\newcommand{\zed}{\mathbb{Z}}
\newcommand{\C}{\mathbb{C}}
\newcommand{\Q}{\mathbb{Q}}
\newcommand{\im}{\mathrm{Im}}
\newcommand{\Sym}{\mathrm{Sym}}
\newcommand{\ve}{\varepsilon}
\newcommand{\id}{\mathrm{id}}
\newcommand{\rank}{\mathrm{rank}}
\newcommand{\Kdim}{\mathrm{K}\text{-}\dim}
\newcommand{\ann}{\mathrm{ann}}
\newcommand{\codim}{\mathrm{co}\text{-}\dim}
\theoremstyle{plain}
\newtheorem{theorem}{Theorem}[section]
\newtheorem{lemma}[theorem]{Lemma}
\newtheorem{proposition}[theorem]{Proposition}
\newtheorem{corollary}[theorem]{Corollary}
\newtheorem{conjecture}[theorem]{Conjecture}
\newtheorem{question}[theorem]{Question}
\newtheorem{fact}[theorem]{Fact}
\theoremstyle{definition}
\newtheorem{definition}[theorem]{Definition}
\newtheorem{acknowledgments}{Acknowledgments\ignorespaces}
\theoremstyle{remark}
\newtheorem{remark}[theorem]{Remark}
\numberwithin{equation}{section}
\begin{document}

\title{Khovanov-Rozansky homology and Directed Cycles}

\author{Hao Wu}

\thanks{The author was partially supported by NSF grant DMS-1205879.}

\address{Department of Mathematics, The George Washington University, Phillips Hall, Room 708, 801 22nd Stree, NW, Washington DC 20052, USA. Telephone: 1-202-994-0653, Fax: 1-202-994-6760}

\email{haowu@gwu.edu}

\subjclass[2010]{Primary 05C20, 05C38, Secondary 14N10, 13P25}

\keywords{directed graph, directed cycle, projective algebraic set, Khovanov-Rozansky homology, Koszul complex, Krull dimension} 

\begin{abstract}
We determine the cycle packing number of a directed graph using elementary projective algebraic geometry. Our idea is rooted in the Khovanov-Rozansky theory. In fact, using the Khovanov-Rozansky homology of a graph, we also obtain algebraic methods of detecting directed and undirected cycles containing a particular vertex or edge. 
\end{abstract}

\maketitle

\section{Introduction}\label{sec-intro}

\subsection{Directed paths and cycles in directed graphs} Before stating our results, let us recall some basic concepts and introduce some notations that will be used in this paper.

A \textit{directed graph} is a pair $G=(V(G),E(G))$ of finite sets, where 
\begin{enumerate}
	\item $V(G)$ is the set of vertices of $G$,
	\item $E(G)$ is the set of edges, each of which is directed. That is, of the two vertices at the two ends of each edge, one is the initial vertex, the other is the terminal vertex.
\end{enumerate}
In this paper, we do not assume that graphs are simple. That is, we allow loops (edges initiate and terminate at the same vertex) and multiple edges from one vertex to another. But, to simplify some of our statements, we assume that all graphs in this paper are without vertices of degree $0$.

Given a directed graph $G$, a \textit{directed path} in $G$ from a vertex $u$ to a different vertex $v$ is a sequence $u=v_0,x_0,v_1,x_1,\dots,x_{n-1},v_n=v$ such that
\begin{enumerate}
	\item $v_0,v_1,\dots,v_n$ are pairwise distinct vertices of $G$,
	\item each $x_i$ is an edge of $G$ with initial vertex $v_i$ and terminal vertex $v_{i+1}$.
\end{enumerate}
Two directed paths from $u$ to $v$ are called \textit{edge-disjoint} if they have no common edges. The amount of directed paths in $G$ from $u$ to $v$ is often measured by:
\begin{itemize}
	\item the maximal number $\alpha_{u\rightarrow v}(G)$ of pairwise edge-disjoint directed paths in $G$ from $u$ to $v$.
\end{itemize}
By the Edge Version of Directed Menger's Theorem, 
\begin{eqnarray*}
\alpha_{u\rightarrow v}(G) & = & \text{minimal number of edges in } G \text{ whose removal from } G \\
&& \text{destroys all directed paths in } G \text{ from } u \text{ to }v.
\end{eqnarray*}
In this paper, we will also consider the following naive upper bound for $\alpha_{u\rightarrow v}(G)$.
\begin{itemize}
	\item the minimal number $\beta_{u\rightarrow v}(G)$ of edges in $G$ incident at $u$ or $v$ whose removal from $G$ destroys all directed paths in $G$ from $u$ to $v$.
\end{itemize}

A \textit{directed cycle} in $G$ is a closed directed path, that is, a sequence $v_0,x_0,v_1,x_1,\dots,x_{n-1},v_n,x_n,v_{n+1}=v_0$ satisfying
\begin{enumerate}
	\item $v_0,v_1,\dots,v_n$ are pairwise distinct vertices of $G$,
	\item each $x_i$ is an edge of $G$ with initial vertex $v_i$ and terminal vertex $v_{i+1}$.
\end{enumerate}
Note that two such sequences represent the same directed cycle if one is a circular permutation of the other. That is, the directed cycle given by $v_0,x_0,v_1,x_1,\dots,x_{n-1},v_n,x_n,v_0$ is the same as the one given by $v_1,x_1,\dots,x_{n-1},v_n,x_n,v_0,x_0,v_1$.

We call a directed graph \textit{acyclic} if it does not contain any directed cycles. 

Two directed cycles in $G$ are called \textit{edge-disjoint} if they have no common edges. Two directed cycles in $G$ are called \textit{disjoint} if they have no common vertices.\footnote{Clearly, disjoint directed cycles are also edge-disjoint.}

For a directed graph $G$, we define
\begin{itemize}
	\item $\alpha(G)$ to be the maximal number of pairwise edge-disjoint directed cycles in $G$,
  \item $\tilde{\alpha}(G)$ to be the maximal number of pairwise disjoint directed cycles in $G$,
	\item $\beta(G)$ to be the minimal number of edges in $G$ whose removal from $G$ destroys all directed cycles in $G$.
\end{itemize}
$\alpha(G)$ is known as the \textit{cycle packing number} of $G$. We call $\tilde{\alpha}(G)$ the \textit{strong cycle packing number} of $G$. $\beta(G)$ is known as the \textit{cyclomatic number} of $G$. Clearly, $\tilde{\alpha}(G) \leq \alpha(G) \leq \beta(G)$. Moreover, by the Lucchesi-Younger Theorem \cite{Lucchesi-Younger}, $\alpha(G) = \beta(G)$ if $G$ is planar.

For a vertex $v$ of $G$, we define
\begin{itemize}
	\item $\alpha_v(G)$ to be the maximal number of pairwise edge-disjoint directed cycles in $G$, each of which contains $v$,
	\item $\beta_v(G)$ to be the minimal number of edges incident at $v$ whose removal from $G$ destroys all directed cycles in $G$ containing $v$.
\end{itemize}
Again, $\alpha_v(G) \leq \beta_v(G)$.

More generally, for a subset $E$ of $E(G)$, define 
\begin{itemize}
	\item $\alpha_E(G)$ to be the maximal number of pairwise edge-disjoint directed cycles in $G$, each of which contains at least one edge in $E$.
	\item $\beta_E(G)$ to be the minimal number of edges in $E$ whose removal from $G$ destroys all directed cycles in $G$ containing at least one edge in $E$.
\end{itemize}
Once more, $\alpha_E(G) \leq \beta_E(G)$.

For two distinct vertices $u$ and $v$ of the directed graph $G$, define a directed graph $G_{u \rightarrow v}$ by
\begin{enumerate}
	\item deleting all edges that have $u$ as their terminal vertex,
	\item deleting all edges that have $v$ as their initial vertex,
	\item after the previous two steps, identifying the vertices $u$ and $v$.
\end{enumerate}
Denote by $u\# v$ the vertex of $G_{u \rightarrow v}$ from the vertices $u$ and $v$ of $G$. One can see:
\begin{enumerate}
	\item The natural inclusion map $E(G_{u \rightarrow v}) \hookrightarrow E(G)$ induces a one-to-one correspondence between directed paths in $G$ from $u$ to $v$ and directed cycles in $G_{u \rightarrow v}$ containing $u\# v$.
	\item Under this correspondence, a collection of directed paths in $G$ from $u$ to $v$ is pairwise edge-disjoint if and only if the corresponding collection of directed cycles in $G_{u \rightarrow v}$ containing $u\# v$ is pairwise edge-disjoint. 
	\item $\alpha_{u\rightarrow v}(G) = \alpha_{u\# v}(G_{u \rightarrow v})$ and $\beta_{u\rightarrow v}(G) = \beta_{u\# v}(G_{u \rightarrow v})$.
\end{enumerate}

A \textit{flow network} $N$ is a quadruple $N=(V,c,s,t)$, where 
\begin{enumerate}
	\item $V$ is a finite set,
	\item $s$ and $t$ are distinct elements of $V$, called source and sink of $N$,
	\item $c:V \times V \rightarrow \mathbb{R}_{\geq 0}$ is the capacity function of $N$ satisfying $c(v,v)=c(v,s)=c(t,v)=0$ for all $v \in V$.
\end{enumerate}
A \textit{flow} $f$ of $N$ is a function $f:V \times V \rightarrow \mathbb{R}_{\geq 0}$ such that
\begin{enumerate}
	\item $f(u,v) \leq c(u,v)$ for all $u,~v \in V$,
	\item $\sum_{v \in V} f(u,v) = \sum_{v \in V} f(v,u)$ for all $u \in V \setminus \{s,t\}$. 
\end{enumerate}
The \textit{flow number} of $f$ is $|f|=\sum_{v\in V} f(s,v) = \sum_{v\in V} f(v,t)$. The maximal flow number of $N$ is $|N| := \max \{|f| ~|~ f$ is a flow of $N\}$.

For the flow network $N$, we define an associated directed graph $G_N$ by
\begin{enumerate}
	\item $V(G_N)=V$,
	\item for any pair $(u,v) \in V\times V$, there are exactly $\left\lceil c(u,v) \right\rceil$ directed edges from $u$ to $v$ in $G_N$, where $\left\lceil c \right\rceil$ is the least integer not less than $c$.
\end{enumerate}
One can check that the maximal flow number $|N|$ of $N$ satisfies $|N|\leq \alpha_{s\rightarrow t}(G_N) \leq \beta_{s\rightarrow t}(G_N)$.

\subsection{Incidence ideal and incidence set} Before stating the definition of the incidence ideal and the incidence set, we first recall the definition of elementary symmetric polynomials. For each $l \geq 1$, denote by $e_l(x_1,\dots,x_m)$ the $l$-th elementary symmetric polynomial in the variables $x_1,\dots,x_m$. That is, 
\begin{equation}\label{eq-def-e}
e_l(x_1,\dots,x_m) = \begin{cases}
1 & \text{if } l=0, \\
\sum_{1\leq i_1<i_2<\cdots<i_l\leq m} x_{i_1}x_{i_2}\cdots x_{i_l} & \text{if } 1\leq l\leq m, \\
0 & \text{if } l>m.
\end{cases}
\end{equation}

\begin{definition}\label{def-incidence}
Let $G$ be a directed graph. The polynomial ring $\zed[E(G)]$ is $\zed$-graded so that every $x \in E(G)$ is homogeneous of degree $1$. 

For a vertex $v$ of $G$, suppose $x_1,\dots,x_m$ are the edges having $v$ as their initial vertex, and $y_1,\dots,y_n$ are the edges having $v$ as their terminal vertex.\footnote{It is possible to have $x_i=y_j$ since we allow loops.} Set $k_v = \max\{m,n\}$. For $1\leq l \leq k_v$, define 
\begin{equation}\label{eq-incidence}
\delta_{v,l}:= e_l(x_1,\dots,x_m) - e_l(y_1,\dots,y_n) \in \zed[E(G)] 
\end{equation}
and call it the degree $l$ incidence relation at $v$. 

Let $\Delta_G := \{\delta_{v,l}~|~ v \in V(G), ~1\leq l \leq k_v\}$ be the set (counting multiplicity) of all incidence relations in $G$. Denote by $I(G)$ the ideal of $\zed[E(G)]$ generated by the set $\Delta_G$. We call $I(G)$ the incidence ideal for $G$. Note that $I(G)$ is a homogeneous ideal. Therefore, it defines a complex projective algebraic set 
\[
P(G) := \{p\in \mathbb{CP}^{|E(G)|-1} ~|~ f(p)=0, ~\forall ~f\in I(G)\}.
\]
We call $P(G)$ the incidence set of $G$.
\end{definition}

Before stating our main result, we introduce the cycle spectrum of a directed graph.

\begin{definition}\label{def-cycle-spectrum}
Let $G$ be a directed graph. A collection $\mathcal{C}$ of pairwise edge-disjoint directed cycles in $G$ is called maximal if it is not a subcollection of any collection of pairwise edge-disjoint directed cycles in $G$. In other words, $\mathcal{C}$ is maximal if, after removing all edges belonging to directed cycles in $\mathcal{C}$ from $G$, the remaining directed graph is acyclic. 

Define $\gamma_n(G)$ to be the number of maximal collections of pairwise edge-disjoint directed cycles in $G$ containing exactly $n$ cycles. We call the sequence $\Gamma(G):= \{\gamma_n(G)\}_{n=1}^\infty$ the cycle spectrum of $G$. Note that any collection of $\alpha(G)$ pairwise edge-disjoint directed cycles in $G$ is maximal. But it is possible to have maximal collections of pairwise edge-disjoint directed cycles containing less than $\alpha(G)$ cycles. Also, the cycle spectrum $\Gamma(G)$ contains only finitely many non-zero terms since $\gamma_n(G)=0$ if $n > \alpha(G)$.
\end{definition}

Our main result is that the incidence set determines the cycle packing number and the cycle spectrum.

\begin{theorem}\label{thm-incidence}
Let $G$ be any directed graph. Then:
\begin{enumerate}
	\item $P(G)$ is a union of finitely many linear subspaces of $\mathbb{CP}^{|E(G)|-1}$;
	\item $\dim P(G) = \alpha(G)-1$;
	\item $\deg P(G)=\gamma_{\alpha(G)}(G)$;
	\item $\gamma_n(G)$ is equal to the number of irreducible components of $P(G)$ of dimension $n-1$.
\end{enumerate}
\end{theorem}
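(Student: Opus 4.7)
The plan is to exhibit $P(G)$ as a finite union of linear subspaces indexed by the maximal edge-disjoint cycle collections of $G$, and then read off all four conclusions from this description. The first step is to reformulate the defining equations: at a vertex $v$ with outgoing edges $x_1,\dots,x_m$ and incoming edges $y_1,\dots,y_n$, writing $M=\max(m,n)=k_v$, the system $\delta_{v,l}=0$ for $1\le l\le k_v$ is equivalent to the polynomial identity
\[
T^{M-m}\prod_{i=1}^{m}(T-x_i)=T^{M-n}\prod_{j=1}^{n}(T-y_j)
\]
in a formal variable $T$, because elementary symmetric polynomials are, up to sign, the coefficients of these polynomials. Since a polynomial over $\mathbb{C}$ is determined by its multiset of roots, this holds iff the multisets of nonzero coordinates on outgoing and incoming edges at $v$ coincide. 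Hence $p\in P(G)$ iff, at every vertex $v$ and for every nonzero value $c$, the number of incoming edges at $v$ with $p_x=c$ equals the number of outgoing ones.

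For each maximal edge-disjoint cycle collection $\mathcal{C}=\{C_1,\dots,C_n\}$, I would define the linear subspace $L_\mathcal{C}\subset\mathbb{CP}^{|E(G)|-1}$ by imposing $p_x=0$ for $x\notin\bigcup_iE(C_i)$ and $p_x=p_y$ whenever $x$ and $y$ lie in the same cycle $C_i$; then $L_\mathcal{C}$ is a linear subspace of projective dimension $|\mathcal{C}|-1$. The inclusion $L_\mathcal{C}\subseteq P(G)$ is immediate from the reformulation, since at any vertex $v$ each $C_i$ passing through $v$ contributes exactly one incoming and one outgoing edge sharing the common value $t_i$, so the nonzero multisets automatically match. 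For the converse, given $p\in P(G)$ with distinct nonzero values $c_1,\dots,c_k$, the subgraph of edges with value $c_i$ has equal in- and out-degree at every vertex by the reformulation, hence is Eulerian and decomposes into edge-disjoint directed cycles. The union of these decompositions over $i$ is an edge-disjoint collection $\mathcal{C}_0$ whose edges are exactly those where $p$ is nonzero. I then extend $\mathcal{C}_0$ to a maximal edge-disjoint collection $\mathcal{C}$ by repeatedly adjoining cycles from the remaining zero-valued edges until the residue is acyclic; assigning value $c_i$ to every cycle of $\mathcal{C}_0$ coming from color $c_i$ and value $0$ to each appended cycle exhibits $p\in L_\mathcal{C}$. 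Therefore $P(G)=\bigcup_{\mathcal{C}}L_\mathcal{C}$, a finite union of linear subspaces, giving (1).

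Finally, I would show the $L_\mathcal{C}$ for distinct maximal $\mathcal{C}$ are pairwise incomparable, so they are exactly the irreducible components of $P(G)$. If $L_\mathcal{C}\subseteq L_{\mathcal{C}'}$ with both $\mathcal{C},\mathcal{C}'$ maximal, then comparing the two sets of linear constraints forces $\bigcup\mathcal{C}\subseteq\bigcup\mathcal{C}'$ and requires each cycle $C'\in\mathcal{C}'$ to have all its edges either inside a single cycle of $\mathcal{C}$ or entirely outside $\bigcup\mathcal{C}$. The first alternative forces $C'$ to equal that cycle of $\mathcal{C}$, because in a directed cycle every vertex has in- and out-degree one, so removing any edge breaks the cycle condition and no proper sub-multiset of the edges of a directed cycle can itself form a directed cycle. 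The second alternative contradicts the maximality of $\mathcal{C}$, because $E(G)\setminus\bigcup\mathcal{C}$ carries no directed cycle. Hence $\mathcal{C}=\mathcal{C}'$, and the map $\mathcal{C}\mapsto L_\mathcal{C}$ is a bijection between maximal edge-disjoint cycle collections and irreducible components of $P(G)$, respecting $\dim L_\mathcal{C}=|\mathcal{C}|-1$. This is (4); (2) is the top-dimensional specialization, and (3) follows because each top-dimensional component is a degree-one linear subspace, so $\deg P(G)$ equals the number of such components, which is $\gamma_{\alpha(G)}(G)$. The main obstacle is the converse inclusion in the middle paragraph, where the Eulerian-decomposition step relies essentially on the multiset reformulation of the incidence relations.
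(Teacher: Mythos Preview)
Your proof is correct and follows the same underlying strategy as the paper: both hinge on the Vieta-type reformulation that the incidence relations at a vertex hold if and only if the multisets of nonzero incoming and outgoing edge-values coincide, and both identify the irreducible components of $P(G)$ with maximal edge-disjoint cycle collections via linear subspaces of dimension $|\mathcal{C}|-1$.

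The organizational difference is worth noting. The paper introduces an auxiliary notion of a \emph{disassembly} of $G$ --- a graph obtained by locally matching incoming to outgoing edges at every vertex --- and first proves $P(G)=\bigcup_{D\in\mathrm{Dis}(G)}P(D)$ over \emph{all} disassemblies, then in a separate lemma characterizes which $P(D)$ are maximal under inclusion (namely those whose cycles map to a maximal cycle collection), and finally deduces the irreducible decomposition. You bypass this machinery: given $p\in P(G)$, you color edges by their nonzero values, observe that each color class is balanced and hence decomposes into edge-disjoint cycles, and extend directly to a maximal collection. This is more economical and makes the geometry-to-combinatorics translation transparent in a single step; the paper's disassembly framework, by contrast, handles circuits that are not cycles as intermediate objects and then refines them. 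One small point you leave implicit: after showing every $C'\in\mathcal{C}'$ coincides with some cycle of $\mathcal{C}$, the conclusion $\mathcal{C}=\mathcal{C}'$ uses the maximality of $\mathcal{C}'$ (otherwise you only have $\mathcal{C}'\subseteq\mathcal{C}$); this is immediate but worth stating.
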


Slightly modifying the definition of the incidence set, we get a method of determining the strong cycle packing number and the strong cycle spectrum defined below.

\begin{definition}\label{def-incidence-strong}
Let $G$ be a directed graph. For a vertex $v$ of $G$, suppose $x_1,\dots,x_m$ are the edges having $v$ as their initial vertex, and $y_1,\dots,y_n$ are the edges having $v$ as their terminal vertex. Recall that $k_v = \max\{m,n\}$. The set of strong incidence relations of $G$ at $v$ is 
\[
\tilde{\Delta}_v:=\{e_1(x_1,\dots,x_m) - e_1(y_1,\dots,y_n)\}\cup \{e_l(x_1,\dots,x_m) ~|~ 2 \leq l \leq m\} \cup \{e_l(y_1,\dots,y_n) ~|~ 2 \leq l \leq n\}.
\]

Denote by $\tilde{I}(G)$ the ideal of $\zed[E(G)]$ generated by the set $\tilde{\Delta}_G:= \bigcup_{v\in V(G)}\tilde{\Delta}_v$. We call $\tilde{I}(G)$ the strong incidence ideal for $G$. Note that $\tilde{I}(G)$ is a homogeneous ideal. Therefore, it defines a complex projective algebraic set 
\[
\tilde{P}(G) := \{p\in \mathbb{CP}^{|E(G)|-1} ~|~ f(p)=0, ~\forall ~f\in \tilde{I}(G)\}.
\]
We call $\tilde{P}(G)$ the strong incidence set of $G$.

A collection $\mathcal{C}$ of pairwise disjoint directed cycles in $G$ is called strongly maximal if it is not a subcollection of any collection of pairwise disjoint directed cycles in $G$. In other words, $\mathcal{C}$ is strongly maximal if, after removing all edges incident at vertices of cycles in $\mathcal{C}$ from $G$, the remaining directed graph is acyclic. 

Define $\tilde{\gamma}_n(G)$ to be the number of strongly maximal collections of pairwise disjoint directed cycles in $G$ containing exactly $n$ cycles. We call the sequence $\tilde{\Gamma}(G):= \{\tilde{\gamma}_n(G)\}_{n=1}^\infty$ the strong cycle spectrum of $G$.
\end{definition}

Clearly, we have that $I(G) \subset \tilde{I}(G)$, $P(G) \supset \tilde{P}(G)$ and $\tilde{\gamma}_n(G) \leq \gamma_n(G)$.

\begin{corollary}\label{cor-incidence-strong}
Let $G$ be any directed graph. Then:
\begin{enumerate}
	\item $\tilde{P}(G)$ is a union of finitely many linear subspaces of $\mathbb{CP}^{|E(G)|-1}$;
	\item $\dim \tilde{P}(G) = \tilde{\alpha}(G)-1$;
	\item $\deg \tilde{P}(G)=\tilde{\gamma}_{\tilde{\alpha}(G)}(G)$;
	\item $\tilde{\gamma}_n(G)$ is equal to the number of irreducible components of $\tilde{P}(G)$ of dimension $n-1$.
\end{enumerate}
\end{corollary}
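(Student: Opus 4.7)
Since every generator $\delta_{v,l}$ of $I(G)$ is a difference of two elements of $\tilde\Delta_v$, we have $I(G) \subset \tilde I(G)$ and hence $\tilde P(G) \subset P(G)$. The plan is to use the explicit description of $P(G)$ from Theorem~\ref{thm-incidence}, intersect each of its linear components with the additional strong-incidence relations, and read off the outcome in terms of strongly maximal collections. By Theorem~\ref{thm-incidence}, together with the concrete parametrization of its components that emerges from the proof, $P(G) = \bigcup_{\mathcal{C}} L_\mathcal{C}$, where $\mathcal{C}$ runs over maximal edge-disjoint collections of directed cycles of $G$ and $L_\mathcal{C}$ is the linear subspace of points whose edge-coordinates vanish off $\bigcup \mathcal{C}$ and are constant with common value $a_C$ along each $C \in \mathcal{C}$; in particular $L_\mathcal{C} \cong \mathbb{CP}^{|\mathcal{C}|-1}$, with homogeneous coordinates $\{a_C\}_{C \in \mathcal{C}}$.

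To compute $L_\mathcal{C} \cap \tilde P(G)$, fix a vertex $v$ and set $\mathcal{C}_v := \{C \in \mathcal{C} : v \in C\}$. Since the cycles in $\mathcal{C}$ are pairwise edge-disjoint, each $C \in \mathcal{C}_v$ contributes exactly one outgoing and one incoming edge at $v$, so on $L_\mathcal{C}$ the outgoing variables $x_1,\dots,x_m$ take the values $\{a_C\}_{C \in \mathcal{C}_v}$ padded with zeros, whence $e_l(x_1,\dots,x_m)\big|_{L_\mathcal{C}} = e_l(\{a_C\}_{C \in \mathcal{C}_v})$. Using the factorization
\[
\prod_{C \in \mathcal{C}_v}(t - a_C) = \sum_{l=0}^{|\mathcal{C}_v|} (-1)^l e_l\bigl(\{a_C\}_{C \in \mathcal{C}_v}\bigr)\, t^{|\mathcal{C}_v|-l},
\]
requiring $e_l = 0$ for $2 \leq l \leq |\mathcal{C}_v|$ forces the product to equal $t^{|\mathcal{C}_v|-1}\bigl(t - \sum_C a_C\bigr)$, so at most one $a_C$ with $C \in \mathcal{C}_v$ can be nonzero. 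The incoming relations yield the same constraint. It follows that a point of $L_\mathcal{C}$ lies in $\tilde P(G)$ iff $\mathcal{C}' := \{C \in \mathcal{C} : a_C \neq 0\}$ is pairwise vertex-disjoint; conversely, for any such vertex-disjoint $\mathcal{C}' \subset \mathcal{C}$, the entire subspace $L_{\mathcal{C}'}$ clearly lies in $\tilde P(G)$ (every $\mathcal{C}_v$ meets $\mathcal{C}'$ in at most one cycle, so every $e_l$ with $l \geq 2$ vanishes automatically).

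Unioning over all maximal edge-disjoint $\mathcal{C}$ and using the set-theoretic fact $L_{\mathcal{C}'} \subset L_{\mathcal{C}''}$ iff $\mathcal{C}' \subset \mathcal{C}''$, I obtain
\[
\tilde P(G) = \bigcup_{\mathcal{C}' \text{ strongly maximal}} L_{\mathcal{C}'},
\]
from which parts (1)--(4) of the corollary follow in exact parallel with Theorem~\ref{thm-incidence}: the decomposition yields (1); $\dim L_{\mathcal{C}'} = |\mathcal{C}'|-1$ yields (2) and (3); and (4) is the count of components of each dimension. The main obstacle is the reliance on the concrete cycle-parametrization of each $L_\mathcal{C}$, which is a feature of the proof of Theorem~\ref{thm-incidence} rather than of its statement; once that description is in hand, the corollary reduces to the elementary-symmetric-polynomial calculation above.
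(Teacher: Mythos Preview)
Your proof is correct but takes a genuinely different route from the paper. The paper constructs an auxiliary bipartite graph $B_G$ by splitting each vertex $v$ into a pair $v_{in},v_{out}$ joined by a new edge $z_v$; it then shows (Lemma~\ref{lemma-extended-cycles}) that vertex-disjoint cycle collections in $G$ correspond bijectively to edge-disjoint cycle collections in $B_G$, and exhibits explicit mutually inverse linear morphisms identifying $\tilde P(G)$ with $P(B_G)$. The corollary then follows by applying Theorem~\ref{thm-incidence} to $B_G$ as a black box.

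Your approach is more direct: you work inside $P(G)$ itself, use the explicit parametrization of its components $L_{\mathcal C}$ (this is Proposition~\ref{prop-decomp-incidence-set} and the equations in Lemma~\ref{lemma-P-disassembly}, not just the statement of Theorem~\ref{thm-incidence}, as you correctly flag), and impose the extra generators $e_l(x_1,\dots,x_m)$, $e_l(y_1,\dots,y_n)$ for $l\ge 2$. The elementary-symmetric calculation showing that at most one $a_C$ with $C\ni v$ can be nonzero is clean and does the job. Your claim that $L_{\mathcal C'}\subset L_{\mathcal C''}$ iff $\mathcal C'\subset\mathcal C''$ is the one step you might want to justify a bit more carefully (it is not purely set-theoretic; one needs that a single directed cycle cannot be an edge-disjoint union of two or more directed cycles), but it is true and is essentially the content of Lemma~\ref{lemma-max-P(D)} specialized to collections of cycles.

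The trade-off: the paper's reduction $\tilde P(G)\cong P(B_G)$ is a reusable structural statement (any future result about $P$ automatically transfers to $\tilde P$), whereas your argument is shorter, avoids the auxiliary graph entirely, and makes the passage from edge-disjoint to vertex-disjoint transparent at the level of coordinates.
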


There are several general purpose software packages for algebraic geometry, such as CoCoA, Macaulay2 and Singular. One can use these packages to compute dimensions and degrees. So, for small directed graphs, Theorem \ref{thm-incidence} and Corollary \ref{cor-incidence-strong} provide an automated method of computing $\alpha(G)$, $\gamma_{\alpha(G)}(G)$ and $\tilde{\alpha}(G)$, $\tilde{\gamma}_{\tilde{\alpha}(G)}(G)$.

A natural question is that, for what $G$, is $P(G)$ or $\tilde{P}(G)$ a projective variety, that is, an irreducible projective algebraic set? Knowing Theorem \ref{thm-incidence}, it is relatively easy to find a combinatorial answer to this question. 

\begin{theorem}\label{thm-Krull-sharp-varieties}
Let $G$ be a directed graph. 
\begin{enumerate}
\item The following statements are equivalent:
\begin{enumerate}
	\item $P(G)$ is a projective variety;
	\item $P(G)$ is a linear subspace of $\mathbb{CP}^{|E(G)|-1}$;
	\item $G$ contains exactly $\alpha(G)$ distinct directed cycles.
\end{enumerate}
\item The following statements are equivalent:
\begin{enumerate}
	\item $\tilde{P}(G)$ is a projective variety;
	\item $\tilde{P}(G)$ is a linear subspace of $\mathbb{CP}^{|E(G)|-1}$;
	\item $G$ contains exactly $\tilde{\alpha}(G)$ distinct directed cycles.
\end{enumerate}
\item If $\tilde{P}(G)$ is a projective variety, then $P(G)=\tilde{P}(G)$.
\end{enumerate}

\end{theorem}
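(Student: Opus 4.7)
The plan is to reduce everything to Theorem \ref{thm-incidence} and Corollary \ref{cor-incidence-strong}, which already describe $P(G)$ and $\tilde{P}(G)$ as finite unions of linear subspaces whose irreducible components and dimensions are explicitly controlled by the cycle spectra.

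For part (1), the implication (b) $\Rightarrow$ (a) is immediate since every linear subspace of $\mathbb{CP}^{|E(G)|-1}$ is irreducible. For (a) $\Rightarrow$ (b), I would invoke Theorem \ref{thm-incidence}(1) to write $P(G) = L_1\cup\cdots\cup L_k$ as a finite union of linear subspaces; the irreducible components of such a union are precisely the maximal $L_i$'s, so if $P(G)$ is irreducible, only one of them survives after discarding redundancies and $P(G)$ is itself a single linear subspace. For (c) $\Rightarrow$ (a), I would observe that if $G$ has exactly $\alpha(G)$ directed cycles, then those cycles must already be pairwise edge-disjoint (otherwise $\alpha(G)$ could not be achieved), so the set of all cycles is the unique maximal edge-disjoint collection; this gives $\gamma_{\alpha(G)}(G)=1$ and $\gamma_n(G)=0$ for $n<\alpha(G)$, hence by Theorem \ref{thm-incidence}(4) $P(G)$ has a single irreducible component. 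For the converse (a) $\Rightarrow$ (c), I would argue by contradiction: any cycle $C$ of $G$ outside the unique maximal collection $\mathcal{C}$ can be extended to some maximal collection $\mathcal{C}'\neq\mathcal{C}$, and then either $|\mathcal{C}'|=\alpha(G)$ (forcing $\gamma_{\alpha(G)}(G)\geq 2$) or $|\mathcal{C}'|<\alpha(G)$ (forcing $\gamma_{|\mathcal{C}'|}(G)>0$ with $|\mathcal{C}'|\neq\alpha(G)$). Either case produces a second irreducible component of $P(G)$, contradicting irreducibility.

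Part (2) follows the identical template, with Corollary \ref{cor-incidence-strong} replacing Theorem \ref{thm-incidence}, $\tilde{\alpha}(G)$ replacing $\alpha(G)$, and strongly maximal collections of pairwise vertex-disjoint cycles replacing maximal collections of pairwise edge-disjoint cycles.

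For part (3), suppose $\tilde{P}(G)$ is a projective variety. By part (2), $G$ contains exactly $\tilde{\alpha}(G)$ directed cycles, and these are pairwise disjoint. Since pairwise disjoint cycles are in particular pairwise edge-disjoint, I get $\tilde{\alpha}(G) \leq \alpha(G)$, while the total number of cycles bounds $\alpha(G)$ from above, forcing $\alpha(G)=\tilde{\alpha}(G)$. Hence $G$ contains exactly $\alpha(G)$ directed cycles, so by part (1) the set $P(G)$ is itself a linear subspace of dimension $\alpha(G)-1 = \tilde{\alpha}(G)-1 = \dim\tilde{P}(G)$. The inclusion $\tilde{P}(G)\subset P(G)$ of two linear subspaces of the same dimension must then be an equality. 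I expect the only step requiring any real care to be the contradiction in (a) $\Rightarrow$ (c) of part (1), where one has to simultaneously rule out a second maximal collection of size $\alpha(G)$ and a maximal collection of smaller size; but given Theorem \ref{thm-incidence}(4), the required bookkeeping on irreducible components is entirely straightforward.
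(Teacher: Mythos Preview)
Your proof is correct, and it takes a cleaner route than the paper's for Part~(1). The paper splits Part~(1) into two separate lemmas: Lemma~\ref{lemma-variety=>generators} proves (a)$\Rightarrow$(b),(c) by explicitly constructing the linear subspace $L$ and producing a point of $P(G)\setminus L$ from any extra cycle, while Lemma~\ref{lemma-generators=>variety} proves (c)$\Rightarrow$(b) via a nontrivial combinatorial claim (for any nonempty $S\subset\{C_1,\dots,C_\alpha\}$ there is some $C_i\in S$ sharing at most one vertex with the rest) together with a forward reference to Corollary~\ref{cor-KR-detect-edge}. You instead go straight through Theorem~\ref{thm-incidence}(4), using only the bijection between irreducible components of $P(G)$ and maximal edge-disjoint cycle collections; both (a)$\Rightarrow$(c) and (c)$\Rightarrow$(a) then reduce to elementary observations about when a single maximal collection can exist. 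This avoids both the combinatorial claim and the forward reference, at the cost of relying on the full strength of Proposition~\ref{prop-decomp-incidence-set} rather than just Lemma~\ref{lemma-union-incidence-set}. For Part~(2) the paper reduces to Part~(1) via the isomorphism $\tilde P(G)\cong P(B_G)$, whereas you rerun the same argument with Corollary~\ref{cor-incidence-strong}; both work equally well. Part~(3) is essentially identical in the two arguments.
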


We will prove Theorem \ref{thm-incidence}, Corollary \ref{cor-incidence-strong} and Theorem \ref{thm-Krull-sharp-varieties} in Section \ref{sec-Krull-sharp-varieties} below.

\subsection{Khovanov-Rozansky homology and its Krull dimension} In \cite{KR1,KR2}, Khovanov and Rozansky introduced an innovative method of constructing link homologies whose graded Euler characteristics are versions of the HOMFLYPT polynomial. Such constructions are known as categorifications in knot theory. The homologies constructed by Khovanov and Rozansky are now called the Khovanov-Rozansky homology. Since their initial work, the Khovanov-Rozansky homology has been generalized and re-interpreted by many researchers. See, for example, \cite{Mackaay-Stosic-Vaz2,Ras2,Webster-Williamson,Wu-color} and many more. Khovanov and Rozansky's construction is a two step process. First, they defined a Koszul matrix factorization for each MOY graph.\footnote{A MOY graph is a directed graph such that the in-degree of each vertex is equal to the out-degree of the same vertex. The name is an acronym of the authors of \cite{MOY}.} Then, using the crossing information in a link diagram, they constructed a chain complex of matrix factorizations for each link diagram. The hard part of their work is to prove that their homology is independent of the choice of the diagram of a link.

The Khovanov-Rozansky homology of directed graphs defined below is a straightforward generalization of the first step in the construction of the HOMFLYPT version of the Khovanov-Rozansky homology of MOY graphs given in \cite{Ras2}. The only change is that we no longer require the graph to be MOY. 

\begin{definition}\label{def-KR}
Let $G$ be a directed graph. Recall that $\Delta_G$ is the set (counting multiplicity) of all incidence relations in $G$. 

The Khovanov-Rozansky chain complex $\mathscr{C}_\ast(G)$ is the graded Koszul chain complex $C_\ast^{\zed[E(G)]} (\Delta_G)$ over $\zed[E(G)]$ defined by $\Delta_G$.\footnote{Strictly speaking, we need to specify a linear order for $\Delta_G$. But, as we shall see in Section \ref{sec-KR}, a change of such a linear order does not change the isomorphism type of the Khovanov-Rozansky chain complex.} The Khovanov-Rozansky homology $\mathscr{H}_\ast (G)$ of $G$ is the homology of $\mathscr{C}_\ast(G)$.

Note that, as a graded Koszul chain complex, $\mathscr{C}_\ast (G)$ has a $\zed \oplus \zed$-grading. One is the homological grading, and the other is the $\zed$-grading of the underlying $\zed[E(G)]$-module. Clearly, $\mathscr{H}_\ast (G)$ inherits this $\zed \oplus \zed$-grading.

There is also the notion of the Khovanov-Rozansky homology over $\Q$. In this case, the chain complex is $\mathscr{C}_\ast^\Q(G) := \Q\otimes_{\zed}\mathscr{C}_\ast(G)$ and the homology is the homology $\mathscr{H}_\ast^\Q(G)$ of $\mathscr{C}_\ast^\Q(G)$.
\end{definition}

\begin{lemma}\label{lemma-KR-invariance}
If directed graphs $G$ and $G'$ are isomorphic, then $\mathscr{C}_\ast(G) \cong \mathscr{C}_\ast(G')$ as chain complexes of graded $\zed[E(G)]$-modules. Therefore, $\mathscr{H}_\ast (G) \cong \mathscr{H}_\ast (G')$ as $\zed\oplus\zed$-graded $\zed[E(G)]$-modules.
\end{lemma}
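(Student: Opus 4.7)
The plan is to upgrade a graph isomorphism $\phi : G \to G'$ to an isomorphism of Koszul complexes by first constructing an isomorphism of the underlying polynomial rings and then checking that it carries the defining sequence of one Koszul complex onto the defining sequence of the other. Write $\phi$ as a pair of bijections $\phi_V : V(G) \to V(G')$ and $\phi_E : E(G) \to E(G')$ that preserve the assignment of initial and terminal vertices to edges. The edge bijection $\phi_E$ extends uniquely to a graded ring isomorphism
\[
\Phi : \zed[E(G)] \xrightarrow{\sim} \zed[E(G')], \qquad x \mapsto \phi_E(x),
\]
and I would view $\mathscr{C}_\ast(G')$ as a graded $\zed[E(G)]$-module by pullback along $\Phi$.

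First I would verify that $\Phi$ sends $\Delta_G$ bijectively onto $\Delta_{G'}$. At a vertex $v \in V(G)$ with outgoing edges $x_1,\dots,x_m$ and incoming edges $y_1,\dots,y_n$, the vertex $\phi_V(v) \in V(G')$ has outgoing edges $\phi_E(x_1),\dots,\phi_E(x_m)$ and incoming edges $\phi_E(y_1),\dots,\phi_E(y_n)$, because $\phi$ preserves incidence and orientation. Since $\Phi$ is a ring homomorphism, it commutes with evaluation of elementary symmetric polynomials, so $\Phi(\delta_{v,l}) = \delta_{\phi_V(v),l}$. This gives a bijection $\Delta_G \leftrightarrow \Delta_{G'}$ that preserves the indexing by vertex and by the degree of the incidence relation; in particular $\Phi(I(G)) = I(G')$.

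Next I would invoke the standard functoriality of Koszul complexes: given a ring isomorphism and a matching bijection of its defining sequences, the induced map on the free module on those generators extends uniquely to an isomorphism of exterior algebras that commutes with the Koszul differential, as is immediate from the defining formula for $d$. This produces an isomorphism $\mathscr{C}_\ast(G) \to \mathscr{C}_\ast(G')$ of chain complexes that is $\zed[E(G)]$-linear (via $\Phi$) and respects both the homological grading and the internal $\zed$-grading. Passing to homology then yields the claimed isomorphism $\mathscr{H}_\ast(G) \cong \mathscr{H}_\ast(G')$.

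The only subtle point, flagged in the footnote to Definition \ref{def-KR}, is the order dependence: the Koszul complex is ostensibly defined only after a linear ordering of $\Delta_G$ has been chosen, and the orderings I use on $\Delta_G$ and $\Delta_{G'}$ need not be matched by $\Phi$. I would handle this by the classical fact that permuting the defining sequence produces an isomorphic Koszul complex, with the explicit isomorphism given by the signed permutation action on the exterior algebra; this simultaneously discharges the footnote's claim. No deep obstacle arises, and the hardest part of the write-up is simply tracking carefully that the $\zed[E(G)]$-module structure on $\mathscr{C}_\ast(G')$ is the one induced by $\Phi$.
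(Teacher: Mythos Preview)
Your proposal is correct and takes essentially the same approach as the paper: both arguments observe that a graph isomorphism induces bijections on vertices and edges that carry the incidence relations of $G$ to those of $G'$, from which the isomorphism of Koszul complexes follows. The paper's proof is a single sentence to this effect, whereas you have spelled out the details (the induced ring isomorphism, the explicit check $\Phi(\delta_{v,l}) = \delta_{\phi_V(v),l}$, and the handling of the ordering ambiguity); your version is a careful expansion of the same idea rather than a different route.
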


\begin{proof}
A graph isomorphism gives one-to-one correspondences $V(G) \rightarrow V(G')$ and $E(G) \rightarrow E(G')$ that, when combined, preserve the incidence relations.
\end{proof}

Next we recall the definition of the Krull dimension. Properties of the Krull dimension relevant to our proofs will be reviewed in Section \ref{sec-Krull-cycles}.

\begin{definition}\label{def-Krull}
Let $R$ be a commutative ring with $1$. The Krull dimension of $R$ is 
\[
\Kdim R:= \sup\{n\geq 0 ~| \text{ there are } n+1 \text{ prime ideals } \mathfrak{p}_0, \mathfrak{p}_1,\dots,\mathfrak{p}_n \text{ of } R \text{ such that } \mathfrak{p}_0 \subsetneq \mathfrak{p}_1\subsetneq \cdots \subsetneq\mathfrak{p}_n \subsetneq R\}.
\]
For an $R$-module $M$, define $\ann_R(M)=\{r\in R~|~ rm=0 ~\forall m\in M\}$, which is an ideal of $R$. Then 
\[
\Kdim_R M := \Kdim (R/\ann_R(M)).
\]
\end{definition}

It is possible to have $\Kdim R = +\infty$. But this does not happen for the rings we will discuss in this paper. Also, for an ideal $I$ of $R$, $\ann_R(R/I)=I$. So $\Kdim_R(R/I) = \Kdim (R/I)$, where the left hand side is the Krull dimension of an $R$-module, and the right hand side is the Krull dimension of a ring.

The following fact can be found in many introductory books to projective algebraic geometry.

\begin{fact}\label{fact-kdim}
For a homogeneous ideal $I$ of $\Q[x_0,\dots,x_n]$, denote by $X(I)$ the projective algebraic set 
\[
X(I)= \{(x_0:\cdots:x_n) \in \mathbb{CP}^n ~|~ f(x_0,\dots,x_n)=0 ~(\forall ~f\in I)\}.
\]
Then $\dim X(I) = \Kdim \Q[x_0,\dots,x_n]/I-1$.
\end{fact}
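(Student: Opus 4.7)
The plan is to reduce this fact to the standard projective dimension theorem over an algebraically closed field, which states that for a homogeneous ideal $J$ of $\mathbb{C}[x_0,\dots,x_n]$ one has $\dim X(J) = \Kdim\, \mathbb{C}[x_0,\dots,x_n]/J -1$; this form is standard and is what actually appears in the textbooks the author cites. The work that remains is to justify replacing $\mathbb{C}$ by $\Q$ on the algebraic side of the equation.

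First I would set $A = \Q[x_0,\dots,x_n]/I$ and $I^{\mathbb{C}} = I\cdot\mathbb{C}[x_0,\dots,x_n]$. Because $I^{\mathbb{C}}$ is generated by a set of generators of $I$ (all of which have rational, hence complex, coefficients), a point of $\mathbb{CP}^n$ vanishes on $I$ if and only if it vanishes on $I^{\mathbb{C}}$, so $X(I)=X(I^{\mathbb{C}})$. Moreover, since $\mathbb{C}$ is flat over $\Q$, the canonical map identifies $A\otimes_{\Q}\mathbb{C}$ with $\mathbb{C}[x_0,\dots,x_n]/I^{\mathbb{C}}$.

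Next I would invoke the invariance of Krull dimension under field extension: if $A$ is a finitely generated $k$-algebra and $K/k$ is any field extension, then $\Kdim(A\otimes_k K)=\Kdim A$. The standard proof uses Noether normalization to write $A$ as a finite extension of some polynomial ring $k[y_1,\dots,y_d]$ with $d=\Kdim A$; tensoring with $K$ preserves both finiteness of the extension and the algebraic independence of $y_1,\dots,y_d$, so $A\otimes_k K$ is a finite extension of $K[y_1,\dots,y_d]$ and hence has Krull dimension $d$ as well. Applied to our situation, this gives
\[
\Kdim \Q[x_0,\dots,x_n]/I \;=\; \Kdim A \;=\; \Kdim(A\otimes_{\Q}\mathbb{C}) \;=\; \Kdim \mathbb{C}[x_0,\dots,x_n]/I^{\mathbb{C}}.
\]

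Finally, I would invoke the projective dimension theorem over $\mathbb{C}$: one passes to the affine cone $\widehat{X(I^{\mathbb{C}})}\subset\mathbb{C}^{n+1}$, which has Krull dimension equal to that of its coordinate ring $\mathbb{C}[x_0,\dots,x_n]/I^{\mathbb{C}}$ by the classical Nullstellensatz-based affine dimension theorem, and then uses the fact that projectivization drops dimension by exactly one. Chaining these equalities yields $\dim X(I)=\Kdim \Q[x_0,\dots,x_n]/I-1$. The only nontrivial step is the field-extension invariance of Krull dimension, and this is the natural place where one must be careful that the rational coefficients of $I$ do not artificially change the algebraic dimension when passing to $\mathbb{C}$.
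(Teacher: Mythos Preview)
The paper does not prove this statement at all: it is labeled a \emph{Fact} and is accompanied only by the remark that it ``can be found in many introductory books to projective algebraic geometry.'' So there is no paper proof to compare against; your proposal supplies an argument where the paper simply cites the literature.

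Your argument is correct. The reduction to $\mathbb{C}$ via $X(I)=X(I^{\mathbb{C}})$ and $A\otimes_{\Q}\mathbb{C}\cong\mathbb{C}[x_0,\dots,x_n]/I^{\mathbb{C}}$ is fine, and the Noether-normalization proof that Krull dimension is preserved under field extension for finitely generated algebras is the standard one. One small point worth making explicit in step~5: the coordinate ring of the affine cone is $\mathbb{C}[x_0,\dots,x_n]/\sqrt{I^{\mathbb{C}}}$ rather than $\mathbb{C}[x_0,\dots,x_n]/I^{\mathbb{C}}$, but passing to the radical does not change Krull dimension, so this does not affect the conclusion.
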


By Lemma \ref{lemma-KR-module} below, for a directed graph $G$, its $0$-th Khovanov-Rozansky homology is the graded $\zed[E(G)]$-module $\mathscr{H}_0 (G) \cong \zed[E(G)] / I(G)$, and $\Kdim_{\zed[E]}\mathscr{H}_\ast (G) =\Kdim_{\zed[E]}\mathscr{H}_0 (G)$ for any $E \subset E(G)$. So, in the discussions below, we state results in terms of Krull dimensions of $\mathscr{H}_0 (G)$ instead of the whole Khovanov-Rozansky homology $\mathscr{H}_\ast (G)$.

\subsection{Khovanov-Rozansky homology and directed cycles}\label{subsec-main-results}
 
\begin{theorem}\label{thm-KR-detect}
Let $G$ be a directed graph.
\begin{enumerate}[1.]
	\item The following statements are equivalent:
\begin{enumerate}[(1)]
	\item $G$ is acyclic;
	\item $\mathscr{H}_0 (G)$ is a finitely generated $\zed$-module;
	\item $\mathscr{H}_\ast(G)$ is a finitely generated $\zed$-module;
	\item There is an $n \geq 1$ such that $x^n \in I(G)$ for all $x \in E(G)$, where $I(G)$ is the incidence ideal of $\zed[E(G)]$;
	\item $\Kdim_{\zed[E(G)]} \mathscr{H}_0 (G) =1$;
	\item $\Kdim_{\Q[E(G)]} \mathscr{H}_0^\Q (G) =0$.
\end{enumerate}
  \item $\alpha(G) = \Kdim_{\Q[E(G)]} \mathscr{H}_0^\Q (G) \leq \Kdim_{\zed[E(G)]} \mathscr{H}_0 (G) -1 \leq \beta(G)$. In particular, if $G$ is planar, then $\alpha(G) = \Kdim_{\Q[E(G)]} \mathscr{H}_0^\Q (G) = \Kdim_{\zed[E(G)]} \mathscr{H}_0 (G) -1 = \beta(G)$.
\end{enumerate}
\end{theorem}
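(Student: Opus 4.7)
My plan is to prove part~(2) and to extract part~(1) largely as the vanishing case, with one separate combinatorial induction to pin down the implication (1)$\Rightarrow$(4) over $\zed$.

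First, Lemma~\ref{lemma-KR-module} identifies $\mathscr{H}_0(G) \cong \zed[E(G)]/I(G)$ and $\mathscr{H}_0^\Q(G) \cong \Q[E(G)]/I(G)_\Q$. Combining $\dim P(G) = \alpha(G)-1$ from Theorem~\ref{thm-incidence} with Fact~\ref{fact-kdim} gives the first equality of part~(2): $\alpha(G) = \Kdim_{\Q[E(G)]} \mathscr{H}_0^\Q(G)$. Specialising to $\alpha(G)=0$ immediately yields (1)$\Leftrightarrow$(6). For the middle inequality, set $A := \zed[E(G)]/I(G)$; since $I(G)$ is homogeneous of positive degree, $\zed \hookrightarrow A$, and by the standard fact that a finitely generated $\zed$-algebra dominating $\zed$ satisfies $\Kdim A \geq \Kdim(A \otimes_\zed \Q) + 1$ (extend any prime chain of $A_\Q$ downward by a prime lying over a nonzero prime of $\zed$), we obtain $\Kdim_{\Q[E(G)]} \mathscr{H}_0^\Q(G) \leq \Kdim_{\zed[E(G)]} \mathscr{H}_0(G) - 1$. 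This also provides (5)$\Rightarrow$(6).

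The heart is (1)$\Rightarrow$(4), which I prove by induction on $|V(G)|$. Pick a sink $v$ with incoming edges $y_1, \dots, y_n$; the incidence relations at $v$ are exactly $e_1(y_1,\dots,y_n), \dots, e_n(y_1,\dots,y_n)$, so each $y_i$ satisfies $T^n - e_1 T^{n-1} + \cdots \pm e_n = 0$, giving $y_i^n \in I(G)$. Setting each $y_i$ to $0$ sends $\zed[E(G)]/I(G)$ onto $\zed[E(G\setminus v)]/I(G\setminus v)$, since for any $u \neq v$ the substitution carries $\delta_{u,l}$ of $G$ to $\delta_{u,l}$ of $G\setminus v$. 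By the inductive hypothesis applied to $G\setminus v$, every edge $e$ of $G\setminus v$ is nilpotent modulo $I(G)+(y_1,\dots,y_n)$; writing $e^m = a+b$ with $a\in I(G)$ and $b\in (y_1,\dots,y_n)$, binomial expansion gives $e^{mN} \equiv b^N \pmod{I(G)}$, and pigeonhole forces $b^N \in (y_1^n,\dots,y_n^n) \subset I(G)$ once $N \geq n^2$. This upgrades nilpotence mod $I(G)+(y_1,\dots,y_n)$ to nilpotence mod $I(G)$, completing (4).

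The remaining implications fall out. (4)$\Rightarrow$(2): $A$ is spanned over $\zed$ by finitely many bounded-exponent monomials. (2)$\Rightarrow$(5): a finitely generated $\zed$-module containing $\zed$ has Krull dimension $1$. (2)$\Leftrightarrow$(3): each $\mathscr{H}_i(G)$ is killed by $I(G)$ and is a subquotient of a free $\zed[E(G)]$-module of finite rank, hence a finitely generated $A$-module, so $A$ finitely generated over $\zed$ forces all $\mathscr{H}_i(G)$ finitely generated over $\zed$. For the right inequality of part~(2), pick $E \subset E(G)$ with $|E| = \beta(G)$ making $G\setminus E$ acyclic; then $A/(x_e : e \in E) = \zed[E(G\setminus E)]/I(G\setminus E)$ has Krull dimension $1$ by the acyclic case, and adjoining back $|E|$ generators raises Krull dimension by at most $|E|$, yielding $\Kdim A \leq 1 + \beta(G)$. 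The planar sharpening is Lucchesi--Younger. The main obstacle is the power-lifting step in (1)$\Rightarrow$(4): over $\Q$ the Nullstellensatz would make (4) a one-line consequence of $P(G)=\emptyset$, but over $\zed$ the induction must carefully track powers without dividing by integers.
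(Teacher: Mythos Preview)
Your overall architecture matches the paper's: derive $\alpha(G)=\Kdim_{\Q[E(G)]}\mathscr{H}_0^\Q(G)$ from Theorem~\ref{thm-incidence} and Fact~\ref{fact-kdim}, then handle the two inequalities separately, and read off the equivalences in Part~1 as special cases. Your induction for (1)$\Rightarrow$(4) is genuinely different from the paper's route and is a nice explicit alternative: the paper proves (1)$\Rightarrow$(2) via Corollary~\ref{cor-fg-acyclic}, which rests on the fact that $R_0[x_1,\dots,x_k]$ is module-finite over $R_0[e_1,\dots,e_k]$ (Corollary~\ref{cor-fg-sym} and Lemma~\ref{lemma-fg}), whereas you use the single-variable relation $y_i^n=e_1y_i^{n-1}-e_2y_i^{n-2}+\cdots$ at a sink together with a pigeonhole power-lifting. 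Both give the same conclusion; yours yields (4) directly with an explicit exponent bound, while the paper's is slightly cleaner structurally.

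Two of your justifications are too loose as stated and need the graded structure to be correct. First, the ``standard fact'' $\Kdim A \geq \Kdim(A\otimes_\zed\Q)+1$ for finitely generated $\zed$-algebras containing $\zed$ is false in general (take $A=\Q$); what makes it work here is that $A$ is graded with $A_0=\zed$, so the homogeneous maximal ideal $\mathfrak{p}_\Q$ of $A_\Q$ realises $\Kdim A_\Q$ as its height, and its contraction $\mathfrak{p}_\zed$ in $A$ has $A/\mathfrak{p}_\zed\cong\zed$, allowing the chain to be extended \emph{upward} (not downward) by one step. This is exactly the content of Lemma~\ref{lemma-Krull-base-change}. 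Second, the assertion that ``adjoining back $|E|$ generators raises Krull dimension by at most $|E|$'' is not a general principle for Noetherian rings: for instance $A=\zed_{(p)}[T]$ has $\Kdim A=2$ but $A/(pT-1)\cong\Q$ has dimension $0$. The paper's argument (Lemma~\ref{lemma-Krull-beta}) instead observes that since $A/(x_1,\dots,x_k)$ is module-finite over $\zed$, Lemma~\ref{lemma-fg} gives that $A$ itself is module-finite over its subring $\zed[\bar x_1,\dots,\bar x_k]$, whence $\Kdim A=\Kdim\zed[\bar x_1,\dots,\bar x_k]\leq k+1$ by integrality and Lemma~\ref{lemma-Krull-polynomial}. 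Once you replace those two sentences with the graded/module-finite arguments, your proof is complete.
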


As a byproduct of the proof of Theorem \ref{thm-KR-detect}, we have the following general proposition.

\begin{proposition}\label{prop-Krull-bound-E}
For a directed graph $G$ and $E \subset E(G)$, 
\[
\alpha_E(G) \leq \Kdim_{\Q[E]} \mathscr{H}_0^\Q (G) \leq \Kdim_{\zed[E]} \mathscr{H}_0 (G) -1.
\]
\end{proposition}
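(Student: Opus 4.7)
The plan is to unpack both Krull dimensions as quotients of polynomial rings by contracted ideals, then prove the lower bound geometrically via Theorem \ref{thm-incidence} and the upper bound by a standard $\zed \to \Q$ dimension comparison. Using the identification $\mathscr{H}_0(G) \cong \zed[E(G)]/I(G)$ from Lemma \ref{lemma-KR-module}, the annihilator of $\mathscr{H}_0^\Q(G)$ in the subring $\Q[E] \subset \Q[E(G)]$ is $I^\Q(G) \cap \Q[E]$, and similarly over $\zed$. Hence
\[
\Kdim_{\Q[E]} \mathscr{H}_0^\Q(G) = \Kdim \Q[E]/(I^\Q(G) \cap \Q[E]), \quad \Kdim_{\zed[E]} \mathscr{H}_0(G) = \Kdim \zed[E]/(I(G) \cap \zed[E]).
\]

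For the lower bound, I would pass to affine geometry and use that $\Kdim \Q[E]/(I^\Q(G) \cap \Q[E])$ equals the dimension of the Zariski closure of the image of the affine cone $\hat{P}(G) := \{p \in \mathbb{A}^{E(G)} : f(p)=0 \text{ for all } f \in I^\Q(G)\}$ under the coordinate projection $\pi: \mathbb{A}^{E(G)} \to \mathbb{A}^E$. By Theorem \ref{thm-incidence}, $\hat{P}(G)$ is a union of linear subspaces $\hat{L}_\mathcal{C}$ indexed by maximal collections $\mathcal{C} = \{C_1,\dots,C_{|\mathcal{C}|}\}$ of pairwise edge-disjoint directed cycles, with $\hat{L}_\mathcal{C}$ parametrized by $(t_1,\dots,t_{|\mathcal{C}|}) \in \mathbb{A}^{|\mathcal{C}|}$ via $p(e) = t_i$ if $e \in C_i$ and $p(e) = 0$ if $e$ lies in no $C_i$ (unambiguous because the $C_i$ are edge-disjoint). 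The image $\pi(\hat{L}_\mathcal{C})$ is therefore parametrized by exactly those $t_i$ for which $C_i$ contains an edge of $E$, so has affine dimension equal to the number of such $C_i$. Given any $\alpha_E(G)$ pairwise edge-disjoint directed cycles each containing an edge of $E$, I would greedily extend them to a maximal collection $\mathcal{C}$; the original cycles remain in $\mathcal{C}$, so $\mathcal{C}$ contains at least $\alpha_E(G)$ cycles meeting $E$, and hence $\dim \pi(\hat{L}_\mathcal{C}) \geq \alpha_E(G)$.

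For the upper bound, set $R := \zed[E]/(I(G) \cap \zed[E])$. Since $\Q$ is flat over $\zed$, $\Q \otimes_\zed R \cong \Q[E]/(I^\Q(G) \cap \Q[E])$, so the inequality reduces to $\Kdim(\Q \otimes_\zed R) \leq \Kdim R - 1$. The ideal $I(G)$ is generated by homogeneous polynomials of positive degree, so $I(G) \cap \zed = 0$, $R$ contains $\zed$ as a subring, and $\Q \otimes_\zed R \neq 0$. I would then invoke the standard fact that for any finitely generated $\zed$-algebra $R$ with $\Q \otimes_\zed R \neq 0$, one has $\Kdim R \geq \Kdim(\Q \otimes_\zed R) + 1$: any chain $\mathfrak{q}_0 \subsetneq \cdots \subsetneq \mathfrak{q}_n$ of primes in $\Q \otimes_\zed R$ pulls back to a chain in $R$ of the same length avoiding $\zed \setminus \{0\}$, and one can enlarge it on top by a maximal ideal $\mathfrak{m} \supset \mathfrak{q}_n$ of $R$. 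By the Nullstellensatz for finitely generated $\zed$-algebras, $R/\mathfrak{m}$ is a finite field of some characteristic $p > 0$, so $p \in \mathfrak{m} \setminus \mathfrak{q}_n$ and the containment $\mathfrak{q}_n \subsetneq \mathfrak{m}$ is strict.

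The substantive step is the lower bound: it depends essentially on reading off $\dim \pi(\hat{L}_\mathcal{C})$ combinatorially from the stratification provided by Theorem \ref{thm-incidence}, so the real work is already packaged into that theorem. The upper bound is a generic feature of finitely generated $\zed$-algebras with no graph-theoretic content, and the only delicate ingredient there is the appeal to the Nullstellensatz over $\zed$.
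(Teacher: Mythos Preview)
Your proposal is correct, but both halves take a different route from the paper.

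For the lower bound, the paper does not invoke Theorem~\ref{thm-incidence} at all. Instead it argues purely algebraically: starting from a collection of $\alpha_E(G)$ pairwise edge-disjoint cycles each meeting $E$, it deletes all other edges (Lemma~\ref{lemma-edge-removal}) and then splits shared vertices (Lemma~\ref{lemma-fuse-vertex}) to produce a disjoint union $G''$ of these cycles, obtaining surjections $\mathscr{H}_0^\Q(G)\twoheadrightarrow\mathscr{H}_0^\Q(G')\twoheadrightarrow\mathscr{H}_0^\Q(G'')$ and hence inequalities of Krull dimensions over $\Q[E]$; a direct computation on $G''$ then shows $\Q[E]/\ann_{\Q[E]}\mathscr{H}_0^\Q(G'')$ is a polynomial ring in $\alpha_E(G)$ variables. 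Your geometric argument via the linear stratification of $\hat P(G)$ and projection to $\mathbb{A}^E$ is equally valid (and only needs the easy containment $\overline{\pi(\hat P(G))}\subset V(I^\Q(G)\cap\Q[E])$, not the full equality you state), but it outsources the work to Theorem~\ref{thm-incidence}; the paper's argument is more self-contained and avoids passing through complex geometry. For the upper bound, the paper's Lemma~\ref{lemma-Krull-base-change} exploits the graded structure: it shows $\codim\mathfrak{p}_\zed=\codim\mathfrak{p}_\Q$ for the irrelevant ideals and then uses $\Kdim R_\Q=\codim\mathfrak{p}_\Q$ together with $\Kdim R_\zed\geq\Kdim\zed+\codim\mathfrak{p}_\zed$. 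Your appeal to the Nullstellensatz over $\zed$ (every maximal ideal of a finitely generated $\zed$-algebra has finite residue field) is a cleaner, more general argument that does not use the grading, and it works just as well here.
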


If $E$ is the subset of all edges incident at a vertex $v$ of $G$, then we have stronger results.

\begin{theorem}\label{thm-KR-detect-vertex} 
Let $G$ be a directed graph, and $v$ a vertex in $G$. Denote by $E(v)$ the set of all edges of $G$ incident at $v$.
\begin{enumerate}[1.]
	\item The following statements are equivalent:
\begin{enumerate}[(1)]
	\item There are no directed cycles in $G$ containing $v$;
	\item $\zed[E(v)]/\ann_{\zed[E(v)]}(\mathscr{H}_0 (G))$ is a finitely generated $\zed$-module;
	\item $\zed[E(v)]/\ann_{\zed[E(v)]}(\mathscr{H}_\ast (G))$ is a finitely generated $\zed$-module;
	\item There is an $n \geq 1$ such that $x^n \in I(G)$ for all $x \in E(v)$, where $I(G)$ is the incidence ideal of $\zed[E(G)]$;
	\item $\Kdim_{\zed[E(v)]} \mathscr{H}_0 (G) =1$;
	\item $\Kdim_{\Q[E(v)]} \mathscr{H}_0^\Q (G) =0$.
\end{enumerate}
  \item $\alpha_v(G) \leq \Kdim_{\Q[E(v)]} \mathscr{H}_0^\Q (G) \leq \Kdim_{\zed[E(v)]} \mathscr{H}_0 (G) -1 \leq \beta_v(G)$.
\end{enumerate}
\end{theorem}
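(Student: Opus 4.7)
The plan is to parallel the proof of Theorem \ref{thm-KR-detect}, tracking everything through the subring $\zed[E(v)] \hookrightarrow \zed[E(G)]$. By Lemma \ref{lemma-KR-module}, the annihilator of $\mathscr{H}_0(G)$ over $\zed[E(v)]$ is exactly $I(G) \cap \zed[E(v)]$, and similarly over $\Q$. Theorem \ref{thm-incidence} writes $P(G)=\bigcup_{\mathcal{C}} L_\mathcal{C}$, the union indexed by maximal edge-disjoint directed cycle collections, with each linear piece $L_\mathcal{C}$ parametrized so that every edge in the cycle $C_i\in\mathcal{C}$ takes a common coordinate $t_i$ while every other edge vanishes. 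Coordinate projection to $\mathbb{C}^{E(v)}$ sends the affine cone over $L_\mathcal{C}$ onto a linear subspace of dimension $|\mathcal{C}_v|$, where $\mathcal{C}_v\subset\mathcal{C}$ is the subcollection of cycles through $v$. Since this finite union of linear subspaces is already Zariski-closed, elimination theory identifies it with the affine variety of $I(G)^\Q\cap\Q[E(v)]$, yielding the key equality
\[
\Kdim_{\Q[E(v)]}\mathscr{H}_0^\Q(G)=\max_\mathcal{C}|\mathcal{C}_v|=\alpha_v(G).
\]

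Part 1 then falls out of this formula and standard facts. Statement (1) is the same as $\alpha_v(G)=0$, which by the displayed formula is (6); the equivalence (6)$\Leftrightarrow$(5) is the standard $\zed$-to-$\Q$ Krull-dimension shift; and (5)$\Leftrightarrow$(2)$\Leftrightarrow$(4) follows by combining the $\Q$-version of Hilbert's Nullstellensatz with a vertex-by-vertex inductive argument that lifts $\Q$-nilpotency of the edges in $E(v)$ to $\zed$-nilpotency (the mechanism used in Theorem \ref{thm-KR-detect} Part 1 applies: starting from a source in the complement of any cycle through $v$, the incidence relations $\delta_{w,l}$ force the outgoing edges to be $\zed$-nilpotent, and one iterates). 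The equivalence (2)$\Leftrightarrow$(3) uses that $\mathscr{C}_\ast(G)$ is a finite Koszul complex of finitely generated free $\zed[E(G)]$-modules, so each $\mathscr{H}_i(G)$ is finitely generated over $\mathscr{H}_0(G)$.

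For Part 2, the lower bound and middle inequality are packaged in the key formula and the $\zed$-to-$\Q$ shift. For the upper bound, pick a set $E_0\subset E(v)$ of size $\beta_v(G)$ whose removal destroys every directed cycle of $G$ through $v$, and set $G'=G-E_0$. A vertex-by-vertex comparison of incidence relations shows that inside $\zed[E(G')]=\zed[E(G)]/(E_0)$ the image of $I(G)$ contains $I(G')$, so $\zed[E(G)]/(I(G)+(E_0))$ is a quotient of $\mathscr{H}_0(G')$. Since $G'$ has no directed cycle through $v$, Part 1 applied to $G'$ forces every edge in $E(v)\setminus E_0$ to be nilpotent in $\mathscr{H}_0(G')$ and hence in $\zed[E(G)]/(I(G)+(E_0))$; the edges in $E_0$ are outright zero there. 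Consequently $\zed[E(v)]/((I(G)\cap\zed[E(v)])+(E_0))$ is a finitely generated $\zed$-module of Krull dimension at most $1$, and Krull's height theorem adds back $|E_0|=\beta_v(G)$ equations to give $\Kdim_{\zed[E(v)]}\mathscr{H}_0(G)\leq 1+\beta_v(G)$.

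The main obstacle is the vertex-by-vertex verification that the reduction of $I(G)$ modulo $E_0$ contains $I(G')$. At each endpoint $w$ of an edge in $E_0$ both the index range $k_w$ and the elementary symmetric polynomials forming the $\delta_{w,l}$'s change, and one must check that every generator of $I(G')$ arises as the reduction of some $\delta_{w,l}$; in particular, the top-index relation of $G$ at such a $w$ may collapse to zero while the corresponding top index $k_{w}'$ in $G'$ is strictly smaller. Beyond this combinatorial bookkeeping and the standard commutative-algebra inputs (Fact \ref{fact-kdim}, Krull's height theorem, and the $\zed$-to-$\Q$ dimension shift), everything assembles directly from Theorem \ref{thm-incidence}.
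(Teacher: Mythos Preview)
Your projection/elimination argument for the identity $\Kdim_{\Q[E(v)]}\mathscr{H}_0^\Q(G)=\alpha_v(G)$ is a genuinely different route from the paper's, and it actually proves more: the paper only obtains the inequality $\alpha_v(G)\leq\Kdim_{\Q[E(v)]}\mathscr{H}_0^\Q(G)$ (via Proposition~\ref{prop-Krull-bound-E}), whereas your argument gives equality. The projection step is sound once one notes that any edge of $E(v)$ lying on a cycle of $\mathcal C$ necessarily lies on a cycle of $\mathcal C_v$, and that elimination commutes with the flat base change $\Q\to\C$.

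There is, however, a real gap in your chain of equivalences for Part~1, specifically at the implication $(1)\Rightarrow(4)$ (equivalently $(1)\Rightarrow(2)$). The ``mechanism used in Theorem~\ref{thm-KR-detect} Part~1'' is the source-removal induction of Corollary~\ref{cor-fg-acyclic}, and that argument needs the \emph{entire} graph to be acyclic so that a source vertex exists at every stage. Under hypothesis~(1) the graph $G$ may well contain directed cycles away from $v$, so no global source need exist and the iteration cannot start. Passing through $\Q$ does not help: from $(6)$ and the Nullstellensatz one obtains only $m\cdot x^n\in I(G)$ for some nonzero integer $m$, and there is no general mechanism to clear that integer (e.g.\ in $\zed[x]/(2x)$ the element $x$ is $\Q$-nilpotent but not $\zed$-nilpotent). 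Relatedly, the ``standard $\zed$-to-$\Q$ shift'' you invoke for $(6)\Leftrightarrow(5)$ is only the one-sided inequality of Corollary~\ref{cor-Krull-base-change-graph}; its converse is the open Conjecture~\ref{conj-Q-Z-same}, so it cannot carry you directly from $(6)$ to $(5)$. Since your Part~2 upper bound invokes Part~1 for the graph $G'=G-E_0$, this gap propagates there as well.

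The paper fills this gap with a nontrivial construction (Lemma~\ref{lemma-fg-KR-vertex}): it identifies all vertices admitting a directed path to $v$ into a single vertex $s$, all vertices reachable from $v$ into a single vertex $t$, and the remaining vertices of the component into a single vertex $u$. In the resulting graph the component of $v$ has only the four vertices $v,s,t,u$; after stripping loops via Lemma~\ref{lemma-deloop} it is genuinely acyclic, so Corollary~\ref{cor-fg-acyclic} applies there, and the inclusion $I(G_{u\#w})\subset I(G)$ from Lemma~\ref{lemma-fuse-vertex} transports the $\zed$-nilpotency back to $G$. This vertex-collapsing step is the missing idea in your outline.
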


One can also detect directed cycles containing a particular edge using $\mathscr{H}_0$.

\begin{corollary}\label{cor-KR-detect-edge} 
Let $G$ be a directed graph, and $x$ an edge in $G$. The following statements are equivalent:
\begin{enumerate}[(1)]
	\item There are no directed cycles in $G$ containing $x$;
	\item $\zed[x]/\ann_{\zed[x]}(\mathscr{H}_0 (G))$ is a finitely generated $\zed$-module;
	\item $\zed[x]/\ann_{\zed[x]}(\mathscr{H}_\ast (G))$ is a finitely generated $\zed$-module;
	\item There is an $n \geq 1$ such that $x^n \in I(G)$, where $I(G)$ is the incidence ideal of $\zed[E(G)]$;
	\item $\Kdim_{\zed[x]} \mathscr{H}_0 (G) =1$;
	\item $\Kdim_{\Q[x]} \mathscr{H}_0^\Q (G) =0$.
\end{enumerate}
There is a directed cycle in $G$ containing $x$ if and only if $\Kdim_{\Q[x]} \mathscr{H}_0^\Q (G) = \Kdim_{\zed[x]} \mathscr{H}_0 (G) -1 =1$.
\end{corollary}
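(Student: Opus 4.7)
The plan is to reduce the corollary to the vertex case (the first part of Theorem~\ref{thm-KR-detect-vertex}) via a subdivision of the edge $x$. Let $u, v$ be the endpoints of $x$ (possibly with $u = v$). Define $G'$ by removing $x$ from $G$ and inserting a new vertex $w$ together with edges $x_1: u \to w$ and $x_2: w \to v$. Since $w$ has $x_1$ as its unique in-edge and $x_2$ as its unique out-edge, the directed cycles in $G$ through $x$ correspond bijectively to the directed cycles in $G'$ through $w$; in particular, one collection vanishes precisely when the other does.

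The only incidence relation at $w$ is $\delta_{w,1} = x_2 - x_1$, and the incidence relations at $u$ and $v$ in $G'$ arise from those in $G$ by substituting $x_1$ for $x$ at $u$ and $x_2$ for $x$ at $v$. The surjection $\phi: \zed[E(G')] \twoheadrightarrow \zed[E(G)]$ sending $x_1, x_2 \mapsto x$ and fixing all other edges has $\ker\phi = (x_1 - x_2) \subseteq I(G')$ and sends the generators of $I(G')$ onto those of $I(G)$, so it descends to an isomorphism $\bar\phi: \mathscr{H}_0(G') \xrightarrow{\sim} \mathscr{H}_0(G)$ of graded rings. At the chain level, $\mathscr{C}_\ast(G')$ factors as the tensor product of the Koszul complex on $\{x_1 - x_2\}$ with the Koszul complex on the remaining relations; since $x_1 - x_2$ is a nonzerodivisor in $\zed[E(G')]$, the first factor has homology concentrated in degree zero equal to $\zed[E(G')]/(x_1-x_2) \cong \zed[E(G)]$, and a standard K\"unneth-type spectral-sequence argument promotes $\bar\phi$ to an isomorphism $\mathscr{H}_\ast(G') \cong \mathscr{H}_\ast(G)$.

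Under $\bar\phi$, the subring $\zed[E(w)] = \zed[x_1, x_2]$ surjects onto $\zed[x] \subseteq \zed[E(G)]$ with kernel $(x_1 - x_2)$, which annihilates $\mathscr{H}_\ast(G')$. Thus $\zed[x_1,x_2]/\ann_{\zed[x_1,x_2]}\mathscr{H}_\ast(G') \cong \zed[x]/\ann_{\zed[x]}\mathscr{H}_\ast(G)$, with analogous isomorphisms for $\mathscr{H}_0$ and over $\Q$; in particular the corresponding Krull dimensions match, and $x_1^n \in I(G')$ iff $x^n \in I(G)$. Applying the first part of Theorem~\ref{thm-KR-detect-vertex} to $(G', w)$ and translating each of its six conditions under $\bar\phi$ and the cycle correspondence now yields the equivalence of (1)--(6) in the corollary.

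For the final sentence, observe that homogeneity of $I(G)$ forces $\Kdim_{\Q[x]}\mathscr{H}_0^\Q(G) \in \{0, 1\}$ and $\Kdim_{\zed[x]}\mathscr{H}_0(G) \in \{1, 2\}$. The existence of a directed cycle through $x$ is the negation of condition (1), hence of (5) and (6); combined with these a priori bounds it forces $\Kdim_{\Q[x]}\mathscr{H}_0^\Q(G) = 1$ and $\Kdim_{\zed[x]}\mathscr{H}_0(G) = 2$, and the converse is immediate. The genuinely technical step in this plan is the chain-level identification $\mathscr{C}_\ast(G') \simeq \mathscr{C}_\ast(G)$ needed to transport condition (3); it is a standard acyclicity argument for a linear Koszul factor but requires careful bookkeeping of the internal $\zed$-grading.
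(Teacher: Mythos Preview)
Your proposal is correct and follows essentially the same subdivision strategy as the paper: insert a new degree-$2$ vertex on $x$, observe that the incidence ideal of the subdivided graph is $(x_1-x_2) + I(G)$, identify the relevant annihilator rings, and invoke Theorem~\ref{thm-KR-detect-vertex} at the new vertex.

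One simplification worth noting: you do more work than necessary for condition~(3). The paper avoids any chain-level argument by appealing to Lemma~\ref{lemma-KR-module}, Part~4, which gives $\ann_R \mathscr{H}_\ast(G) = \ann_R \mathscr{H}_0(G)$ for every subring $R \subset \zed[E(G)]$; this makes (2)$\Leftrightarrow$(3) automatic. Your K\"unneth/spectral-sequence route is fine (and is in fact the content of Lemma~\ref{lemma-contraction-strong}/Lemma~\ref{lemma-vertex-2-removal}), but it is not needed here. Your treatment of the final sentence, via the a~priori bounds $\Kdim_{\Q[x]}\mathscr{H}_0^\Q(G)\in\{0,1\}$ and $\Kdim_{\zed[x]}\mathscr{H}_0(G)\in\{1,2\}$, is more explicit than the paper's and is correct.
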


Corollary \ref{cor-KR-detect-edge} follows easily from Theorem \ref{thm-KR-detect-vertex}. We will prove Theorems \ref{thm-KR-detect}, \ref{thm-KR-detect-vertex}, Proposition \ref{prop-Krull-bound-E} and Corollary \ref{cor-KR-detect-edge} in Section \ref{sec-Krull-cycles}.

Using the aforementioned relation of directed paths and cycles, we have the following immediate corollaries.

\begin{corollary}\label{cor-KR-detect-path} 
Let $G$ be a directed graph, and $u$, $v$ two distinct vertices in $G$. Denote by $E(u\# v)$ the set of all edges of $G_{u\rightarrow v}$ incident at $u\# v$.
\begin{enumerate}[1.]
	\item The following statements are equivalent:
\begin{enumerate}[(1)]
	\item There are no directed paths in $G$ from $u$ to $v$;
	\item $\zed[E(u\# v)]/\ann_{\zed[E(u\# v)]}(\mathscr{H}_0 (G_{u\rightarrow v}))$ is a finitely generated $\zed$-module;
	\item $\zed[E(u\# v)]/\ann_{\zed[E(u\# v)]}(\mathscr{H}_\ast (G_{u\rightarrow v}))$ is a finitely generated $\zed$-module;
	\item $\Kdim_{\zed[E(u\# v)]} \mathscr{H}_0 (G_{u\rightarrow v}) =1$;
	\item $\Kdim_{\Q[E(u\# v)]} \mathscr{H}_0^\Q (G_{u\rightarrow v}) =0$.
\end{enumerate}
  \item $\alpha_{u\rightarrow v} (G) \leq \Kdim_{\Q[E(u\# v)]} \mathscr{H}_0^\Q (G_{u\rightarrow v}) \leq \Kdim_{\zed[E(u\# v)]} \mathscr{H}_0 (G_{u\rightarrow v}) -1 \leq \beta_{u\rightarrow v}(G)$.
\end{enumerate}
\end{corollary}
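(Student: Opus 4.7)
The plan is to deduce Corollary \ref{cor-KR-detect-path} as a direct translation of Theorem \ref{thm-KR-detect-vertex}, applied to the auxiliary directed graph $G_{u\rightarrow v}$ at its distinguished vertex $u\# v$. The corollary simply restates that theorem for this particular graph and vertex, after converting the language of directed cycles through a vertex back into the language of directed paths between two vertices.

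The key input, already recorded in the introductory discussion of $G_{u\rightarrow v}$, is the natural inclusion $E(G_{u\rightarrow v}) \hookrightarrow E(G)$, which sets up a bijection between directed paths in $G$ from $u$ to $v$ and directed cycles in $G_{u\rightarrow v}$ containing $u\# v$. This bijection preserves edge-disjointness and, in particular, yields the identifications $\alpha_{u\rightarrow v}(G)=\alpha_{u\# v}(G_{u\rightarrow v})$ and $\beta_{u\rightarrow v}(G)=\beta_{u\# v}(G_{u\rightarrow v})$. Equivalently, there are no directed paths in $G$ from $u$ to $v$ precisely when there are no directed cycles in $G_{u\rightarrow v}$ containing $u\# v$. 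By construction of $G_{u\rightarrow v}$, the set $E(u\# v)$ of edges of $G_{u\rightarrow v}$ incident at $u\# v$ coincides with the set described in the corollary.

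I would then apply Theorem \ref{thm-KR-detect-vertex} directly to the pair $(G_{u\rightarrow v}, u\# v)$, with $E(v)$ replaced by $E(u\# v)$. Part 1 of that theorem immediately yields the equivalence of the five conditions in Part 1 of the corollary; note that item $(4)$ of the theorem, asserting $x^n\in I(G_{u\rightarrow v})$ for every $x\in E(u\# v)$, is simply omitted from the corollary's list, so no further work is needed. Part 2 of the theorem provides the chain
\[
\alpha_{u\# v}(G_{u\rightarrow v}) \leq \Kdim_{\Q[E(u\# v)]} \mathscr{H}_0^\Q (G_{u\rightarrow v}) \leq \Kdim_{\zed[E(u\# v)]} \mathscr{H}_0 (G_{u\rightarrow v}) -1 \leq \beta_{u\# v}(G_{u\rightarrow v}),
\]
which, under the identifications above, is exactly Part 2 of the corollary.

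There is no genuine obstacle: the argument is a one-step reduction, and the only bookkeeping is to verify that the edge-set of $G_{u\rightarrow v}$ incident at $u\# v$ matches $E(u\# v)$ as described, which is immediate from the three-step construction (deleting edges terminating at $u$, deleting edges initiating at $v$, and then identifying $u$ with $v$). All genuine content—in particular, the Krull-dimension estimates controlled by the Khovanov-Rozansky homology—has already been established in Theorem \ref{thm-KR-detect-vertex}.
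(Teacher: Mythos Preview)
Your proposal is correct and matches the paper's own approach exactly: the paper's proof is the single line ``Follows immediately from Theorem \ref{thm-KR-detect-vertex},'' and your write-up simply unpacks that reduction via the path--cycle correspondence and the identities $\alpha_{u\rightarrow v}(G)=\alpha_{u\# v}(G_{u\rightarrow v})$, $\beta_{u\rightarrow v}(G)=\beta_{u\# v}(G_{u\rightarrow v})$ already recorded in the introduction.
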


\begin{proof}
Follows immediately from Theorem \ref{thm-KR-detect-vertex}.
\end{proof}

\begin{corollary}\label{cor-flow}
Let $N$ be a flow network of with source $s$ and sink $t$, and $G_N$ the directed graph associated to $N$. Denote by $(G_N)_{s \rightarrow t}$ the directed graph obtained from $G_N$ by identifying $s$, $t$ and by $s \# t$ the vertex in $(G_N)_{s \rightarrow t}$ from $s$, $t$. Then the maximal flow number $|N|$ of $N$ satisfies $|N|\leq \alpha_{s \rightarrow t}(G_N) \leq \Kdim_{\Q[E(s\# t)]} \mathscr{H}_0^\Q ((G_N)_{s \rightarrow t}) \leq \Kdim_{\zed[E(s\# t)]} \mathscr{H}_0 ((G_N)_{s \rightarrow t}) -1 \leq \beta_{{s \rightarrow t}}(G_N)$, where $E(s\# t)$ is the set of edges in $(G_N)_{s \rightarrow t}$ incident at $s \# t$.
\end{corollary}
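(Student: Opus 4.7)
The plan is to simply stitch together two pieces that are already in place. First, the inequality $|N| \leq \alpha_{s\rightarrow t}(G_N)$ has already been noted in Section \ref{sec-intro} (right after the definition of the flow network $N$ and its associated directed graph $G_N$), so I would just cite that observation. The underlying argument, for completeness, is standard: any flow $f$ of $N$ satisfies $f(u,v) \leq c(u,v) \leq \lceil c(u,v) \rceil$, so $f$ is also a feasible flow in the network with integer capacities $\lceil c(u,v) \rceil$; by the Integrality Theorem and the Edge Version of Directed Menger's Theorem applied to this integer network, the maximum flow coincides with $\alpha_{s \rightarrow t}(G_N)$, giving $|f| \leq \alpha_{s \rightarrow t}(G_N)$, hence $|N| \leq \alpha_{s \rightarrow t}(G_N)$.

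Next I would apply Corollary \ref{cor-KR-detect-path} to the directed graph $G = G_N$ with distinguished vertices $u = s$ and $v = t$. This yields directly the middle chain of inequalities
\[
\alpha_{s\rightarrow t}(G_N) \;\leq\; \Kdim_{\Q[E(s\# t)]} \mathscr{H}_0^\Q ((G_N)_{s \rightarrow t}) \;\leq\; \Kdim_{\zed[E(s\# t)]} \mathscr{H}_0 ((G_N)_{s \rightarrow t}) - 1 \;\leq\; \beta_{s\rightarrow t}(G_N),
\]
where $E(s\# t)$ is the set of edges of $(G_N)_{s\rightarrow t}$ incident at the merged vertex $s\# t$. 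Concatenating with the initial inequality $|N| \leq \alpha_{s\rightarrow t}(G_N)$ gives the desired chain.

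Since this is purely a concatenation of already-proved results, there is no real technical obstacle. The only step that requires care is verifying the first inequality $|N| \leq \alpha_{s\rightarrow t}(G_N)$, and this is handled by the standard Max-Flow/Menger argument above; the rest is a direct invocation of Corollary \ref{cor-KR-detect-path}.
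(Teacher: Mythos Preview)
Your proposal is correct and follows essentially the same approach as the paper: the paper's proof simply says the result ``follows immediately from Part 2 of Corollary \ref{cor-KR-detect-path}'', implicitly using the earlier observation in Section \ref{sec-intro} that $|N|\leq \alpha_{s\rightarrow t}(G_N)$. Your extra justification of that first inequality via the Integrality Theorem and Menger's Theorem is more than the paper provides, but it is consistent with what the paper leaves to the reader.
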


\begin{proof}
Follows immediately from Part 2 of Corollary \ref{cor-KR-detect-path}.
\end{proof}

\subsection{Undirected cycles} The Khovanov-Rozansky homology also detects undirected cycles in a directed graph. Here, an undirected cycle in a directed graph $G$ is a sequence 
\[
v_0,x_0,v_1,x_1,\dots,x_{n-1},v_n,x_n,v_{n+1}=v_0
\] 
satisfying
\begin{enumerate}
	\item $v_0,v_1,\dots,v_n$ are pairwise distinct vertices of $G$,
	\item each $x_i$ is an edge of $G$ either with initial vertex $v_i$ and terminal vertex $v_{i+1}$ or with initial vertex $v_{i+1}$ and terminal vertex $v_{i}$
\end{enumerate}
Again, two such sequences represent the same undirected cycle if one is a circular permutation of the other. That is, the undirected cycle given by $v_0,x_0,v_1,x_1,\dots,x_{n-1},v_n,x_n,v_0$ is the same as the one given by $v_1,x_1,\dots,x_{n-1},v_n,x_n,v_0,x_0,v_1$.

We formulate our results on undirected cycles as Theorems \ref{thm-tree} and \ref{thm-tree-v}.

\begin{theorem}\label{thm-tree}
Let $G$ be a directed graph. 
\begin{enumerate}[1.]
	\item The underlying undirected graph of $G$ is a disjoint union of trees if and only if $\mathscr{H}_\ast(G) \cong \mathscr{H}_0(G) \cong \zed$ as graded $\zed[E(G)]$-modules, where $\zed$ is the graded $\zed[E(G)]$-module $\zed = \zed[E(G)]/(E(G))$, and $(E(G))$ is the homogeneous ideal of $\zed[E(G)]$ generated by $E(G)$.
	\item $G$ contains an undirected cycle if and only if $\mathscr{H}_{0,1} (G) \neq 0$, where $\mathscr{H}_{0,1} (G)$ is the homogeneous component of $\mathscr{H}_{0} (G)$ of module degree $1$.
	\item Define 
	\begin{itemize}
		\item $\alpha_{undirected}(G)$ to be the maximal number of pairwise edge-disjoint undirected cycles in $G$,
		\item $\beta_{undirected}(G)$ to be the minimal number of edges in $G$ whose removal from $G$ destroys all undirected cycles in $G$.
	\end{itemize}
Then $\alpha_{undirected}(G) \leq \rank \mathscr{H}_{0,1} (G) \leq \beta_{undirected}(G)$, where $\rank \mathscr{H}_{0,1} (G)$ is the rank of $\mathscr{H}_{0,1} (G)$ as a $\zed$-module.
\end{enumerate}
\end{theorem}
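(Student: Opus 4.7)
The core of the proof is an explicit description of the module-degree-$1$ component of $\mathscr{H}_0(G) \cong \zed[E(G)]/I(G)$ (Lemma \ref{lemma-KR-module}). Since each generator $\delta_{v,l}$ of $I(G)$ is homogeneous of module degree $l$, only the linear relations $\{\delta_{v,1}\}_{v \in V(G)}$ contribute in module degree $1$. Writing out $\delta_{v,1} = \sum_{x:\,i(x)=v} x - \sum_{y:\,t(y)=v} y$ (loops at $v$ cancel), the map $\zed^{V(G)} \to \zed^{E(G)}$ sending $v \mapsto \delta_{v,1}$ is, up to an overall sign, the transpose of the standard cellular boundary operator $\partial\colon \zed^{E(G)} \to \zed^{V(G)}$ of the underlying undirected $1$-CW complex $|G|$. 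Hence
\[
\mathscr{H}_{0,1}(G) \;\cong\; \coker(\partial^{T}) \;=\; H^{1}(|G|;\zed) \;\cong\; \zed^{|E(G)|-|V(G)|+c(G)},
\]
where $c(G)$ is the number of connected components of $|G|$ and the last isomorphism uses that cellular $H^{1}$ of a $1$-complex is always free.

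With this identification in hand, Parts 2 and 3 are short. Part 2 is immediate because the circuit rank $|E|-|V|+c$ is positive iff $|G|$ contains an undirected cycle. For Part 3, the classical graph-theoretic fact that the complement of a spanning forest of $|G|$ is a minimum feedback edge set gives $\beta_{\mathrm{undirected}}(G) = |E|-|V|+c = \rank \mathscr{H}_{0,1}(G)$, and the bound $\alpha_{\mathrm{undirected}}(G) \leq \beta_{\mathrm{undirected}}(G)$ follows by selecting one edge from each cycle in an edge-disjoint packing.

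For Part 1, the direction ``$\Leftarrow$'' is immediate from Part 2: if $\mathscr{H}_0(G) \cong \zed$ then $\mathscr{H}_{0,1}(G) = 0$, so $|G|$ has circuit rank $0$ and is a forest. Conversely, assuming $|G|$ is a disjoint union of trees, a leaf-peeling induction shows $I(G)=(E(G))$: any leaf $v$ with unique incident edge $x$ contributes $\delta_{v,1} = \pm x$, placing $x$ in $I(G)$; erasing the pair $(v,x)$ yields a smaller forest, and one iterates over all edges. Hence $\mathscr{H}_0(G) \cong \zed[E(G)]/(E(G)) = \zed$. The vanishing $\mathscr{H}_{i}(G)=0$ for $i>0$ reduces, via the tensor factorization $\mathscr{C}_\ast(G_1 \sqcup G_2) \cong \mathscr{C}_\ast(G_1) \otimes_{\zed} \mathscr{C}_\ast(G_2)$ and the K\"{u}nneth theorem, to the case of a single tree, where one exhibits an explicit deformation retraction of $\mathscr{C}_\ast(G)$ onto its degree-$0$ component $\zed$, built edge-by-edge through the same leaf-peeling order.

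I would anticipate that the main obstacle is the positive-degree vanishing in Part 1: the set $\Delta_G$ is highly redundant (for instance, $\sum_{v} \delta_{v,1} = 0$ always, and for a tree many of the higher $\delta_{v,l}$ lie in the ideal generated by the linear ones), so the Koszul chain complex is not manifestly acyclic in positive homological degrees. Handling this requires the organization of generators and the structural properties of $\mathscr{C}_\ast$ developed in Section \ref{sec-KR} (including the order-independence noted in the footnote to Definition \ref{def-KR}), which together should allow one to contract away the redundant generators at each leaf reduction step without altering the quasi-isomorphism type.
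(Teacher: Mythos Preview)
Your proposal is correct, and for Parts~2 and~3 it takes a genuinely different route from the paper. The paper proves (b) by fixing an undirected cycle $C$, using the edge-removal surjection $\mathscr{H}_0(G)\to\mathscr{H}_0(C)$ (Lemma~\ref{lemma-edge-removal}), and then contracting $C$ via Lemma~\ref{lemma-vertex-2-removal} down to either $\zed[x]$ or $\zed[x]/(x^2)$; for Part~3 it again works by graph surgery, passing to a subgraph $G'$ carrying the $\alpha$ cycles, splitting vertices (Lemma~\ref{lemma-fuse-vertex}) to a disjoint union $G''$ of cycles, and computing directly. Your cohomological identification $\mathscr{H}_{0,1}(G)\cong H^1(|G|;\zed)\cong\zed^{|E|-|V|+c}$ bypasses all of this and in fact yields the sharper statement $\rank\mathscr{H}_{0,1}(G)=\beta_{\mathrm{undirected}}(G)$, not just the inequality. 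The paper is aware of this connection---it remarks on the vertex-edge incidence matrix immediately after stating Theorem~\ref{thm-tree} and notes that related results were previously known---but deliberately proves the theorem using the same edge-removal/vertex-fusing lemmas that drive the rest of the paper, so that the undirected case parallels the directed one.

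For Part~1 your leaf-peeling argument is essentially the paper's. The contraction you anticipate is exactly Lemma~\ref{lemma-end-removal} (a consequence of Lemma~\ref{lemma-contraction-strong}): removing a degree-$1$ vertex together with its edge gives a homotopy equivalence $\mathscr{C}_\ast(G)\simeq\mathscr{C}_\ast(\hat{G})$ of chain complexes, so a single induction handles both $\mathscr{H}_0$ and the higher vanishing simultaneously. Your separation into ``first show $I(G)=(E(G))$, then build a deformation retraction'' is unnecessary once you invoke that lemma, and the K\"unneth reduction to a single tree is also not needed since leaf-peeling works for any forest.
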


\begin{theorem}\label{thm-tree-v}
Let $G$ be a directed graph and $v$ a vertex of $G$. Denote by $E(v)$ the set of edges of $G$ incident at $v$. Define $A(v) = \zed[E(v)]/(\ann_{\zed[E(v)]} \mathscr{H}_0 (G))$, which is a graded $\zed[E(v)]$-module.
\begin{enumerate}[1.]
		\item There are no undirected cycles in $G$ containing $v$ if and only if $A(v) \cong \zed$ as graded $\zed[E(v)]$-modules, where $\zed$ is the graded $\zed[E(v)]$-module $\zed = \zed[E(v)]/(E(v))$, and $(E(v))$ is the homogeneous ideal of $\zed[E(v)]$ generated by $E(v)$.
	\item There is an undirected cycle in $G$ containing $v$ if and only if $A_1(v) \neq 0$, where $A_1(v)$ is the homogeneous component of $A(v)$ of module degree $1$.
	\item Define 
	\begin{itemize}
		\item $\alpha_{undirected}(G,v)$ to be the maximal number of pairwise edge-disjoint undirected cycles in $G$, each of which contains $v$,
		\item $\beta_{undirected}(G,v)$ to be the minimal number of edges in $G$ incident at $v$ whose removal from $G$ destroys all undirected cycles in $G$ containing $v$.
	\end{itemize}
Then $\alpha_{undirected}(G,v) \leq \rank A_1(v) \leq \beta_{undirected}(G,v)$.
\end{enumerate}
\end{theorem}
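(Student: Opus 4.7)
The strategy is to mirror, at the vertex $v$, the proof of Theorem~\ref{thm-tree}. Since $\mathscr{H}_0(G)=\zed[E(G)]/I(G)$ is nonzero (as $I(G)$ is a proper homogeneous ideal), a direct check gives $\ann_{\zed[E(v)]}(\mathscr{H}_0(G))=I(G)\cap\zed[E(v)]$, so $A(v)=\zed[E(v)]/(I(G)\cap\zed[E(v)])$ is the graded subring of $\mathscr{H}_0(G)$ generated by the edges at $v$. Writing $(-)_1$ for the degree-$1$ component, one has $A_1(v)=\zed[E(v)]_1/(I(G)_1\cap\zed[E(v)]_1)$, with $I(G)_1$ the $\zed$-span of the linear incidence relations $\{\delta_{v',1}\}_{v'\in V(G)}$.

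The key ingredient, which I would establish first, is the following: for $x\in E(G)$, the element $x$ lies in $I(G)_1$ if and only if $x$ is contained in no undirected cycle of $G$ (equivalently, $x$ is a bridge). The ``if'' direction is telescoping: if $x$ is a bridge, removing $x$ splits its component into two pieces, and summing $\delta_{v',1}$ over the vertices of one piece collapses to $\pm x$. The ``only if'' direction is chain--cochain duality: $I(G)_1$ is the image of the coboundary $d\colon \zed[V(G)]\to\zed[E(G)]_1$, $v'\mapsto\delta_{v',1}$, while every undirected cycle produces a $1$-cycle vector $C\in\ker(\partial\colon\zed[E(G)]_1\to\zed[V(G)])$; the identity $\langle d(a),C\rangle=\langle a,\partial(C)\rangle=0$ then forces every element of $I(G)_1$ to have zero coefficient along every edge of every undirected cycle.

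From this, Parts 1 and 2 drop out at once. If no undirected cycle passes through $v$, each $x\in E(v)$ is a non-loop bridge (loops at $v$ are themselves undirected cycles through $v$), hence lies in $I(G)_1$; thus $E(v)\subseteq I(G)\cap\zed[E(v)]$, so $A(v)$ is a quotient of $\zed[E(v)]/(E(v))=\zed$, which equals $\zed$ since $1\neq 0$ in $A(v)$. Conversely, $A_1(v)\neq 0$ forces some $x\in E(v)$ to lie outside $I(G)_1$, hence inside some undirected cycle, and that cycle passes through $v$. This yields Part 2 and the backward half of Part 1.

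Part 3 is a sandwich of rank bounds. For the lower bound, given $\alpha_{undirected}(G,v)$ pairwise edge-disjoint undirected cycles $C_1,\ldots,C_\alpha$ through $v$, pick edges $x_i\in E(v)\cap C_i$ (distinct by edge-disjointness). For any rational relation $\sum n_ix_i\in I(G)_1\otimes\Q$, pairing with the cycle-vector of $C_j$ yields $\pm n_j=0$: the left side vanishes by the duality above, while the right side collapses since $x_i\in C_j$ only for $i=j$. Thus $\{x_i\}$ is linearly independent in $A_1(v)\otimes\Q$, so $\rank A_1(v)\geq\alpha_{undirected}(G,v)$. For the upper bound, fix $E'\subseteq E(v)$ of size $\beta_{undirected}(G,v)$ destroying all undirected cycles through $v$. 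Each $y\in E(v)\setminus E'$ is then a non-loop edge of $G\setminus E'$ lying in no undirected cycle there, hence $y\in I(G\setminus E')_1$ by the key lemma applied to $G\setminus E'$; since $\delta_{v',1}(G\setminus E')$ differs from $\delta_{v',1}(G)$ only by a $\zed$-combination of edges in $E'$, this lifts to $y\equiv(\text{$\zed$-combination of }E')\pmod{I(G)_1\cap\zed[E(v)]_1}$. So $A_1(v)$ is generated by the images of $E'$, giving $\rank A_1(v)\leq\beta_{undirected}(G,v)$. The main obstacle is the duality direction of the key lemma; once established, the rest of the argument is routine.
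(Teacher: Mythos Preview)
Your argument is correct and genuinely different from the paper's. The paper proves (a) by constructing an auxiliary graph $\hat{G}$ (collapsing each connected component of $G_0\setminus E(v)$ to a point), applying Lemmas~\ref{lemma-deloop} and~\ref{lemma-separate} to compute $\ann_{\zed[E(v)]}\mathscr{H}_0(\hat{G})$, and then pushing this forward via the surjection of Lemma~\ref{lemma-fuse-vertex}; for (b) and the lower bound it builds $G'$ from the chosen cycles, splits vertices to get a disjoint union $G''$ of cycles, computes $\mathscr{H}_0(G'')$ explicitly, and again uses surjections. You instead work entirely in degree~$1$, identifying $I(G)_1$ with the image of the simplicial coboundary and exploiting its orthogonality to the cycle space. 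Your route is more elementary and closer in spirit to the incidence-matrix discussion preceding Corollary~\ref{cor-u-cycle-detect}; the paper's route stays within the Khovanov--Rozansky formalism and reuses the graph-manipulation lemmas it has already built.

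One phrasing slip to fix: the duality $\langle d(a),C\rangle=\pm\langle a,\partial C\rangle=0$ does \emph{not} force every element of $I(G)_1$ to have zero coefficient along every edge of every cycle (e.g.\ $\delta_{v',1}$ itself typically has nonzero coefficients on cycle edges). What it gives is that every element of $I(G)_1$ pairs to zero with every cycle vector; since the single edge $x$ pairs to $\pm1$ with any cycle vector through $x$, this suffices to conclude $x\notin I(G)_1$. Your lower-bound argument in Part~3 uses the pairing correctly, so this is only a wording issue in the key-lemma paragraph.
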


The proofs of Theorems \ref{thm-tree} and \ref{thm-tree-v} have the same flavor as that of Theorems \ref{thm-KR-detect} and \ref{thm-KR-detect-vertex}, but are more elementary. We include these in Section \ref{sec-cycles-u}.

It turns out that $\mathscr{H}_{0,1} (G)$ is closely related to the vertex-edge incidence matrix. For a directed graph $G$, we order its vertices and edges. That is, we write $V(G)=\{v_1,\dots,v_m\}$ and $E(G)=\{x_1,\dots,x_n\}$. Denote by $\zed \cdot E(G)$ the free $\zed$-module generated by $E(G)$. Identify the $\zed$-modules $\zed \cdot E(G)$ and $\zed^n$ by the isomorphism which identifies each $x_j$ with the row vector with a single $1$ at the $j$-th position and $0$'s at all other positions. The vertex-edge incidence matrix $D=(d_{i,j})_{m\times n}$ of $G$ is defined by 
\[
d_{i,j}=
\begin{cases}
1  & \text{if } v_i \text{ is the terminal vertex, but not the initial vertex of } x_j,\\
-1 & \text{if } v_i \text{ is the initial vertex, but not the terminal vertex of } x_j,\\
0  & \text{otherwise.} 
\end{cases}
\]
Clearly, the above identification maps the degree $1$ incidence relation $\delta_{v,1}$ at a vertex $v$ to the row in $D$ corresponding to $v$. Note that $\mathscr{H}_{0,1} (G)$ is isomorphic to $\zed \cdot E(G)$ modulo all degree $1$ incidence relations in $G$, which is isomorphic to $\zed^n$ modulo the submodule generated by rows of the vertex-edge incidence matrix. In view of this, Theorem \ref{thm-tree} implies that the following statements are equivalent:
\begin{enumerate}
	\item There is an undirected cycle in $G$.
	\item The submodule of $\zed^n$ generated by rows of $D$ is not $\zed^n$.
	\item The submodule of $\zed^n$ generated by rows of $D$ has rank less than $n$.
\end{enumerate}
This and some other related results were known prior to our work. 

The following corollary is a re-formulation of Theorems \ref{thm-tree} and \ref{thm-tree-v} using linear algebra over $\zed_2$ without referring to the Khovanov-Rozansky homology. Versions of this corollary were known prior to our work.

\begin{corollary}\label{cor-u-cycle-detect}
Let $G$ be an undirected graph with finite vertex set $V(G)$ and finite edge set $E(G)$. For any $v \in V(G)$, denote by $E(v)$ the set of edges in $G$ incident at $v$ and by $L(v)$ the set of loops in $G$ at $v$ (edges connecting $v$ to $v$). Define $\zed_2 \cdot E(G) := \bigoplus_{x \in E(G)} \zed_2 \cdot x$, which is a linear space over the field $\zed_2$. Define $S$ to be the subspace of $\zed_2 \cdot E(G)$ spanned by $\{\sum_{x\in E(v)\setminus L(v)} x ~|~ v \in V(G)\}$.
\begin{enumerate}[1.]
	\item $\alpha_{undirected}(G) \leq |E(G)|-\dim_{\zed_2} S \leq \beta_{undirected}(G)$.\footnote{Strictly speaking, $\alpha_{undirected}(G)$, $\beta_{undirected}(G)$ and $\alpha_{undirected}(G,v)$, $\beta_{undirected}(G,v)$ are defined above for directed graphs. But these are clearly independent of the directions of edges in $G$ and are therefore invariants for undirected graphs.} In particular, $|E(G)|>\dim_{\zed_2} S$ if and only if there is an undirected cycle in $G$.
	\item Define $\zed_2 \cdot E(v) := \bigoplus_{x \in E(v)} \zed_2 \cdot x$, which is a subspace of $\zed_2 \cdot E(G)$. Then 
	\[
	\alpha_{undirected}(G,v) \leq |E(v)|-\dim_{\zed_2} (\zed_2 \cdot E(v))\cap S = \dim_{\zed_2} (\zed_2 \cdot E(v)+ S) - \dim_{\zed_2} S \leq \beta_{undirected}(G,v).
	\]
	In particular, $|E(v)|> \dim_{\zed_2}  (\zed_2 \cdot E(v))\cap S$ if and only if there is an undirected cycle in $G$ containing $v$.
	\item For an edge $x$ of $G$, there exists an undirected cycle in $G$ containing $x$ if and only if $x \notin S$.  
\end{enumerate}
\end{corollary}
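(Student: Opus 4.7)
\emph{Plan.} I would prove all three parts by elementary linear algebra over $\zed_2$, identifying $S$ as the row space of the mod-$2$ vertex-edge incidence matrix $D=(D_{v,x})$ of $G$, where $D_{v,x}=1$ iff $x\in E(v)\setminus L(v)$. Viewed as a linear map $D:\zed_2\cdot E(G)\to\zed_2\cdot V(G)$, the basis vector of a non-loop edge with endpoints $\{u,v\}$ is sent to $e_u+e_v$ and that of a loop to $0$, so $\ker D$ is precisely the mod-$2$ cycle space $Z(G)$ of the underlying undirected graph, spanned by indicator vectors of simple undirected cycles. With respect to the standard bilinear form on $\zed_2\cdot E(G)$, row space and kernel of $D$ are orthogonal complements, giving $\dim_{\zed_2}S+\dim_{\zed_2}Z(G)=|E(G)|$; the standard rank computation for incidence matrices over $\zed_2$ yields $\dim_{\zed_2}S=|V(G)|-c(G)$, where $c(G)$ is the number of connected components of $G$.

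For Part 1, these facts immediately give $|E(G)|-\dim_{\zed_2}S=|E(G)|-|V(G)|+c(G)$, which is the cyclomatic number of the underlying undirected graph and hence equals $\beta_{undirected}(G)$. The inequality $\alpha_{undirected}(G)\leq\beta_{undirected}(G)$ is then obvious: any set of edges whose removal destroys every undirected cycle must meet each cycle of an edge-disjoint family in a distinct edge. Part 3 follows at once from the duality $S=Z(G)^\perp$: an edge $x$ lies in $S$ iff $\langle x,z\rangle=0$ for every $z\in Z(G)$, iff $x$ is not in the support of any element of $Z(G)$. Since every element of $Z(G)$ decomposes as an edge-disjoint union of simple cycles (the standard cycle decomposition of a $\zed_2$-Eulerian subgraph), this is equivalent to saying that $x$ is contained in no simple undirected cycle of $G$.

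The substance lies in Part 2. The middle equality is the standard identity $\dim(A\cap B)+\dim(A+B)=\dim A+\dim B$, so the task is really to sandwich $|E(v)|-\dim_{\zed_2}(\zed_2\cdot E(v)\cap S)$ between $\alpha_{undirected}(G,v)$ and $\beta_{undirected}(G,v)$. I would compute $\dim_{\zed_2}(\zed_2\cdot E(v)\cap S)$ directly: an element $\sum_{u\in U}r_u\in S$, where $r_u$ is the $u$-th row of $D$, is supported on $E(v)$ iff for every non-loop edge $y\notin E(v)$ with endpoints $\{a,b\}$ the set $U\cap\{a,b\}$ has even cardinality; equivalently, $U\setminus\{v\}$ must be a union of connected components of $G-v$. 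Factoring out the kernel of $\zed_2\cdot V(G)\to S$ (spanned by indicator vectors of connected components of $G$) then yields $\dim_{\zed_2}(\zed_2\cdot E(v)\cap S)=k_v$, where $k_v$ is the number of connected components of $G-v$ lying inside the connected component of $G$ that contains $v$. Combinatorially, $\beta_{undirected}(G,v)=|E(v)|-k_v$: one may keep at most one non-loop edge from $v$ to each of these $k_v$ components and must remove every loop at $v$, and this bound is realised. Together with the trivial bound $\alpha_{undirected}(G,v)\leq\beta_{undirected}(G,v)$, this closes the chain. I expect the principal obstacle to be precisely this identification $\dim_{\zed_2}(\zed_2\cdot E(v)\cap S)=k_v$, which requires careful bookkeeping of the row combinations of $D$ that survive restriction to $E(v)$; the remaining pieces are either standard graph theory or immediate consequences of orthogonality.
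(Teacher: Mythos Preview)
Your argument is correct and takes a genuinely different route from the paper. The paper deduces Parts~1 and~2 from its Theorems on the Khovanov--Rozansky homology (Theorems~\ref{thm-tree} and~\ref{thm-tree-v}): it identifies $\zed_2\otimes_\zed\mathscr{H}_{0,1}(G)\cong\zed_2\cdot E(G)/S$ and $\zed_2\otimes_\zed A_1(v)\cong\zed_2\cdot E(v)/((\zed_2\cdot E(v))\cap S)$, then imports the bounds on $\rank\mathscr{H}_{0,1}(G)$ and $\rank A_1(v)$ already established there, redoing the $\beta$ side over $\zed_2$. For Part~3 the paper gives an explicit construction: if $x$ lies on a cycle $C$ it quotients by the edges off $C$, and if $x$ lies on no cycle it exhibits $x$ as the sum of the incidence rows over one side of the cut defined by $x$. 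Your proof bypasses the homological machinery entirely, working directly with the mod-$2$ incidence matrix: the identification of $|E(G)|-\dim_{\zed_2}S$ with the cyclomatic number and your computation $\dim_{\zed_2}(\zed_2\cdot E(v)\cap S)=k_v$ together with $\beta_{undirected}(G,v)=|E(v)|-k_v$ in fact establish \emph{equality} with $\beta_{undirected}$ in both Parts~1 and~2, which is sharper than the inequalities stated. Your Part~3 via the duality $S=Z(G)^\perp$ is cleaner than the paper's two ad hoc constructions. What the paper's route buys is coherence with the surrounding narrative---the corollary is presented as a $\zed_2$ shadow of the homological theorems---whereas your route is self-contained, classical, and sharper, at the cost of not illustrating the connection to $\mathscr{H}_\ast$.
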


Parts 1 and 2 of Corollary \ref{cor-u-cycle-detect} follow from Theorems \ref{thm-tree} and \ref{thm-tree-v} via an easy computation of $\zed_2\otimes_{\zed} \mathscr{H}_{0,1} (G)$ and $\zed_2\otimes_{\zed} A_1(v)$. Part 3 follows from a simple direct argument. We include the proof of Corollary \ref{cor-u-cycle-detect} in Section \ref{sec-cycles-u}. 

Part 3 of Corollary \ref{cor-u-cycle-detect} explains how to use the vertex-edge incidence matrix $D$ over $\zed_2$ to detect the existence of undirected cycles in $G$ containing a particular vertex or edge. Let $D'$ be the reduced row echelon matrix over $\zed_2$ obtained by row operations from $D$. Then the non-zero rows of $D'$ form a basis for $S$. By Part 3 of Corollary \ref{cor-u-cycle-detect}, an edge $x_j$ of $G$ is not contained in any undirected cycles in $G$ if and only if $x_j$, viewed as a row vector in $\zed_2^n$, is equal to a row of $D'$. Therefore, a vertex $v$ of $G$ is not contained in any undirected cycles in $G$ if and only if all edges incident at $v$ are equal to rows of $D'$.

\subsection{A homology for undirected graphs} We should mention that one can modify the definition of the Khovanov-Rozansky homology to define a version of it for undirected graphs.

\begin{definition}\label{def-KR-undirected}
Let $G$ be an undirected graph with finite vertex set $V(G)$ and finite edge set $E(G)$. For any vertex $v$ of $G$, denote by $E(v)$ the set of edges in $G$ incident at $v$ counting multiplicities.\footnote{That is, each loop edge at $v$ appears twice in $E(v)$, each non-loop edges incident at $v$ appears once in $E(v)$. This ensures that $|E(v)|=\deg v$.} Define $U_\ast(G)$ to be the graded Koszul chain complex $C_\ast^{\zed[(G)]}(\Omega_G)$ over $\zed[E(G)]$ defined by the sequence $\Omega_G = \{e_l(E(v))~|~ v \in V(G),~1\leq l \leq \deg v\}$, where $e_l(E(v))$ is the elementary symmetric polynomial on $E(v)$. That is, if $E(v)=\{x_1,\dots,x_m\},$\footnote{It is possible to have $x_i=x_j$ since we allow loops and count multiplicities in $E(v)$ in the undirected situation.} then $e_l(E(v))$ is given by Equation \eqref{eq-def-e}.

Define $\mathscr{U}_\ast(G)$ to the homology of $U_\ast(G)$.

Note that, as a graded Koszul chain complex, $U_\ast (G)$ has a $\zed \oplus \zed$-grading. One is the homological grading, and the other is the $\zed$-grading of the underlying $\zed[E(G)]$-module. Clearly, $\mathscr{U}_\ast (G)$ inherits this $\zed \oplus \zed$-grading.
\end{definition}

The following proposition describes some basic properties of $\mathscr{U}_\ast (G)$.

\begin{proposition}\label{prop-KR-undirected}
Let $G$ be an undirected graph. 
\begin{enumerate}[1.]
  \item $\mathscr{U}_\ast(G)$ is a finitely generated $\zed$-module.
	\item $G$ is a disjoint union of trees if and only if $\mathscr{U}_\ast(G) \cong \mathscr{U}_0(G) \cong \zed$ as graded $\zed[E(G)]$-modules, where $\zed$ is the graded $\zed[E(G)]$-module $\zed = \zed[E(G)]/(E(G))$, and $(E(G))$ is the homogeneous ideal of $\zed[E(G)]$ generated by $E(G)$.
	\item $G$ contains an undirected cycle if and only if $\mathscr{U}_{0,1} (G) \neq 0$, where $\mathscr{U}_{0,1} (G)$ is the homogeneous component of $\mathscr{U}_{0} (G)$ of module degree $1$.
	\item $\alpha_{undirected}(G) \leq \dim_{\zed_2}\zed_2\otimes_{\zed} \mathscr{U}_{0,1} (G) \leq \beta_{undirected}(G)$.
\end{enumerate}
\end{proposition}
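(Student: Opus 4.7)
My plan is to handle the four statements in the order (1), (3), (4), (2), since the converse of (2) depends on (3), and the bounds in (4) reduce to (3) together with Corollary~\ref{cor-u-cycle-detect}. For part (1), observe that for every edge $x\in E(G)$ and every vertex $v$ incident to $x$ with $m=\deg v$, substituting $t=x$ into Vieta's identity $\prod_{z\in E(v)}(t-z)=\sum_{l=0}^{m}(-1)^l e_l(E(v))\,t^{m-l}$ yields $x^m\in(\Omega_G)$. Thus every edge variable is nilpotent in $\mathscr{U}_0(G)=\zed[E(G)]/(\Omega_G)$, so $\mathscr{U}_0(G)$ is spanned as a $\zed$-module by finitely many monomials with bounded exponents. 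Every higher Koszul homology $\mathscr{U}_i(G)$ is a subquotient of a finitely generated $\zed[E(G)]$-module annihilated by $(\Omega_G)$, hence is finitely generated over $\zed[E(G)]/(\Omega_G)$ and over $\zed$.

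For parts (3) and (4), identify $\mathscr{U}_{0,1}(G)$ with the $\zed$-cokernel of the linear map $\zed\cdot V(G)\to\zed\cdot E(G)$, $v\mapsto e_1(E(v))=\sum_{x\in E(v)\setminus L(v)}x+2\sum_{x\in L(v)}x$. Tensoring with $\zed_2$ kills the loop coefficients and identifies $\zed_2\otimes\mathscr{U}_{0,1}(G)$ with the cokernel $\zed_2\cdot E(G)/S$ of Corollary~\ref{cor-u-cycle-detect}, so $\dim_{\zed_2}\zed_2\otimes\mathscr{U}_{0,1}(G)=|E(G)|-\dim_{\zed_2}S$; part (4) is then a direct consequence of Corollary~\ref{cor-u-cycle-detect}(1). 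Part (3) follows because $\mathscr{U}_{0,1}(G)=0$ for a forest (a standard Smith-normal-form computation on its vertex-edge incidence matrix), while any graph with an undirected cycle has $|E(G)|>\dim_{\zed_2}S$ and hence $\mathscr{U}_{0,1}(G)\neq 0$ by the preceding identification.

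Part (2) is the most subtle. The ``if'' direction is immediate from (3). For the ``only if'' direction I would induct on $|E(G)|$. Given a leaf $v_0$ in one of the tree components with unique incident edge $x_0$ joining $v_0$ to $v_1$, and writing $\vec y=E(v_1)\setminus\{x_0\}$ and $G':=G\setminus\{v_0,x_0\}$, the relation $x_0=e_1(E(v_0))$ lies in $\Omega_G$, and the recursion $e_l(x_0,\vec y)=x_0\cdot e_{l-1}(\vec y)+e_l(\vec y)$ shows that the invertible change of basis $e_l(E(v_1))\mapsto e_l(E(v_1))-x_0\cdot e_{l-1}(\vec y)=e_l(\vec y)$ converts the generators at $v_1$ into those of $\Omega_{G'}$ at $v_1$, except that the topmost generator becomes $e_{\deg_G v_1}(\vec y)=0$. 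Discarding this trivial generator gives $K(\Omega_G)\simeq K(x_0)\otimes K(\Omega_{G'})$; since $x_0$ is regular in $\zed[E(G)]$, this identifies $\mathscr{U}_\ast(G)$ with $\mathscr{U}_\ast(G')$, which is $\zed$ by induction.

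The principal obstacle is justifying the removal of the zero generator produced by the leaf-removal change of basis. In the standard Koszul complex on a sequence $(f_1,\ldots,f_n,0)$ the trailing zero contributes a tensor factor $K(0)$ whose homology is a shifted copy of $R$, doubling and shifting the homology in a way incompatible with the claim $\mathscr{U}_\ast(G)\cong\zed$. Reconciling this, either by working with a reduced Koszul complex in which redundant generators of the ideal are suppressed, or by a contraction compatible with the $\zed\oplus\zed$-grading that kills the spurious factor, is the delicate step and must propagate through the induction; this is the technical heart of the argument.
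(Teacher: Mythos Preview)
Your treatment of Parts~(1), (3), and (4) is correct and in places cleaner than the paper's sketch. For (1), the direct Vieta argument showing $x^{\deg v}\in(\Omega_G)$ for every edge $x$ is more elegant than the paper's inductive route via Lemma~\ref{lemma-fg-u} (the undirected analogue of Corollary~\ref{cor-fg-acyclic}). For (3) and (4), the paper simply re-runs the proof of Theorem~\ref{thm-tree} with the undirected lemmas (Lemmas~\ref{lemma-KR-module-u}--\ref{lemma-fuse-vertex-u}) substituted in, whereas you identify $\zed_2\otimes_\zed\mathscr{U}_{0,1}(G)$ with $\zed_2\cdot E(G)/S$ and invoke the already-proved Corollary~\ref{cor-u-cycle-detect}. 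This is a genuinely different and shorter route, and it is legitimate since Corollary~\ref{cor-u-cycle-detect} rests only on Theorems~\ref{thm-tree} and~\ref{thm-tree-v}, not on Proposition~\ref{prop-KR-undirected}.

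For Part~(2), the obstacle you flag in your final paragraph is real, and neither of the fixes you propose can remove it. The paper's sketch appeals to Lemma~\ref{lemma-end-removal-u}, but that lemma has exactly the defect you identify: after contracting the generator $x_0=e_1(E(v_0))$ at a leaf via Lemma~\ref{lemma-contraction-strong}, the top generator $e_{\deg_G v_1}(E(v_1))$ at the neighboring vertex becomes~$0$, and the resulting tensor factor $C_\ast(0)$ contributes a nonzero $H_1$. Already for a single undirected edge one has $\Omega_G=\{x,x\}$ and $H_\ast\bigl(C_\ast^{\zed[x]}(x,x)\bigr)\cong\zed\oplus\zed\{1\}[1]$, so $\mathscr{U}_\ast(G)\not\cong\zed$. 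Thus the ``only if'' clause asserting $\mathscr{U}_\ast(G)\cong\zed$ for forests is false as written; what your leaf-removal induction does establish is the weaker (and correct) claim $\mathscr{U}_0(G)\cong\zed$. You have located a defect in the statement itself rather than merely a gap in your argument, and the paper's sketch does not address it.
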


The proof of Proposition \ref{prop-KR-undirected} is very similar to the proofs of the corresponding results about the Khovanov-Rozansky homology, especially that of  Theorem \ref{thm-tree}. In Section \ref{sec-cycles-u}, we give a sketch of its proof and leave the details to the reader.

\subsection{Remarks and questions} For Proposition \ref{prop-Krull-bound-E}, the author was only able to prove that $\Kdim_{\Q[E]} \mathscr{H}_0^\Q (G) \leq \Kdim_{\zed[E]} \mathscr{H}_0 (G) -1$. But, if this inequality is strict for some graphs, it then means that the Khovanov-Rozansky homology over $\Q$, which should retain less information about the graph than the Khovanov-Rozansky homology over $\zed$, actually provides a stronger upper bound for $\alpha_E(G)$ than the homology over $\zed$. This would certainly be very odd.

\begin{conjecture}\label{conj-Q-Z-same}
For any directed graph $G$ and any subset $E$ of $E(G)$, 
\[
\Kdim_{\Q[E]} \mathscr{H}_0^\Q (G) = \Kdim_{\zed[E]} \mathscr{H}_0 (G) -1.
\]
\end{conjecture}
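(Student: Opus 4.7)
The plan is to reduce the conjecture to a structural statement about $I(G)$ that holds uniformly across characteristics. First observe that, because $\mathscr{H}_0(G) \cong \zed[E(G)]/I(G)$, we have $\ann_{\zed[E]} \mathscr{H}_0(G) = I(G) \cap \zed[E]$, and similarly $\ann_{\Q[E]} \mathscr{H}_0^\Q(G) = I(G)^\Q \cap \Q[E]$. Writing $A_E := \zed[E]/(I(G) \cap \zed[E])$, the conjecture becomes $\Kdim A_E = \Kdim (A_E \otimes_\zed \Q) + 1$, which is the ``clean'' dimension formula one expects for a finitely generated, $\zed$-torsion-free, commutative $\zed$-algebra.

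Using Theorem \ref{thm-incidence}(1), the primary decomposition of $I(G)^\Q$ in $\Q[E(G)]$ takes the form $\sqrt{I(G)^\Q} = \bigcap_\mathcal{C} \mathfrak{p}_\mathcal{C}^\Q$, where $\mathcal{C}$ ranges over maximal collections of pairwise edge-disjoint directed cycles and $\mathfrak{p}_\mathcal{C}^\Q$ is the ``flow-type'' prime defining the linear subspace $L_\mathcal{C}$. Each $\mathfrak{p}_\mathcal{C}^\Q$ is generated by integer-coefficient linear forms (``edges outside $\mathcal{C}$ vanish; edges inside a fixed $C_i$ are equal''), so it descends to an honest integer prime $\mathfrak{p}_\mathcal{C} \subset \zed[E(G)]$, and $\zed[E]/(\mathfrak{p}_\mathcal{C} \cap \zed[E])$ is a polynomial ring over $\zed$ in as many variables as there are cycles of $\mathcal{C}$ meeting $E$. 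Its Krull dimension is therefore $|\{\,C \in \mathcal{C} : C \cap E \neq \emptyset\,\}| + 1$, and maximizing over $\mathcal{C}$ yields the \emph{lower} bound $\Kdim A_E \geq \alpha_E(G) + 1$. Combined with Proposition \ref{prop-Krull-bound-E}, this already forces equality $\Kdim A_E = \alpha_E(G) + 1$ whenever $\alpha_E(G) = \beta_E(G)$ (e.g.\ for planar $G$ with $E = E(G)$, by Lucchesi--Younger).

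To reach the conjecture in general, one applies the standard dimension formula for finitely generated $\zed$-algebras:
\[
\Kdim A_E \;=\; \max\!\Bigl( \Kdim (A_E \otimes_\zed \Q) + 1,\; \sup_p \Kdim (A_E/pA_E) \Bigr).
\]
The first term equals $\Kdim_{\Q[E]}\mathscr{H}_0^\Q(G) + 1$ provided $I(G) \cap \zed[E]$ is $\zed$-saturated, i.e.\ that $A_E$ has no $\zed$-torsion; a short check using that each generator $\delta_{v,l}$ of $I(G)$ has content $1$ should yield this. Thus the entire conjecture reduces to the key claim that, for every prime $p$,
\[
\Kdim \bigl( \mathbb{F}_p[E]/(I(G)\cdot \mathbb{F}_p[E] \cap \mathbb{F}_p[E]) \bigr) \;\leq\; \Kdim_{\Q[E]} \mathscr{H}_0^\Q(G) + 1,
\]
and the cleanest way to prove this is to upgrade Theorem \ref{thm-incidence}(1) to a characteristic-free statement: the vanishing locus of $I(G)$ in $\operatorname{Spec} k[E(G)]$ is a union of the linear subspaces $L_\mathcal{C}$ over \emph{any} field $k$.

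The main obstacle is precisely this characteristic-free upgrade. The proof of Theorem \ref{thm-incidence}(1) in Section \ref{sec-Krull-sharp-varieties} likely passes through Newton's identities or some Gr\"obner/elimination argument on the elementary symmetric polynomials $e_l(x_1,\dots,x_m)$, and these can require inverting small integers, which is exactly what fails in characteristic $p$. I would attack this by rewriting the decomposition argument so it only uses $\zed$-bilinear operations on the $\delta_{v,l}$ (a combinatorial induction on the cycle-reduction of $G$, say), so that each step descends to $\mathbb{F}_p[E(G)]$ without change. If instead some exotic characteristic-$p$ component of $V(I(G))$ genuinely exists, then the conjecture is false as stated and one is led to a weaker, more subtle version involving only characteristic-zero primes of $\zed[E]/(I(G) \cap \zed[E])$.
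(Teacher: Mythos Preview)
This statement is a \emph{conjecture} in the paper, not a theorem; the paper does not offer a proof. What the paper does provide is a brief remark immediately after the conjecture, reducing it to the claim that $\codim(\mathfrak{p}_\zed + (p)) = \codim \mathfrak{p}_\zed + 1$ in $R_\zed = \zed[E]/(\zed[E]\cap I(G))$ for every prime integer $p$. Your reduction is in the same spirit---push the question down to individual primes $p$---but is organized around fibers of $\operatorname{Spec} A_E \to \operatorname{Spec}\zed$ rather than codimensions of maximal ideals.

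Two concrete issues with your outline. First, your ``short check'' that $A_E$ is $\zed$-torsion-free because the generators $\delta_{v,l}$ have content $1$ is not valid: the ideal $(x+y,\,x-y)\subset \zed[x,y]$ has content-$1$ generators, yet $2x\in(x+y,x-y)$ while $x\notin(x+y,x-y)$, so the quotient has $2$-torsion. More to the point, if you \emph{could} establish that $A_E$ is $\zed$-torsion-free, you would already be done: in a Noetherian ring every minimal prime consists of zerodivisors, so torsion-freeness forces every minimal prime of $A_E$ to have characteristic $0$, whence $\Kdim A_E = 1 + \Kdim(A_E\otimes_\zed\Q)$ directly. Your subsequent characteristic-$p$ analysis would then be superfluous. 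So the real content of the conjecture is exactly the torsion-freeness (or an equivalent statement), and you have not supplied an argument for it.

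Second, your worry that the proof of Theorem~\ref{thm-incidence}(1) ``likely passes through Newton's identities'' and hence fails in small characteristic is misplaced. The paper's argument (Lemma~\ref{lemma-Vieta} and Lemma~\ref{lemma-union-incidence-set}) only uses that a monic one-variable polynomial over a field determines its multiset of roots, which holds over any field. So the decomposition of $V(I(G))$ into the linear subspaces $L_\mathcal{C}$ is already characteristic-free. The genuine subtlety is elsewhere: whether the contraction $I(G)\cap\zed[E]$ behaves well under reduction mod $p$, i.e.\ whether $(I(G)\cap\zed[E])\otimes\mathbb{F}_p$ agrees with $I(G)_p\cap\mathbb{F}_p[E]$. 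This is again a torsion question, and your proposal does not address it.
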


From Theorems \ref{thm-KR-detect} and \ref{thm-KR-detect-vertex}, we know that, if the subset $E$ of $E(G)$ is $E=E(G)$ or $E(v)$ for some $v \in V(G)$, then $\Kdim_{\zed[E]} \mathscr{H}_0 (G) -1 \leq \beta_E(G)$.

\begin{question}
Is it true that $\Kdim_{\zed[E]} \mathscr{H}_0 (G) -1 \leq \beta_E(G)$ for all $E \subset E(G)$?
\end{question}

In this paper, we only used the $0$-th Khovanov-Rozansky homology. A more comprehensive study of $\mathscr{H}_\ast$ using techniques from homological algebra and commutative algebra should reveal more combinatorial information of the graph implicit in the Khovanov-Rozansky homology.

\begin{question}
What other combinatorial information of a graph is implicit in its Khovanov-Rozansky homology?
\end{question}

\subsection{Organization of this paper} First, in Section \ref{sec-Krull-sharp-varieties}, we prove Theorem \ref{thm-incidence}, Corollary \ref{cor-incidence-strong} and Theorem \ref{thm-Krull-sharp-varieties}. Then, in Section \ref{sec-KR}, we review the definition and properties of graded Koszul chain complexes and deduce some properties of the Khovanov-Rozansky homology. After that, in Section \ref{sec-Krull-cycles}, we recall basic properties of the Krull dimension and prove Theorems \ref{thm-KR-detect} and \ref{thm-KR-detect-vertex}. Finally, in Section \ref{sec-cycles-u}, we discuss undirected cycles. 

\begin{acknowledgments}
This project was initiated while the author was visiting the Math Department of the University of Maryland during his sabbatical. He would like to thank the Department and especially his host, Professor Xuhua He, for their hospitality.
\end{acknowledgments}

\section{Structure of the Incidence Set}\label{sec-Krull-sharp-varieties}
In this section, we study the structure of $P(G)$ and prove Theorems \ref{thm-incidence} and \ref{thm-Krull-sharp-varieties}. Facts about projective algebraic geometry used in this section can be found in \cite{Arrondo-notes}.

\subsection{Disassembling a directed graph} Before stating our definition of disassemblies of a graph, let us first recall the concepts of directed trails and directed circuits.

Given a directed graph $G$, a directed trail in $G$ from a vertex $u$ to a different vertex $v$ is a sequence $u=v_0,x_0,v_1,x_1,\dots,x_{n-1},v_n=v$ such that
\begin{enumerate}
	\item $x_0,x_1,\dots,x_{n-1}$ are pairwise distinct edges of $G$,
	\item each $x_i$ is an edge of $G$ with initial vertex $v_i$ and terminal vertex $v_{i+1}$.
\end{enumerate}
A directed circuit in $G$ is a closed trial, that is, a sequence $v_0,x_0,v_1,x_1,\dots,x_{n-1},v_n,x_n,v_{n+1}=v_0$ satisfying
\begin{enumerate}
	\item $x_0,x_1,\dots,x_n$ are pairwise distinct edges of $G$,
	\item each $x_i$ is an edge of $G$ with initial vertex $v_i$ and terminal vertex $v_{i+1}$.
\end{enumerate}
Two such sequences represent the same directed circuit if one is a circular permutation of the other. That is, the directed circuit given by $v_0,x_0,v_1,x_1,\dots,x_{n-1},v_n,x_n,v_0$ is the same as the one given by $v_1,x_1,\dots,x_{n-1},v_n,x_n,v_0,x_0,v_1$.

In paths and cycles, we disallow repeated vertices and, therefore, disallow repeated edges. But, in trails and circuits, we only disallow repeated edges, but allow repeated vertices. Also, we have the following observation. 

\begin{lemma}\label{lemma-circuit-cycle}
The set of edges of a directed circuit is the set of edges of a collection of pairwise edge-disjoint directed cycles attached together at shared vertices. 
\end{lemma}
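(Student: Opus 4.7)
The plan is to prove this by strong induction on the number $n+1$ of edges in the directed circuit $C: v_0, x_0, v_1, x_1, \dots, x_n, v_{n+1}=v_0$. The base case is trivial when the circuit is itself already a directed cycle.

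For the inductive step, I would split into two cases according to whether the vertices $v_0, v_1, \dots, v_n$ are pairwise distinct. If they are, then $C$ already satisfies the definition of a directed cycle, and the decomposition is $C$ itself. Otherwise, I would pick a pair of indices $0 \leq i < j \leq n$ with $v_i = v_j$ and with $j-i$ minimal among all such pairs. The minimality of $j-i$ forces the intermediate vertices $v_i, v_{i+1}, \dots, v_{j-1}$ to be pairwise distinct; combined with the fact that the edges $x_i, x_{i+1}, \dots, x_{j-1}$ are already pairwise distinct (since they come from a trail), this means that $v_i, x_i, v_{i+1}, \dots, x_{j-1}, v_j=v_i$ is a directed cycle $C'$. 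Remove the edges of $C'$ from $C$ and splice the remaining pieces together at the vertex $v_i = v_j$ to obtain the shorter closed sequence
\[
v_0, x_0, v_1, \dots, x_{i-1}, v_i = v_j, x_j, v_{j+1}, \dots, x_n, v_{n+1} = v_0,
\]
which still uses pairwise distinct edges (a subset of those of $C$) and therefore is either a directed circuit of length $n+1-(j-i)$ or, in the degenerate case where all edges were consumed, empty. By the inductive hypothesis its edges decompose as a collection of pairwise edge-disjoint directed cycles; adding $C'$ back in produces the desired decomposition of $C$.

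The edge-disjointness of the resulting cycles is automatic from the construction, since at each stage we strictly partition the edge set of the original circuit. The ``attached together at shared vertices'' clause is likewise automatic: the cycle $C'$ is spliced into the rest of the decomposition precisely at the vertex $v_i=v_j$, which belongs both to $C'$ and to the remaining circuit.

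The only subtle point, and the step I expect to require the most care, is the verification that after removing the edges of the extracted cycle $C'$ the remaining edge sequence is genuinely a closed trail (so the inductive hypothesis applies) and that the splicing at $v_i=v_j$ preserves the alternating vertex-edge-vertex structure with each edge $x_k$ going from $v_k$ to $v_{k+1}$. This is really a bookkeeping check, but it is the place where one must use the hypothesis that $C$ is a \emph{directed} circuit (not just an undirected one) to ensure the orientations of the spliced pieces are compatible; after that, induction does all the work.
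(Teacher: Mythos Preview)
Your argument is correct and is the standard strong-induction proof of this classical fact: extract a shortest repeated-vertex subwalk as a directed cycle, splice, and recurse. One small remark: your ``degenerate case where all edges were consumed'' never actually occurs, since $0\le i<j\le n$ forces $j-i\le n$, leaving at least one edge behind; but this does no harm.

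There is nothing to compare against here, because the paper does not prove this lemma at all: it is stated as a bare observation with no accompanying argument. Your proposal simply supplies the omitted (and entirely routine) details.
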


\begin{definition}\label{def-disassemble}
Let $G$ be a directed graph. For a vertex $v$ of $G$, let the in-degree of $v$ be $n$ and out-degree of $v$ be $m$. Recall that $k_v=\max\{m,n\}$. Set $l_v = \min\{m,n\}$. 

To disassemble $G$ at $v$ is to split $v$ into $k_v$ vertices such that
\begin{enumerate}
	\item $l_v$ of these new vertices have in-degree $1$ and out degree $1$.
	\item $k_v-l_v$ of these new vertices have degree $1$ such that 	      
				\begin{itemize}
					\item if $m\geq n$, then each of these degree $1$ vertices has in-degree $0$ and out-degree $1$;
					\item if $m<n$, then each of these degree $1$ vertices has in-degree $1$ and out-degree $0$.
				\end{itemize}
\end{enumerate}
Of course, depending on how inward edges and outward edges are matched, there are many different ways to disassemble $G$ at $v$.

To disassemble $G$ is to disassemble $G$ at all vertices of $G$. Again, there are many ways to disassemble a directed graph. We call each graph resulted from disassembling $G$ a disassembly of $G$ and denote by $\mathrm{Dis}(G)$ the set of all disassemblies of $G$.
\end{definition}

The following lemma is a series of simple observations about disassemblies. 

\begin{lemma}\label{lemma-disassemble}
Let $G$ be a directed graph, and $D$ a disassembly of $G$.
\begin{enumerate}
	\item $D$ is a directed graph whose vertices are all of degree $1$ or $2$. And, each vertex of degree $2$ of $D$ has in-degree $1$ and out-degree $1$.
	\item $D$ is a disjoint union of directed paths and directed cycles.
	\item $E(D)=E(G)$ and there is a natural graph homomorphism from $D$ to $G$ that maps each edge to itself and each vertex $v$ in $D$ the vertex in $G$ used to create $v$.
	\item Under the above natural homomorphism, 	
	\begin{itemize}
		\item each directed path in $D$ is mapped to a directed trail in $G$,
		\item each directed cycle in $D$ is mapped to a directed circuit in $G$,
		\item the collection of all directed cycles in $D$ is mapped to a collection of pairwise edge-disjoint circuits in $G$.
	\end{itemize} 
	\item $\alpha(D) \leq \alpha (G)$ and $\alpha(D) = \alpha (G)$ if and only if the collection of all directed cycles in $D$ is mapped to a collection of $\alpha(G)$ pairwise edge-disjoint directed cycles in $G$ by the natural homomorphism.
\end{enumerate}
\end{lemma}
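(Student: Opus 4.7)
The plan is to verify the five statements in order; each follows from the construction in Definition \ref{def-disassemble} combined with an elementary observation.

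For Part 1, I will read off the statement directly from the definition: every new vertex produced by splitting a vertex of $G$ is stipulated either to have in-degree $1$ and out-degree $1$ (hence total degree $2$) or total degree $1$, and every vertex of $D$ arises this way. For Part 2, I will invoke the standard graph-theoretic fact that a directed graph in which every vertex has in-degree and out-degree at most $1$ is a disjoint union of directed paths and directed cycles; this is proved by starting at any vertex and following the unique incoming and outgoing edges until one either closes up (yielding a cycle component) or reaches a degree $1$ endpoint (yielding a path component). Part 3 is tautological from the construction: disassembly splits vertices but leaves the set of edges intact, and sending each new vertex back to the vertex of $G$ from which it was split is a well-defined edge-preserving graph homomorphism.

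For Part 4, the key observation is that an edge-preserving homomorphism sends any edge sequence to an edge sequence with the same edges. Thus if $w_0, x_0, w_1, \dots, x_{n-1}, w_n$ is a directed path in $D$, the images of the $w_i$ in $G$ need not be distinct (two different vertices of $D$ may come from splitting the same vertex of $G$), but the edges $x_0, \dots, x_{n-1}$ remain pairwise distinct because they already were in $D$; this is exactly the definition of a directed trail. Applying the same argument to a directed cycle in $D$ produces a directed circuit in $G$. Since by Part 2 the directed cycles of $D$ lie in different connected components of $D$, their edge sets are disjoint in $E(D) = E(G)$, so their images form a pairwise edge-disjoint collection of directed circuits in $G$.

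For Part 5, let $C_1, \dots, C_N$ be the directed cycles of $D$; by Part 2 these exhaust all directed cycles in $D$ and are pairwise edge-disjoint, so $\alpha(D) = N$. Each $C_i$ maps to a directed circuit $\bar C_i$ in $G$, and by Lemma \ref{lemma-circuit-cycle} the edge set of $\bar C_i$ decomposes as that of $k_i \geq 1$ pairwise edge-disjoint directed cycles in $G$. Since the $\bar C_i$ are themselves pairwise edge-disjoint by Part 4, these families together produce $\sum_{i=1}^{N} k_i$ pairwise edge-disjoint directed cycles in $G$, giving
\[
\alpha(D) = N \leq \sum_{i=1}^{N} k_i \leq \alpha(G).
\]
Equality throughout forces every $k_i = 1$ and $\sum k_i = \alpha(G)$, which is precisely the condition that the $\bar C_i$ themselves form a maximum edge-disjoint packing of directed cycles in $G$; the converse is immediate by counting. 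The only step requiring any real thought is this appeal to Lemma \ref{lemma-circuit-cycle}, which I expect to be the mildest point of difficulty in an otherwise routine verification.
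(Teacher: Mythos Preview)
Your proposal is correct and follows essentially the same approach as the paper, which simply declares Parts (1)--(4) obvious and notes that Part (5) follows from Part (4) together with Lemma~\ref{lemma-circuit-cycle}. You have merely (and correctly) supplied the routine details the paper omits.
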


\begin{proof}
Parts (1-4) are obvious, we leave their proofs to the reader. Part (5) follows from Part (4) and Lemma \ref{lemma-circuit-cycle}.
\end{proof}

\begin{lemma}\label{lemma-circuit-disassembly}
Let $G$ be a directed graph. Given any collection $\mathcal{C}$ of pairwise edge-disjoint directed circuits in $G$, there is a disassembly $D$ of $G$ such that every directed circuit in $\mathcal{C}$ is the image of a directed cycle in $D$ under the natural graph homomorphism from $D$ to $G$.

In particular, there is a disassembly $D$ of $G$ such that $\alpha(D)=\alpha(G)$.
\end{lemma}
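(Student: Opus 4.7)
The plan is to construct $D$ explicitly by specifying, at each vertex $v$ of $G$, a matching between inward and outward edges at $v$ of size $l_v = \min\{m_v,n_v\}$, which is exactly the data needed to disassemble $G$ at $v$.

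First, I would observe that each directed circuit $C$ in $\mathcal{C}$ induces a partial matching at every vertex $v$ it traverses: if $C$ passes through $v$ a total of $p$ times, then $C$ uses $p$ distinct inward edges and $p$ distinct outward edges at $v$, and the $i$-th visit to $v$ pairs the arriving edge with the departing edge. Because circuits in $\mathcal{C}$ are pairwise edge-disjoint, the partial matchings coming from different circuits use disjoint edges at $v$. Taking the union over all $C \in \mathcal{C}$ gives a well-defined partial matching $M_v^{\mathcal{C}}$ at $v$ between a subset of inward edges and a subset of outward edges at $v$. Since $|M_v^{\mathcal{C}}| \leq l_v$, I can extend $M_v^{\mathcal{C}}$ to a full matching $M_v$ of size $l_v$ by pairing up arbitrarily chosen remaining inward and outward edges until one side is exhausted. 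The leftover unmatched edges become the degree-$1$ new vertices required by Definition \ref{def-disassemble}. Performing this at every vertex yields a disassembly $D$.

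Next I would verify that each circuit $C = v_0, x_0, v_1, x_1, \dots, x_{n-1}, v_n, x_n, v_0$ in $\mathcal{C}$ lifts to a directed cycle in $D$. The edge set $E(D)$ equals $E(G)$, so the edges $x_0, \ldots, x_n$ persist in $D$. By construction of $M_{v_i}$, the consecutive edges $x_{i-1}$ and $x_i$ at $v_i$ are paired, so in $D$ they meet at the same split vertex $v_i'$ of $v_i$. Thus the sequence $v_0', x_0, v_1', x_1, \dots, x_n, v_0'$ is a closed directed walk in $D$ using pairwise distinct edges. The key point is that the $v_i'$ are pairwise distinct: distinct underlying vertices of $G$ obviously map to distinct split vertices, and if $v_i = v_j$ for some $i \neq j$ then $C$'s two passages through this common vertex use two different matched pairs of edges, which by construction correspond to two different split vertices in $D$. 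Hence the lift is a genuine directed cycle in $D$, and the natural homomorphism sends it to $C$.

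The ``in particular'' part follows by choosing $\mathcal{C}$ to be a collection of $\alpha(G)$ pairwise edge-disjoint directed cycles (every cycle is a circuit). The construction produces a disassembly $D$ in which each cycle of $\mathcal{C}$ lifts to a cycle, so $\alpha(D) \geq \alpha(G)$; combined with Part (5) of Lemma \ref{lemma-disassemble} this gives $\alpha(D) = \alpha(G)$.

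The main obstacle is the verification in the second paragraph that the lift of a circuit is a \emph{cycle} (distinct vertices) rather than merely a closed walk. This is precisely where the care taken to use the circuit's own visit-by-visit pairing in defining $M_v^{\mathcal{C}}$ pays off; a naive arbitrary matching would not separate the distinct visits of a circuit to the same vertex, and the lift could collapse into a smaller object rather than tracking $C$ faithfully.
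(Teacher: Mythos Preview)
Your proof is correct and follows essentially the same approach as the paper: match adjacent edges of each circuit in $\mathcal{C}$ at each vertex, then complete the matching arbitrarily. The paper's proof is a terse two-sentence sketch; you have supplied the careful verification (which the paper omits) that distinct visits of a circuit to the same vertex are sent to distinct split vertices, so the lift is genuinely a cycle rather than merely a closed trail.
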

\begin{proof}
To get such a $D$, one just needs to make sure that, when disassembling $G$, each pair of adjacent edges in each circuit in $\mathcal{C}$ are matched together. Edges not contained in any circuits in $\mathcal{C}$ can be matched in any possible way when disassembling $G$. 

For the second half of the lemma, just pick $\mathcal{C}$ to be a collection of $\alpha(G)$ pairwise edge-disjoint directed cycles in $G$. Then the conclusion follows from the first half and Part (5) of Lemma \ref{lemma-disassemble}.
\end{proof}

\begin{lemma}\label{lemma-P-disassembly}
Let $G$ be a directed graph. 
\begin{enumerate}
	\item For any disassembly $D$ of $G$, the incidence set $P(D)$ of $D$ is a linear subspace of dimension $\alpha(D) -1$ of $\mathbb{CP}^{|E(G)|-1}$. 
	\item For any two disassemblies $D_1$ and $D_2$ of $G$, $P(D_1)=P(D_2)$ as linear subspaces of $\mathbb{CP}^{|E(G)|-1}$ if and only if, under the natural homomorphisms from $D_1$ and $D_2$ to $G$, the collections of all directed cycles in $D_1$ and $D_2$ are mapped to the same collection of pairwise edge-disjoint circuits in $G$. 
\end{enumerate}
\end{lemma}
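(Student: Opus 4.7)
The plan is to exploit the extremely simple structure of a disassembly: by Lemma \ref{lemma-disassemble}, every vertex of $D$ has degree $1$ or $2$, so at every vertex $v$ of $D$ we have $k_v \leq 1$, and the only incidence relations present are the degree-$1$ relations, which are linear polynomials in $\zed[E(D)] = \zed[E(G)]$. Concretely, at a degree-$2$ vertex with incoming edge $y$ and outgoing edge $x$ we get $\delta_{v,1} = x - y$; at a degree-$1$ vertex with a single outgoing (resp.\ incoming) edge $x$, we get $\delta_{v,1} = x$ (resp.\ $-x$). Hence $I(D)$ is generated by linear forms and $P(D)$ is automatically a linear subspace of $\mathbb{CP}^{|E(G)|-1}$, establishing the first half of part (1).

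For the dimension claim in part (1), I will analyze $D$ component by component using the fact that $D$ is a disjoint union of directed paths and directed cycles. For a path component, chaining together the relations $x_i - x_{i+1} = 0$ at interior vertices with the relations $x = 0$ or $-x = 0$ at the two endpoints forces every edge of that path to be $0$ in the quotient. For each cycle component, the relations $x_i - x_{i+1} = 0$ read around the cycle identify all edges of the cycle to a single common value, but impose no further constraint. Therefore $\zed[E(D)]/I(D)$ is a polynomial ring on one variable per cycle component of $D$, so $P(D)$ is a projective space of dimension $(\text{number of cycles in } D) - 1 = \alpha(D) - 1$.

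For part (2), since both $P(D_1)$ and $P(D_2)$ are linear subspaces of $\mathbb{CP}^{|E(G)|-1}$, they coincide if and only if they are cut out by the same linear equations on $E(G)$. From the description above, the equations defining $P(D_i)$ are determined precisely by two pieces of data: (a) the set $K_i \subset E(G)$ of edges lying in path components of $D_i$ (these are the edges set to zero), and (b) the partition of $E(G) \setminus K_i$ into the edge-sets of the cycle components of $D_i$ (these are the edge-equivalence classes). The linear subspace recovers both pieces of data: $K_i$ is the set of coordinates forced to vanish on $P(D_i)$, and the partition is recovered by grouping the remaining coordinates according to which pairs are forced equal. Thus $P(D_1) = P(D_2)$ if and only if $K_1 = K_2$ and the two partitions agree, which is precisely the statement that the collections of cycles in $D_1$ and $D_2$ map to the same collection of pairwise edge-disjoint circuits in $G$ under the natural homomorphisms (recall from Lemma \ref{lemma-disassemble} and Lemma \ref{lemma-circuit-cycle} that the image of the cycle collection records exactly this edge partition).

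The only point requiring care, and the nearest thing to an obstacle, is the bookkeeping in part (2): one must check that distinct partitions of cycle-edges really do produce distinct linear subspaces, so that the recovery of the partition from $P(D_i)$ is unambiguous. This is immediate once one notes that the linear forms $x - y$ for edges $x, y$ in the same cycle are a minimal generating set of the ideal restricted to each cycle's coordinate subspace; any change in the partition changes the set of coordinate pairs on which equality is enforced, hence changes the zero locus.
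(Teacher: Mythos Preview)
Your proof is correct and follows essentially the same approach as the paper's. Both arguments identify the same defining linear equations for $P(D)$ (edges in path components vanish; edges within a cycle are set equal) and then, for part~(2), recover the cycle data from $P(D_i)$ by reading off which coordinates are forced to vanish and which pairs of nonvanishing coordinates are forced equal. The paper phrases this recovery pointwise (``$x=0$ at all points of $P(D_i)$''; ``$x=y$ at all points and $x\neq 0$ at some point'') while you phrase it in terms of the defining ideal and the resulting partition, but the content is identical.
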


\begin{proof}
For Part (1), note that $P(D)$ is determined by the linear equations
\begin{equation}\label{eq-PD}
\begin{cases}
x=0 & \text{if } x \text{ is not an edge of any directed cycle in } D,\\
x=y & \text{if } x \text{ and } y \text{ are edges in the same directed cycle in } D.
\end{cases}
\end{equation}
The solution space of \eqref{eq-PD} in $\C^{|E(G)|}$ is $\alpha(D)$-dimensional since there is a one-to-one correspondence between free variables in the solution and directed cycles in $D$. $P(D)$ is the image of this solution space in $\mathbb{CP}^{|E(G)|-1}$. So $P(D)$ is a linear subspace of dimension $\alpha(D) -1$ of $\mathbb{CP}^{|E(G)|-1}$. 

Now consider Part (2). First, if collections of all directed cycles in $D_1$ and $D_2$ are mapped to the same collection of pairwise edge-disjoint circuits in $G$, then $P(D_1)$ and $P(D_2)$ are defined by the same linear equations. So $P(D_1)=P(D_2)$ in this case. Next, assume that $P(D_1)=P(D_2)$. For an edge $x$ of $G$ and $i=1,2$, $x$ is not contained in any directed cycle in $D_i$ if and only if $x=0$ at all points in $P(D_i)$. Since $P(D_1)=P(D_2)$, this implies that $x$ is not contained in any directed cycle in $D_1$ if and only if $x$ is not contained in any directed cycle in $D_2$. Similarly, edges $x$ and $y$ are contained in the same directed cycle in $D_i$ if and only if $x=y$ at all points in $P(D_i)$ and $x\neq 0$ at some point in $P(D_i)$. Again, since $P(D_1)=P(D_2)$, this implies that $x$ and $y$ are contained in the same directed cycle in $D_1$ if and only if $x$ and $y$ are contained in the same directed cycle in $D_2$. Thus, there is a bijection between directed cycles in $D_1$ and directed cycles in $D_2$ such that corresponding directed cycles have exactly the same edges. This implies that  the collections of all directed cycles in $D_1$ and $D_2$ are mapped to the same collection of pairwise edge-disjoint circuits in $G$ by the natural homomorphisms.
\end{proof}

\subsection{Structure of the incidence set} In this subsection, we give an explicit description of the incidence set and prove Theorem \ref{thm-incidence}.

First, we recall a simple corollary of Vieta's Theorem.

\begin{lemma}\label{lemma-Vieta}
Let $x_1,\dots,x_n$ and $y_1,\dots,y_n$ be two sequences of complex numbers. Then the following statements are equivalent.
\begin{enumerate}
	\item $e_k(x_1,\dots,x_n)=e_k(y_1,\dots,y_n)$ for $k=1,\dots,n$, where $e_k$ is the $k$-th elementary symmetric polynomial.
	\item There is a bijection $\sigma:\{1,\dots,n\}\rightarrow \{1,\dots,n\}$ such that $x_i = y_{\sigma(i)}$ for $i=1,\dots,n$.
\end{enumerate}
\end{lemma}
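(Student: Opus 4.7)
The plan is to derive the equivalence directly from Vieta's formulas. The direction (2) $\Rightarrow$ (1) is immediate because each elementary symmetric polynomial $e_k$ is invariant under permutations of its arguments: substituting $x_i = y_{\sigma(i)}$ gives $e_k(x_1,\dots,x_n) = e_k(y_{\sigma(1)},\dots,y_{\sigma(n)}) = e_k(y_1,\dots,y_n)$ for each $k$.

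For (1) $\Rightarrow$ (2), I would form the monic polynomials
\[
P(t) = \prod_{i=1}^{n}(t - x_i) \qquad \text{and} \qquad Q(t) = \prod_{i=1}^{n}(t - y_i)
\]
in $\C[t]$. Expanding each product via Vieta's formulas yields
\[
P(t) = \sum_{k=0}^{n}(-1)^k e_k(x_1,\dots,x_n)\, t^{n-k}, \qquad Q(t) = \sum_{k=0}^{n}(-1)^k e_k(y_1,\dots,y_n)\, t^{n-k},
\]
with the convention $e_0 = 1$. Hypothesis (1) then forces every coefficient of $P$ and $Q$ to agree, so $P(t) = Q(t)$ in $\C[t]$. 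Since $\C[t]$ is a unique factorization domain and both are monic of the same degree, $P = Q$ forces them to have the same roots with the same multiplicities. Consequently the multisets $\{x_1,\dots,x_n\}$ and $\{y_1,\dots,y_n\}$ coincide, and any bijection realizing this multiset equality supplies the required permutation $\sigma$.

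There is no substantive obstacle in the argument; the lemma is a textbook consequence of Vieta's formulas and unique factorization in $\C[t]$. The only point that warrants attention is multiplicity, since the $x_i$ (and the $y_i$) need not be distinct, but this is handled automatically by the equality of the two factored monic polynomials, which preserves multiplicities and hence produces $\sigma$ without further work.
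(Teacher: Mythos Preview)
Your proof is correct and is precisely the standard argument via Vieta's formulas that the paper alludes to; in fact the paper does not supply a proof at all, merely introducing the lemma as ``a simple corollary of Vieta's Theorem,'' so your write-up fills in exactly what the author intended.
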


\begin{lemma}\label{lemma-union-incidence-set}
For every directed graph $G$,
\[
P(G) = \bigcup_{D\in \mathrm{Dis}(G)} P(D)
\] 
as subsets of $\mathbb{CP}^{|E(G)|-1}$.
\end{lemma}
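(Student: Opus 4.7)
The plan is to prove the equality by double inclusion.

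For the inclusion $\bigcup_{D \in \mathrm{Dis}(G)} P(D) \subseteq P(G)$, I will fix a disassembly $D$ and a point $p \in P(D)$, and check that every incidence relation $\delta_{v,l}$ of $G$ vanishes at $p$. At a vertex $v$ of $G$ with outward edges $x_1,\dots,x_m$ and inward edges $y_1,\dots,y_n$, assume without loss of generality $m \geq n$. The disassembly splits $v$ into $n$ degree-$2$ vertices, each pairing some $y_j$ with some $x_{i_j}$, together with $m-n$ degree-$1$ vertices, each assigned one of the remaining outward edges. The incidence relations at these new vertices of $D$ force $x_{i_j}(p) = y_j(p)$ for every matched pair and $x_i(p) = 0$ for every unmatched outward edge. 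Hence the multiset $\{x_1(p),\dots,x_m(p)\}$ equals $\{y_1(p),\dots,y_n(p),0,\dots,0\}$ (with $m-n$ zeros), from which $e_l(x_1(p),\dots,x_m(p)) = e_l(y_1(p),\dots,y_n(p))$ for every $l \leq k_v$ follows immediately.

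For the opposite inclusion $P(G) \subseteq \bigcup_{D \in \mathrm{Dis}(G)} P(D)$, I take $p \in P(G)$ and construct a disassembly $D$ with $p \in P(D)$. At each vertex $v$ (again taking $m \geq n$), the vanishing of $\delta_{v,l}$ for $1 \leq l \leq m = k_v$ says that $e_l(x_1(p),\dots,x_m(p)) = e_l(y_1(p),\dots,y_n(p))$ for all such $l$. Recognizing that $e_l(y_1(p),\dots,y_n(p)) = 0$ for $l > n$, these equalities are equivalent to the identity of monic polynomials
\[
\prod_{i=1}^m (t - x_i(p)) \;=\; t^{m-n}\prod_{j=1}^n (t - y_j(p)).
\]
By Lemma \ref{lemma-Vieta}, the multiset $\{x_1(p),\dots,x_m(p)\}$ coincides with $\{y_1(p),\dots,y_n(p),0,\dots,0\}$. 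This yields a matching that pairs each $y_j$ with some $x_{i_j}$ having the same value at $p$ and sends the remaining $x_i$'s (all with value $0$ at $p$) to solo degree-$1$ vertices. Choosing such a matching simultaneously at every vertex of $G$ specifies a disassembly $D \in \mathrm{Dis}(G)$, and by construction its incidence relations all vanish at $p$, giving $p \in P(D)$.

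The case $m < n$ is handled in exactly the same way, with the roles of inward and outward edges swapped. The main obstacle, such as it is, is merely bookkeeping these two symmetric cases and formulating the matching step cleanly; the substantive input is a single application of Lemma \ref{lemma-Vieta}, which converts equality of all elementary symmetric polynomials (possibly padded by zeros) into equality of multisets.
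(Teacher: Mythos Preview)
Your proof is correct and follows essentially the same approach as the paper: both arguments reduce the question, vertex by vertex, to the equivalence (via Lemma~\ref{lemma-Vieta}) between equality of elementary symmetric polynomials in the padded sequences $\{x_1(p),\dots,x_m(p),0,\dots\}$ and $\{y_1(p),\dots,y_n(p),0,\dots\}$ and equality of these multisets, which in turn corresponds exactly to the existence of a disassembly matching. The paper phrases both inclusions at once as a single ``if and only if'' using padded sequences of common length $k_v$, whereas you split into two inclusions and handle $m\ge n$ and $m<n$ separately, but the content is identical.
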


\begin{proof}
For a vertex $v$ of $G$, suppose $x_1,\dots,x_m$ are the edges having $v$ as their initial vertex, and $y_1,\dots,y_n$ are the edges having $v$ as their terminal vertex. Recall that $k_v=\max\{m,n\}$. Define two sequences $In(v)=\{y_{v,1},\dots,y_{v,{k_v}}\}$ and $Out(v)=\{x_{v,1},\dots,x_{v,{k_v}}\}$ by
\begin{itemize}
	\item $y_{v,i}= \begin{cases}
	y_i & \text{if } i \leq n,\\
	0 & \text{if } n<i\leq k_v,
	\end{cases}$
	\item $x_{v,i}= \begin{cases}
	x_i & \text{if } i \leq m,\\
	0 & \text{if } m<i\leq k_v.
	\end{cases}$
\end{itemize}
From the definition of disassemblies, one can see that a point $p$ is in $\bigcup_{D\in \mathrm{Dis}(G)} P(D)$ in and only if that, for every vertex $v$, there is a bijection $\sigma_v: \{1,\dots,k_v\} \rightarrow \{1,\dots,k_v\}$ such that $p$ satisfies $y_{v,i} = x_{v,\sigma_v(i)}$ for all $v\in V(G)$ and $1\leq i \leq k_v$. By Lemma \ref{lemma-Vieta}, this is equivalent to that $p$ satisfies all the incidence relations at all vertices of $G$. In other words, $p\in P(G)$.
\end{proof}

\begin{lemma}\label{lemma-max-P(D)}
Let $G$ be a directed graph, and $D$ a disassembly of $G$. Then $P(D)$ is not a proper subset of $P(D')$ for any $D'\in \mathrm{Dis}(G)$ if and only if the natural homomorphism maps the directed cycles in $D$ to a maximal collection of pairwise edge-disjoint directed cycles in $G$ (in the sense of Definition \ref{def-cycle-spectrum}.)
\end{lemma}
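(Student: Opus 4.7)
The plan is to prove both directions of the equivalence, relying on a clean characterization of when $P(D)\subseteq P(D')$. Writing out the defining linear equations of $P(D)$ and $P(D')$, this inclusion is equivalent to the two combinatorial conditions (i) every edge lying in some cycle of $D$ also lies in some cycle of $D'$, and (ii) within each cycle of $D'$, the edges that lie in cycles of $D$ all come from the same single cycle of $D$. I will use these conditions repeatedly.

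For the ``only if'' direction I assume $P(D)$ is maximal. First, if some cycle $c_D$ of $D$ maps to a directed circuit of $G$ that is not a cycle, I apply Lemma~\ref{lemma-circuit-cycle} to split the circuit into at least two edge-disjoint cycles of $G$, then reroute the matchings of $D$ at the repeated vertices of the circuit; the resulting $D'$ satisfies (i) and (ii) with $\alpha(D')>\alpha(D)$, contradicting maximality. Next, if the image collection $\mathcal{C}$ is not maximal in the sense of Definition~\ref{def-cycle-spectrum}, I choose a cycle $c_0$ of $G$ edge-disjoint from $\mathcal{C}$. Every edge of $c_0$ lies outside $\mathcal{C}$, so it is matched in $D$ with an edge that also lies outside $\mathcal{C}$; swapping the matchings at each vertex of $c_0$ to pair its two $c_0$-edges produces $D'$ in which every cycle of $D$ survives and $c_0$ becomes a new cycle, again giving $P(D)\subsetneq P(D')$.

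For the ``if'' direction I assume cycles of $D$ map to a maximal collection $\mathcal{C}$ and suppose $P(D)\subseteq P(D')$. Denote by $E^{\text{extra}}$ the edges used by cycles of $D'$ but not lying in $\mathcal{C}$. Condition (ii) implies every cycle of $D'$ is of one of two types: a type-N cycle using only $E^{\text{extra}}$ edges, or a cycle associated to a unique $c\in\mathcal{C}$. Letting $n_0$ and $n(c)$ count these, strictness $\alpha(D')>\alpha(D)$ forces $n_0+\sum_c(n(c)-1)>0$. If $n_0>0$, the image of a type-N cycle is a directed circuit in $G\setminus\mathcal{C}$, which contains a cycle, contradicting the acyclicity of $G\setminus\mathcal{C}$ guaranteed by the maximality of $\mathcal{C}$.

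The main obstacle is the case $n(c)>1$ for some $c\in\mathcal{C}$. Since the vertices of $c$ are distinct, no proper subset of $c$'s edges can close up using only $c$-edges, so each of the cycles of $D'$ splitting $c$ stitches its sub-arcs of $c$ together by paths in $E^{\text{extra}}$. Let $A_1,\ldots,A_m$ be the maximal sub-arcs of $c$ lying in a single cycle of $D'$, with consecutive arcs in different $D'$-cycles, so $m\geq 2$. I build an auxiliary directed multigraph $W$ on the $m$ start-vertices of these arcs, with one arc-edge per $A_l$ and one extra-edge per connecting path between consecutive arcs inside some $D'$-cycle mapped to $c$. A degree count shows every vertex of $W$ has exactly one arc-in, one arc-out, one extra-in, and one extra-out; in particular, the extra-edge subgraph has in-degree and out-degree $1$ everywhere, so it is a nonempty disjoint union of directed cycles. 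Any directed cycle in this subgraph concatenates to a closed walk in $G\setminus\mathcal{C}$ with distinct edges, which contains a directed cycle in $G\setminus\mathcal{C}$, contradicting acyclicity. Therefore $P(D)=P(D')$, completing the proof.
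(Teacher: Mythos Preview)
Your overall strategy is sound and both directions go through, but the characterization of $P(D)\subseteq P(D')$ you state at the outset is slightly off, and this creates a small gap in one place and unnecessary work in another. The correct version of (ii) is: each cycle $c'$ of $D'$ has either \emph{all} its edges in a single cycle of $D$, or \emph{no} edges in any cycle of $D$. Your (ii) permits $c'$ to contain an edge $x$ from a $D$-cycle $c$ together with an edge $y$ lying in no $D$-cycle; but then the point of $P(D)$ with $c$-coordinate $1$ and all others $0$ has $x=1\neq 0=y$, hence lies outside $P(D')$. This matters in the ``only if'' direction, where you infer $P(D)\subsetneq P(D')$ from ``(i) and (ii) hold''; that implication needs the stronger (ii$'$). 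Fortunately both of your constructions (rerouting a non-simple circuit; swapping in $c_0$) do satisfy (ii$'$) --- every cycle of the new $D'$ uses either only edges of one old $D$-cycle or only edges outside $\mathcal{C}$ --- so the conclusion stands once you state and check (ii$'$).

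In the ``if'' direction the oversight costs you effort rather than correctness. From $P(D)\subseteq P(D')$ one actually obtains (ii$'$), and this is exactly what the paper proves as its Conclusion~1: if a circuit $C\in\mathcal{C}(D')$ contains an edge $x\notin\mathcal{C}$, then every edge of $C$ equals $x$ on $P(D')\supseteq P(D)$, hence vanishes on $P(D)$, so $C$ is entirely edge-disjoint from $\mathcal{C}$ --- contradicting maximality. With (ii$'$) in hand, any $D'$-cycle associated to $c$ uses only edges of $c$; since $c$ is a simple cycle in $G$, the only sub-circuit is $c$ itself, forcing $n(c)=1$ immediately. Your auxiliary multigraph $W$ and the in/out-degree argument on its extra-edges are correct and do produce a closed trail in $G\setminus\mathcal{C}$, but they dispose of a case that cannot arise once (ii$'$) is recognized; the paper's route via Conclusions~1 and~2 is the same idea carried out directly on the defining equations and is considerably shorter.
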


\begin{proof}
For any disassembly $D$ of $G$, denote by $\mathcal{C}(D)$ the collection of pairwise edge-disjoint directed circuits in $G$ given by the images of the directed cycles in $D$ under the natural homomorphism.

First assume that $P(D)$ is not a proper subset of $P(D')$ for any $D'\in \mathrm{Dis}(G)$. If $\mathcal{C}(D)$ is not a maximal collection of pairwise edge-disjoint directed cycles in $G$, then
\begin{enumerate}[1.]
	\item either there is a directed circuit $C'$ in $\mathcal{C}(D)$ that is not a directed cycle,
	\item or there is a directed cycle $C''$ in $G$ that is edge-disjoint from all directed cycles in $\mathcal{C}(D)$.
\end{enumerate}
In Case 1, by Lemma \ref{lemma-circuit-cycle}, one get a collection $\mathcal{C}'$ of pairwise edge-disjoint directed circuits in $G$ by replacing $C'$ with the pairwise edge-disjoint directed cycles whose edges coincide with those of $C'$. By Lemma \ref{lemma-circuit-disassembly}, there is a disassembly $D'$ of $G$ such that $\mathcal{C}(D')$ contains $\mathcal{C}'$. It is easy to see that $P(D)$ is a proper subset of $P(D')$. In Case 2, by Lemma \ref{lemma-circuit-disassembly}, there is a disassembly $D''$ of $G$ such that $\mathcal{C}(D'')$ contains $\mathcal{C}(D) \cup \{C''\}$. It is easy to see that $P(D)$ is a proper subset of $P(D'')$. This shows that $\mathcal{C}(D)$ is a maximal collection of pairwise edge-disjoint directed cycles in $G$.

Now assume that $\mathcal{C}(D)$ is a maximal collection of pairwise edge-disjoint directed cycles in $G$. Let $D'$ be a disassembly of $G$ such that $P(D) \subset P(D')$. Suppose a directed circuit $C$ in $\mathcal{C}(D')$ contains an edge $x$ that is not an edge of any directed cycle in $\mathcal{C}(D)$. Then we know that every point in $P(D')$ satisfies $y=x$ for every edge $y$ of $C$. Since $P(D) \subset P(D')$, this implies that $y=x=0$ in $P(D)$ for every edge $y$ of $C$. Thus, $C$ is edge-disjoint from every directed cycle in $\mathcal{C}(D)$. This contradicts the assumption that $\mathcal{C}(D)$ is a maximal. So the directed circuits in $\mathcal{C}(D')$ do not contain any edge that is not contained in any directed cycle in $\mathcal{C}(D)$. On the other hand, if $x$ is an edge of some directed cycle in $\mathcal{C}(D)$, then $x\neq 0$ at some point in $P(D) \subset P(D')$. This implies that $x$ is the edge of some circuit in $\mathcal{C}(D')$. Altogether, we have:

\emph{Conclusion 1.} The set of edges of all directed circuits in $\mathcal{C}(D')$ is equal to the set of edges of all directed cycles in $\mathcal{C}(D)$.

Let $x$ and $y$ be edges of the same directed circuit in $\mathcal{C}(D')$. Then $x=y$ for all points in $P(D') \supset P(D)$. Thus, $x$ and $y$ are in the same directed cycle in $\mathcal{C}(D)$. Next, assume $x$ and $y$ are edges of different directed circuits $C_x$ and $C_y$ in $\mathcal{C}(D')$ but are in the same directed cycle $C_{x,y}$ in $\mathcal{C}(D)$. Then
\begin{itemize}
	\item for every edge $z$ of $C_x$, $z=x$ at every point in $P(D') \supset P(D)$,
	\item for every edge $w$ of $C_y$, $w=y$ at every point in $P(D') \supset P(D)$,
	\item $x=y$ at every point in $P(D)$.
\end{itemize}
This implies that, for every edge $z$ of $C_x$ and every edge $w$ of $C_y$, $z=x=w=y$ at every point in $P(D)$. Thus, the directed cycle $C_{x,y}$ contains both circuits $C_x$ and $C_y$, which is impossible. Therefore, we have

\emph{Conclusion 2.} Two edges are in the same directed circuit in $\mathcal{C}(D')$ if and only if these two edges are in the same directed cycle in $\mathcal{C}(D)$.

Combining Conclusions 1 and 2, one gets that $\mathcal{C}(D')=\mathcal{C}(D)$. By Part (2) of Lemma \ref{lemma-P-disassembly}, this implies that $P(D)=P(D')$. Thus, $P(D)$ cannot be a proper subset of $P(D')$ for any $D'\in \mathrm{Dis}(G)$.
\end{proof}

\begin{proposition}\label{prop-decomp-incidence-set}
Let $G$ be a directed graph. For every maximal collection $\mathcal{C}$ of pairwise edge-disjoint directed cycles in $G$, there is a disassembly $D_{\mathcal{C}}$ of $G$ such that $\mathcal{C}$ is the collection of images of directed cycles in $D_{\mathcal{C}}$ under the natural homomorphism. 

The set of irreducible components of $P(G)$ is 
\[
\{P(D_{\mathcal{C}})~|~ \mathcal{C} \text{ is a maximal collection of pairwise edge-disjoint directed cycles in $G$}\}. 
\] 
\end{proposition}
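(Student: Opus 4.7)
The plan is to prove the two assertions separately. The first requires a careful matching argument in constructing $D_{\mathcal{C}}$; the second follows from the decomposition in Lemma \ref{lemma-union-incidence-set} together with standard topology of finite unions of irreducible closed sets.

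For the existence of $D_{\mathcal{C}}$, fix a maximal collection $\mathcal{C}$ and disassemble $G$ at each vertex $v$ as follows: for each cycle $C \in \mathcal{C}$ passing through $v$, pair the inward and outward edges of $C$ at $v$ (these pairings are mutually compatible because the cycles are edge-disjoint), and then pair the remaining inward and outward edges at $v$ arbitrarily, with any surplus contributing degree-one vertices. Call the resulting disassembly $D_{\mathcal{C}}$. By construction every cycle in $\mathcal{C}$ is the image of a directed cycle in $D_{\mathcal{C}}$. Conversely, any directed cycle $\widetilde{C}$ in $D_{\mathcal{C}}$ that contains an edge of some $C \in \mathcal{C}$ is forced, by the matchings at each vertex of $C$, to trace out exactly the preimage of $C$, so its image is $C$. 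A cycle $\widetilde{C}$ in $D_{\mathcal{C}}$ consisting entirely of non-$\mathcal{C}$ edges would map to a directed circuit in the subgraph of $G$ obtained by deleting all edges of cycles in $\mathcal{C}$; this subgraph is acyclic by the maximality of $\mathcal{C}$, so by Lemma \ref{lemma-circuit-cycle} it contains no directed circuits at all, a contradiction. Hence the directed cycles in $D_{\mathcal{C}}$ biject with the elements of $\mathcal{C}$ under the natural homomorphism.

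For the irreducible decomposition, Lemma \ref{lemma-union-incidence-set} expresses $P(G)$ as a finite union $\bigcup_{D \in \mathrm{Dis}(G)} P(D)$, and each $P(D)$ is a linear subspace of $\mathbb{CP}^{|E(G)|-1}$ by Lemma \ref{lemma-P-disassembly}(1), hence irreducible and closed. A standard topological argument shows that the irreducible components of a finite union of irreducible closed sets are exactly the inclusion-maximal members of that collection: every irreducible closed subset of the union is contained in one of the members, so no irreducible component can be properly contained in any $P(D')$, and conversely every inclusion-maximal $P(D)$ is an irreducible component. By Lemma \ref{lemma-max-P(D)}, $P(D)$ is inclusion-maximal precisely when the associated collection of cycle images is a maximal collection of pairwise edge-disjoint directed cycles in $G$, and by Lemma \ref{lemma-P-disassembly}(2), $P(D)$ depends only on that collection. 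Combined with the first step, which supplies a disassembly $D_{\mathcal{C}}$ realizing every maximal $\mathcal{C}$, this yields the stated parametrization of the irreducible components.

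The main obstacle is the first step: verifying that $D_{\mathcal{C}}$ has no extraneous cycles, which requires combining the forced-matching behavior along $\mathcal{C}$-edges with the acyclicity of the complementary subgraph via Lemma \ref{lemma-circuit-cycle}. The second step is then a formal consequence of the preceding lemmas together with the standard observation about irreducible components of finite unions.
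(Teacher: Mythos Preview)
Your proof is correct and follows essentially the same route as the paper: the construction of $D_{\mathcal{C}}$ is the content of Lemma~\ref{lemma-circuit-disassembly} together with maximality (the paper states $\mathcal{C}\subset\mathcal{C}(D_{\mathcal{C}})$ and then $\mathcal{C}=\mathcal{C}(D_{\mathcal{C}})$ by maximality, which is exactly your no-extraneous-cycles argument unpacked), and the identification of irreducible components via Lemmas~\ref{lemma-union-incidence-set}, \ref{lemma-P-disassembly}, and \ref{lemma-max-P(D)} matches the paper's verification that the $P(D_{\mathcal{C}})$ cover $P(G)$ and are pairwise incomparable. Your second step packages the argument as the standard fact about irreducible components of a finite union of irreducible closed sets, whereas the paper verifies the needed inclusions by hand, but the substance is identical.
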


\begin{proof}
Again, for any disassembly $D$ of $G$, denote by $\mathcal{C}(D)$ the collection of pairwise edge-disjoint directed circuits in $G$ given by the images of the directed cycles in $D$ under the natural homomorphism.

By Lemma \ref{lemma-circuit-disassembly}, there is a $D_{\mathcal{C}} \in \mathrm{Dis}(G)$ such that $\mathcal{C} \subset \mathcal{C}(D_{\mathcal{C}})$. Since $\mathcal{C}$ is maximal, we have that $\mathcal{C} = \mathcal{C}(D_{\mathcal{C}})$. Consider the union $P:=\bigcup_{\mathcal{C}} P(D_{\mathcal{C}})$, where $\mathcal{C}$ runs through all maximal collection of pairwise edge-disjoint directed cycles in $G$. Let $D \in \mathrm{Dis}(G)$. If $\mathcal{C}(D)$ is a maximal collection of pairwise edge-disjoint directed cycles in $G$, then, by Lemma \ref{lemma-P-disassembly}, $P(D)=P(D_{\mathcal{C}(D)})$ is contained in $P$. If $\mathcal{C}(D)$ is not a maximal collection of pairwise edge-disjoint directed cycles in $G$, then, by Lemma \ref{lemma-max-P(D)}, $P(D)$ is a proper subset of $P(D_{\mathcal{C}})$ for some maximal collection $\mathcal{C}$ of pairwise edge-disjoint directed cycles in $G$. Thus, by Lemma \ref{lemma-union-incidence-set},
\begin{equation}\label{eq-irre-comps-P(G)}
P(G) = P := \bigcup_{\mathcal{C}} P(D_{\mathcal{C}}).
\end{equation}

Let $\mathcal{C}_1$ and $\mathcal{C}_2$ be two distinct maximal collections of pairwise edge-disjoint directed cycles in $G$. By Lemma \ref{lemma-P-disassembly}, $P(D_{\mathcal{C}_1}) \neq P(D_{\mathcal{C}_2})$. By Lemma \ref{lemma-max-P(D)}, $P(D_{\mathcal{C}_1})$ and $P(D_{\mathcal{C}_2})$ cannot be proper subsets of each other. Hence, $P(D_{\mathcal{C}_1})$ and $P(D_{\mathcal{C}_2})$ are not subsets of each other. Since every linear subspace of $\mathbb{CP}^{|E(G)|-1}$ is irreducible, this shows that \eqref{eq-irre-comps-P(G)} is the decomposition of $P(G)$ into its irreducible components. 
\end{proof}

Now the proof of Theorem \ref{thm-incidence} is quite easy.

\begin{proof}[Proof of Theorem \ref{thm-incidence}]
First, by Proposition \ref{prop-decomp-incidence-set}, $P(G)$ is a finite union of linear subspaces of $\mathbb{CP}^{|E(G)|-1}$. In particular, 
\[
\dim P(G) = \max \{\dim P(D_{\mathcal{C}})~|~ \mathcal{C} \text{ is a maximal collection of pairwise edge-disjoint directed cycles in $G$}\}.
\]
By Lemma \ref{lemma-P-disassembly}, $\dim P(D_{\mathcal{C}}) = |\mathcal{C}|-1$, where $|\mathcal{C}|$ is the number of pairwise edge-disjoint directed cycles in the maximal collection $\mathcal{C}$.
So 
\[
\dim P(G) = \max \{|\mathcal{C}|-1~|~ \mathcal{C} \text{ is a maximal collection of pairwise edge-disjoint directed cycles in $G$}\} = \alpha(G)-1.
\]
By the definition of $\gamma_n(G)$ in Definition \ref{def-cycle-spectrum}, one can see that 
\begin{eqnarray*}
\gamma_n(G) & = & |\{\mathcal{C} ~|~ \mathcal{C} \text{ is a maximal collection of pairwise edge-disjoint directed cycles in $G$ satisfying } |\mathcal{C}|=n\}| \\
& = & \text{the number of irreducible components of } P(G) \text{ of dimension } n-1.
\end{eqnarray*}
Recall that the degree of a projective algebraic set is the sum of degrees of its top dimensional irreducible components. Since the degree of any linear subspace of $\mathbb{CP}^{|E(G)|-1}$ is $1$, we have
\begin{eqnarray*}
&& \deg P(G) \\
& = & |\{\mathcal{C} ~|~ \mathcal{C} \text{ is a maximal collection of pairwise edge-disjoint directed cycles in $G$ satisfying } \dim P(D_{\mathcal{C}}) = \alpha(G) -1\}| \\
& = & |\{\mathcal{C} ~|~ \mathcal{C} \text{ is a maximal collection of pairwise edge-disjoint directed cycles in $G$ satisfying } |\mathcal{C}|=\alpha(G)\}| \\
& = & \gamma_{\alpha(G)}(G).
\end{eqnarray*}
\end{proof}

\subsection{Collections pairwise disjoint directed cycles}\label{subsec-disjoint} We prove Corollary \ref{cor-incidence-strong} in this subsection.

Let $G$ be a directed graph. We define a new directed graph $B_G$ by 
\begin{enumerate}
	\item splitting each vertex $v$ of $G$ into two vertices $v_{in}$ and $v_{out}$ such that
				\begin{itemize}
					\item all edges pointing into $v$ in $G$ now point into $v_{in}$ in $B_G$,
					\item all edges pointing out of $v$ in $G$ now point out of $v_{out}$ in $B_G$,
				\end{itemize}
	\item for each vertex $v$ of $G$, adding a single edge $z_v$ to $B_G$ pointing from $v_{in}$ to $v_{out}$. 			
\end{enumerate}
Note that $V(B_G)=\{v_{in}~|~v\in V(G)\} \cup \{v_{out}~|~v\in V(G)\}$ and $E(B_G) = E(G) \cup \{z_v~|~v\in V(G)\}$. Clearly, $B_G$ is a directed bipartite graph since the edges in $E(G)$ all point from $\{v_{out}~|~v\in V(G)\}$ to $\{v_{in}~|~v\in V(G)\}$, and each $z_v$ points from $\{v_{in}~|~v\in V(G)\}$ to $\{v_{out}~|~v\in V(G)\}$. 

For a directed cycle $C$ in $G$ given by the sequence $v_0,x_0,v_1,x_1,\dots,x_{n-1},v_n,x_n,v_0$, we define a directed cycle $\ve(C)$ in $B_G$ by replacing each vertex $v_i$ in $C$ by the sequence $(v_i)_{in},z_{v_i},(v_i)_{out}$. That is, $\ve(C)$ is the directed cycle in $B_G$ given by the sequence 
\[
(v_0)_{in},z_{v_0},(v_0)_{out},x_0,(v_1)_{in},z_{v_1},(v_1)_{out},x_1,\dots,x_{n-1},(v_n)_{in},z_{v_n},(v_n)_{out},x_n,(v_0)_{in}.
\]

\begin{lemma}\label{lemma-extended-cycles}
Let $G$ be a directed graph, and $B_G$, $\ve$ defined as above. Then:
\begin{enumerate}
	\item $\ve$ is a bijection from the set of directed cycles in $G$ to the set of directed cycles in $B_G$;
	\item A collection $\mathcal{C}$ of directed cycles in $G$ is pairwise disjoint if and only if the collection $\{\ve(C)~|~C \in \mathcal{C}\}$ is pairwise edge-disjoint in $B_G$;
	\item A collection $\mathcal{C}$ of pairwise disjoint directed cycles in $G$ is strongly maximal in $G$ if and only if the collection $\{\ve(C)~|~C \in \mathcal{C}\}$ of pairwise edge-disjoint directed cycles is maximal in $B_G$;
	\item $\tilde{\alpha}(G) = \alpha(B_G)$, $\tilde{\gamma}_n(G) = \gamma_n(B_G) ~\forall ~n\in \zed_{>0}$.
\end{enumerate}
\end{lemma}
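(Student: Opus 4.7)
The plan is to exploit the bipartite structure of $B_G$: every edge of $E(G) \subset E(B_G)$ points from the ``out''-side $\{v_{out}\}$ to the ``in''-side $\{v_{in}\}$, while each $z_v$ points the other way. So any directed cycle in $B_G$ must strictly alternate between $z_v$-edges and $E(G)$-edges. This rigidity is the engine behind every part of the lemma.

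For Part (1), the map $\ve$ is well-defined and injective by construction, so it suffices to show surjectivity. I would take an arbitrary directed cycle $C'$ in $B_G$ and use the bipartite alternation above to write it in the form
\[
(v_0)_{in}, z_{v_0}, (v_0)_{out}, x_0, (v_1)_{in}, z_{v_1}, (v_1)_{out}, x_1, \dots, (v_n)_{in}, z_{v_n}, (v_n)_{out}, x_n, (v_0)_{in}.
\]
Contracting each triple $(v_i)_{in}, z_{v_i}, (v_i)_{out}$ to the single vertex $v_i$ of $G$ then produces a directed cycle $C$ in $G$ with $\ve(C) = C'$; the distinctness of the $v_i$ in $G$ is inherited from the distinctness of the $(v_i)_{in}$ in $B_G$.

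For Part (2), the observation is that $\ve(C_1)$ and $\ve(C_2)$ share the edge $z_v$ exactly when both $C_1$ and $C_2$ contain the vertex $v$, and they share an $E(G)$-edge only when the two cycles already share the endpoints of that edge. Hence vertex-disjointness of $\mathcal{C}$ in $G$ translates exactly into edge-disjointness of $\{\ve(C) ~|~ C \in \mathcal{C}\}$ in $B_G$.

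For Part (3), I plan to argue both directions by contradiction. If $\mathcal{C}$ is strongly maximal in $G$ but $\{\ve(C) ~|~ C \in \mathcal{C}\}$ fails to be maximal in $B_G$, I pick a directed cycle $C'$ in $B_G$ edge-disjoint from all $\ve(C)$; by Parts (1) and (2), $C' = \ve(\tilde{C})$ for a directed cycle $\tilde{C}$ in $G$ vertex-disjoint from every $C \in \mathcal{C}$, so $\mathcal{C} \cup \{\tilde{C}\}$ contradicts strong maximality. The reverse direction is symmetric: any directed cycle in $G$ surviving the removal of all edges incident at vertices of $\mathcal{C}$ lifts via $\ve$ to a cycle in $B_G$ edge-disjoint from all $\ve(C)$. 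Part (4) then follows immediately by taking maxima and counting maximal collections through the bijection established in Parts (1)--(3). The main obstacle is Part (1)'s surjectivity, where I must carefully exploit the bipartite alternation to reconstruct the $G$-cycle; once this correspondence is in hand, the remaining parts reduce to bookkeeping built on the one-to-one dictionary between $z_v$-edges of $B_G$ and vertices of $G$.
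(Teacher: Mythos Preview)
Your proposal is correct and follows essentially the same approach as the paper: the bipartite alternation of $B_G$ forces surjectivity of $\ve$ in Part~(1), the dictionary ``$\ve(C)$ contains $z_v$ iff $C$ contains $v$'' yields Parts~(2) and~(3), and Part~(4) is an immediate consequence. Your write-up is in fact more detailed than the paper's, which dispatches (2) and (3) in a single sentence from the $z_v$-vertex correspondence.
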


\begin{proof}
For Part (1), note that $\ve$ is injective since $\ve(C)$ determines the set of edges of $C$. Also, since $B_G$ is bipartite, the edges in any directed cycle in $B_G$ must alternate between $E(G)$ and $\{z_v~|~v\in V(G)\}$. This implies that $\ve$ is surjective.

Two directed cycles $C_1$ and $C_2$ in $G$ share the vertex $v$ if and only if the directed cycles $\ve(C_1)$ and $\ve(C_2)$ in $B_G$ share the edge $z_v$. This implies Parts (2) and (3).

Part (4) follows from Parts (1-3).
\end{proof}

With Lemma \ref{lemma-extended-cycles}, it is easy to see that Corollary \ref{cor-incidence-strong} follows from Theorem \ref{thm-incidence}.

\begin{proof}[Proof of Corollary \ref{cor-incidence-strong}]
We order edges of $G$ as $x_1,\dots,x_n$ and vertices of $G$ as $v_1,\dots,v_m$, where $n=|E(G)|$ and $m=|V(G)|$. This give an ordering of the edges of $B_G$ as $x_1,\dots,x_n,z_{v_1},\dots,z_{v_m}$. 

For a vertex $v$ of $G$, denote by $In(v)$ the set of edges in $G$ pointing into $v$. Define a map 
\[
\Phi:\mathbb{CP}^{|E(G)|-1}=\mathbb{CP}^{n-1} \rightarrow \mathbb{CP}^{|E(B_G)|-1} = \mathbb{CP}^{n+m-1}
\] 
by 
\[
\Phi(x_1:\cdots:x_n)=(x_1:\cdots:x_n:z_{v_1}:\cdots:z_{v_m}),
\] 
where $z_{v_i} = \sum_{x\in In(v_i)} x$.

Let $W$ be the Zariski open set in $\mathbb{CP}^{|E(B_G)|-1}$ given by 
\[
W:= \{(x_1:\cdots:x_n:z_{v_1}:\cdots:z_{v_m}) \in \mathbb{CP}^{|E(B_G)|-1}~|~ (x_1,\dots,x_n) \neq 0\}.
\]
Define a map $\Psi:W \rightarrow \mathbb{CP}^{|E(G)|-1}$ by 
\[
\Psi(x_1:\cdots:x_n:z_{v_1}:\cdots:z_{v_m}) = (x_1:\cdots:x_n).
\]

It is clear that 
\begin{itemize}
	\item $\Phi(\tilde{P}(G)) = P(B_G)$ and $\Psi(P(B_G)) = \tilde{P}(G)$, where $P(B_G)$ is defined as in Definition \ref{def-incidence};
	\item $\phi=\Phi|_{\tilde{P}(G)}:\tilde{P}(G) \rightarrow P(B_G)$ and $\psi=\Psi|_{P(B_G)}:P(B_G) \rightarrow \tilde{P}(G)$ are morphisms of projective algebraic sets;
	\item $\psi\circ \phi =\id_{\tilde{P}(G)}$ and $\phi\circ \psi = \id_{P(B_G)}$.
\end{itemize}
So $\phi$ and $\psi$ are isomorphisms of projective algebraic sets. Therefore, $\tilde{P}(G)$ and $P(B_G)$ are isomorphic as projective algebraic sets. In particular, they have the same dimension and irreducible decomposition. By Part (4) of Lemma \ref{lemma-extended-cycles}, this proves Parts (2) and (4) of Corollary \ref{cor-incidence-strong} follows from Parts (2) and (4) of Theorem \ref{thm-incidence}.

Note that all components of $\Phi$ and $\Psi$ are linear functions of the coordinates. So they map linear subspaces to linear subspaces. By Theorem \ref{thm-incidence}, $P(B_G)$ is the union of finitely many linear subspaces of $\mathbb{CP}^{|E(B_G)|-1}$. So $\tilde{P}(G)$ is a union of finitely many linear subspaces of $\mathbb{CP}^{|E(G)|-1}$. This proves Part (1) of Corollary \ref{cor-incidence-strong}.

Finally, since all irreducible components of $\tilde{P}(G)$ are linear subspaces, and the degree of any linear subspace is $1$, $\deg \tilde{P}(G)$ is equal to the number of irreducible components of $\tilde{P}(G)$ of dimension $\tilde{\alpha}(G)-1 = \alpha(B_G)-1$. So, by Part (4) of Corollary \ref{cor-incidence-strong}, $\deg \tilde{P}(G)=\tilde{\gamma}_{\tilde{\alpha}(G)}(G)$. This proves Part (3) of Corollary \ref{cor-incidence-strong}.
\end{proof}

\subsection{Irreducible incidence sets}
In this subsection, we prove Theorem \ref{thm-Krull-sharp-varieties}. First, we recall a fact about the dimension of projective algebraic sets.

\begin{fact}\label{fact-dim-chain}
The dimension of a projective algebraic set $X$ is the maximal length $r$ of a strictly increasing chain $Z_0 \subsetneq Z_1 \subsetneq \cdots \subsetneq Z_r$ of irreducible projective algebraic set contained in $X$.
\end{fact}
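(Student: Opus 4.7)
The plan is to deduce this from Fact \ref{fact-kdim} together with the standard order-reversing correspondence between irreducible closed subsets of a projective algebraic set and certain homogeneous prime ideals of its coordinate ring. Since the statement is labeled a \emph{Fact}, I will only sketch the argument; no new input is needed beyond what is already granted to the reader.

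Let $I \subset \Q[x_0,\dots,x_n]$ be a radical homogeneous ideal defining $X = X(I) \subseteq \mathbb{CP}^n$, and write $R = \Q[x_0,\dots,x_n]/I$. The projective Nullstellensatz yields an inclusion-reversing bijection between irreducible projective algebraic subsets of $X$ and homogeneous prime ideals $\mathfrak{p}$ of $R$ that are strictly contained in the irrelevant ideal $\mathfrak{m}_+ := (x_0,\dots,x_n)/I$. Under this bijection, a strictly increasing chain $Z_0 \subsetneq Z_1 \subsetneq \cdots \subsetneq Z_r$ of irreducible projective subsets of $X$ corresponds to a strictly decreasing chain $\mathfrak{p}_r \subsetneq \mathfrak{p}_{r-1} \subsetneq \cdots \subsetneq \mathfrak{p}_0 \subsetneq \mathfrak{m}_+$ of homogeneous primes of $R$, and conversely.

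Next I would invoke two standard facts about finitely generated graded algebras over a field: (i) the Krull dimension is attained by chains of homogeneous prime ideals, and (ii) every maximal such chain terminates at the irrelevant ideal $\mathfrak{m}_+$. Together these imply that the maximal length $r$ of a chain of homogeneous primes strictly contained in $\mathfrak{m}_+$ is $\Kdim R - 1$. Combining this with the bijection above and with Fact \ref{fact-kdim} yields
\[
\max r \;=\; \Kdim R - 1 \;=\; \dim X,
\]
as claimed.

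The only genuine work lies in the graded Nullstellensatz and in item (i); both are completely standard and can be found in any introductory commutative algebra or projective algebraic geometry text. Consequently I do not anticipate a serious obstacle beyond correctly invoking these results in the homogeneous setting.
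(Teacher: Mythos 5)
The paper does not actually prove this statement: it is presented as a \emph{Fact} with no argument, citing Arrondo's notes at the start of the section as the source for ``facts about projective algebraic geometry.'' In many standard treatments, the maximal length of a chain of irreducible closed subsets \emph{is} the definition of the dimension of a projective algebraic set (with Fact~\ref{fact-kdim}, the Krull-dimension reformulation, being the theorem), so there is nothing to prove under that convention. Your sketch instead treats Fact~\ref{fact-kdim} as the primitive and derives Fact~\ref{fact-dim-chain} from it via the graded prime correspondence; that is a legitimate alternative packaging, and the two statements are indeed interderivable through exactly the argument you describe.

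One point you gloss over and should flag: the ideal $I$ lives in $\Q[x_0,\dots,x_n]$ but $X(I)$ sits in $\mathbb{CP}^n$, so the Nullstellensatz correspondence is between irreducible closed subsets of $X$ and homogeneous primes of $\C[x_0,\dots,x_n]/(I\cdot\C[x_0,\dots,x_n])$, not of $R=\Q[x_0,\dots,x_n]/I$ as you wrote. The conclusion survives because $\Kdim R$ equals $\Kdim\bigl(\C[x_0,\dots,x_n]/(I\cdot\C[x_0,\dots,x_n])\bigr)$ (both are finitely generated graded algebras over a field and the dimension is governed by transcendence degree, so it is stable under field extension), but you should either make the bijection over $\C$ and then pass the Krull dimension back to $\Q$, or note the base-change step explicitly. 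Your invocation of the two graded facts --- that Krull dimension is realized by homogeneous prime chains, and that every proper homogeneous ideal sits inside the irrelevant ideal because $R_0$ is a field --- is correct and is exactly what makes the ``$-1$'' in Fact~\ref{fact-kdim} cancel against the top link $\mathfrak{m}_+$ of the chain.
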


We divide the proof of Theorem \ref{thm-Krull-sharp-varieties} into Lemmas \ref{lemma-variety=>generators} and \ref{lemma-generators=>variety} below.

\begin{lemma}\label{lemma-variety=>generators}
For a directed graph $G$, if $P(G)$ is a projective variety, then $G$ contains exactly $\alpha(G)$ distinct directed cycles, and $P(G)$ is a linear subspace of $\mathbb{CP}^{|E(G)|-1}$.
\end{lemma}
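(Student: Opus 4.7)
The plan is to extract this essentially for free from Proposition \ref{prop-decomp-incidence-set}, which already identifies the irreducible components of $P(G)$ with the maximal collections of pairwise edge-disjoint directed cycles in $G$. If $P(G)$ is irreducible, then there is exactly one such maximal collection $\mathcal{C}^*$, and $P(G)=P(D_{\mathcal{C}^*})$. By Part (1) of Lemma \ref{lemma-P-disassembly}, $P(D_{\mathcal{C}^*})$ is a linear subspace of $\mathbb{CP}^{|E(G)|-1}$, which takes care of the linearity conclusion. Moreover, since $\alpha(G)$ is achieved by some maximal collection and $\mathcal{C}^*$ is the only one, we must have $|\mathcal{C}^*|=\alpha(G)$.

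It then remains to show that every directed cycle in $G$ belongs to $\mathcal{C}^*$, which forces $G$ to have exactly $\alpha(G)$ distinct directed cycles. The key step is this: given any directed cycle $C_0$ in $G$, the singleton $\{C_0\}$ is pairwise edge-disjoint, so it can be extended greedily (adding cycles one at a time as long as edge-disjointness is preserved) to a maximal collection $\tilde{\mathcal{C}}$ of pairwise edge-disjoint directed cycles containing $C_0$. By uniqueness, $\tilde{\mathcal{C}}=\mathcal{C}^*$, and hence $C_0 \in \mathcal{C}^*$.

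The main conceptual obstacle I anticipate is nothing more than keeping straight the difference between ``maximal'' in the sense of Definition \ref{def-cycle-spectrum} (not extendable) and ``of maximum size'' (realizing $\alpha(G)$); the proof rests on exploiting the first of these via a greedy extension, while the second then follows from the uniqueness of irreducible component. No genuinely hard input beyond Proposition \ref{prop-decomp-incidence-set} and Lemma \ref{lemma-P-disassembly} is required.
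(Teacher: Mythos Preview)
Your proof is correct, and it takes a genuinely different route from the paper's. The paper does not invoke Proposition \ref{prop-decomp-incidence-set} directly; instead it constructs the explicit linear subspace $L$ cut out by the equations $x_{i,j}=x_{i,k}$ (edges in the same cycle of a fixed $\alpha(G)$-collection are equal) and $x=0$ (edges outside the collection vanish), observes $L\subset P(G)$ with $\dim L=\alpha(G)-1$, and then uses irreducibility together with Fact \ref{fact-dim-chain} and Theorem \ref{thm-incidence} to force $L=P(G)$. To rule out a stray cycle $C$, the paper builds the single point $p$ determined by $x=y$ on $C$ and $x=0$ off $C$, and checks $p\in P(G)\setminus L$. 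Your argument instead leverages the full strength of Proposition \ref{prop-decomp-incidence-set}: irreducibility forces a \emph{unique} maximal collection $\mathcal{C}^*$ (here you are implicitly using that distinct maximal collections give distinct components, which is established in the proof of that proposition via Lemma \ref{lemma-P-disassembly}(2)), and then the greedy-extension step shows every cycle lies in $\mathcal{C}^*$. Your approach is more conceptual and shorter, essentially reading the conclusion off the structure theorem; the paper's approach is more self-contained at the level of equations and points, and in particular identifies $P(G)$ explicitly as the linear space $L$ attached to the collection.
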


\begin{proof}
Let $\alpha=\alpha(G)$ and $\{C_1,\dots,C_\alpha\}$ a collection of $\alpha$ pairwise edge-disjoint directed cycles in $G$. For each $C_i$, denote by $E(C_i)=\{x_{i,1},\dots,x_{i,l_i}\}$ the set of edges of $C_i$. Consider the linear subspace $L$ of $\mathbb{CP}^{|E(G)|-1}$ defined by 
\begin{eqnarray}
	 \label{eq-in-cycle} x_{i,j}& =& x_{i,k},~\forall ~i,~j \text{ and }k, \\
	 \label{eq-out-cycle} x & = & 0, ~\forall ~x \in E(G)\setminus (\bigcup_{i=1}^{\alpha} E(C_i)).
\end{eqnarray}
Then $L$ is of dimension $\alpha-1$, and is contained in $P(G)$. Assume $L \subsetneq P(G)$. Since $P(G)$ is irreducible, by Fact \ref{fact-dim-chain}, this implies that $\alpha-1 =\dim L<\dim P(G)$, which contradicts Theorem \ref{thm-incidence}. Thus, $P(G)=L$ is a linear subspace of $\mathbb{CP}^{|E(G)|-1}$. It remains to show that $G$ contains exactly $\alpha$ directed cycles. In other words, there are no directed cycles in $G$ other than the cycles $C_1,\dots,C_\alpha$. Assume there is a directed cycle $C$ in $G$ different from $C_1,\dots,C_\alpha$. Consider the equations 
\[
\begin{cases}
x=y, ~\forall ~x,y \in E(C), \\
x=0, ~\forall ~x \in E(G)\setminus E(C),
\end{cases}
\]
where $E(C)$ is the set of edges of $C$. These equations determine a single point $p \in P(G)$. Since $C$ is not equal to any $C_i$, one can see that either $C$ contains an edge in $E(G)\setminus (\bigcup_{i=1}^{\alpha} E(C_i))$, or there is a $C_i$ such that $\emptyset \subsetneq E(C)\cap E(C_i) \subsetneq E(C_i)$. In either case, it is easy to check that $p \notin L$. This is a contradiction since $L=P(G)$.
\end{proof}

\begin{lemma}\label{lemma-generators=>variety}
For a directed graph $G$, if $G$ contains exactly $\alpha(G)$ distinct directed cycles, then $P(G)$ is a linear subspace of $\mathbb{CP}^{|E(G)|-1}$.
\end{lemma}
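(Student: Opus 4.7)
The plan is to deduce this from Proposition \ref{prop-decomp-incidence-set}, which already gives the decomposition of $P(G)$ into linear irreducible components indexed by maximal collections of pairwise edge-disjoint directed cycles in $G$. It therefore suffices to exhibit a unique such maximal collection under the hypothesis.

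First I would observe that if $G$ contains exactly $\alpha=\alpha(G)$ distinct directed cycles $C_1,\dots,C_\alpha$, then the $C_i$ are automatically pairwise edge-disjoint. Indeed, by definition of $\alpha(G)$ there exists some subcollection of size $\alpha$ that is pairwise edge-disjoint, and since $\{C_1,\dots,C_\alpha\}$ is the entire set of directed cycles in $G$, this subcollection must coincide with $\{C_1,\dots,C_\alpha\}$. Consequently $\mathcal{C}:=\{C_1,\dots,C_\alpha\}$ is a maximal collection of pairwise edge-disjoint directed cycles in $G$, and it is the \emph{only} one: any maximal collection consists of directed cycles of $G$, hence of elements of $\mathcal{C}$, and the collection is maximal only if it equals $\mathcal{C}$ (since adding any remaining $C_i$ would still yield a pairwise edge-disjoint collection).

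Then I would apply Proposition \ref{prop-decomp-incidence-set}: the irreducible components of $P(G)$ are exactly the linear subspaces $P(D_{\mathcal{C}'})$ as $\mathcal{C}'$ ranges over maximal collections of pairwise edge-disjoint directed cycles in $G$. Since the only such $\mathcal{C}'$ is $\mathcal{C}$, the set $P(G)$ has a unique irreducible component, namely $P(D_{\mathcal{C}})$, which is a linear subspace of $\mathbb{CP}^{|E(G)|-1}$ by Lemma \ref{lemma-P-disassembly}. Hence $P(G)=P(D_{\mathcal{C}})$ is itself a linear subspace.

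This is essentially a bookkeeping argument once Proposition \ref{prop-decomp-incidence-set} is in place; I do not anticipate any genuine obstacle. The only point requiring care is the first observation that the hypothesis forces all $\alpha(G)$ cycles to be pairwise edge-disjoint, and the subsequent uniqueness of the maximal collection — both of which are immediate from the definitions.
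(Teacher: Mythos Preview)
Your argument is correct and considerably shorter than the paper's own proof. You use Proposition \ref{prop-decomp-incidence-set} as a black box: once you know the irreducible components of $P(G)$ are exactly the linear subspaces $P(D_{\mathcal{C}'})$ indexed by maximal collections $\mathcal{C}'$, the hypothesis forces a unique such $\mathcal{C}'$, and you are done. The only substantive step is the observation that all $\alpha(G)$ cycles must be pairwise edge-disjoint and hence form the unique maximal collection, which you handle correctly.

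The paper takes a more hands-on route: it fixes the collection $\{C_1,\dots,C_\alpha\}$, writes down the explicit linear equations \eqref{eq-in-cycle}--\eqref{eq-out-cycle} defining the candidate subspace $L$, and then proves directly that $P(G)=L$. To show $P(G)\subset L$ it first invokes Corollary \ref{cor-KR-detect-edge} (a forward reference) to force $x=0$ on $P(G)$ for edges outside the cycles, and then proves a combinatorial claim --- that in any nonempty subcollection of the $C_i$ there is a cycle meeting the rest in at most one vertex --- which it uses to peel off the cycles one at a time and verify the equations $x_{i,j}=x_{i,k}$. Your approach avoids both the forward reference and the combinatorial claim entirely; the paper's approach, on the other hand, yields the explicit defining equations of $P(G)$ as a byproduct.
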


\begin{proof}
Let $\alpha=\alpha(G)$ and $\{C_1,\dots,C_\alpha\}$ a collection of $\alpha$ pairwise edge-disjoint directed cycles in $G$. For each $C_i$, denote by $E(C_i)=\{x_{i,1},\dots,x_{i,l_i}\}$ the set of edges of $C_i$. We prove that $P(G)$ is equal to the linear subspace $L$ of $\mathbb{CP}^{|E(G)|-1}$ defined by the equations \eqref{eq-in-cycle} and \eqref{eq-out-cycle}.

Since $G$ contains no directed cycles other than $C_1,\dots,C_\alpha$, no edge $x \in E(G)\setminus (\bigcup_{i=1}^{\alpha} E(C_i))$ is contained in any directed cycles in $G$. By Corollary \ref{cor-KR-detect-edge}\footnote{Although Corollary \ref{cor-KR-detect-edge} appears in this paper later than Theorem \ref{thm-Krull-sharp-varieties}, its proof does not depend on Theorem \ref{thm-Krull-sharp-varieties}.}, this means that $x^n \in I(G)$ for some $n >0$. Thus, $P(G)$ satisfies equation \eqref{eq-out-cycle}.

Next consider the common vertices of the directed cycles $C_1,\dots,C_\alpha$.

\emph{Claim: Let $\emptyset \subsetneq S\subset\{C_1,\dots,C_\alpha\}$. Then there is a $C_{i} \in S$ such that $V(C_{i}) \cap (\bigcup_{C \in S\setminus \{C_i\}} V(C))$ is either the empty set or the set of a single vertex, where $V(C)$ is the set of vertices of $C$.}

If this claim is not true, then $|V(C_{i}) \cap (\bigcup_{C \in S\setminus \{C_i\}} V(C))| \geq 2$ for all $C_i \in S$. Based on this, one can inductively construct a sequence $C_{i_1},v_1, C_{i_2}, v_2,\dots v_{n-1},C_{i_n},v_n,\dots$ such that, for all $n \geq 1$
\begin{itemize}
	\item $C_{i_n} \in S$,
	\item $C_{i_n} \neq C_{i_{n+1}}$,
	\item $v_n \in V(C_{i_n}) \cap V(C_{i_{n+1}})$,
	\item $v_n \neq v_{n+1}$.
\end{itemize}
Since $S$ is a finite set, there are positive integers $m$ and $n$ such that $m+1 <n$, $C_{i_n}=C_{i_m}$, and $C_{i_m}, C_{i_{m+1}},\dots, C_{i_{n-1}}$ are pairwise distinct. For $m+1\leq l \leq n-1$, there is a directed path $P_l$ in $C_{i_l}$ from $v_{l-1}$ to $v_l$. And there is a directed path $P_n$ in $C_{i_n}=C_{i_m}$ from $v_{n-1}$ to $v_m$. In the case $v_{n-1}=v_m$, we choose $P_n$ to be the path with no edge. (Note that, if $v_{n-1}=v_m$, then $n>m+2$ since $v_{n-1}\neq v_{n-2}$.) Putting the directed paths $P_{m+1},\dots,P_n$ together, we get a directed circuit $P$ such that $E(P)\neq \emptyset$ and $E(C_i) \nsubseteq E(P)$ for every $i=1,\dots, \alpha$. This implies that $P$ contains a directed cycle other than $C_1,\dots,C_\alpha$, which is a contradiction.

Now we are ready to prove that $P(G)$ satisfies equation \eqref{eq-in-cycle}. Let $S_1 = \{C_1,\dots,C_\alpha\}$. By the above claim, there is an $i_1\in \{1,2,\dots,\alpha\}$ such that $|V(C_{i_1}) \cap (\bigcup_{C \in S\setminus \{C_{i_1}\}} V(C))| \leq 1$. Using this and the fact that $P(G)$ satisfies equation \eqref{eq-out-cycle}, it is easy to see that $P(G)$ satisfies equation \eqref{eq-in-cycle} for $i=i_1$. Now let $S_2 = S_1 \setminus \{C_{i_1}\}$. Applying the above claim to $S_2$, and using the fact $P(G)$ satisfies equation \eqref{eq-out-cycle} and equation \eqref{eq-in-cycle} for $i=i_1$, one can find a $C_{i_2}\in S_2$ such that $P(G)$ satisfies equation \eqref{eq-in-cycle} for $i=i_2$. Then let $S_3 = S_2 \setminus \{C_{i_2}\}$ and iterate the above argument. After $\alpha$ iterations of this argument, we conclude that $P(G)$ satisfies equation \eqref{eq-in-cycle} for $i=1,\dots, \alpha$. Thus, $P(G) \subset L$. But it is clear that $L \subset P(G)$. This shows that $P(G)=L$ is a linear subspace of $\mathbb{CP}^{|E(G)|-1}$. 
\end{proof}

\begin{proof}[Proof of Theorem \ref{thm-Krull-sharp-varieties}]
First consider Part (1). Linear subspaces of a projective space are irreducible. So (1-b) $\Rightarrow$ (1-a). (1-a) $\Rightarrow$ (1-b) and (1-a) $\Rightarrow$ (1-c) follow from Lemma \ref{lemma-variety=>generators}. Finally, Lemma \ref{lemma-generators=>variety} means (1-c) $\Rightarrow$ (1-b). This completes the proof of Part (1) of Theorem \ref{thm-Krull-sharp-varieties}.

By the proof of Corollary \ref{cor-incidence-strong} in Subsection \ref{subsec-disjoint}, $\tilde{P}(G) \cong P(B_G)$ as projective algebraic sets, and the isomorphism preserves linearity. So Part (2) of Theorem \ref{thm-Krull-sharp-varieties} follows from Part (1) of Theorem \ref{thm-Krull-sharp-varieties} and Part (4) of Lemma \ref{lemma-extended-cycles}.

Finally, assume $\tilde{P}(G)$ is a projective variety. Note that $G$ always contains $\alpha(G)$ distinct directed cycles and $\alpha(G)\geq \tilde{\alpha}(G)$. So, if $G$ contains exactly $\tilde{\alpha}(G)$ distinct directed cycles, then $\alpha(G)= \tilde{\alpha}(G)$, and $G$ contains exactly $\alpha(G)$ distinct directed cycles. This means $P(G)$ is also a projective variety. But $\tilde{P}(G)$ is a linear subspace of $P(G)$ of dimension $\tilde{\alpha}(G)-1 =\alpha(G)-1 = \dim P(G)$. By Fact \ref{fact-dim-chain}, this means $P(G)=\tilde{P}(G)$.
\end{proof}

\section{Khovanov-Rozansky Homology of Directed Graphs}\label{sec-KR}

In this section, we review the definition and properties of graded Koszul chain complexes and deduce some basic properties of the Khovanov-Rozansky Homology.

\subsection{Graded Koszul chain complexes} Let $R$ be a $\zed$-graded commutative ring with $1$ and $M$ a $\zed$-graded $R$-module. For any $l\in \zed$, denote by $M\{l\}$ the $\zed$-graded $R$-module obtained from $M$ by shifting the grading by $l$. That is, $M\{l\}=M$ as ungraded $R$-modules, and, for any homogeneous element $m\in M$, $\deg_{M\{l\}} m = \deg_M m +l$.

\begin{definition}\label{def-Koszul}
Let $r$ be a homogeneous element of $R$. The graded Koszul chain complex $C_\ast^R(r)$ over $R$ defined by $r$ is the two term chain complex
\[
C_\ast^R(r) = 0 \rightarrow \underbrace{R\{\deg r\}}_{1} \xrightarrow{r} \underbrace{R}_{0} \rightarrow 0,
\]
where $r$ acts by multiplication and the under-braces indicate the homological grading. Note that this is a chain complex of graded $R$-modules, and its boundary map preserves the module grading.

More generally, the graded Koszul chain complex $C_\ast^R(r_1,r_2,\dots,r_k)$ over $R$ defined by the sequence $\{r_1,r_2,\dots,r_k\}$ of homogeneous elements of $R$ is the tensor product over $R$ of the graded Koszul chain complexes associated to all elements in this sequence, that is, the tensor product
\begin{eqnarray*}
&& C_\ast^R(r_1,r_2,\dots,r_k) = C_\ast^R(r_1) \otimes_R C_\ast^R(r_2) \otimes_R \cdots \otimes_R C_\ast^R(r_k) \\
& = & (0 \rightarrow \underbrace{R\{\deg r_1\}}_{1} \xrightarrow{r_1} \underbrace{R}_{0} \rightarrow 0)\otimes_R (0 \rightarrow \underbrace{R\{\deg r_2\}}_{1} \xrightarrow{r_2} \underbrace{R}_{0} \rightarrow 0)\otimes_R \cdots \otimes_R 0 (\rightarrow \underbrace{R\{\deg r_k\}}_{1} \xrightarrow{r_k} \underbrace{R}_{0} \rightarrow 0).
\end{eqnarray*}
Note that 
\begin{itemize}
	\item $C_\ast^R(r_1,r_2,\dots,r_k)$ is a chain complex of graded $R$-modules, and its boundary map preserves the module grading,
	\item permuting the sequence $\{r_1,r_2,\dots,r_k\}$ permutes the factors in the above tensor product and, therefore, does not change the isomorphism type of the Koszul chain complex $C_\ast^R(r_1,r_2,\dots,r_k)$.
\end{itemize}
When $R$ is clear from the context, we drop it from the notation of Koszul chain complexes.
\end{definition}

Next, we recall some simple facts about graded Koszul chain complexes.

\begin{lemma}\label{lemma-C-0}
$H_0(C_\ast^R(r_1,r_2,\dots,r_k)) \cong R/(r_1,r_2,\dots,r_k)$.
\end{lemma}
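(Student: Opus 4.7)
The plan is to read off $H_0$ directly from the structure of the tensor product used to build $C_\ast^R(r_1,\dots,r_k)$. First I would identify the chain modules in homological degrees $0$ and $1$. In homological degree $0$ one must pick the degree $0$ summand $R$ from each factor $C_\ast^R(r_i)$, so $C_0 \cong R\otimes_R\cdots\otimes_R R \cong R$. In homological degree $1$ one picks the degree $1$ summand $R\{\deg r_i\}$ from exactly one factor $C_\ast^R(r_i)$ and the degree $0$ summand $R$ from every other factor, so
\[
C_1 \;\cong\; \bigoplus_{i=1}^k R\{\deg r_i\}.
\]

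Next I would check that the boundary $\partial : C_1 \to C_0 \cong R$ restricted to the $i$-th summand $R\{\deg r_i\}$ is, up to a Koszul sign, multiplication by $r_i$. This is immediate from the Leibniz rule for differentials on a tensor product of complexes, together with the fact that each factor $C_\ast^R(r_i)$ has boundary equal to multiplication by $r_i$. The signs are irrelevant for the image, so
\[
\im \partial \;=\; r_1 R + r_2 R + \cdots + r_k R \;=\; (r_1,r_2,\dots,r_k) \subset R,
\]
and therefore $H_0(C_\ast^R(r_1,\dots,r_k)) = C_0/\im\partial \cong R/(r_1,\dots,r_k)$, which is the desired isomorphism of graded $R$-modules.

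As an alternative route one could argue by induction on $k$. The base case $k=1$ is immediate from the two-term complex. For the inductive step one uses the short exact sequence of complexes $0 \to R\{\deg r_k\}[1] \to C_\ast^R(r_k) \to R \to 0$, tensors it with $C_\ast^R(r_1,\dots,r_{k-1})$ (which preserves exactness since both nonzero terms are free), and examines the tail of the resulting long exact sequence in homology, obtaining $H_0(C_\ast^R(r_1,\dots,r_k)) \cong H_0(C_\ast^R(r_1,\dots,r_{k-1}))/r_k\,H_0(C_\ast^R(r_1,\dots,r_{k-1}))$. The inductive hypothesis then gives $R/(r_1,\dots,r_k)$. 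No substantive obstacle is expected; the only point requiring care is tracking the Koszul signs in the tensor-product differential, which do not affect the image and hence do not affect $H_0$.
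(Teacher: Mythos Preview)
Your proof is correct and takes essentially the same approach as the paper: both identify $C_0\cong R$, $C_1\cong\bigoplus_{i=1}^k R\{\deg r_i\}$, and observe that the boundary map is (up to sign) multiplication by $(r_1,\dots,r_k)$, so $H_0\cong R/(r_1,\dots,r_k)$. Your alternative inductive argument and your remarks on Koszul signs are sound but more than the paper bothers with for this elementary lemma.
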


\begin{proof}
Note that $C_\ast^R(r_1,r_2,\dots,r_k)$ is of the form
\[
\cdots \rightarrow \underbrace{\left.%
\begin{array}{c}
  R\{ \deg r_1\} \\
  \oplus \\
  \vdots \\
  \oplus \\
	R\{ \deg r_k\}
\end{array}%
\right.}_{1} \xrightarrow{(r_1,\dots,r_k)} \underbrace{R}_{0} \rightarrow 0.
\]
The lemma follows immediately.
\end{proof}

\begin{lemma}\label{lemma-C-ideal}
The endomorphism $C_\ast^R(r_1,r_2,\dots,r_k) \xrightarrow{r_j} C_\ast^R(r_1,r_2,\dots,r_k)$ is a chain map homotopic to $0$ for $j=1,\dots,k$. Consequently, $H_\ast(C_\ast^R(r_1,r_2,\dots,r_k))$ is a finitely generated $R/(r_1,r_2,\dots,r_k)$-module.
\end{lemma}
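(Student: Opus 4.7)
The plan is to exhibit an explicit null-homotopy of the multiplication-by-$r_j$ endomorphism, factor by factor, and then invoke the tensor-product structure of the Koszul complex to finish.

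First I would handle the single-element case $k=1$. The complex $C_\ast^R(r)$ has only two nonzero modules, $C_1 = R\{\deg r\}$ and $C_0 = R$, with boundary $d_1 = r$. Define a degree-$(\deg r)$ map of graded modules $h: C_\ast \to C_{\ast+1}$ by taking $h_0 = \id: R \to R\{\deg r\}$ (which preserves the internal grading because of the shift) and $h_1 = 0$. A direct check shows $d_1 h_0 = r \cdot \id_{C_0}$ and $h_0 d_1 = r \cdot \id_{C_1}$, so $dh + hd = \mu_r$ on the entire complex. Thus $\mu_r$ is null-homotopic on $C_\ast^R(r)$.

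Next I would extend this to the general Koszul complex by tensoring. Writing
\[
C_\ast^R(r_1,\dots,r_k) = C_\ast^R(r_1) \otimes_R \cdots \otimes_R C_\ast^R(r_k),
\]
set $H_j := \id \otimes \cdots \otimes h \otimes \cdots \otimes \id$ with the null-homotopy $h$ for $\mu_{r_j}$ inserted in the $j$-th tensor factor. A short calculation using the graded Leibniz rule $d(a \otimes b) = da \otimes b + (-1)^{|a|} a \otimes db$ and the fact that $H_j$ raises homological degree by $1$ shows that the crossed terms $(-1)^{|a|} h(a) \otimes db$ cancel between $dH_j$ and $H_j d$, leaving $(dh + hd)$ acting in the $j$-th slot and identities elsewhere. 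Since $dh + hd = \mu_{r_j}$ on the single factor, this yields $dH_j + H_j d = \mu_{r_j}$ on the full complex, proving the first assertion.

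For the consequence, $\mu_{r_j}$ being null-homotopic means it induces the zero map on $H_\ast$, so each $r_j$ annihilates $H_\ast(C_\ast^R(r_1,\dots,r_k))$; hence the ideal $(r_1,\dots,r_k)$ is contained in $\ann_R H_\ast$, and $H_\ast$ is naturally an $R/(r_1,\dots,r_k)$-module. Finite generation then reduces to observing that each chain module $C_i$ is a finite direct sum of grading shifts of $R$, hence finitely generated over $R$; for the rings of interest in this paper (polynomial rings $\zed[E(G)]$ or $\Q[E(G)]$), which are Noetherian by the Hilbert basis theorem, every subquotient of a finitely generated module is finitely generated, so $H_\ast$ is finitely generated over $R$ and therefore over $R/(r_1,\dots,r_k)$. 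The main (minor) obstacle is simply keeping track of signs in the tensor-product step; everything else is bookkeeping.
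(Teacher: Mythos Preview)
Your proof is correct and follows essentially the same approach as the paper: construct the null-homotopy on a single factor $C_\ast^R(r_j)$ and then tensor with the identity on the remaining factors. You are in fact more careful than the paper on the finite-generation clause, correctly noting that it requires $R$ to be Noetherian (which holds for the polynomial rings $\zed[E(G)]$ and $\Q[E(G)]$ actually used); as stated for an arbitrary graded ring $R$, the ``consequently'' part need not hold.
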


\begin{proof}
We only prove that $C_\ast^R(r_1,r_2,\dots,r_k) \xrightarrow{r_1} C_\ast^R(r_1,r_2,\dots,r_k)$ is a chain map homotopic to $0$. Define a chain homotopy $H:C_\ast^R(r_1) \rightarrow C_\ast^R(r_1)$ of homological degree $1$ by
\[
\xymatrix{
0  \ar[r] & R\{ \deg r_1\} \ar[r]^>>>>>>{r_1} & R \ar[dl]_{1} \ar[r] & 0  \\
0  \ar[r] & R\{ \deg r_1\} \ar[r]^>>>>>>{r_1} & R\ar[r] & 0.
}
\]
Denote by $d_1$ the boundary map of $C_\ast^R(r_1)$. Then, for the endomorphism $C_\ast^R(r_1) \xrightarrow{r_1} C_\ast^R(r_1)$, we have $r_1 = H\circ d_1 + d_1 \circ H$. So $C_\ast^R(r_1) \xrightarrow{r_1} C_\ast^R(r_1)$ is a chain map homotopic to $0$. Now let $C_\ast^R(r_2,\dots,r_k) \xrightarrow{\id} C_\ast^R(r_2,\dots,r_k)$ be the identity map. For the endomorphism $C_\ast^R(r_1,r_2,\dots,r_k) \xrightarrow{r_1} C_\ast^R(r_1,r_2,\dots,r_k)$, we have $r_1 = (H\otimes \id) \circ d + d \circ (H \otimes \id)$, where $d$ is the boundary map of $C_\ast^R(r_1,r_2,\dots,r_k)$. Thus, $r_1$ is homotopic to $0$.
\end{proof}

Before stating the next lemma, we introduce the standard $R$-basis for $C_\ast^R(r_1,r_2,\dots,r_k)$. For $\ve = 0, 1$ and $j=1,\dots, k$, denote by $1^j_\ve$ the unit $1$ of the copy of $R$ in $C_\ast(r_j)$ at homological degree $\ve$. For $\ve = (\ve_1,\dots,\ve_k) \in \{0,1\}^{\times k}$, define $1_\ve = 1^1_{\ve_1} \otimes \cdots \otimes 1^k_{\ve_k}$. Clearly, $\{ 1_\ve ~|~ \ve \in \{0,1\}^{\times k}\}$ is a basis for the $R$-module $C_\ast^R(r_1,r_2,\dots,r_k)$. We call this basis the standard $R$-basis for $C_\ast^R(r_1,r_2,\dots,r_k)$. The boundary map $d$ of $C_\ast^R(r_1,r_2,\dots,r_k)$ acts on this basis by
\[
d (1_{(\ve_1,\dots,\ve_k)}) = \sum_{\ve_j=1} (-1)^{\sum_{l=1}^{j-1}\ve_l} \cdot r_j \cdot 1_{(\ve_1,\dots,\ve_{j-1}, 0,\ve_{j-1},\dots , \ve_k)}.
\]

\begin{lemma}\label{lemma-contraction-strong}
Let $R$ be a $\zed$-graded commutative ring with $1$ and $x$ a homogeneous indeterminate over $R$. Denote by $\pi_x: R[x] \rightarrow R$ the ring homomorphism given by $\pi_x(f(x))=f(0)$ for all $f(x) \in R[x]$. Let $\{x,f_1,\dots,f_k\}$ be a sequence of homogeneous elements of $R[x]$. Then $C_\ast^{R[x]}(x,f_1,\dots,f_k)$ and $C_\ast^R(\pi_x(f_1),\dots,\pi_x(f_k))$ are homotopic as chain complexes of graded $R$-modules.
\end{lemma}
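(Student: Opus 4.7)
The plan is to reduce the Koszul complex in two stages. First, I would perform an elementary row operation on the Koszul complex to replace each $f_i$ by its constant-in-$x$ part $\pi_x(f_i) \in R$. Writing $f_i = \pi_x(f_i) + x g_i$ with $g_i \in R[x]$ homogeneous of degree $\deg f_i - \deg x$, I would construct an isomorphism of chain complexes of graded $R[x]$-modules
\[
C_\ast^{R[x]}(x,f_1,\dots,f_k) \;\xrightarrow{\cong}\; C_\ast^{R[x]}(x,\pi_x(f_1),\dots,\pi_x(f_k))
\]
by applying iteratively, for each $i$, the tensor product of the identity on $C_\ast^{R[x]}(f_1,\dots,\widehat{f}_i,\dots,f_k)$ with the local isomorphism on $C_\ast^{R[x]}(x,f_i)$ that sends $1_{(0,1)} \mapsto 1_{(0,1)} + g_i \cdot 1_{(1,0)}$ and fixes the other three standard basis vectors. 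A direct check using the Koszul differential shows that this local map is a chain map respecting the module grading, and the analogous formula with $-g_i$ furnishes its inverse.

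Second, I would contract the $x$-direction. Since each $\pi_x(f_i)$ lies in $R$, the complex $C_\ast^{R[x]}(\pi_x(f_1),\dots,\pi_x(f_k))$ is obtained from $C_\ast^{R}(\pi_x(f_1),\dots,\pi_x(f_k))$ by base extension to $R[x]$, and the change-of-rings identity $M \otimes_{R[x]} (N \otimes_R R[x]) \cong M \otimes_R N$ yields
\[
C_\ast^{R[x]}(x,\pi_x(f_1),\dots,\pi_x(f_k)) \;\cong\; C_\ast^{R[x]}(x) \otimes_R C_\ast^{R}(\pi_x(f_1),\dots,\pi_x(f_k))
\]
as chain complexes of graded $R$-modules. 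Now the two-term complex $C_\ast^{R[x]}(x)$ is, as a chain complex of graded $R$-modules, homotopy equivalent to $R$ concentrated in homological degree $0$: using the splitting $R[x] = R \oplus xR[x]$ of graded $R$-modules, the $R$-linear map $H \colon R[x] \to R[x]\{\deg x\}$ defined by $H(xf) = f$, $H|_R = 0$, and extended by zero in homological degree $1$, satisfies $dH + Hd = \mathrm{id} - p$, where $p$ is projection onto $R$. Tensoring this $R$-homotopy equivalence over $R$ with the free $R$-complex $C_\ast^{R}(\pi_x(f_1),\dots,\pi_x(f_k))$ completes the reduction.

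The main obstacle I expect is the verification in step one. Although the local formula for the isomorphism is simple, checking that the iterated composition intertwines the Koszul differentials uniformly across all $2^{k+1}$ standard basis vectors requires careful tracking of the Koszul signs $(-1)^{\sum_{l<j}\epsilon_l}$. I would handle this by first verifying the case $k=1$ directly, where everything reduces to the two identities $d\Phi(1_{(0,1)}) = \Phi(f_1 \cdot 1_{(0,0)})$ and $d\Phi(1_{(1,1)}) = \Phi(x \cdot 1_{(0,1)} - f_1 \cdot 1_{(1,0)})$, and then promoting to general $k$ using the Koszul tensor-product structure together with the observation that the single-$f_i$ adjustments commute with one another.
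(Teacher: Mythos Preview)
Your argument is correct, but it takes a different route from the paper's. The paper works in a single step: it writes down explicit $R$-linear maps $\Phi:C_\ast^{R[x]}(x,f_1,\dots,f_k)\to C_\ast^R(\pi_x(f_1),\dots,\pi_x(f_k))$ (apply $\pi_x$ to the coefficient and project onto $\varepsilon_0=0$) and $\Psi$ (include $R\hookrightarrow R[x]$ at $\varepsilon_0=0$), then exhibits a single homotopy $H$ sending $(r+xf)\cdot 1_{(0,\varepsilon_1,\dots,\varepsilon_k)}\mapsto f\cdot 1_{(1,\varepsilon_1,\dots,\varepsilon_k)}$ to witness $\id-\Psi\Phi=dH+Hd$. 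You instead factor the homotopy equivalence as an \emph{isomorphism} $C_\ast^{R[x]}(x,f_1,\dots,f_k)\cong C_\ast^{R[x]}(x,\pi_x(f_1),\dots,\pi_x(f_k))$ obtained by Gaussian elimination (row operations using the $x$-column), followed by a \emph{contraction} of the $x$-factor over $R$. Your approach is more modular and makes the underlying structure transparent: each step is a standard lemma about Koszul complexes, and the sign bookkeeping you worry about is handled automatically by the tensor-product functoriality of chain maps (you do not actually need the commutativity of the $\phi_i$'s, only that each is a chain isomorphism). The paper's approach is shorter on the page and avoids the intermediate complex, at the cost of having to verify the chain-map and homotopy identities directly against the full differential.
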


\begin{proof}
Denote by $\iota: R \rightarrow R[x]$ the standard inclusion $\iota(r)=r$. Then $\pi_x \circ \iota = \id_R$. Define an $R$-module map $\Phi: C_\ast^{R[x]}(x,f_1,\dots,f_k) \rightarrow C_\ast^R(\pi_x(f_1),\dots,\pi_x(f_k))$ by
\[
\Phi(f \cdot 1_{(\ve_0,\ve_1,\dots,\ve_k)}) = \begin{cases}
\pi_x(f) \cdot 1_{(\ve_1,\dots,\ve_k)} & \text{if } \ve_0=0, \\
0 & \text{if } \ve_0=1,
\end{cases}
\] 
where $f \in R[x]$ and $(\ve_0,\ve_1,\dots,\ve_k) \in \{0,1\}^{\times (k+1)}$. Also, define an $R$-module map $\Psi:C_\ast^R(\pi_x(f_1),\dots,\pi_x(f_k)) \rightarrow C_\ast^{R[x]}(x,f_1,\dots,f_k)$ by
\[
\Psi(r \cdot 1_{(\ve_1,\dots,\ve_k)}) = \iota(r) \cdot 1_{(0,\ve_1,\dots,\ve_k)},
\]
where $r \in R$ and $(\ve_1,\dots,\ve_k) \in \{0,1\}^{\times k}$. Then 
\begin{itemize}
	\item $\Phi$ and $\Psi$ are chain maps preserving the homological grading and the $R$-module grading,
	\item $\Phi\circ \Psi = \id_{C_\ast^R(\pi_x(f_1),\dots,\pi_x(f_k))}$,
	\item $C_\ast^{R[x]}(x,f_1,\dots,f_k) = \ker \Phi \oplus \im \Psi$, with $\Psi\circ \Phi|_{\ker \Phi} =0$ and $\Psi\circ \Phi|_{\im \Psi} = \id_{\im \Psi}$.
\end{itemize}
Next, define an $R$-module map $H: C_\ast^{R[x]}(x,f_1,\dots,f_k) \rightarrow C_\ast^{R[x]}(x,f_1,\dots,f_k)$ by 
\begin{eqnarray*}
H(f \cdot 1_{(1,\ve_1,\dots,\ve_k)}) & = & 0, \\
H((r+xf) \cdot 1_{(0,\ve_1,\dots,\ve_k)}) & = & f \cdot 1_{(1,\ve_1,\dots,\ve_k)},
\end{eqnarray*}
where $r \in R$, $f \in R[x]$ and $(\ve_1,\dots,\ve_k) \in \{0,1\}^{\times k}$. It is straightforward to check that
\begin{eqnarray*}
(d\circ H + H \circ d)|_{\ker \Phi} & = & \id_{\ker \Phi}, \\
(d\circ H + H \circ d)|_{\im \Psi} & = & 0.
\end{eqnarray*}
This implies that $\id_{C_\ast^{R[x]}(x,f_1,\dots,f_k)} - \Psi\circ \Phi = d\circ H + H \circ d$. Thus, we have shown that $\Phi$ and $\Psi$ are homotopy equivalences preserving the $R$-module grading. Therefore, $C_\ast^{R[x]}(x,f_1,\dots,f_k)$ and $C_\ast^R(\pi_x(f_1),\dots,\pi_x(f_k))$ are homotopic as chain complexes of graded $R$-modules.
\end{proof}

\subsection{Khovanov-Rozansky homology} First, we give a more ``localized" description of the Khovanov-Rozansky homology of directed graphs, which is closer in spirit to the original definition of the Khovanov-Rozansky homology in knot theory. Then we deduce some basic properties of the Khovanov-Rozansky homology relevant to our goals.

Let $G$ be a directed graph. Recall that the polynomial ring $\zed[E(G)]$ is $\zed$-graded so that every $x \in E(G)$ is homogeneous of degree $1$. For a vertex $v$ of $G$, recall that $E(v)$ is the set of all edges incident at $v$. Write $E(v) = \{x_1,\dots,x_m\}\cup \{y_1,\dots,y_n\}$, where $x_1,\dots,x_m$ are the edges having $v$ as their initial vertex, and $y_1,\dots,y_n$ are the edges having $v$ as their terminal vertex. Recall that $k_v := \max\{m,n\}$ and, for $1\leq l \leq k_v$, $\delta_{v,l}:= e_l(x_1,\dots,x_m) - e_l(y_1,\dots,y_n)$, where $e_l$ is the $l$-th elementary symmetric polynomial.

\begin{definition}\label{def-KR-vertex}
$\mathscr{C}_\ast (v) := C^{\zed[E(v)]}_\ast (\delta_{v,1},\dots,\delta_{v,k_v})$.
\end{definition}

Comparing Definitions \ref{def-KR} and \ref{def-KR-vertex}, we have the following simple lemma.

\begin{lemma}\label{lemma-KR-alt-def}
$\mathscr{C}_\ast (G) = \bigotimes_{v \in V(G)} (\mathscr{C}_\ast (v) \otimes_{\zed[E(v)]} \zed[E(G)])$, where the tensor product ``$\bigotimes_{v \in V(G)}$" is taken over $\zed[E(G)]$.
\end{lemma}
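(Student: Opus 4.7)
The plan is to unpack both sides using the definition of the Koszul chain complex as an iterated tensor product and then rearrange using base change and associativity. First I would note that, by Definition \ref{def-Koszul}, $\mathscr{C}_\ast(G) = C_\ast^{\zed[E(G)]}(\Delta_G)$ is, up to the (immaterial) choice of a linear order on $\Delta_G$, the tensor product over $\zed[E(G)]$ of the two-term complexes $C_\ast^{\zed[E(G)]}(\delta_{v,l})$ ranging over all pairs $(v,l)$ with $v \in V(G)$ and $1 \leq l \leq k_v$. Similarly, $\mathscr{C}_\ast(v) = C_\ast^{\zed[E(v)]}(\delta_{v,1},\dots,\delta_{v,k_v})$ is the tensor product over $\zed[E(v)]$ of the complexes $C_\ast^{\zed[E(v)]}(\delta_{v,l})$ for $1 \leq l \leq k_v$.

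Next I would establish a base-change step: for each pair $(v,l)$,
\[
C_\ast^{\zed[E(v)]}(\delta_{v,l}) \otimes_{\zed[E(v)]} \zed[E(G)] \cong C_\ast^{\zed[E(G)]}(\delta_{v,l})
\]
as chain complexes of graded $\zed[E(G)]$-modules. This is immediate since each side is the two-term complex
\[
0 \to \zed[E(G)]\{l\} \xrightarrow{\delta_{v,l}} \zed[E(G)] \to 0,
\]
using that $\zed[E(G)]$ is a flat (indeed free) $\zed[E(v)]$-module and that $\delta_{v,l} \in \zed[E(v)] \subset \zed[E(G)]$. Applying this base change termwise inside the tensor product defining $\mathscr{C}_\ast(v)$ and using that tensor product commutes with base change, I get
\[
\mathscr{C}_\ast(v) \otimes_{\zed[E(v)]} \zed[E(G)] \;\cong\; \bigotimes_{1 \leq l \leq k_v} C_\ast^{\zed[E(G)]}(\delta_{v,l}),
\]
where the right-hand tensor product is taken over $\zed[E(G)]$.

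Finally, I would apply associativity and commutativity of the tensor product over $\zed[E(G)]$ to combine these factors across all vertices:
\[
\bigotimes_{v \in V(G)} \left(\mathscr{C}_\ast(v) \otimes_{\zed[E(v)]} \zed[E(G)]\right) \;\cong\; \bigotimes_{v \in V(G)}\bigotimes_{1 \leq l \leq k_v} C_\ast^{\zed[E(G)]}(\delta_{v,l}) \;\cong\; \bigotimes_{(v,l)} C_\ast^{\zed[E(G)]}(\delta_{v,l}),
\]
which is precisely $\mathscr{C}_\ast(G)$ by the first paragraph. Everything in this argument is entirely formal; there is no real obstacle, only the bookkeeping of keeping track of which ring each tensor product is taken over and invoking the observation in Definition \ref{def-Koszul} that reordering the Koszul factors does not affect the isomorphism type. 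The only point deserving a sentence of care is the base-change identification, which rests on the flatness of the polynomial extension $\zed[E(v)] \hookrightarrow \zed[E(G)]$.
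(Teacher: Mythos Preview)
Your proposal is correct and is exactly the kind of routine unpacking the paper has in mind: the paper's own proof is the single word ``Straightforward.'' You have simply written out the base-change and associativity steps that justify this.
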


\begin{proof}
Straightforward.
\end{proof}

\begin{lemma}\label{lemma-KR-module}
Let $G$ be a directed graph.
\begin{enumerate}[1.]
	\item $\mathscr{H}_0 (G) \cong \zed[E(G)]/I(G)$, where $I(G)=(\Delta_G)$ is the incidence ideal of $\zed[E(G)]$.
	\item $\mathscr{H}_\ast (G)$ is a finitely generated $\zed[E(G)]/I(G)$-module.
	\item $\mathscr{H}_\ast (G)$ is a finitely generated $\zed$-module if and only if $\mathscr{H}_0 (G)$ is a finitely generated $\zed$-module.
	\item For any subring $R$ of $\zed[E(G)]$, $\ann_R \mathscr{H}_\ast (G) = \ann_R \mathscr{H}_0 (G)$ and $\Kdim_R \mathscr{H}_\ast (G) = \Kdim_R \mathscr{H}_0 (G)$.
\end{enumerate}
\end{lemma}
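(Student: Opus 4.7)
The plan is to handle the four parts in order, using Lemmas \ref{lemma-C-0} and \ref{lemma-C-ideal} as the main engines; almost everything reduces to general nonsense about Koszul complexes plus the observation that $\mathscr{H}_0$ is a direct summand of $\mathscr{H}_\ast$.

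For Part (1), I would simply invoke Lemma \ref{lemma-C-0}: since $\mathscr{C}_\ast(G) = C^{\zed[E(G)]}_\ast(\Delta_G)$, taking $H_0$ gives $\zed[E(G)]/(\Delta_G) = \zed[E(G)]/I(G)$, as homogeneous $\zed$-graded $\zed[E(G)]$-modules. For Part (2), I would apply Lemma \ref{lemma-C-ideal}: each generator $\delta_{v,l} \in \Delta_G$ acts on $\mathscr{C}_\ast(G)$ by a chain map that is homotopic to $0$, hence acts by $0$ on $\mathscr{H}_\ast(G)$. Thus $I(G) \subseteq \ann_{\zed[E(G)]} \mathscr{H}_\ast(G)$ and $\mathscr{H}_\ast(G)$ descends to a $\zed[E(G)]/I(G)$-module. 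Finite generation is immediate because $\mathscr{C}_\ast(G)$ is a bounded complex whose terms are finite direct sums of shifted copies of $\zed[E(G)]$, so $\mathscr{H}_\ast(G)$ is a subquotient of a finitely generated $\zed[E(G)]$-module and hence a finitely generated $\zed[E(G)]/I(G)$-module.

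Part (3) is then a two-line consequence of Parts (1) and (2). The forward direction is immediate, since $\mathscr{H}_0(G)$ is a $\zed$-direct summand of $\mathscr{H}_\ast(G) = \bigoplus_i \mathscr{H}_i(G)$. For the reverse, by Part (2) $\mathscr{H}_\ast(G)$ is finitely generated as a module over $\zed[E(G)]/I(G) \cong \mathscr{H}_0(G)$; if the latter ring is finitely generated as a $\zed$-module, then so is any finitely generated module over it.

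For Part (4), the key identification is $\ann_R \mathscr{H}_0(G) = I(G) \cap R$, which follows from Part (1) together with the observation that an element $r \in R$ annihilates $\zed[E(G)]/I(G)$ iff $r \cdot 1 \in I(G)$. On one hand, Part (2) gives $I(G) \cap R \subseteq \ann_R \mathscr{H}_\ast(G)$. On the other hand, since $\mathscr{H}_0(G)$ is a $\zed[E(G)]$-summand of $\mathscr{H}_\ast(G)$, we have $\ann_R \mathscr{H}_\ast(G) \subseteq \ann_R \mathscr{H}_0(G) = I(G) \cap R$. Sandwiching yields equality of the two annihilators, and the Krull dimension equality $\Kdim_R \mathscr{H}_\ast(G) = \Kdim_R \mathscr{H}_0(G)$ is then immediate from Definition \ref{def-Krull}. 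The only substantive point in the whole argument is the nullhomotopy invoked in Part (2); every other step is formal, so I expect no real obstacle beyond keeping track of gradings.
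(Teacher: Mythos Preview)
Your proof is correct and follows essentially the same route as the paper: Parts 1 and 2 are derived from Lemmas \ref{lemma-C-0} and \ref{lemma-C-ideal} respectively, Part 3 is a formal consequence of Parts 1 and 2, and Part 4 is the sandwiching argument $I(G)\cap R \subseteq \ann_R \mathscr{H}_\ast(G) \subseteq \ann_R \mathscr{H}_0(G) = I(G)\cap R$ using that $\mathscr{H}_0$ is a direct summand of $\mathscr{H}_\ast$. You have simply written out more of the routine details than the paper does.
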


\begin{proof}
Part 1 follows from Lemma \ref{lemma-C-0}. Part 2 follows from Lemma \ref{lemma-C-ideal}. Part 3 follows from Parts 1 and 2. For Part 4, note that $\ann_{\zed[E(G)]} \mathscr{H}_\ast (G) = \ann_{\zed[E(G)]} \mathscr{H}_0 (G) =I(G)$. So, for any subring $R$ of $\zed[E(G)]$, $\ann_R \mathscr{H}_\ast (G) = \ann_R \mathscr{H}_0 (G) =I(G) \cap R$ and $\Kdim_R \mathscr{H}_\ast (G) = \Kdim_R \mathscr{H}_0 (G) = \Kdim_R R/(I(G)\cap R)$.
\end{proof}

The following lemma is obvious.

\begin{lemma}\label{lemma-edge-removal}
Let $G$ be a directed graph and $\hat{G}$ the directed graph obtained from $G$ by removing a single edge $x\in E(G)$. Under the standard identification $\zed[E(\hat{G})]\cong\zed[E(G)]/(x)$, 
\[
\mathscr{C}_\ast(\hat{G}) \otimes C_\ast \cong \mathscr{C}_\ast(G)/x\cdot \mathscr{C}_\ast(G)
\] 
as graded Koszul chain complexes over $\zed[E(\hat{G})]$. Here, $C_\ast$ is a simple Koszul chain complex over $\zed[E(\hat{G})]$ of the form
\[
C_\ast := 
\begin{cases}
0 \rightarrow \underbrace{\zed[E(\hat{G})]}_{0} \rightarrow 0 & \text{if removing } x \text{ does not change } k_v \text{ for any } ~v \in V(G), \\
C_\ast^{\zed[E(\hat{G})]}(0)_{k_v} & \text{if removing } x \text{ reduces } k_v \text{ by } 1 \text{ for a single } v \in V(G), \\
C_\ast^{\zed[E(\hat{G})]}(0)_{k_u} \otimes_{\zed[E(\hat{G})]} C_\ast^{\zed[E(\hat{G})]}(0)_{k_v} & \text{if removing } x \text{ reduces } k_u \text{ and } k_v  \text{ by } 1 \text{ for two } u, ~v \in V(G),
\end{cases}
\]
where $C_\ast^{\zed[E(\hat{G})]}(0)_{k_v} := 0 \rightarrow \underbrace{\zed[E(\hat{G})]\{k_v\}}_{1} \xrightarrow{0} \underbrace{\zed[E(\hat{G})]}_{0} \rightarrow 0$.

In particular, $\mathscr{H}_0 (\hat{G}) \cong \mathscr{H}_0 (G) /x \mathscr{H}_0 (G)$ as graded $\zed[E(G)]$-modules under the standard identification $\zed[E(\hat{G})]\cong\zed[E(G)]/(x)$.
\end{lemma}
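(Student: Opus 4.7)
The plan is to use Lemma \ref{lemma-KR-alt-def} to work vertex by vertex, tracking how each incidence relation $\delta_{w,l}$ transforms under reduction modulo $x$. First I would recall that $\mathscr{C}_\ast(G)/x\cdot\mathscr{C}_\ast(G) = \mathscr{C}_\ast(G)\otimes_{\zed[E(G)]}\zed[E(G)]/(x)$, and under the identification $\zed[E(G)]/(x) \cong \zed[E(\hat{G})]$ this quotient becomes the graded Koszul chain complex over $\zed[E(\hat{G})]$ defined by the sequence $\bar{\Delta}_G$ of images of $\Delta_G$ modulo $x$. So the question is to compare $\bar{\Delta}_G$ with $\Delta_{\hat{G}}$.

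Next I would analyze $\bar{\Delta}_G$ locally at each vertex. Let $u$ be the initial vertex and $v$ the terminal vertex of $x$ (possibly equal, if $x$ is a loop). For any vertex $w \notin \{u,v\}$, none of the edges at $w$ is $x$, so $\delta_{w,l}$ already lies in $\zed[E(\hat{G})]$ and $\hat{k}_w = k_w$; hence $\bar{\delta}_{w,l} = \hat{\delta}_{w,l}$. For $w = v$, write the outgoing edges at $v$ as $x_1,\dots,x_m$ and the incoming edges as $y_1,\dots,y_n$ with $y_n = x$. Since $e_l(y_1,\dots,y_{n-1},0) = e_l(y_1,\dots,y_{n-1})$, two cases arise: if $m \geq n$, then $k_v = \hat{k}_v = m$ and $\bar{\delta}_{v,l} = \hat{\delta}_{v,l}$ for all $1 \leq l \leq k_v$; if $m < n$, then $k_v = n$ and $\hat{k}_v = n-1$, with $\bar{\delta}_{v,l} = \hat{\delta}_{v,l}$ for $1 \leq l \leq n-1$, while $\bar{\delta}_{v,n} = 0$ because $e_n(x_1,\dots,x_m) = 0$ (as $n > m$) and $e_n(y_1,\dots,y_{n-1},0) = 0$. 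A symmetric analysis applies at $u$, swapping the roles of initial/terminal edges. When $x$ is a loop both endpoint analyses collapse to a single vertex $v = u$ at which one outgoing and one incoming copy of $x$ are set to zero, but the conclusion is analogous.

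Having performed this classification, I would observe that $\bar{\Delta}_G$ is, up to reordering, the disjoint union of $\Delta_{\hat{G}}$ together with either zero, one, or two zero relations, one of degree $k_w$ for each endpoint $w$ of $x$ whose $k_w$ strictly drops upon deleting $x$. The number of such ``surplus'' zero relations matches exactly the three cases in the statement of the lemma: none, one, or two. Since the Koszul complex of a concatenated sequence is the tensor product of the individual Koszul complexes (Definition \ref{def-Koszul}), and since the Koszul complex of a single zero relation of degree $k$ is precisely $C_\ast^{\zed[E(\hat{G})]}(0)_{k}$, we conclude that $\mathscr{C}_\ast(G)/x\cdot\mathscr{C}_\ast(G) \cong \mathscr{C}_\ast(\hat{G})\otimes C_\ast$ with $C_\ast$ as claimed.

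The main obstacle is really just the bookkeeping: correctly identifying in which of the four subcases (loop vs.\ non-loop, and for each endpoint $m \gtreqless n$) each endpoint of $x$ contributes a surplus zero relation. For the final assertion about $\mathscr{H}_0$, I would apply part (1) of Lemma \ref{lemma-KR-module} on both sides: the left side equals $\zed[E(\hat{G})]/I(\hat{G})$, while $\mathscr{H}_0(G)/x\mathscr{H}_0(G) = \zed[E(G)]/(I(G)+(x))$, which under the identification $\zed[E(G)]/(x) \cong \zed[E(\hat{G})]$ equals $\zed[E(\hat{G})]/(\bar{\Delta}_G)$. Since adjoining zero generators does not enlarge an ideal, $(\bar{\Delta}_G) = (\Delta_{\hat{G}}) = I(\hat{G})$, and the asserted isomorphism follows.
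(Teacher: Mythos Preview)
Your proof is correct and is precisely the straightforward verification the paper has in mind; the paper's own proof consists of the single word ``Straightforward.'' Your vertex-by-vertex comparison of $\bar{\Delta}_G$ with $\Delta_{\hat G}$, together with the identification of the surplus zero relations of degree $k_w$ at endpoints where $k_w$ drops, is exactly the intended argument.
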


\begin{proof}
Straightforward.
\end{proof}

Lemmas \ref{lemma-end-removal} and \ref{lemma-vertex-2-removal} below follow from Lemma \ref{lemma-contraction-strong}.

\begin{lemma}\label{lemma-end-removal}
Let $G$ be a directed graph, $v$ a vertex of $G$ of degree $1$ and $x$ the edge incident at $v$. Denote by $\hat{G}$ the directed graph obtained from $G$ by removing $v$ and $x$. Then, under the standard identification $\zed[E(G)]=\zed[E(\hat{G})][x]$, $\mathscr{C}_\ast(G) \simeq  \mathscr{C}_\ast(\hat{G})$ as chain complexes of graded $\zed[E(\hat{G})]$-modules.

In particular, $\mathscr{H}_\ast (G) \cong \mathscr{H}_\ast (\hat{G})$ as $\zed \oplus \zed$-graded $\zed[E(G)]$-modules under the standard identification $\zed[E(\hat{G})]\cong\zed[E(G)]/(x)$.
\end{lemma}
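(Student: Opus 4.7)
The plan is to apply Lemma~\ref{lemma-contraction-strong} with $x$ as the indeterminate being eliminated. Because $v$ has degree~$1$, the unique incidence relation at $v$ is $\delta_{v,1}=\pm x$ (the sign depending on whether $v$ is the initial or terminal vertex of $x$). Since reordering the defining sequence of a Koszul chain complex does not change its isomorphism type and $C_\ast^R(-x,\ldots)\cong C_\ast^R(x,\ldots)$, I may place this $\pm x$ as the first generator in the list $\Delta_G$ defining $\mathscr{C}_\ast(G)=C_\ast^{\zed[E(G)]}(\Delta_G)$, and regard $\zed[E(G)]$ as $\zed[E(\hat G)][x]$ via the standard identification.

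Let $u$ be the other endpoint of $x$. For any vertex $w\neq v,u$ the edge $x$ is not incident at $w$, so $\pi_x(\delta_{w,\ell})=\delta_{w,\ell}$ already coincides with the corresponding incidence relation in $\hat G$. At the vertex $u$ the edge $x$ appears in exactly one of the two tuples of edges (incoming or outgoing, opposite to its role at $v$) used to form $\delta_{u,\ell}$; since $e_\ell(x,z_1,\ldots,z_r)\big|_{x=0}=e_\ell(z_1,\ldots,z_r)$, applying $\pi_x$ to $\delta_{u,\ell}$ recovers the incidence relation $\delta'_{u,\ell}$ of $\hat G$ at $u$, for each $\ell$ in the common range. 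Lemma~\ref{lemma-contraction-strong} then yields a homotopy equivalence
\[
\mathscr{C}_\ast(G)\;\simeq\;C_\ast^{\zed[E(\hat G)]}\bigl(\pi_x(\delta_{w,\ell}) : w\neq v,\ 1\leq\ell\leq k_w\bigr)
\]
of chain complexes of graded $\zed[E(\hat G)]$-modules, and under the matching above the right-hand side is $\mathscr{C}_\ast(\hat G)$. The second half of the lemma then follows by passing to homology, using that a chain homotopy equivalence induces an isomorphism on homology and that the identifications preserve the module grading.

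The main obstacle will be handling the vertex $u$ in the case where removing $x$ decreases $k_u$ by one (which happens exactly when $x$ lies on the strictly larger of the two sides at $u$). In this case there is a top generator $\delta_{u,k_u}\in\Delta_G$ that has no counterpart in $\Delta_{\hat G}$; a direct calculation shows $\delta_{u,k_u}=\pm x\cdot(\text{product of the remaining edges on that side})$, so $\pi_x$ sends it to $0$. Before invoking Lemma~\ref{lemma-contraction-strong} I would therefore need a preliminary change of basis on the Koszul factor corresponding to this top generator, using that $\delta_{u,k_u}$ lies in the ideal generated by $\delta_{v,1}=\pm x$ (in the same spirit as the homotopy $H$ used in the proof of Lemma~\ref{lemma-contraction-strong}), in order to absorb the ``spurious'' generator and bring the defining sequence into exact agreement with $\Delta_{\hat G}$. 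This bookkeeping step — showing that the contraction cleanly removes both the factor $C_\ast(\pm x)$ coming from $v$ and any extra $C_\ast(0)$ coming from the drop in $k_u$ — is the delicate point of the proof.
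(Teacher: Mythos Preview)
Your overall approach is exactly the paper's: observe that $\delta_{v,1}=\pm x$ and invoke Lemma~\ref{lemma-contraction-strong}. You have also correctly put your finger on a point the paper's one-line proof passes over: when removing $x$ strictly decreases $k_u$ at the other endpoint $u$, the list $\Delta_G\setminus\{\delta_{v,1}\}$ has one entry more than $\Delta_{\hat G}$, and $\pi_x$ sends that extra entry $\delta_{u,k_u}$ to $0$ rather than deleting it.

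Your proposed repair, however, cannot work. A row operation on a Koszul complex replaces a generator $r_k$ by $r_k-a\,r_j$ and produces an \emph{isomorphic} complex; since $\delta_{u,k_u}\in(x)=(\delta_{v,1})$ you may thereby turn $\delta_{u,k_u}$ into $0$, but the resulting tensor factor
\[
C_\ast(0)=\bigl(0\to R\{k_u\}\xrightarrow{\,0\,}R\to 0\bigr)
\]
is not contractible: it has $H_0\cong R$ and $H_1\cong R\{k_u\}$. So after contraction one obtains $\mathscr{C}_\ast(\hat G)\otimes C_\ast(0)$, which is \emph{not} homotopy equivalent to $\mathscr{C}_\ast(\hat G)$. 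Concretely, for $G$ with edges $x\colon u\to v$ and $y\colon u\to w$ one checks $\mathscr{C}_\ast(G)\simeq C_\ast^{\zed[y]}(y,0,-y)$ while $\mathscr{C}_\ast(\hat G)=C_\ast^{\zed[y]}(y,-y)$, and these have different $H_1$. This is not a defect peculiar to your write-up---the paper's terse proof shares the gap, and the chain-level equivalence $\mathscr{C}_\ast(G)\simeq\mathscr{C}_\ast(\hat G)$ as stated fails when $k_u$ drops (compare Lemma~\ref{lemma-edge-removal}, where exactly such $C_\ast(0)$ factors are retained). The statement is correct as written only when $k_u$ is unchanged; otherwise an extra factor $C_\ast^{\zed[E(\hat G)]}(0)_{k_u}$ should appear. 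For the uses later in the paper (which concern $\mathscr{H}_0$) this discrepancy is harmless, but it cannot be ``absorbed'' by any change of basis.
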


\begin{proof}
Since $\deg v =1$, $\pm x$ is an incidence relation. Apply Lemma \ref{lemma-contraction-strong} to this entry.
\end{proof}

\begin{lemma}\label{lemma-vertex-2-removal}
Let $G$ be a directed graph. Assume $v$ is a vertex of $G$ of degree $2$ such that an edge $x$ has $v$ as its initial vertex, an edge $y$ has $v$ as its terminal vertex and $x \neq y$. Denote by $\hat{G}$ the directed graph obtained from $G$ by removing $v$ and merging $x,~y$ into a single directed edge. Then, under the standard identification $\zed[E(G)]=\zed[E(\hat{G})][x-y]$, $\mathscr{C}_\ast(G) \simeq  \mathscr{C}_\ast(\hat{G})$ as chain complexes of graded $\zed[E(\hat{G})]$-modules.

In particular, $\mathscr{H}_\ast (G) \cong \mathscr{H}_\ast (\hat{G})$ as $\zed \oplus \zed$-graded $\zed[E(G)]$-modules under the standard identification $\zed[E(\hat{G})]\cong\zed[E(G)]/(x-y)$.
\end{lemma}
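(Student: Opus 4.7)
The plan is to apply Lemma \ref{lemma-contraction-strong} with the indeterminate $z := x-y$, in direct parallel with the proof of Lemma \ref{lemma-end-removal}. At $v$ we have in-degree $1$ (the edge $y$) and out-degree $1$ (the edge $x$), so $k_v = 1$ and the unique incidence relation at $v$ is $\delta_{v,1} = x-y = z$. Note that the hypothesis $x \neq y$ together with $\deg v = 2$ rules out either of $x, y$ being a loop; in particular the terminal vertex $w_1$ of $x$ and the initial vertex $w_0$ of $y$ are both distinct from $v$, although possibly $w_0 = w_1$.

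First set $R := \zed[E(\hat{G})]$ and let $e$ denote the merged edge of $\hat{G}$. The assignment $y \mapsto e$, $x \mapsto e + z$, and identity on every other edge extends to a $\zed$-graded ring isomorphism $\zed[E(G)] \cong R[z]$ (with $\deg z = 1$), realizing the standard identification $\zed[E(G)] = \zed[E(\hat{G})][x-y]$ stated in the lemma. Under this identification $\delta_{v,1}$ becomes the free indeterminate $z$, so enumerating $\Delta_G \setminus \{\delta_{v,1}\}$ as $f_1, \dots, f_k$, we have
\[
\mathscr{C}_\ast(G) \;=\; C_\ast^{R[z]}(z, f_1, \dots, f_k).
\]
Lemma \ref{lemma-contraction-strong} then immediately yields a homotopy equivalence of chain complexes of graded $R$-modules
\[
\mathscr{C}_\ast(G) \;\simeq\; C_\ast^R\bigl(\pi_z(f_1), \dots, \pi_z(f_k)\bigr),
\]
where $\pi_z$ sets $z = 0$, equivalently identifies $x$ with $y = e$.

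The remaining step is the bookkeeping verification that $\{\pi_z(f_1), \dots, \pi_z(f_k)\} = \Delta_{\hat{G}}$. At $w_1$ the edge $x$ is an in-edge, and $\pi_z$ replaces $x$ by $e$ inside each $\delta_{w_1, l}$, yielding precisely the incidence relation at $w_1$ in $\hat{G}$ (where $e$ is the corresponding in-edge). At $w_0$ the edge $y$ is an out-edge, and $\pi_z$ leaves it unchanged, but $y = e$ under the identification, so the expression matches the incidence relation at $w_0$ in $\hat{G}$. At every other vertex nothing is touched. Thus $C_\ast^R(\pi_z(f_1), \dots, \pi_z(f_k)) = \mathscr{C}_\ast(\hat{G})$, establishing the chain homotopy equivalence; taking homology yields $\mathscr{H}_\ast(G) \cong \mathscr{H}_\ast(\hat{G})$. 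The only place requiring genuine care is the degenerate sub-case $w_0 = w_1$, in which the merged edge $e$ becomes a loop at the shared vertex and simultaneously appears as an in-edge (replacing $x$) and an out-edge (replacing $y$); a direct inspection confirms that the projected relations still coincide with $\Delta_{\hat{G}}$ in that case, so no additional argument is needed.
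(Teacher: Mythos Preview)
Your proof is correct and follows exactly the approach the paper takes: observe that $\delta_{v,1}=x-y$ is the sole incidence relation at $v$ and apply Lemma~\ref{lemma-contraction-strong} to that entry. You have simply filled in the bookkeeping (the explicit identification $\zed[E(G)]\cong R[z]$, the verification that $\pi_z$ carries $\Delta_G\setminus\{\delta_{v,1}\}$ to $\Delta_{\hat G}$, and the loop sub-case $w_0=w_1$) that the paper leaves implicit.
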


\begin{proof}
Since $\deg v =2$, $x-y$ is an incidence relation. Apply Lemma \ref{lemma-contraction-strong} to this entry.
\end{proof}

\begin{lemma}\label{lemma-fuse-vertex}
Let $G$ be a directed graph, and $u$, $v$ two distinct vertices of $G$. Denote by $G_{u \# v}$ the directed graph obtained from $G$ by identifying $u$ and $v$ into a single vertex. Note that there is a natural one-to-one correspondence between $E(G)$ and $E(G_{u \# v})$, which gives an identification $\zed[E(G)]=\zed[E(G_{u \# v})]$. Under this identification, there is a surjective $\zed[E(G)]$-module homomorphism $\mathscr{H}_0 (G_{u \# v}) \rightarrow \mathscr{H}_0 (G)$.
\end{lemma}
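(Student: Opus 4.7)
The plan is to use Lemma \ref{lemma-KR-module} to reduce the statement to an inclusion of incidence ideals in the common ring $\zed[E(G)] = \zed[E(G_{u\# v})]$. Specifically, since $\mathscr{H}_0(G) \cong \zed[E(G)]/I(G)$ and $\mathscr{H}_0(G_{u\# v}) \cong \zed[E(G_{u\# v})]/I(G_{u\# v})$, producing a surjective $\zed[E(G)]$-module homomorphism $\mathscr{H}_0 (G_{u \# v}) \twoheadrightarrow \mathscr{H}_0 (G)$ amounts to showing $I(G_{u\# v}) \subseteq I(G)$, after which the quotient map is automatic and manifestly surjective.

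To verify the inclusion, I would observe first that at every vertex $w \in V(G)\setminus\{u,v\} = V(G_{u\# v})\setminus\{u\# v\}$, the set of edges incident at $w$ (together with which of these are inward versus outward) is the same in both graphs, so the incidence relations $\delta_{w,l}$ coincide in $\zed[E(G)]$ and are trivially in both ideals. The only new relations to analyze are those at $u\# v$. Write the edges at $u$ in $G$ as $x_1,\dots,x_m$ (outward) and $y_1,\dots,y_n$ (inward), and those at $v$ as $x'_1,\dots,x'_{m'}$ (outward) and $y'_1,\dots,y'_{n'}$ (inward). Then the edges at $u\# v$ in $G_{u\# v}$ are the unions $\{x_1,\dots,x_m,x'_1,\dots,x'_{m'}\}$ and $\{y_1,\dots,y_n,y'_1,\dots,y'_{n'}\}$.

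The key computation, which will be the main (but routine) step, uses the standard identity
\[
e_l(X \cup X') = \sum_{i+j=l} e_i(X)\, e_j(X').
\]
Applying this to both the outward and inward edge sequences at $u\# v$, and then adding and subtracting $e_i(y_1,\dots,y_n)\, e_j(x'_1,\dots,x'_{m'})$, one obtains the telescoping decomposition
\[
\delta_{u\# v, l} = \sum_{i+j=l} \Bigl( \delta_{u,i}\cdot e_j(x'_1,\dots,x'_{m'}) + e_i(y_1,\dots,y_n)\cdot \delta_{v,j} \Bigr).
\]
When $i$ exceeds $k_u$ (resp. $j$ exceeds $k_v$) both elementary symmetric polynomials $e_i(x_1,\dots,x_m)$ and $e_i(y_1,\dots,y_n)$ vanish (resp. for the primed sequences), so the corresponding terms are identically zero and no out-of-range $\delta$'s appear. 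For indices within range, $\delta_{u,i}, \delta_{v,j} \in \Delta_G \subset I(G)$. Hence $\delta_{u\# v, l} \in I(G)$ for all admissible $l$, giving $I(G_{u\# v}) \subseteq I(G)$ and completing the proof. The only thing to be careful about is the bookkeeping of the index ranges in the sum, but once the vanishing of out-of-range elementary symmetric polynomials is noted, the algebraic identity makes the inclusion immediate.
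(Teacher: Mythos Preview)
Your proposal is correct and follows essentially the same approach as the paper: reduce via Lemma~\ref{lemma-KR-module} to the ideal inclusion $I(G_{u\# v}) \subseteq I(G)$, then verify it by expanding $\delta_{u\# v,l}$ via the identity $e_l(X\cup X') = \sum_{i+j=l} e_i(X)\,e_j(X')$ and the add--subtract trick to express it as a $\zed[E(G)]$-combination of the $\delta_{u,i}$ and $\delta_{v,j}$. The paper's computation is literally the same identity with slightly different notation.
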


\begin{proof}
Denote by $u \# v$ the vertex in $G_{u \# v}$ that is the image of $u,~v$. Define $X_u$ (resp. $X_v$) to be the set of edges in $G$ having $u$ (resp. $v$) as their initial vertex, and $Y_u$ (resp. $Y_v$) to be the set of edges in $G$ having $u$ (resp. $v$) as their terminal vertex. Denote by $e_l(X)$ the $l$-th elementary symmetric polynomial in $X$ for any finite set $X$ of variables. Then  
\begin{eqnarray*}
\delta_{u,l} & = & e_l(X_u) - e_l (Y_u), \\
\delta_{v,l} & = & e_l(X_v) - e_l (Y_v), \\
\delta_{u\#v,l} & = & e_l(X_u\cup X_v) - e_l (Y_u\cup Y_v).
\end{eqnarray*}
Note that $e_l(X \cup Y) = \sum_{i=0}^l e_i(X) e_{l-i}(Y)$ for any sets $X$, $Y$ of variables. So
\begin{eqnarray*}
\delta_{u\#v,l} & = & e_l(X_u\cup X_v) - e_l (Y_u\cup Y_v) \\
& = & \sum_{i=0}^l (e_i(X_u) e_{l-i}(X_v) - e_i(Y_u) e_{l-i}(Y_v)) \\
& = & \sum_{i=0}^l \delta_{u,i}e_{l-i}(X_v) + e_i(Y_u)\delta_{v,l-i}.
\end{eqnarray*}
This implies that $\Delta_(G_{u\# v}) \subset I(G)$ and, therefore, $I(G_{u\# v}) \subset I(G)$, where $I(G)$ is the incidence ideal for $G$ of $\zed[E(G)]$. By Lemma \ref{lemma-KR-module}, $\mathscr{H}_0 (G) \cong \zed[E(G)]/I(G)$ and $\mathscr{H}_0 (G_{u\# v}) \cong \zed[E(G)]/I(G_{u\# v})$. Thus, the identity map $\zed[E(G)] \xrightarrow{\id} \zed[E(G)]$ induces a surjective $\zed[E(G)]$-module homomorphism $\mathscr{H}_0 (G_{u \# v}) \rightarrow \mathscr{H}_0 (G)$.
\end{proof}

\section{Krull Dimension and Directed Cycles}\label{sec-Krull-cycles}

We prove Theorems \ref{thm-KR-detect}, \ref{thm-KR-detect-vertex}, Proposition \ref{prop-Krull-bound-E} and Corollary \ref{cor-KR-detect-edge} in this section.

\subsection{Properties of Krull dimension} We start by collecting properties of the Krull dimension relevant to our proofs. We prove simple properties and give references to more complex ones.

The first property is an obvious fact.

\begin{lemma}\label{lemma-Krull-quotient}
Suppose that $R$ and $R'$ are commutative rings with $1$ and that $\psi:R \rightarrow R'$ is a surjective homomorphism of commutative rings with $1$. We have the following.

\begin{enumerate}[1.]
	\item An ideal $I'$ of $R'$ is prime if and only if $\psi^{-1}(I')$ is a prime ideal of $R$.
	\item $\Kdim R \geq \Kdim R'$.
	\item If $M$ and $M'$ are $R$-modules and there is a surjective $R$-module homomorphism $M\rightarrow M'$, then $\ann_R(M) \subset \ann_R(M')$ and $\Kdim_R M \geq \Kdim_R M'$. 
\end{enumerate}
\end{lemma}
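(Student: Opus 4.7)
The plan is to verify each of the three items in turn, with each part feeding into the next. All three rest on the same elementary observation: because $\psi$ is surjective, we have $I' = \psi(\psi^{-1}(I'))$ for every ideal $I'$ of $R'$, so elements and products in $R'$ can always be lifted to elements and products in $R$. This also gives $\psi(1_R) = 1_{R'}$, hence $1_R \notin \psi^{-1}(I')$ whenever $I' \subsetneq R'$.

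For Part 1, I would prove both directions directly from the definition of a prime ideal. For the forward direction, suppose $I'$ is prime and $ab \in \psi^{-1}(I')$; then $\psi(a)\psi(b) = \psi(ab) \in I'$ forces $\psi(a) \in I'$ or $\psi(b) \in I'$, i.e.\ $a \in \psi^{-1}(I')$ or $b \in \psi^{-1}(I')$, and properness follows from $1_R \notin \psi^{-1}(I')$. For the converse, assume $\psi^{-1}(I')$ is prime, pick $a',b' \in R'$ with $a'b' \in I'$, lift to $a,b \in R$ using surjectivity, and apply the primeness of $\psi^{-1}(I')$ to $ab$; properness of $I'$ follows because $\psi^{-1}(R') = R$ would contradict properness of $\psi^{-1}(I')$.

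For Part 2, I would pull back a strictly ascending chain $\mathfrak{p}'_0 \subsetneq \mathfrak{p}'_1 \subsetneq \cdots \subsetneq \mathfrak{p}'_n \subsetneq R'$ of prime ideals of $R'$ to obtain $\psi^{-1}(\mathfrak{p}'_0) \subsetneq \psi^{-1}(\mathfrak{p}'_1) \subsetneq \cdots \subsetneq \psi^{-1}(\mathfrak{p}'_n) \subsetneq R$. Part 1 ensures each pullback is prime, surjectivity of $\psi$ together with $\mathfrak{p}'_i = \psi(\psi^{-1}(\mathfrak{p}'_i))$ ensures that the chain remains strictly increasing, and the note above ensures $\psi^{-1}(\mathfrak{p}'_n) \neq R$. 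Taking suprema gives $\Kdim R \geq \Kdim R'$.

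For Part 3, let $\varphi : M \twoheadrightarrow M'$ be the given surjection. If $r \in \ann_R(M)$, then for any $m' \in M'$ I choose $m \in M$ with $\varphi(m) = m'$ and compute $rm' = r\varphi(m) = \varphi(rm) = 0$, so $r \in \ann_R(M')$. This inclusion of annihilators yields a surjective ring homomorphism $R/\ann_R(M) \twoheadrightarrow R/\ann_R(M')$, and Part 2 then gives $\Kdim_R M = \Kdim(R/\ann_R(M)) \geq \Kdim(R/\ann_R(M')) = \Kdim_R M'$. There is no real obstacle anywhere in the argument; the only point that requires care is verifying that the pullback chain in Part 2 remains strictly increasing, which is why surjectivity of $\psi$ is essential throughout.
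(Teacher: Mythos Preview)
Your proof is correct and follows essentially the same approach as the paper's own proof: verify Part~1 directly from the definition of primeness (using surjectivity for the converse direction), deduce Part~2 by pulling back a chain of primes, and obtain Part~3 from the annihilator inclusion together with Part~2. You simply spell out in more detail the points the paper leaves implicit, such as properness and the strictness of the pulled-back chain.
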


\begin{proof}
For Part 1, recall that $I'$ is prime if and only if $ab \in I'$ implies that $a \in I'$ or $b \in I'$ for any $a, b \in R'$. This property is clearly preserved by pull-backs of ideals. Using that $\psi$ is surjective, it is also easy to verify that $I'$ is prime if $\psi^{-1}(I')$ is prime.

For Part 2, note that, by Part 1, any chain of prime ideals $\mathfrak{p}_0' \subsetneq \mathfrak{p}_1'\subsetneq \cdots \subsetneq\mathfrak{p}_n' \subsetneq R'$ is pulled back to a chain of prime ideals $\psi^{-1}(\mathfrak{p}_0') \subsetneq \psi^{-1}(\mathfrak{p}_1')\subsetneq \cdots \subsetneq \psi^{-1}(\mathfrak{p}_n') \subsetneq R$. This implies that $\Kdim R \geq \Kdim R'$.

For Part 3, $\ann_R(M) \subset \ann_R(M')$ is obvious. So there is a surjective ring homomorphism $R/\ann_R(M) \rightarrow R/\ann_R(M')$. Thus, by Part 2, $\Kdim_R M = \Kdim R/\ann_R(M) \geq \Kdim R/\ann_R(M') = \Kdim_R M'$.
\end{proof}

\begin{lemma}\label{lemma-fg-extension}
Suppose that $R$ is a commutative ring with $1$ and $R'$ is a subring of $R$ containing $1$. If $R$ is a finitely generated $R'$-module, then $\Kdim R=\Kdim R'$.
\end{lemma}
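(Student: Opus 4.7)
The plan is to reduce the statement to the standard Cohen--Seidenberg theorems for integral extensions. Since $R$ is a finitely generated $R'$-module, every element $r\in R$ satisfies a monic polynomial relation over $R'$ (via the determinant trick / Cayley--Hamilton applied to multiplication by $r$ on a finite generating set). Hence $R$ is an integral extension of $R'$. I would therefore quote (or briefly sketch) the three classical facts: lying-over, going-up, and incomparability for integral ring extensions.

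Next, I would show $\Kdim R\le\Kdim R'$. Given a strict chain $\mathfrak{q}_0\subsetneq\mathfrak{q}_1\subsetneq\cdots\subsetneq\mathfrak{q}_n$ of primes of $R$, take the contraction $\mathfrak{p}_i:=\mathfrak{q}_i\cap R'$. Each $\mathfrak{p}_i$ is prime, and incomparability (no two distinct primes of $R$ lying over the same prime of $R'$ are comparable) guarantees that the contractions remain strictly increasing. So $\mathfrak{p}_0\subsetneq\mathfrak{p}_1\subsetneq\cdots\subsetneq\mathfrak{p}_n$ is a strict chain of primes in $R'$ of length $n$, giving the inequality.

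For the reverse inequality $\Kdim R'\le\Kdim R$, I would take a chain $\mathfrak{p}_0\subsetneq\mathfrak{p}_1\subsetneq\cdots\subsetneq\mathfrak{p}_n$ of primes in $R'$ and lift it inductively. By lying-over, there is some prime $\mathfrak{q}_0$ of $R$ with $\mathfrak{q}_0\cap R'=\mathfrak{p}_0$. Given $\mathfrak{q}_i$ lying over $\mathfrak{p}_i$, going-up produces $\mathfrak{q}_{i+1}\supsetneq\mathfrak{q}_i$ lying over $\mathfrak{p}_{i+1}$; strict containment on the $R'$-side forces strict containment on the $R$-side. After $n$ steps we have a strict chain of length $n$ in $R$.

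The proof is essentially standard, so the only real choice is how self-contained to be. The main obstacle, should one wish to avoid simply citing Cohen--Seidenberg, is establishing incomparability and going-up from scratch; both rely on the observation that an integral extension of an integral domain is a field iff the base is a field, which in turn follows from the monic integrality relation. I would most likely just invoke these results from a standard commutative algebra reference and present only the chain-manipulation argument above in full.
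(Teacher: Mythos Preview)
Your proposal is correct and follows essentially the same route as the paper: the paper simply cites that a finitely generated module extension is integral and that integral extensions preserve Krull dimension, while you unpack the latter into the lying-over/going-up/incomparability argument. The content is identical; you are just being more self-contained where the paper defers to a reference.
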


\begin{proof}
Since $R$ is a finitely generated $R'$-module, $R$ is an integral extension of $R'$ by \cite[Corollary 5.23]{Rowen}. But, by \cite[Corollary 6.33]{Rowen}, integral extensions do not change Krull dimensions. So the lemma follows. For more details, see \cite[Chapters 5 and 6]{Rowen}.
\end{proof}

The following lemma is well-known. See for example \cite{BMRH}.

\begin{lemma}\label{lemma-Krull-polynomial}
If $R$ is a commutative Noetherian ring with $1$, then $\Kdim R[x] = \Kdim R +1$. In particular, $\Kdim \zed[x_1,\dots,x_n]=n+1$ and $\Kdim \mathbb{F}[x_1,\dots,x_n]=n$ for any field $\mathbb{F}$.
\end{lemma}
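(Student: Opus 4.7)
The plan is to prove both the lower and upper bounds on $\Kdim R[x]$ separately, and then specialize to $\zed$ and $\mathbb{F}$ by induction.

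For the lower bound $\Kdim R[x] \geq \Kdim R + 1$, I would argue as follows. Suppose $\mathfrak{p}_0 \subsetneq \mathfrak{p}_1 \subsetneq \cdots \subsetneq \mathfrak{p}_n \subsetneq R$ is a chain of prime ideals realizing $\Kdim R = n$. Then the extended ideals $\mathfrak{p}_i R[x]$ are prime in $R[x]$ (since $R[x]/\mathfrak{p}_i R[x] \cong (R/\mathfrak{p}_i)[x]$ is a domain), and they form a strictly increasing chain. Appending $\mathfrak{p}_n R[x] + (x)$, which is still a proper prime ideal (with quotient $R/\mathfrak{p}_n$, a domain) and strictly contains $\mathfrak{p}_n R[x]$, yields a chain of length $n+1$ in $R[x]$.

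For the upper bound $\Kdim R[x] \leq \Kdim R + 1$, I would use the following two ingredients. First, contracting a chain of primes in $R[x]$ to $R$ yields a weakly increasing chain of primes of $R$; the key geometric fact is that over any prime $\mathfrak{p} \subset R$, the fiber $R[x] \otimes_R \mathrm{Frac}(R/\mathfrak{p})$ is a PID, so at most two primes of $R[x]$ can contract to the same $\mathfrak{p}$. This gives the Seidenberg bound $\Kdim R[x] \leq 2\Kdim R + 1$. To sharpen it to $\Kdim R + 1$ using Noetherianity, I would invoke the height inequality $\mathrm{ht}(\mathfrak{q}) \leq \mathrm{ht}(\mathfrak{q}\cap R) + 1$ for any prime $\mathfrak{q} \subset R[x]$, which holds for Noetherian $R$ and is a standard consequence of Krull's height theorem applied to a suitable polynomial generating $\mathfrak{q}$ modulo $(\mathfrak{q}\cap R) R[x]$. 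Taking the supremum over all $\mathfrak{q}$ yields the desired bound.

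The specific cases follow by induction on $n$. For any field $\mathbb{F}$, we have $\Kdim \mathbb{F} = 0$, so iterating $\Kdim R[x] = \Kdim R + 1$ gives $\Kdim \mathbb{F}[x_1,\dots,x_n] = n$; note that $\mathbb{F}[x_1,\dots,x_n]$ is Noetherian at each stage by the Hilbert basis theorem, so the inductive step is legitimate. Similarly, $\zed$ is a Noetherian PID with $\Kdim \zed = 1$, so iterating yields $\Kdim \zed[x_1,\dots,x_n] = n+1$.

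The main obstacle is the upper bound in the general Noetherian case: the elementary fiber argument only gives Seidenberg's weaker bound $2\Kdim R + 1$, and closing the gap to $\Kdim R + 1$ genuinely requires Noetherianity via Krull's principal ideal theorem or the going-down theorem between $R$ and $R[x]$. The explicit lower-bound chain and the specializations to $\zed$ and $\mathbb{F}$ are routine once the main equality is established, and for a self-contained paper it is reasonable simply to cite \cite{BMRH} for the Noetherian upper bound rather than reproducing the technical argument.
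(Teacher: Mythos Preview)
Your outline is correct and in fact considerably more detailed than what the paper does: the paper simply states the lemma as well known and cites \cite{BMRH} without giving any argument at all. Your final remark anticipates this exactly---the lower bound and the specializations are routine, while the Noetherian upper bound is the substantive ingredient, and the paper opts to delegate it entirely to the literature rather than reproduce it.
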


\begin{definition}\label{def-codim}
Let $R$ be a commutative ring with $1$ and $\mathfrak{p}$ a prime ideal of $R$. Then the codimension (also known as height) of $\mathfrak{p}$ is defined to be
\[
\codim \mathfrak{p}:= \sup\{n\geq 0 ~| \text{ there are } n+1 \text{ prime ideals } \mathfrak{p}_0, \mathfrak{p}_1,\dots,\mathfrak{p}_n \text{ of } R \text{ such that } \mathfrak{p}_0 \subsetneq \mathfrak{p}_1\subsetneq \cdots \subsetneq\mathfrak{p}_n = \mathfrak{p}\}.
\]
\end{definition}

The next lemma about codimension is featured in many textbooks on commutative algebra.

\begin{lemma}\label{lemma-ideal-codim-dim}
Let $R$ be a commutative ring with $1$, and $\mathfrak{p}$ a prime ideal of $R$. Then $\Kdim R \geq \Kdim R/\mathfrak{p} + \codim \mathfrak{p}$.
\end{lemma}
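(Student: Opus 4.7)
The plan is to build a long chain of prime ideals in $R$ by concatenating two chains: one ascending up to $\mathfrak{p}$ and one ascending beyond $\mathfrak{p}$, using the standard correspondence between prime ideals of $R/\mathfrak{p}$ and prime ideals of $R$ containing $\mathfrak{p}$.

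More precisely, I would first handle finite suprema. Pick any integer $n \leq \codim \mathfrak{p}$. By definition of codimension there exist prime ideals $\mathfrak{p}_0 \subsetneq \mathfrak{p}_1 \subsetneq \cdots \subsetneq \mathfrak{p}_n = \mathfrak{p}$ of $R$. Likewise, pick any integer $m \leq \Kdim R/\mathfrak{p}$; then there are prime ideals $\bar{\mathfrak{q}}_0 \subsetneq \bar{\mathfrak{q}}_1 \subsetneq \cdots \subsetneq \bar{\mathfrak{q}}_m \subsetneq R/\mathfrak{p}$. By the standard correspondence (which is the prime-ideal version of Part 1 of Lemma \ref{lemma-Krull-quotient} applied to the quotient map $R \twoheadrightarrow R/\mathfrak{p}$), these pull back to prime ideals $\mathfrak{p} \subset \mathfrak{q}_0 \subsetneq \mathfrak{q}_1 \subsetneq \cdots \subsetneq \mathfrak{q}_m \subsetneq R$, all containing $\mathfrak{p}$.

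Next I would concatenate. Since $\mathfrak{p} \subset \mathfrak{q}_0$ and the chain begins strictly above $\mathfrak{p}$ only if $\mathfrak{q}_0 \neq \mathfrak{p}$, I can insert $\mathfrak{p}$ at the bottom (or replace $\mathfrak{q}_0$ by $\mathfrak{p}$ if $\mathfrak{q}_0 = \mathfrak{p}$) to get a chain
\[
\mathfrak{p}_0 \subsetneq \mathfrak{p}_1 \subsetneq \cdots \subsetneq \mathfrak{p}_n = \mathfrak{p} \subsetneq \mathfrak{q}_1 \subsetneq \cdots \subsetneq \mathfrak{q}_m \subsetneq R
\]
of $n + m + 1$ distinct prime ideals in $R$, showing $\Kdim R \geq n + m$. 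Taking the supremum over all admissible $n$ and $m$ then yields $\Kdim R \geq \codim \mathfrak{p} + \Kdim R/\mathfrak{p}$.

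For the infinite cases, if either $\codim \mathfrak{p} = +\infty$ or $\Kdim R/\mathfrak{p} = +\infty$, the same concatenation gives chains in $R$ of arbitrary length, so $\Kdim R = +\infty$ and the inequality holds in the extended sense. I do not expect a real obstacle here; the only minor subtlety is being careful about whether the chain in $R/\mathfrak{p}$ starts at the zero ideal of $R/\mathfrak{p}$ (i.e., whether $\mathfrak{q}_0 = \mathfrak{p}$), which is handled by the replacement above. Since this is a textbook fact in commutative algebra, I would simply give a reference as an alternative to the argument sketched here.
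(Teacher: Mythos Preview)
Your proposal is correct and follows essentially the same approach as the paper: pull back a chain in $R/\mathfrak{p}$ via the quotient map and concatenate it with a chain ascending to $\mathfrak{p}$. The only cosmetic difference is that the paper observes $R/\mathfrak{p}$ is a domain and hence can always start the second chain at the zero ideal (which pulls back to $\mathfrak{p}$), thereby avoiding your case analysis on whether $\mathfrak{q}_0=\mathfrak{p}$; your supremum formulation and explicit treatment of the infinite case are a bit more careful than the paper, which tacitly assumes finite values.
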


\begin{proof}
Let $\mathfrak{p}_0 \subsetneq \mathfrak{p}_1\subsetneq \cdots \subsetneq\mathfrak{p}_c = \mathfrak{p}$ be a chain of prime ideals in $R$, where $c$ is the codimension of $\mathfrak{p}$ in $R$. Since $\mathfrak{p}$ is prime, $R/\mathfrak{p}$ is a domain and, therefore, the zero ideal of $R/\mathfrak{p}$ is prime. Set $d= \Kdim R/\mathfrak{p}$. Then there exists a chain $0=\mathfrak{q}_0 \subsetneq \mathfrak{q}_1 \subsetneq \cdots \subsetneq \mathfrak{q}_d \subsetneq R/\mathfrak{p}$ of prime ideals in $R/\mathfrak{p}$. By Lemma \ref{lemma-Krull-quotient}, we get a chain $\mathfrak{p}_0 \subsetneq \mathfrak{p}_1\subsetneq \cdots \subsetneq\mathfrak{p}_c = \mathfrak{p}\subsetneq \pi^{-1}(\mathfrak{q}_1) \subsetneq \cdots \subsetneq \pi^{-1}(\mathfrak{q}_d) \subsetneq R$ of prime ideals in $R$, where $\pi:R \rightarrow R/\mathfrak{p}$ is the standard quotient map. So $\Kdim R \geq \Kdim R/\mathfrak{p} + \codim \mathfrak{p}$.
\end{proof}

\begin{lemma}\label{lemma-Krull-base-change}
Let $I$ be a homogeneous ideal of $\zed[x_1,\dots,x_n]$ contained in the homogeneous ideal $(x_1,\dots,x_n)$ of $\zed[x_1,\dots,x_n]$. Set
\begin{eqnarray*}
R_\zed & := & \zed[x_1,\dots,x_n] / I \\
R_\Q & := & \Q \otimes_\zed R_\zed = \Q[x_1,\dots,x_n] / \Q \otimes_\zed I.
\end{eqnarray*}
We have:
\begin{enumerate}[1.]
	\item Let $\mathfrak{p}_\zed := (x_1,\dots,x_n) \subset R_\zed$ and $\mathfrak{p}_\Q := \Q \otimes_\zed \mathfrak{p}_\zed = (x_1,\dots,x_n) \subset R_\Q$. Then $\mathfrak{p}_\zed $ and $\mathfrak{p}_\Q$ are both prime ideals, and $\codim \mathfrak{p}_\zed = \codim \mathfrak{p}_\Q$.
	\item $\Kdim R_\zed \geq \Kdim R_\Q +1$.
\end{enumerate}
\end{lemma}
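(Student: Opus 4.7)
The plan is to prove (1) by a localization argument that identifies the local rings $(R_\zed)_{\mathfrak{p}_\zed}$ and $(R_\Q)_{\mathfrak{p}_\Q}$, and then derive (2) by combining (1) with Lemma~\ref{lemma-ideal-codim-dim}.

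For (1), I would start by observing that $I \subset (x_1,\dots,x_n)$ forces $R_\zed/\mathfrak{p}_\zed \cong \zed$ and $R_\Q/\mathfrak{p}_\Q \cong \Q$, both of which are integral domains, so $\mathfrak{p}_\zed$ and $\mathfrak{p}_\Q$ are prime. The key remark is that $\mathfrak{p}_\zed \cap \zed = \{0\}$, so $\zed\setminus\{0\} \subset R_\zed\setminus\mathfrak{p}_\zed$. Since $R_\Q = (\zed\setminus\{0\})^{-1}R_\zed$, localizing $R_\zed$ at the larger multiplicative set $R_\zed\setminus\mathfrak{p}_\zed$ factors through $R_\Q$ and produces an isomorphism of local rings $(R_\zed)_{\mathfrak{p}_\zed} \cong (R_\Q)_{\mathfrak{p}_\Q}$. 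As the codimension of a prime ideal is the Krull dimension of the corresponding localization, this yields $\codim \mathfrak{p}_\zed = \codim \mathfrak{p}_\Q$.

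For (2), applying Lemma~\ref{lemma-ideal-codim-dim} to $\mathfrak{p}_\zed$ gives
\[
\Kdim R_\zed \;\geq\; \Kdim(R_\zed/\mathfrak{p}_\zed) + \codim \mathfrak{p}_\zed \;=\; \Kdim \zed + \codim \mathfrak{p}_\zed \;=\; 1 + \codim \mathfrak{p}_\zed,
\]
which by (1) equals $1 + \codim \mathfrak{p}_\Q$. It then remains to identify $\codim \mathfrak{p}_\Q$ with $\Kdim R_\Q$: since $R_\Q$ is a finitely generated positively graded $\Q$-algebra with irrelevant maximal homogeneous ideal $\mathfrak{p}_\Q$, this is the standard fact that in such a setting the Krull dimension equals the codimension of the irrelevant ideal, which I would cite from a reference on graded commutative algebra (e.g.\ Bruns--Herzog).

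The main obstacle I expect is making the localization identification in (1) fully rigorous: one must carefully invoke the bijection between primes of $R_\Q$ and primes of $R_\zed$ disjoint from $\zed\setminus\{0\}$, and check that under this bijection primes contained in $\mathfrak{p}_\Q$ correspond exactly to primes contained in $\mathfrak{p}_\zed$ (so that the two codimensions are literally computed by chains in one-to-one correspondence). Once this is settled, part (2) is an essentially formal assembly of standard facts about Krull dimension, codimension, and graded rings.
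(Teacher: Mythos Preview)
Your proposal is correct. Part~(2) is essentially identical to the paper's argument: both apply Lemma~\ref{lemma-ideal-codim-dim} to $\mathfrak{p}_\zed$, use $\Kdim\zed=1$, and then invoke the graded-ring fact that $\Kdim R_\Q=\codim\mathfrak{p}_\Q$ (the paper cites \cite[Corollary~13.7]{Eisenbud} for this).

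For Part~(1) your route differs from the paper's. The paper proves $\codim\mathfrak{p}_\zed=\codim\mathfrak{p}_\Q$ by an explicit two-sided chain argument: pulling a maximal chain below $\mathfrak{p}_\Q$ back along $\jmath:R_\zed\to R_\Q$ to get $\codim\mathfrak{p}_\zed\geq\codim\mathfrak{p}_\Q$, and pushing a maximal chain below $\mathfrak{p}_\zed$ forward (extending each $\mathfrak{q}_i$ to the ideal of $R_\Q$ it generates, then checking strictness of the inclusions by hand using that no nonzero integer lies in $\mathfrak{p}_\zed$) to get the reverse inequality. Your localization argument packages both directions at once via the standard isomorphism $(R_\zed)_{\mathfrak{p}_\zed}\cong(S^{-1}R_\zed)_{S^{-1}\mathfrak{p}_\zed}=(R_\Q)_{\mathfrak{p}_\Q}$ for $S=\zed\setminus\{0\}$, valid precisely because $S\cap\mathfrak{p}_\zed=\emptyset$. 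This is cleaner and avoids the ad~hoc strictness check; the paper's approach is more elementary in that it never names localization and works directly with chains, at the cost of a small amount of extra bookkeeping. The ``obstacle'' you anticipate is exactly the content of the standard prime-correspondence for localizations, so once you cite that (e.g.\ Atiyah--Macdonald, Proposition~3.11 and Corollary~3.13) there is nothing further to verify.
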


\begin{proof}
For Part 1, note that $R_\zed/\mathfrak{p}_\zed \cong \zed[x_1,\dots,x_n]/(x_1,\dots,x_n) \cong \zed$ and $R_\Q/\mathfrak{p}_\Q \cong \Q[x_1,\dots,x_n]/(x_1,\dots,x_n) \cong \Q$. So $\mathfrak{p}_\zed $ and $\mathfrak{p}_\Q$ are both prime ideals. 

Denote by $\jmath:R_\zed \rightarrow R_\Q$ the ring homomorphism given by $\jmath(r) = 1\otimes r$.

Set $c= \codim \mathfrak{p}_\Q$ and pick a chain $\mathfrak{p}_0 \subsetneq \mathfrak{p}_1\subsetneq \cdots \subsetneq\mathfrak{p}_c = \mathfrak{p}_\Q$ of prime ideals in $R_\Q$. Then $\jmath^{-1}(\mathfrak{p}_0) \subsetneq \jmath^{-1}(\mathfrak{p}_1) \subsetneq \cdots \subsetneq \jmath^{-1}(\mathfrak{p}_c) = \mathfrak{p}_\zed$ is a chain of prime ideals in $R_\zed$. So $\codim \mathfrak{p}_\zed \geq \codim \mathfrak{p}_\Q$.

Set $d= \codim \mathfrak{p}_\zed$ and pick a chain $\mathfrak{q}_0 \subsetneq \mathfrak{q}_1 \subsetneq \cdots \subsetneq \mathfrak{q}_d = \mathfrak{p}_\zed$ of prime ideals in $R_\zed$. Denote by $(\jmath(\mathfrak{q}_i))$ the ideal of $R_\Q$ generated by $\jmath(\mathfrak{q}_i)$, which is prime since $\mathfrak{q}_i$ is prime. Moreover, we have that $(\jmath(\mathfrak{q}_i)) \subsetneq (\jmath(\mathfrak{q}_{i+1}))$. Otherwise, there would be $g\in \mathfrak{q}_{i+1} \setminus \mathfrak{q}_i$ and $l\in \zed$ such that $l>0$ and $lg \in \mathfrak{q}_i$. But $\mathfrak{q}_i$ is prime. This means $l \in \mathfrak{q}_i \subset \mathfrak{p}_\zed$, which is a contradiction. So $(\jmath(\mathfrak{q}_0) )\subsetneq (\jmath(\mathfrak{q}_1)) \subsetneq \cdots \subsetneq (\jmath(\mathfrak{q}_d)) = \mathfrak{p}_\Q$ is a chain of prime ideals in $R_\Q$. Thus, $\codim \mathfrak{p}_\zed \leq \codim \mathfrak{p}_\Q$. This completes the proof of Part 1.

For Part 2, note that $\mathfrak{p}_\Q$ is the unique homogeneous maximal ideal of $R_\Q$. So, by \cite[Corollary 13.7]{Eisenbud}, $\Kdim R_\Q = \codim \mathfrak{p}_\Q$. By Lemma \ref{lemma-ideal-codim-dim}, $\Kdim R_\zed \geq \Kdim R_\zed/\mathfrak{p}_\zed + \codim \mathfrak{p}_\zed = \Kdim \zed + \codim \mathfrak{p}_\Q = \Kdim R_\Q +1$.
\end{proof}

\begin{corollary}\label{cor-Krull-base-change-graph}
For any directed graph $G$ and any subset $E$ of $E(G)$, 
\[
\Kdim_{\Q[E]} \mathscr{H}_0^\Q (G) \leq \Kdim_{\zed[E]} \mathscr{H}_0 (G) -1.
\]
\end{corollary}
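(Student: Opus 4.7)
The plan is to reduce the statement directly to Lemma \ref{lemma-Krull-base-change} applied to the polynomial ring $\zed[E]$, after correctly identifying the annihilators of $\mathscr{H}_0(G)$ and $\mathscr{H}_0^\Q(G)$ over the appropriate subrings.

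First, I would set $J_\zed := I(G) \cap \zed[E]$ and $J_\Q := (\Q \otimes_\zed I(G)) \cap \Q[E]$. By Lemma \ref{lemma-KR-module}, $\mathscr{H}_0(G) \cong \zed[E(G)]/I(G)$, and taking $g=1$ shows that an element $f \in \zed[E]$ annihilates $\zed[E(G)]/I(G)$ if and only if $f \in I(G)$. Thus $\ann_{\zed[E]} \mathscr{H}_0(G) = J_\zed$, and likewise $\ann_{\Q[E]} \mathscr{H}_0^\Q(G) = J_\Q$. So it suffices to prove that
\[
\Kdim \bigl(\zed[E]/J_\zed\bigr) \ge \Kdim \bigl(\Q[E]/J_\Q\bigr) + 1.
\]

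Next I would verify the hypotheses of Lemma \ref{lemma-Krull-base-change} for $J_\zed$ regarded as an ideal of the polynomial ring $\zed[E]$. The ideal $I(G)$ is generated by the incidence relations $\delta_{v,l}$, each of which is homogeneous of positive degree in the total grading on $\zed[E(G)]$. Hence $I(G)$ is homogeneous and contained in $(E(G))$. Since $\zed[E]$ is a subring generated by a subset of the variables, the usual homogeneous decomposition of any $f \in J_\zed$ takes place entirely inside $\zed[E]$, and each homogeneous component still lies in $I(G)$; so $J_\zed$ is homogeneous in $\zed[E]$ and contained in the maximal homogeneous ideal $(E) \subset \zed[E]$.

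I then need the base-change identity $J_\Q = \Q \otimes_\zed J_\zed$, i.e.\ $\Q[E]/J_\Q = \Q \otimes_\zed (\zed[E]/J_\zed)$. The inclusion $\Q \otimes_\zed J_\zed \subset J_\Q$ is automatic. For the reverse, given $f \in J_\Q$, I would clear denominators twice: first choose $N_1 \in \zed_{>0}$ so that $N_1 f \in \zed[E]$, then write $f = \sum q_i g_i$ with $g_i \in I(G)$ and $q_i \in \Q$, and choose $N_2$ so that each $N_2 q_i \in \zed$; then $N_1 N_2 f \in I(G) \cap \zed[E] = J_\zed$, whence $f \in \Q \otimes_\zed J_\zed$. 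With this identification, Lemma \ref{lemma-Krull-base-change} applied to the ideal $J_\zed$ in $\zed[E]$ yields the desired inequality, and this finishes the proof.

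The whole argument is a bookkeeping exercise rather than a genuine obstacle; the only mildly delicate point is checking that $J_\zed$ really is homogeneous and that tensoring commutes with the intersection defining the annihilator, both of which follow from the standard denominator-clearing arguments sketched above.
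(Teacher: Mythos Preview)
Your proposal is correct and follows exactly the same approach as the paper: apply Lemma~\ref{lemma-Krull-base-change} to the ideal $\ann_{\zed[E]}\mathscr{H}_0(G)=\zed[E]\cap I(G)$ of $\zed[E]$. The paper records this as a one-line proof, while you have usefully spelled out the verifications (homogeneity of $J_\zed$, containment in $(E)$, and the identity $J_\Q=\Q\otimes_\zed J_\zed$) that make the application of the lemma legitimate; note that your $N_1$ is in fact redundant, since once $N_2 f\in I(G)\subset\zed[E(G)]$ and $f\in\Q[E]$ you already have $N_2 f\in\zed[E(G)]\cap\Q[E]=\zed[E]$.
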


\begin{proof}
Apply Lemma \ref{lemma-Krull-base-change} to the ideal $\ann_{\zed[E]}\mathscr{H}_0 (G)=\zed[E]\cap I(G)$ of $\zed[E]$, where $I(G)$ is the incidence ideal of $\zed[E(G)]$.
\end{proof}

\begin{remark}
It is not hard to check that any homogeneous maximal ideal of $R_\zed$ in Lemma\ref{lemma-Krull-base-change} is of the form $\mathfrak{m}= \mathfrak{p}_\zed + (p) = \mathfrak{p}_\zed \oplus p \cdot \zed$, where $p \in \zed$ is a prime number. By \cite[Corollary 13.7]{Eisenbud}, this implies that 
\[
\Kdim  R_\zed = \max \{\codim (\mathfrak{p}_\zed + (p)) ~|~ p \text{ is a prime integer}\}.
\] 
So, to prove Conjecture \ref{conj-Q-Z-same}, one just needs to verify that $\codim (\mathfrak{p}_\zed + (p)) = \codim \mathfrak{p}_\zed +1$ for the ring $R_\zed = {\zed[E]/(\zed[E]\cap I(G))}$ for any directed graph $G$, any subset $E$ of $E(G)$ and any prime integer $p$.
\end{remark}

\subsection{Directed cycles} We prove Theorem \ref{thm-KR-detect} and Proposition \ref{prop-Krull-bound-E} in this subsection. Our proof starts with the following simple observation.

\begin{lemma}\label{lemma-fg}
Let $R$ be a $\zed$-graded commutative ring with $1$, and $R_n$ the homogeneous component of $R$ of degree $n$. Assume that
\begin{itemize}
	\item $R_n=0$ if $n<0$,
	\item $R_n$ is a finitely generated $R_0$-module for each $n \geq 0$.
\end{itemize}
Suppose that $a_1,\dots,a_k$ are homogeneous elements of $R$ of positive degrees. Let $I=(a_1,\dots,a_k)$ be the ideal of $R$ generated by $a_1,\dots,a_k$, and $R'=R_0[a_1,\dots,a_k]$ the $R_0$-subalgebra of $R$ generated by $a_1,\dots,a_k$. Then $R$ is a finitely generated $R'$-module if and only if $R/I$ is a finitely generated $R_0$-module.
\end{lemma}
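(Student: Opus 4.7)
The plan is to establish both implications of the biconditional, relying on the graded structure of $R$ and a degree-based induction. The non-negativity of the grading and the positivity of $\deg a_j$ are what make the argument go through; the hypothesis that each $R_n$ is finitely generated over $R_0$ is not strictly needed for this statement, but it is the background assumption that makes the conclusion interesting in applications.

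For the forward implication, I would start from a finite set of generators $m_1,\dots,m_s$ of $R$ over $R'$ and push them down to $R/I$. The images of the $a_j$ vanish in $R/I$, so the action of $R'=R_0[a_1,\dots,a_k]$ on $R/I$ factors through $R'/(R'\cap I)$; a short calculation using homogeneity shows that $R_0\cap I=0$ (any identity $a=\sum r_i a_i$ in degree $0$ forces each degree-$(-\deg a_i)$ component of $r_i$ to vanish), so the image of $R_0$ in $R/I$ is faithfully $R_0$, and the image of $R'$ collapses to $R_0$. Hence $\bar m_1,\dots,\bar m_s$ generate $R/I$ as an $R_0$-module.

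For the reverse implication, I would take homogeneous generators $\bar m_1,\dots,\bar m_s$ of $R/I$ over $R_0$ (possible because $I$ is homogeneous, so $R/I$ is graded), lift them to homogeneous $m_1,\dots,m_s\in R$, and adjoin $m_0:=1$. Let $M$ be the $R'$-submodule $\sum_i R'\,m_i$. The claim $R_n\subseteq M$ is proved by induction on $n$. The base $n=0$ is immediate since $R_0\subseteq R'$ and $1\in M$. For the inductive step, given homogeneous $r\in R_n$, write $\bar r=\sum_{i:\,\deg m_i=n} r_i\bar m_i$ with $r_i\in R_0$ (the degree constraint kills the other summands). Then $r-\sum r_i m_i$ lies in $I\cap R_n$, and by homogeneity it can be written as $\sum_j a_j s_j$ with $s_j\in R_{n-\deg a_j}$. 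Because $\deg a_j\ge 1$, each $s_j$ has degree strictly less than $n$, so the inductive hypothesis gives $s_j\in M$; since $a_j\in R'$ and $M$ is an $R'$-module, $a_j s_j\in M$, and therefore $r\in M$.

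There is no substantive obstacle here; the whole argument is a graded Nakayama-type lifting. The only points that require care are the homogeneity of the lifted generators (needed to make the sum $\bar r=\sum r_i\bar m_i$ have $r_i\in R_0$ and not in higher components) and the identity $R_0\cap I=0$, both of which follow cleanly from the non-negative grading and the positivity of $\deg a_j$.
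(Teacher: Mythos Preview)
Your argument is correct and follows the paper's closely in spirit: the forward direction is handled identically (push a finite $R'$-generating set down to $R/I$ and observe that $R'$ acts through $R_0$ there), and the reverse direction is, in both cases, a degree induction exploiting $\deg a_j\ge 1$. The one substantive difference is in how the finite generating set for the reverse direction is produced. The paper first observes that $R/I$ finitely generated over $R_0$ forces $R_n\subset I$ for all $n>N$, then invokes the standing hypothesis that each $R_n$ is finitely generated over $R_0$ to assemble a finite homogeneous $R_0$-generating set for $\bigoplus_{n\le N}R_n$, and inducts with that. You instead lift homogeneous $R_0$-generators of $R/I$ directly to $R$, adjoin $1$, and induct with those. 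Your route is a bit cleaner and, as you correctly note, does not use the finite-generation hypothesis on the individual $R_n$; the paper's route does use it in an essential way to build its generating set. (Your aside that $R_0\cap I=0$ is correct but not actually needed: one only requires that $R/I$ is an $R_0$-module, not that $R_0$ embeds.)
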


\begin{proof}
Assume that $R$ is a finitely generated $R'$-module. Then there exist $r_1,\dots,r_m \in R$ such that, for every $r \in R$, there are $f_1,\dots,f_m \in R_0[x_1,\dots,x_k]$ satisfying $r = \sum_{i=1}^m f_i(a_1,\dots,a_k) \cdot r_i$. So $r+I = (\sum_{i=1}^m f_i(a_1,\dots,a_k) \cdot r_i) + I = \sum_{i=1}^m f_i(0,\dots,0) \cdot (r_i +I)$. Note that $f_i(0,\dots,0) \in R_0$. This shows that $R/I$ is generated over $R_0$ by $\{r_1+I,\dots,r_m+I\}$. So $R/I$ is a finitely generated $R_0$-module.

Now assume that $R/I$ is a finitely generated $R_0$-module. Then, there is an $N>0$ such that $R_n \subset I$ if $n>N$. Since each $R_n$ is finitely generated over $R_0$, there is a finite generating set $\{s_1,\dots,s_l\}$ of homogeneous elements for the $R_0$-module $\bigoplus_{n=0}^N R_n$. We claim that $\{s_1,\dots,s_l\}$ is also a generating set for the $R'$-module $R$. To show this, we only need to show that $R_n \subset \sum_{j=1}^l R'\cdot s_j$. For $n \leq N$, this is trivial since $R_n \subset \sum_{j=1}^l R_0\cdot s_j$ for $n \leq N$. Now assume that $R_n \subset \sum_{j=1}^l R'\cdot s_j$ for all $n \leq K$, where $K \geq N$. Then $R_{K+1} \subset I$. So $r = \sum_{i=1}^k r_i a_i$ for any $r \in R_{K+1}$, where each $r_i$ is of the degree $K+1- \deg a_i \leq K$. So $r_i \in  \sum_{j=1}^l R'\cdot s_j$ for each $1\leq i \leq k$. Thus, $r \in \sum_{j=1}^l R'\cdot s_j$. This shows that $R_{K+1} \subset \sum_{j=1}^l R'\cdot s_j$. Thus, $R$ is a finitely generated $R'$-module.
\end{proof}

\begin{corollary}\label{cor-fg-sym}
Let $R$ be a $\zed$-graded commutative ring with $1$, and $R_n$ the homogeneous component of $R$ of degree $n$. Assume that
\begin{itemize}
	\item $R_n=0$ if $n<0$,
	\item $R_n$ is a finitely generated $R_0$-module for each $n \geq 0$.
\end{itemize}
Suppose that $a_1,\dots,a_k$ are homogeneous elements of $R$ of degree $1$. For $1\leq l \leq k$, let $e_l = e_l(a_1,\dots,a_k)=\sum_{1\leq i_1< \cdots < i_l \leq k} a_{i_1}\cdots a_{i_l}$. Then $R/(a_1,\dots,a_k)$ is finitely generated over $R_0$ if and only if $R/(e_1,\dots,e_k)$ is finitely generated over $R_0$.
\end{corollary}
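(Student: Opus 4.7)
The plan is to reduce Corollary \ref{cor-fg-sym} to Lemma \ref{lemma-fg} and then compare the two relevant subalgebras of $R$ using a standard integrality argument.

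First, I would invoke Lemma \ref{lemma-fg} twice. Since the $a_i$ are homogeneous of degree $1$ and the $e_l$ are homogeneous of degree $l$ (all positive), the lemma applies to both sequences, giving
\begin{align*}
R/(a_1,\dots,a_k) \text{ is f.g.\ over } R_0 &\iff R \text{ is f.g.\ over } R_0[a_1,\dots,a_k], \\
R/(e_1,\dots,e_k) \text{ is f.g.\ over } R_0 &\iff R \text{ is f.g.\ over } R_0[e_1,\dots,e_k].
\end{align*}
So the corollary reduces to showing that $R$ is finitely generated as a module over $R_0[a_1,\dots,a_k]$ if and only if it is finitely generated as a module over $R_0[e_1,\dots,e_k]$.

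The key step is to prove that $R_0[a_1,\dots,a_k]$ is itself a finitely generated $R_0[e_1,\dots,e_k]$-module. For this I would use the polynomial
\[
P(t) = \prod_{j=1}^{k}(t-a_j) = t^k - e_1 t^{k-1} + e_2 t^{k-2} - \cdots + (-1)^k e_k,
\]
which is monic in $t$ with coefficients in $R_0[e_1,\dots,e_k]$. Since $P(a_i)=0$ for every $i$, each $a_i$ is integral over $R_0[e_1,\dots,e_k]$. By the standard fact that a finitely generated algebra whose generators are integral over the base ring is module-finite over it (e.g.\ \cite[Corollary 5.23]{Rowen}, also used in the proof of Lemma \ref{lemma-fg-extension}), this yields that $R_0[a_1,\dots,a_k]$ is a finitely generated $R_0[e_1,\dots,e_k]$-module.

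From this, the two finiteness conditions on $R$ are equivalent by transitivity: if $R$ is f.g.\ over $R_0[a_1,\dots,a_k]$, then composing with the module-finite extension $R_0[e_1,\dots,e_k] \subseteq R_0[a_1,\dots,a_k]$ shows $R$ is f.g.\ over $R_0[e_1,\dots,e_k]$; conversely, any finite generating set of $R$ over the smaller ring $R_0[e_1,\dots,e_k]$ is automatically a finite generating set over the larger ring $R_0[a_1,\dots,a_k]$. I expect the only nontrivial step to be the integrality/module-finiteness of $R_0[a_1,\dots,a_k]$ over $R_0[e_1,\dots,e_k]$, and that is handled cleanly by writing down $P(t)$ as above; the rest of the argument is bookkeeping via Lemma \ref{lemma-fg}.
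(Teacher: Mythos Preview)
Your proof is correct and follows essentially the same approach as the paper: both reduce via Lemma~\ref{lemma-fg} to showing that $R_0[a_1,\dots,a_k]$ is module-finite over $R_0[e_1,\dots,e_k]$, and then conclude by transitivity. The only cosmetic difference is in justifying that module-finiteness: the paper quotes the classical fact that $R_0[x_1,\dots,x_k]$ is generated by $k!$ elements over its symmetric subring and pushes this down, whereas you argue directly via integrality using $P(t)=\prod_j(t-a_j)$; both are standard and equivalent.
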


\begin{proof}
Consider the $R_0$-subalgebras $R'=R_0[a_1,\dots,a_k]$ and $R''=R_0[e_1,\dots,e_k]$ of $R$. Denote by $\Sym(x_1,\dots,x_k)$ the ring of symmetric polynomials in variables $x_1,\dots,x_k$ over $R_0$. Then $R_0[x_1,\dots,x_k]$ is a $\Sym(x_1,\dots,x_k)$-module generated by $k!$ generators. This implies that $R'$ is a finitely generated $R''$-module. Thus, $R$ is a finitely generated $R'$-module if and only if $R$ is a finitely generated $R''$-module. Now the corollary follows from Lemma \ref{lemma-fg}.
\end{proof}

\begin{corollary}\label{cor-fg-acyclic}
Let $G$ be an acyclic directed graph. Then $\mathscr{H}_0(G)$ is a finitely generated $\zed$-module.
\end{corollary}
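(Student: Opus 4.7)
The plan is to induct on $|E(G)|$. In the base case $|E(G)| = 0$, the paper's convention that $G$ has no vertices of degree $0$ forces $V(G) = \emptyset$ as well, so $\zed[E(G)] = \zed$ and $I(G) = 0$; by Lemma \ref{lemma-KR-module}, $\mathscr{H}_0(G) \cong \zed$, which is trivially finitely generated over $\zed$.

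For the inductive step, I would exploit that every finite directed acyclic graph has a source: if every vertex had at least one incoming edge, tracing incoming edges backward from any vertex would, by pigeonhole, eventually repeat a vertex and produce a directed cycle. Fix such a source $v$ and let $x_1, \ldots, x_m$ be the edges having $v$ as their initial vertex. Since $v$ has in-degree $0$, we have $k_v = m$ and the incidence relations at $v$ are exactly $e_1(x_1, \ldots, x_m), \ldots, e_m(x_1, \ldots, x_m)$. Let $J$ be the ideal of $\zed[E(G)]$ generated by all incidence relations at vertices other than $v$, and set $R := \zed[E(G)]/J$. Then Lemma \ref{lemma-KR-module} gives
\[
\mathscr{H}_0(G) \;\cong\; R \big/ \bigl(e_1(x_1, \ldots, x_m), \ldots, e_m(x_1, \ldots, x_m)\bigr),
\]
and applying Corollary \ref{cor-fg-sym} to $R$ with the degree-$1$ elements $a_i = x_i$ reduces the problem to showing that $R/(x_1, \ldots, x_m)$ is a finitely generated $\zed$-module.

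To finish, I would identify $R/(x_1, \ldots, x_m)$ with $\mathscr{H}_0(G^\#)$, where $G^\#$ is the directed graph obtained from $G$ by deleting $v$ and the edges $x_1, \ldots, x_m$ (suppressing any isolated vertices that result). Because $v$ is a source, none of the $x_i$ is a loop and no other vertex has any of them as an outgoing edge, so setting $x_i \mapsto 0$ transforms each incidence relation $e_l(X_u) - e_l(Y_u)$ at a vertex $u \neq v$ into $e_l(X_u) - e_l(Y_u \setminus \{x_1, \ldots, x_m\})$. This is precisely the degree-$l$ incidence relation at $u$ in $G^\#$ when $l \leq k_u^{G^\#}$, and vanishes identically when $l$ exceeds both $|X_u|$ and $|Y_u \setminus \{x_1, \ldots, x_m\}|$ because both elementary symmetric polynomials involved are then zero. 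Hence the image of $J$ under the canonical quotient $\zed[E(G)] \twoheadrightarrow \zed[E(G)]/(x_1, \ldots, x_m) = \zed[E(G^\#)]$ is exactly $I(G^\#)$, yielding $R/(x_1, \ldots, x_m) \cong \mathscr{H}_0(G^\#)$ by Lemma \ref{lemma-KR-module}. Since $G^\#$ is still acyclic and has strictly fewer edges than $G$, the inductive hypothesis supplies the needed finite generation. The only mildly delicate point is the bookkeeping in this last identification, in particular tracking how $k_u$ can drop when $u$ is the terminal vertex of several of the $x_i$; the rest is a direct application of Corollary \ref{cor-fg-sym}.
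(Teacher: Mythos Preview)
Your proof is correct and follows essentially the same route as the paper: pick a source $v$, set $R=\zed[E(G)]/J$ with $J$ generated by the incidence relations at the other vertices, observe $\mathscr{H}_0(G)\cong R/(e_1,\dots,e_m)$, and invoke Corollary~\ref{cor-fg-sym} to reduce to $R/(x_1,\dots,x_m)\cong\mathscr{H}_0$ of the smaller acyclic graph. The only cosmetic differences are that the paper inducts on $|V(G)|$ rather than $|E(G)|$, and that it obtains the identification $R/(x_1,\dots,x_m)\cong\mathscr{H}_0(G')$ by citing Lemma~\ref{lemma-edge-removal} instead of verifying it directly as you do.
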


\begin{proof}
We induct on $|V(G)|$. If $G$ only has one vertex, then it has no edges since it is acyclic. So $\mathscr{H}_0(G)$ is just the base ring $\zed$. Now assume the corollary is true for any graph with $n$ vertices and $G$ is an acyclic directed graph with $n+1$ vertices. Since $G$ is acyclic, there is a vertex $v$ of $G$ that is not the terminal vertex of any edge. Let $x_1,\dots,x_k$ be the edges having $v$ as their initial vertex. Denote by $G'$ the directed graph obtained from $G$ by removing $v$ and $x_1,\dots,x_k$. Then $G'$ is an acyclic directed graph with $n$ vertices. So $\mathscr{H}_0(G')$ is a finitely generated $\zed$-module. Let $I$ be the ideal of $\zed[E(G)]$ generated by the set $\{\delta_{u,l}~|~ u\in V(G)\setminus \{v\},~1\leq l \leq k_u\}$ of all incidence relations of $G$ except those at $v$. Set $R=\zed[E(G)]/I$. Then $\mathscr{H}_0(G) \cong R/(e_1(x_1,\dots,x_k),\dots,e_k(x_1,\dots,x_k))$ and, by Lemma \ref{lemma-edge-removal}, $\mathscr{H}_0(G') \cong R/(x_1,\dots,x_k)$. Thus, by Corollary \ref{cor-fg-sym}, $\mathscr{H}_0(G)$ is also a finitely generated $\zed$-module.
\end{proof}

Next, we prove Proposition \ref{prop-Krull-bound-E}.

\begin{proof}[Proof of Proposition \ref{prop-Krull-bound-E}]
By Corollary \ref{cor-Krull-base-change-graph}, we have that $\Kdim_{\Q[E]} \mathscr{H}_0^\Q (G) \leq \Kdim_{\zed[E]} \mathscr{H}_0 (G) -1$. It remains to show that $\alpha_E(G) \leq \Kdim_{\Q[E]} \mathscr{H}_0^\Q (G)$. Fix a collection $\mathcal{C}$ of $\alpha_E(G)$ pairwise edge-disjoint directed cycles in $G$, each of which contains at least one edge in $E$. Denote by $G'$ the subgraph of $G$ consisting of exactly the edges and vertices in $\mathcal{C}$. Then, by Lemma \ref{lemma-edge-removal}, $\mathscr{H}_0^\Q(G') \cong \mathscr{H}_0^\Q(G)/I\cdot\mathscr{H}_0^\Q(G)$, where $I$ is the ideal of $\Q[E(G)]/(\Q\otimes_\zed I(G))$ generated by all the edges in $E(G)\setminus E(G')$. Here, $I(G)$ is the incidence ideal of $\zed[E(G)]$. So, by Lemma \ref{lemma-Krull-quotient}, we have $\Kdim_{\Q[E]} \mathscr{H}_0^\Q (G) \geq \Kdim_{\Q[E]} \mathscr{H}_0^\Q (G')$. For any vertex $v$ of $G'$ of degree $2k$, split it into $k$ vertices of degree $2$, each of which is the vertex of two edges in the same directed cycle in the collection $\mathcal{C}$. This changes $G'$ to a new directed graph $G''$ consisting of $\alpha_E(G)$ pairwise disjoint directed cycles, each of which contains at least one edge in $E$. By Lemmas \ref{lemma-fuse-vertex} and \ref{lemma-Krull-quotient}, $\Kdim_{\Q[E]} \mathscr{H}_0^\Q (G') \geq \Kdim_{\Q[E]} \mathscr{H}_0^\Q (G'')$. By the construction of $G''$ and Lemma \ref{lemma-vertex-2-removal}, one can see that $\ann_{\Q[E]} \mathscr{H}_0 (G'')$ is the ideal of $\Q[E]$ generated by the set 
\[
(E\setminus E(G'')) \cup \{x-y ~|~ x,~y \in E \cap E(G'') \text{ are contained in the same directed cycle in } \mathcal{C} \}.
\]
Thus, $\Q[E]/\ann_{\Q[E]} \mathscr{H}_0 (G'')$ is isomorphic to a polynomial ring over $\Q$ of $\alpha_E(G)$ variables. By Lemma \ref{lemma-Krull-polynomial}, 
\[
\Kdim_{\Q[E]} \mathscr{H}_0^\Q (G'') = \Kdim \Q[E]/\ann_{\Q[E]} \mathscr{H}_0 (G'') = \alpha_E(G).
\]
Therefore, $\Kdim_{\Q[E]} \mathscr{H}_0^\Q (G) \geq \Kdim_{\Q[E]} \mathscr{H}_0^\Q (G') \geq \Kdim_{\Q[E]} \mathscr{H}_0^\Q (G'') =\alpha_E(G)$.
\end{proof}

\begin{lemma}\label{lemma-Krull-beta}
For any directed graph $G$, $\Kdim_{\zed[E(G)]} \mathscr{H}_0 (G) \leq \beta(G)+1$.
\end{lemma}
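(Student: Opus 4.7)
The plan is to exhibit $\mathscr{H}_0(G)$ as a finite module over a $\zed$-subalgebra generated by only $\beta(G)$ ring elements, so that its Krull dimension is bounded above by that of a quotient of a polynomial ring in $\beta(G)$ variables over $\zed$. First, I would pick a set $E=\{e_1,\dots,e_\beta\}\subset E(G)$ of size $\beta=\beta(G)$ whose removal destroys all directed cycles in $G$, producing an acyclic directed graph $G'$. Iterating Lemma \ref{lemma-edge-removal} once for each $e_i$ identifies $\mathscr{H}_0(G')\cong \mathscr{H}_0(G)/(e_1,\dots,e_\beta)\mathscr{H}_0(G)$, and Corollary \ref{cor-fg-acyclic} then shows this quotient is a finitely generated $\zed$-module.

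Next set $R=\mathscr{H}_0(G)\cong \zed[E(G)]/I(G)$ and check that $R$ meets the hypotheses of Lemma \ref{lemma-fg}: it is $\zed$-graded with $R_0=\zed$, each $R_n$ is finitely generated over $\zed$ (being a quotient of the finitely generated free $\zed$-module $\zed[E(G)]_n$), and $e_1,\dots,e_\beta$ are homogeneous of positive degree. Feeding the $\zed$-finiteness of $R/(e_1,\dots,e_\beta)R$ just established into Lemma \ref{lemma-fg}, I would conclude that $R$ is finitely generated as a module over the $\zed$-subalgebra $R':=\zed[e_1,\dots,e_\beta]\subset R$. By Lemma \ref{lemma-fg-extension}, this module-finite extension preserves Krull dimension, so $\Kdim R=\Kdim R'$.

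Finally, $R'$ is a quotient of the polynomial ring $\zed[y_1,\dots,y_\beta]$ via $y_i\mapsto e_i$, so Lemma \ref{lemma-Krull-quotient} together with Lemma \ref{lemma-Krull-polynomial} yields $\Kdim R'\leq \Kdim \zed[y_1,\dots,y_\beta]=\beta+1$. Stringing the equalities and inequalities together delivers
\[
\Kdim_{\zed[E(G)]}\mathscr{H}_0(G)\;=\;\Kdim R\;=\;\Kdim R'\;\leq\;\beta(G)+1.
\]
The only conceptual step is recognising that Lemma \ref{lemma-fg} lets one upgrade the $\zed$-finiteness of the acyclic quotient into module-finiteness of $R$ over the subalgebra generated by a minimum feedback edge set; once that observation is made, everything else is routine bookkeeping with standard facts about the behaviour of Krull dimension under finite extensions and under quotients of polynomial rings.
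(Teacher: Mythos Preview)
Your proof is correct and follows essentially the same approach as the paper: choose a minimum feedback edge set, use Corollary~\ref{cor-fg-acyclic} and Lemma~\ref{lemma-fg} to see that $\mathscr{H}_0(G)$ is module-finite over the $\zed$-subalgebra generated by those edges, then bound the Krull dimension via Lemmas~\ref{lemma-fg-extension}, \ref{lemma-Krull-quotient}, and~\ref{lemma-Krull-polynomial}. The only cosmetic difference is notation (the paper calls your $R'$ by the name $R$) and that the paper writes the quotient identification $\zed[E(G)]/(I(G)+(x_1,\dots,x_k))\cong \mathscr{H}_0(G')$ directly rather than invoking Lemma~\ref{lemma-edge-removal}.
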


\begin{proof}
Let $k = \beta(G)$, and $x_1,\dots,x_k$ $k$ edges of $G$, whose removal from $G$ yields an acyclic directed graph $G'$. Thus,by Corollary \ref{cor-fg-acyclic}, $\zed[E(G)]/(I(G)+(x_1,\dots,x_k)) \cong \zed[E(G')]/I(G') \cong \mathscr{H}_0 (G')$ is a finitely generated $\zed$-module. Here, $I(G)$ is the incidence ideal of $\zed[E(G)]$. So, by Lemma \ref{lemma-fg}, $\zed[E(G)]/I(G)$ is a finitely generated module over its subring $R$ generated over $\zed$ by $x_1,\dots,x_k$. By Lemma \ref{lemma-fg-extension}, $\Kdim_{\zed[E(G)]} \mathscr{H}_0 (G) = \Kdim \zed[E(G)]/I(G) = \Kdim R$. But $R$ is a quotient ring of the polynomial ring $\zed[x_1,\dots,x_k]$. So, by Lemmas \ref{lemma-Krull-quotient} and \ref{lemma-Krull-polynomial}, $\Kdim R \leq \Kdim \zed[x_1,\dots,x_k] = k+1$. Therefore, $\Kdim_{\zed[E(G)]} \mathscr{H}_0 (G) \leq k+1 = \beta(G)+1$.
\end{proof}

We have effectively proved Theorem \ref{thm-KR-detect}. Here we recap what was done.

\begin{proof}[Proof of Theorem \ref{thm-KR-detect}]
In Part 1, (1) $\Rightarrow$ (2) follows from Corollary \ref{cor-fg-acyclic}. (2) $\Leftrightarrow$ (3) follows from Lemma \ref{lemma-KR-module}. (2) $\Leftrightarrow$ (4) is elementary. (2) $\Rightarrow$ (5) follows from Lemma \ref{lemma-fg-extension}. (5) $\Rightarrow$ (6) follows from Corollary \ref{cor-Krull-base-change-graph}. (6) $\Rightarrow$ (1) follows from Proposition \ref{prop-Krull-bound-E}. This completes the proof of Part 1.

Part 2 follows from Theorem \ref{thm-incidence}, Fact \ref{fact-kdim}, Corollary \ref{cor-Krull-base-change-graph} and Lemma \ref{lemma-Krull-beta}.
\end{proof}

\subsection{Directed cycles through a particular vertex or edge} We prove Theorem \ref{thm-KR-detect-vertex} and Corollary \ref{cor-KR-detect-edge} in this subsection. First, we state several simple algebraic lemmas that will be used in the proof.

\begin{lemma}\label{lemma-deloop}
Let $X$, $Y$ and $Z$ be three pairwise disjoint finite sets of variables. Then the sets $\Delta:=\{e_l(X\cup Z) - e_l(Y \cup Z) ~|~ l>0\}$ and $\Delta':=\{e_l(X) - e_l(Y) ~|~ l>0\}$ generate the same ideal of $\zed[X\cup Y \cup Z]$, where $e_l(S)$ is the $l$-th elementary symmetric polynomial in the finite set $S$ of variables.

In particular, for any directed graph $G$, the incidence ideal $I(G)$ of $\zed[E(G)]$ admits a set of generators in $\zed[E(G)\setminus L(G)]$, where $L(G)$ is the set of loop edges of $G$.
\end{lemma}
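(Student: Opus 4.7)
The plan is to reduce the claim to a single polynomial identity and a short induction. The key tool is the multiplicative formula for elementary symmetric polynomials on disjoint variable sets: whenever $A$ and $B$ are disjoint finite sets of variables,
\[
e_l(A \cup B) \;=\; \sum_{i=0}^{l} e_i(A)\, e_{l-i}(B),
\]
with the convention $e_0 = 1$. Since $X$, $Y$, $Z$ are pairwise disjoint, applying this formula to $X \cup Z$ and to $Y \cup Z$ and subtracting yields
\[
e_l(X \cup Z) - e_l(Y \cup Z) \;=\; \sum_{i=1}^{l} \bigl(e_i(X) - e_i(Y)\bigr)\, e_{l-i}(Z),
\]
where the $i = 0$ term drops because $e_0(X) = e_0(Y) = 1$. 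This identity will drive both containments.

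One direction, $(\Delta) \subseteq (\Delta')$, is immediate from the identity: each generator of $\Delta$ is exhibited as a $\zed[X\cup Y \cup Z]$-linear combination of generators of $\Delta'$. For the reverse containment $(\Delta') \subseteq (\Delta)$ I would induct on $l$. The base case $l=1$ collapses the identity to $e_1(X\cup Z) - e_1(Y\cup Z) = e_1(X) - e_1(Y)$, so this generator of $\Delta'$ already belongs to $\Delta$. For the inductive step, solve the identity for the top-$l$ term:
\[
e_l(X) - e_l(Y) \;=\; \bigl(e_l(X \cup Z) - e_l(Y \cup Z)\bigr) - \sum_{i=1}^{l-1} \bigl(e_i(X) - e_i(Y)\bigr)\, e_{l-i}(Z).
\]
The leading bracket lies in $\Delta$, and each summand in the tail has a factor $e_i(X) - e_i(Y)$ with $i < l$, which lies in $(\Delta)$ by the inductive hypothesis. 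Hence $e_l(X) - e_l(Y) \in (\Delta)$, completing the induction.

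For the graph-theoretic consequence, at each vertex $v$ of $G$ let $X_v$ and $Y_v$ be the sets of outgoing and incoming edges at $v$. The edges that appear in both are exactly the loops at $v$, so $X_v \cap Y_v = L(v)$ and the three sets $X_v \setminus L(v)$, $Y_v \setminus L(v)$, $L(v)$ are pairwise disjoint. Applying the first half of the lemma with $X = X_v \setminus L(v)$, $Y = Y_v \setminus L(v)$, $Z = L(v)$ shows that the incidence relations $\delta_{v,l} = e_l(X_v) - e_l(Y_v)$ at $v$ generate the same ideal as the modified relations $e_l(X_v \setminus L(v)) - e_l(Y_v \setminus L(v))$, which lie in $\zed[E(G) \setminus L(G)]$. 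Taking the union over all $v \in V(G)$ produces a generating set for $I(G)$ contained in the loop-free subring. The whole argument is purely formal, and the only thing to be careful about is keeping track of disjointness each time the multiplicative formula for $e_l$ is invoked; there is no substantive obstacle.
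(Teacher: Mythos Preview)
Your proof is correct. The first containment $(\Delta)\subseteq(\Delta')$ is argued exactly as in the paper, via the multiplicative formula for elementary symmetric polynomials on disjoint sets. For the reverse containment the paper instead writes down a closed-form inverse,
\[
e_l(X)-e_l(Y)=\sum_{i=0}^{l}(-1)^{l-i}h_{l-i}(Z)\bigl(e_i(X\cup Z)-e_i(Y\cup Z)\bigr),
\]
using the complete symmetric polynomials $h_j(Z)$; this amounts to inverting the unitriangular system you solve by induction. Your inductive version is slightly more elementary in that it avoids invoking the $e$--$h$ duality, while the paper's version has the virtue of exhibiting explicit coefficients. The graph-theoretic deduction is handled the same way in both.
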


\begin{proof}
Let $I$ (resp. $I'$) be the ideal of $\zed[X\cup Y \cup Z]$ generated by $\Delta$ (resp. $\Delta'$.) For any $l>0$, 
\[
e_l(X\cup Z) - e_l(Y \cup Z) = \sum_{i=0}^l e_{l-i}(Z)(e_i(X)-e_i(Y)) \in I'.
\]
So $I \subset I'$. On the other hand,
\[
e_l(X) - e_l(Y) = \sum_{i=0}^l (-1)^{l-i} h_{l-i}(Z) (e_i(X\cup Z) - e_i(Y \cup Z)) \in I,
\]
where $h_j(Z)$ is the $j$-th complete symmetric polynomial in $Z$. So $I'\subset I$.
\end{proof}

\begin{lemma}\label{lemma-separate}
Let $X$ and $Y$ be two disjoint finite sets of variables. Assume $f_1,\dots,f_n$ are homogeneous polynomials in $\zed[X]$ of positive degrees, and $g_1,\dots,g_m$ are homogeneous polynomials in $\zed[Y]$ of positive degrees. Denote by $I$ the ideal $I= (f_1,\dots,f_n,g_1,\dots,g_m)$ of $\zed[X\cup Y]$. Then $\ann_{\zed[X]} (\zed[X\cup Y]/I)$ is the ideal of $\zed[X]$ generated by $f_1,\dots,f_n$.
\end{lemma}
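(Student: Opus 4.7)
The plan is to reduce the claim to a tensor-product computation over $\zed$ and exploit the positive-degree homogeneity of the $g_j$ to produce a $\zed$-summand of $\zed[Y]/(g_1,\dots,g_m)$ containing the class of $1$. The inclusion $(f_1,\dots,f_n)\zed[X] \subseteq \ann_{\zed[X]}(\zed[X\cup Y]/I)$ is immediate from $f_i \in I$, so only the reverse inclusion requires work.

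The first step is to observe that $\zed[X\cup Y] = \zed[X]\otimes_\zed \zed[Y]$ and that the generators of $I$ split cleanly into the two factors. Applying right-exactness of $\otimes_\zed$ twice, I would verify the ring isomorphism
\[
\zed[X\cup Y]/I \;\cong\; (\zed[X]/(f_1,\dots,f_n))\otimes_\zed (\zed[Y]/(g_1,\dots,g_m)).
\]
This is routine but must be written out, since it converts the question into linear algebra over $\zed[X]/(f_1,\dots,f_n)$.

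The second step uses homogeneity. Because each $g_j$ is homogeneous of positive degree, the ideal $(g_1,\dots,g_m)$ is homogeneous and contained in the irrelevant ideal of $\zed[Y]$, so $\zed[Y]/(g_1,\dots,g_m)$ is a nonnegatively graded $\zed$-algebra whose degree-$0$ component is the free $\zed$-module $\zed\cdot\overline{1}$. Splitting off degree $0$ yields a $\zed$-module decomposition $\zed[Y]/(g_1,\dots,g_m) = \zed\cdot\overline{1}\;\oplus\; M$, where $M$ is the sum of its strictly positive homogeneous components. Tensoring this splitting with $\zed[X]/(f_1,\dots,f_n)$ over $\zed$ produces the decomposition
\[
\zed[X\cup Y]/I \;\cong\; \zed[X]/(f_1,\dots,f_n)\;\oplus\;\bigl((\zed[X]/(f_1,\dots,f_n))\otimes_\zed M\bigr)
\]
as $\zed[X]/(f_1,\dots,f_n)$-modules, under which the canonical map $\zed[X]/(f_1,\dots,f_n) \to \zed[X\cup Y]/I$ becomes the inclusion of the first summand. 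In particular, that canonical map is injective, which gives exactly the desired conclusion: if $h \in \zed[X]$ becomes zero modulo $I$, then $h$ is already zero in $\zed[X]/(f_1,\dots,f_n)$, i.e.\ $h \in (f_1,\dots,f_n)\zed[X]$.

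The main subtlety is the splitting of $\zed[Y]/(g_1,\dots,g_m)$ as a $\zed$-module: this is where the positive-degree hypothesis is essential, since it guarantees that $\overline{1}$ generates a free $\zed$-summand rather than being torsion or zero. Once this summand is in place the argument is formal, and no flatness hypothesis on $\zed[X]/(f_1,\dots,f_n)$ is needed because the summand $\zed\cdot\overline{1}$ is already free of rank one over $\zed$.
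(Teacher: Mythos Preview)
Your proof is correct. The paper, however, reaches the same conclusion by a much shorter substitution trick: writing any $\varphi\in I\cap\zed[X]$ as $\varphi=\sum a_i f_i+\sum b_j g_j$ and evaluating at $Y=0$ kills every $g_j$ (positive-degree homogeneity in $\zed[Y]$), leaves $\varphi$ and the $f_i$ unchanged, and exhibits $\varphi=\sum (a_i|_{Y=0})f_i\in(f_1,\dots,f_n)$. Your tensor-product/graded-splitting argument is the structural version of the same idea---evaluation at $Y=0$ \emph{is} the projection onto $Y$-degree $0$, which is exactly the splitting you construct---but you package it as an injectivity statement for the map $\zed[X]/(f_1,\dots,f_n)\hookrightarrow\zed[X\cup Y]/I$. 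The paper's approach is quicker for this lemma; yours yields the stronger conclusion that $\zed[X]/(f_1,\dots,f_n)$ sits inside $\zed[X\cup Y]/I$ as a $\zed[X]$-module direct summand, which is not needed here but is a cleaner module-theoretic picture.
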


\begin{proof}
Denote by $\hat{I}$ the ideal of $\zed[X]$ generated by $f_1,\dots,f_n$. Clearly, $\ann_{\zed[X]} (\zed[X\cup Y]/I) = I \cap \zed[X]$. In particular, $f_1,\dots,f_n \in \ann_{\zed[X]} (\zed[X\cup Y]/I)$ and, therefore, $\hat{I} \subset \ann_{\zed[X]} (\zed[X\cup Y]/I)$. Now assume $\varphi \in \ann_{\zed[X]} (\zed[X\cup Y]/I)=I \cap \zed[X]$. Then there are $a_1,\dots,a_n,b_1,\dots,b_m \in \zed[X\cup Y]$ such that 
\[
\varphi = \sum_{i=1}^n a_i f_i + \sum_{j=1}^m b_jg_j.
\]
Substituting every variable in $Y$ by $0$, we get $\varphi|_{Y=0} =\varphi$, $f_i|_{Y=0} =f_i$, $g_j|_{Y=0}=0$ and $a_i|_{Y=0} \in \zed[X]$. Thus,
\[
\varphi = \varphi|_{Y=0} = \sum_{i=1}^n (a_i|_{Y=0}) f_i \in \hat{I}.
\]
This shows that $\ann_{\zed[X]} (\zed[X\cup Y]/I) \subset \hat{I}$.
\end{proof}

\begin{lemma}\label{lemma-fg-restriction}
Let $X$ and $Y$ be two disjoint finite sets of variables. Assume $I$ is an ideal of $\zed[X \cup Y]$ such that $M:= \zed[X \cup Y]/I$ is a finitely generated $\zed$-module. Then $\zed[X]/(\ann_{\zed[X]} M) = \zed[X]/(\zed[X]\cap I)$ is also a finitely generated $\zed$-module.
\end{lemma}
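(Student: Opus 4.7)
The plan is to reduce the lemma to two essentially independent observations: first, identify the annihilator ideal explicitly, and second, invoke Noetherianness of $\zed$-modules to transfer finite generation from $M$ to a submodule.

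First I would verify the equality $\ann_{\zed[X]} M = \zed[X]\cap I$. This is immediate: for $f\in\zed[X]$, $f$ annihilates $M$ iff $f$ annihilates the cyclic generator $1+I$, iff $f\in I$; intersecting with $\zed[X]$ gives the claim. So the two quotient rings in the statement are the same ring, and there is a natural injective ring homomorphism
\[
\varphi:\zed[X]/(\zed[X]\cap I)\hookrightarrow \zed[X\cup Y]/I = M,
\]
induced by the inclusion $\zed[X]\hookrightarrow \zed[X\cup Y]$ (injectivity follows since the kernel of the composite $\zed[X]\to M$ is exactly $\zed[X]\cap I$). Through $\varphi$ we may regard $\zed[X]/(\zed[X]\cap I)$ as a $\zed$-submodule of $M$.

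Second, I would invoke the standard fact that $\zed$ is a Noetherian ring, hence any finitely generated $\zed$-module is a Noetherian $\zed$-module, and therefore every $\zed$-submodule of it is itself finitely generated over $\zed$. Since $M$ is finitely generated over $\zed$ by hypothesis, the submodule $\varphi(\zed[X]/(\zed[X]\cap I))\subset M$ is finitely generated over $\zed$, which via the isomorphism $\varphi$ onto its image gives that $\zed[X]/(\zed[X]\cap I)$ is a finitely generated $\zed$-module.

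There is no real obstacle: once the annihilator is identified as $\zed[X]\cap I$, the statement is purely a consequence of Noetherianness of $\zed$. The only point to be careful about is to make the submodule interpretation precise before applying the Noetherian submodule principle, since finite generation of a quotient $\zed[X]/(\zed[X]\cap I)$ as a $\zed$-module is not automatic from finite generation of a larger quotient $\zed[X\cup Y]/I$ without such an embedding.
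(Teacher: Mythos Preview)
Your proposal is correct and follows essentially the same approach as the paper: the paper's proof also embeds $\zed[X]/(\zed[X]\cap I)$ into $M$ via the inclusion $\zed[X]\hookrightarrow\zed[X\cup Y]$ and then invokes Noetherianness of $\zed$ to conclude that this submodule of the finitely generated $\zed$-module $M$ is itself finitely generated. Your version is slightly more explicit in justifying the annihilator identification and the injectivity of the induced map, but the argument is the same.
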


\begin{proof}
The standard inclusion $\zed[X]\hookrightarrow \zed[X \cup Y]$ induces a well-defined injective $\zed[X]$-module homomorphism $\zed[X]/(\ann_{\zed[X]} M) =\zed[X]/(\zed[X]\cap I) \hookrightarrow M= \zed[X \cup Y]/I$. Since $M$ is a finitely generated $\zed$-module and $\zed$ is a Noetherian ring, this implies that $\zed[X]/(\ann_{\zed[X]} M)$ is also a finitely generated $\zed$-module.
\end{proof}

Now we are ready to work on Theorem \ref{thm-KR-detect-vertex}.

\begin{lemma}\label{lemma-fg-KR-vertex}
Let $G$ be a directed graph, and $v$ a vertex in $G$. Denote by $E(v)$ the set of all edges of $G$ incident at $v$. If there are no directed cycles in $G$ containing $v$, then $\zed[E(v)]/\ann_{\zed[E(v)]}(\mathscr{H}_0 (G))$ is a finitely generated $\zed$-module.
\end{lemma}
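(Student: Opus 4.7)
The plan is to reduce $G$ to a four-vertex collapse $\hat G$ for which finite generation can be checked by an explicit symmetric-function computation, and then transfer the conclusion back to $G$ via Lemma~\ref{lemma-fuse-vertex}. Since no directed cycle of $G$ passes through $v$, the set $V_{+}$ of vertices reachable from $v$ and the set $V_{-}$ of vertices from which $v$ is reachable intersect only in $\{v\}$; partition $V(G)\setminus\{v\}$ as $V_{+}^{\circ}\sqcup V_{-}^{\circ}\sqcup V_{0}$ with $V_\pm^\circ=V_\pm\setminus\{v\}$ and $V_{0}$ the complement of $V_{+}\cup V_{-}$. Fuse each nonempty class into a single vertex $w_{+},w_{-},w_{0}$ respectively to obtain $\hat G$. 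Iterating Lemma~\ref{lemma-fuse-vertex} produces a surjection $\mathscr{H}_0(\hat G)\twoheadrightarrow\mathscr{H}_0(G)$ of $\zed[E(v)]$-modules, so $\ann_{\zed[E(v)]}\mathscr{H}_0(G)\supset\ann_{\zed[E(v)]}\mathscr{H}_0(\hat G)$, and it will suffice to establish finite generation of $\zed[E(v)]/\ann_{\zed[E(v)]}\mathscr{H}_0(\hat G)$ over $\zed$.

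The combinatorial core is a short edge-type analysis: any edge of type $v\to V_{-}^{\circ}$, $V_{+}^{\circ}\to v$, $V_{+}^{\circ}\to V_{-}^{\circ}$, $V_{+}\to V_{0}$, or $V_{0}\to V_{-}$ either produces a cycle through $v$ or violates one of the reachability definitions and hence cannot occur in $G$. Consequently $w_{+}$ has no non-loop outgoing edges and $w_{-}$ has no non-loop incoming edges in $\hat G$, and the only $v$-incident edges of $\hat G$ are $X:=E_{\mathrm{out}}(v)$ (from $v$ to $w_{+}$) and $Y:=E_{\mathrm{in}}(v)$ (from $w_{-}$ to $v$). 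Writing $P,Q,R$ respectively for the edges $w_{-}\to w_{+}$, $w_{-}\to w_{0}$, $w_{0}\to w_{+}$, and invoking Lemma~\ref{lemma-deloop} to discard the loop variables at $w_{\pm}$, the incidence relations at $w_{+}$ place every $e_{l}(X\cup P\cup R)$ into $I(\hat G)$, and the relations at $w_{-}$ place every $e_{l}(Y\cup P\cup Q)$ into $I(\hat G)$.

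A standard Vieta argument — each $z\in S$ satisfies $z^{|S|}\in(e_{1}(S),\dots,e_{|S|}(S))$ since $z$ is a root of $\prod_{s\in S}(T-s)$ — then produces a common exponent $N$ with $x^{N}\in I(\hat G)$ for every $x\in X$ and $y^{N}\in I(\hat G)$ for every $y\in Y$. Hence $\ann_{\zed[E(v)]}\mathscr{H}_0(\hat G)\supset (x^{N},y^{N}:x\in X,\,y\in Y)$, so $\zed[E(v)]/\ann_{\zed[E(v)]}\mathscr{H}_0(\hat G)$ is a quotient of the visibly finitely generated $\zed$-module $\bigotimes_{e\in E(v)}\zed[e]/(e^{N})$, and the surjection of the first paragraph pushes this finite generation down to $\zed[E(v)]/\ann_{\zed[E(v)]}\mathscr{H}_0(G)$. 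The main subtlety will lie in the combinatorial step — careful verification that the triple fusion, combined with the no-cycle-through-$v$ hypothesis, genuinely eliminates every outgoing non-loop edge at $w_{+}$ and every incoming non-loop edge at $w_{-}$; once that structural picture of $\hat G$ is secured, the algebraic part via Lemma~\ref{lemma-deloop} and Vieta's formula is essentially automatic.
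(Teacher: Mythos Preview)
Your proof is correct. The combinatorial reduction---partition $V(G)\setminus\{v\}$ by reachability and fuse each class to a single vertex, then invoke Lemma~\ref{lemma-fuse-vertex} to pass the annihilator back to $G$---is exactly what the paper does (the paper's $S,T,U$ are your $V_-^\circ,V_+^\circ,V_0$, restricted to the connected component of $v$). Where you diverge is in extracting the finite generation from the collapsed graph. The paper strips off the loops at the fused vertices (Lemma~\ref{lemma-deloop}), observes that the resulting four-vertex graph $G''$ is acyclic, and then invokes the already established Corollary~\ref{cor-fg-acyclic}/Theorem~\ref{thm-KR-detect} together with Lemmas~\ref{lemma-separate} and~\ref{lemma-fg-restriction} to descend from $\zed[E]$ to $\zed[E(v)]$. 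You instead exploit directly that $w_+$ is a sink and $w_-$ a source (modulo loops), so the incidence relations at these two vertices alone already give $e_l(X\cup P\cup R),\,e_l(Y\cup P\cup Q)\in I(\hat G)$, and Vieta immediately produces a uniform power $N$ with $e^N\in I(\hat G)$ for every $e\in E(v)$. Your route is more self-contained---it avoids the appeal to the global acyclic case and the two auxiliary lemmas on separation and restriction---at the cost of not exhibiting the structural fact that the whole delooped collapse is acyclic. Both arguments are clean; yours is the shorter path to this particular lemma.
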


\begin{proof}
Denote by $\hat{G}$ the connected component of $G$ containing $v$. That is, $\hat{G}$ is the subgraph of $G$ given by
\begin{enumerate}
	\item $V(\hat{G}) = \{u \in V(G) ~|$ there is an undirected path in $G$ from $u$ to $v\}$,
	\item $E(\hat{G}) = \{x \in E(G)~|~ x$ is incident at some $u\in V(\hat{G})\}$.
\end{enumerate}
Consider the following subsets of $V(\hat{G})$. 
\begin{itemize}
	\item $S=\{ s \in V(G) ~|~s\neq v,$ there is a directed path from $s$ to $v\}$,
	\item $T=\{ t \in V(G) ~|~t\neq v,$ there is a directed path from $v$ to $t\}$,
	\item $U = V(\hat{G}) \setminus (S \cup T \cup \{v\})$.
\end{itemize}
Since there are no directed cycles in $G$ containing $v$, $S$ and $T$ are disjoint. So $\{S,T,U,\{v\}\}$ is a partition of $V(\hat{G})$. For $A,B \in \{S,T,U,\{v\}\}$, denote by $E_{A\rightarrow B}$ the set 
\[
E_{A\rightarrow B} = \{x \in E(G)~|~ \text{the initial vertex of } x \text{ is in } A, \text{ and the terminal vertex of } x \text{ is in } B\}.
\]  
Since there are no directed cycles in $G$ containing $v$, $E_{T\rightarrow S} = \emptyset$. By the definitions of $S,~T,~U$, we also have $E_{T\rightarrow U}=E_{U\rightarrow S} = E_{\{v\}\rightarrow U} = E_{U\rightarrow \{v\}} = \emptyset$.

Now define a directed graph $G'$ from $G$ by
\begin{enumerate}
	\item identifying all vertices in $S$ into a single vertex, called $s$,
	\item identifying all vertices in $T$ into a single vertex, called $t$,
	\item identifying all vertices in $U$ into a single vertex, called $u$.
\end{enumerate}
Note that: 
\begin{itemize}
	\item There are no directed cycles in $G'$ containing $v$.
	\item There is a natural one-to-one correspondence between edges of $G$ and edges of $G'$.
\end{itemize}

We identify edges of $G$ and edges of $G'$ by this correspondence. Then, by Lemma \ref{lemma-fuse-vertex}, there is a surjective $\zed[E(G)]$-module homomorphism $\mathscr{H}_0(G') \rightarrow \mathscr{H}_0(G)$. 

The connected component of $G'$ containing $v$ has exactly four vertices: $v$, $s$, $t$ and $u$. As sets of edges in $G'$, $E_{S \rightarrow S}$, $E_{T \rightarrow T}$ and $E_{U \rightarrow U}$ are the sets of loops at $s$, $t$ and $u$, respectively. Define
\[
E = E_{S \rightarrow \{v\}} \cup E_{S \rightarrow T} \cup E_{S \rightarrow U} \cup E_{\{v\} \rightarrow T} \cup E_{U \rightarrow T}.
\]
Then, as a set of edges in $G'$, $E$ is the set of non-loop edges in the connected component of $G'$ containing $v$. Define a subgraph $G''$ of $G'$ by $V(G'') = \{v,s,t,u\}$ and $E(G'')=E$. Then $G''$ contains no directed cycles.

Consider the incidence ideal $I(G')$ for $G'$ of $\zed[E(G')]=\zed[E(G)]$ generated by the incidence relations $\Delta_{G'}$ of $G'$. Applying Lemma \ref{lemma-deloop} at $v,s,t,u$, one gets generators of $I(G')$ in ${\zed[E(G)\setminus (E_{S \rightarrow S} \cup E_{T \rightarrow T} \cup E_{U \rightarrow U})]}$ such that:
\begin{enumerate}
	\item Each of these generators is a homogeneous polynomial of positive degree in either $\zed[E]$ or $\zed[E(G)\setminus (E_{S \rightarrow S} \cup E_{T \rightarrow T} \cup E_{U \rightarrow U} \cup E)]$.
	\item The generators in $\zed[E]$ generate the incidence ideal $I(G'')$ for $G''$ of $\zed[E]$. (In fact, these generators are exactly the incidence relations of $G''$.)
\end{enumerate}
Thus, by Lemma \ref{lemma-separate}, $\ann_{\zed[E]} \mathscr{H}_0 (G') = \ann_{\zed[E]} (\zed[E(G)]/I(G')) = I(G'')$. Since $G''$ contains no directed cycles, by Theorem \ref{thm-KR-detect}, $\zed[E]/(\ann_{\zed[E]} \mathscr{H}_0 (G')) = \zed[E]/I(G'') = \mathscr{H}_0 (G'')$ is a finitely generated $\zed$-module. Note that $E(v)=E_{S \rightarrow \{v\}} \cup E_{\{v\} \rightarrow T}$ is a subset of $E$. Using Lemma \ref{lemma-fg-restriction}, one has that $\zed[E(v)]/(\ann_{\zed[E(v)]} \mathscr{H}_0 (G')) = \zed[E(v)]/ (\zed[E(v)] \cap \ann_{\zed[E]} \mathscr{H}_0 (G'))$ is also a finitely generated $\zed$-module.

Recall that there is a surjective $\zed[E(G)]$-module homomorphism $\mathscr{H}_0(G') \rightarrow \mathscr{H}_0(G)$. This implies that $\ann_{\zed[E(v)]} \mathscr{H}_0 (G') \subset \ann_{\zed[E(v)]} \mathscr{H}_0 (G)$. Therefore, $\zed[E(v)]/(\ann_{\zed[E(v)]} \mathscr{H}_0 (G))$ is a finitely generated $\zed$-module too.
\end{proof}

\begin{lemma}\label{lemma-Krull-beta-vertex}
Let $G$ be a directed graph, and $v$ a vertex in $G$. Denote by $E(v)$ the set of all edges of $G$ incident at $v$. Then $\Kdim_{\zed[E(v)]} \mathscr{H}_0 (G) \leq \beta_v(G)+1$.
\end{lemma}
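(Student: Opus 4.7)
The plan is to mirror the proof of Lemma \ref{lemma-Krull-beta}, replacing the role of the whole graph $G$ by a small ``local-at-$v$'' auxiliary graph $G''$ built in the style of Lemma \ref{lemma-fg-KR-vertex}. After a preliminary reduction handling loops at $v$ (each loop is a directed cycle through $v$ forcing its removal in any optimal set, and by Lemma \ref{lemma-deloop} loops at $v$ split off as free polynomial variables in $\mathscr H_0(G)$, so both sides of the inequality rise by the same amount), we may assume $G$ has no loop at $v$. Set $k=\beta_v(G)$ and pick $x_1,\dots,x_k\in E(v)$ so that $\tilde G := G\setminus\{x_1,\dots,x_k\}$ contains no directed cycle through $v$. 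Partition the $v$-component $\hat G$ as $\hat G = \{v\}\cup S\cup T\cup U$ using directed paths in $\tilde G$ (so that $S\cap T=\emptyset$), contract $S,T,U$ in $G$ to obtain $G'$ (which admits a surjection $\mathscr H_0(G')\twoheadrightarrow\mathscr H_0(G)$ by Lemma \ref{lemma-fuse-vertex}), and let $G''$ be the subgraph of $G'$ on $\{v,s,t,u\}$ formed by its non-loop edges. Exactly as in the proof of Lemma \ref{lemma-fg-KR-vertex}, Lemmas \ref{lemma-deloop} and \ref{lemma-separate} give $\ann_{\zed[E(G'')]}\mathscr H_0(G')=I(G'')$, and the same structural analysis yields that $\tilde G'' := G''\setminus\{x_1,\dots,x_k\}$ is acyclic (it coincides with the auxiliary graph Lemma \ref{lemma-fg-KR-vertex} produces starting from $\tilde G$).

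By Corollary \ref{cor-fg-acyclic}, $\mathscr H_0(\tilde G'')$ is a finitely generated $\zed$-module, and iterating Lemma \ref{lemma-edge-removal} gives $\mathscr H_0(G'')/(x_1,\dots,x_k)\mathscr H_0(G'')\cong\mathscr H_0(\tilde G'')$. Applying Lemma \ref{lemma-fg} to $\mathscr H_0(G'')$ with the elements $x_1,\dots,x_k$ therefore makes $\mathscr H_0(G'')$ a finitely generated module over its $\zed$-subring $R$ generated by $x_1,\dots,x_k$; combining Lemmas \ref{lemma-fg-extension}, \ref{lemma-Krull-quotient}, and \ref{lemma-Krull-polynomial} yields $\Kdim\mathscr H_0(G'')=\Kdim R\leq k+1$. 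Because we have reduced to the case $E(v)\subset E(G'')$ and $\ann_{\zed[E(G'')]}\mathscr H_0(G')=I(G'')$, the quotient $\zed[E(v)]/\ann_{\zed[E(v)]}\mathscr H_0(G')$ embeds as a subring of $\mathscr H_0(G'')$ containing $R$. Since $R$ is Noetherian, this subring is finitely generated as an $R$-module (a submodule of the finitely generated $R$-module $\mathscr H_0(G'')$), so Lemma \ref{lemma-fg-extension} yields $\Kdim_{\zed[E(v)]}\mathscr H_0(G')\leq k+1$.

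Finally, the surjection $\mathscr H_0(G')\twoheadrightarrow\mathscr H_0(G)$ forces $\ann_{\zed[E(v)]}\mathscr H_0(G')\subset\ann_{\zed[E(v)]}\mathscr H_0(G)$, so $\zed[E(v)]/\ann_{\zed[E(v)]}\mathscr H_0(G)$ is a quotient of $\zed[E(v)]/\ann_{\zed[E(v)]}\mathscr H_0(G')$, and Lemma \ref{lemma-Krull-quotient} closes with $\Kdim_{\zed[E(v)]}\mathscr H_0(G)\leq k+1 = \beta_v(G)+1$. The main obstacle is verifying that $\tilde G''$ really is acyclic; this follows by recognizing $\tilde G''$ as the output of the $\{S,T,U\}$-contraction procedure of Lemma \ref{lemma-fg-KR-vertex} applied to $\tilde G$, but the identification (and the loop-at-$v$ reduction) must be carried out explicitly.
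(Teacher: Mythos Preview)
Your argument is correct, but it takes a noticeably longer route than the paper's. The paper proceeds directly: remove $x_1,\dots,x_k$ from $G$ to obtain a graph $G'$ with no directed cycle through $v$, invoke Lemma \ref{lemma-fg-KR-vertex} as a black box to conclude that $\zed[E_{G'}(v)]/\ann_{\zed[E_{G'}(v)]}\mathscr H_0(G')$ is finitely generated over $\zed$, identify this ring with $(\zed[E(v)]/\ann_{\zed[E(v)]}\mathscr H_0(G))/(x_1,\dots,x_k)$, and finish via Lemmas \ref{lemma-fg}, \ref{lemma-fg-extension}, \ref{lemma-Krull-quotient}, \ref{lemma-Krull-polynomial}. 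No loop reduction, no auxiliary $G''$, no acyclicity check is needed.

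You instead rebuild the $\{S,T,U\}$-contraction machinery from the proof of Lemma \ref{lemma-fg-KR-vertex} inside the argument and run the Lemma \ref{lemma-Krull-beta} template on the four-vertex graph $G''$. This costs you the preliminary loop-at-$v$ reduction and the verification that $\tilde G''$ is acyclic (both of which you correctly flag as needing explicit checking). The payoff is that your Krull-dimension bound comes from a genuine edge-deletion quotient $\mathscr H_0(G'')\to\mathscr H_0(\tilde G'')$ via Lemma \ref{lemma-edge-removal}, whereas the paper's shortcut relies on the identity $\ann_{\zed[E(v)]}\mathscr H_0(G') = \ann_{\zed[E(v)]}\mathscr H_0(G) + (x_1,\dots,x_k)$, a contraction-of-ideals statement that is asserted rather than proved there. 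So your longer path sidesteps that step, at the price of redoing work already packaged in Lemma \ref{lemma-fg-KR-vertex}.
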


\begin{proof}
Let $k = \beta_v(G)$ and $\{x_1,\dots,x_k\}$ a subset of $E(v)$ whose removal destroys all directed cycles in $G$ containing $v$. Denote by $G'$ the directed graph obtained from $G$ by removing the edges $x_1,\dots,x_k$. Then there are no directed cycles in $G'$ containing $v$. Denote by $E_{G'}(v)$ the edges in $G'$ incident at $v$, that is, $E_{G'}(v)=E(v)\setminus \{x_1,\dots,x_k\}$. By Lemma \ref{lemma-edge-removal}, $\mathscr{H}_0 (G')$ is the $\zed[E(G)]$-module $\mathscr{H}_0 (G)/(I\cdot \mathscr{H}_0 (G))$, where $I$ is the ideal of $\zed[E(G)]$ generated by $x_1,\dots,x_k$. Since $\{x_1,\dots,x_k\} \subset E(v)$, one can see that $\ann_{\zed[E(v)]} \mathscr{H}_0 (G') = \ann_{\zed[E(v)]} \mathscr{H}_0 (G) + I_v$, where $I_v$ is the ideal of $\zed[E(v)]$ generated by $x_1,\dots,x_k$. Therefore, $\zed[E_{G'}(v)]/\ann_{\zed[E_{G'}(v)]} \mathscr{H}_0 (G') \cong (\zed[E(v)]/\ann_{\zed[E(v)]} \mathscr{H}_0 (G))/(x_1,\dots,x_k)$, where $(x_1,\dots,x_k)$ is the ideal of $\zed[E(v)]/\ann_{\zed[E(v)]} \mathscr{H}_0 (G)$ generated by $x_1,\dots,x_k$. But, by Lemma \ref{lemma-fg-KR-vertex}, $\zed[E_{G'}(v)]/\ann_{\zed[E_{G'}(v)]} \mathscr{H}_0 (G')$ is a finitely generated $\zed$-module. So, by Lemmas \ref{lemma-Krull-quotient}, \ref{lemma-fg-extension}, \ref{lemma-Krull-polynomial} and \ref{lemma-fg},
\[
\Kdim_{\zed[E(v)]} \mathscr{H}_0 (G) = \Kdim \zed[E(v)]/\ann_{\zed[E(v)]} \mathscr{H}_0 (G) = \Kdim R \leq \Kdim \zed[x_1,\dots,x_k] = k+1,
\]
where $R$ is the $\zed$-subalgebra of $\zed[E(v)]/\ann_{\zed[E(v)]} \mathscr{H}_0 (G)$ generated by $x_1,\dots,x_k$. This proves the lemma.
\end{proof}

We have effectively proved Theorem \ref{thm-KR-detect-vertex}. Here we recap what was done.

\begin{proof}[Proof of Theorem \ref{thm-KR-detect-vertex}]
In Part 1, (1) $\Rightarrow$ (2) follows from Lemma \ref{lemma-fg-KR-vertex}. (2) $\Leftrightarrow$ (3) follows from Lemma \ref{lemma-KR-module}. (2) $\Leftrightarrow$ (4) is elementary. (2) $\Rightarrow$ (5) follows from Lemma \ref{lemma-fg-extension}. (5) $\Rightarrow$ (6) follows from Corollary \ref{cor-Krull-base-change-graph}. (6) $\Rightarrow$ (1) follows from Proposition \ref{prop-Krull-bound-E}. This completes the proof of Part 1.

Part 2 follows from Proposition \ref{prop-Krull-bound-E}, Corollary \ref{cor-Krull-base-change-graph} and Lemma \ref{lemma-Krull-beta-vertex}.
\end{proof}

Corollary \ref{cor-KR-detect-edge} follows easily from Theorem \ref{thm-KR-detect-vertex}.

\begin{proof}[Proof of Corollary \ref{cor-KR-detect-edge}]
Denote by $\widetilde{G}$ the directed graph obtained from $G$ by adding a new vertex $v$ in the middle of $x$. This splits $x$ into two new edges, whose directions are given by the direction $x$. Call one of these new edges $x$ and the other $y$. Then $E(\widetilde{G}) = E(G) \cup \{y\}$. And the incidence ideal $I(\widetilde{G})$ for $\widetilde{G}$ is the ideal of $\zed[E(\widetilde{G})]$ generated by $x-y$ and all the incidence relations of $G$. Therefore, by Lemma \ref{lemma-KR-module}, 
\[
\zed[x,y]/\ann_{\zed[x,y]}(\mathscr{H}_\ast (\widetilde{G})) \cong \zed[x,y]/\ann_{\zed[x,y]}(\mathscr{H}_0 (\widetilde{G})) \cong \zed[x]/\ann_{\zed[x]}(\mathscr{H}_0 (G)).
\] 
Similarly, 
\[
\Q[x,y]/\ann_{\Q[x,y]}(\mathscr{H}_0^\Q (\widetilde{G})) \cong \Q[x]/\ann_{\Q[x]}(\mathscr{H}_0^\Q (G)).
\]
Note that:
\begin{itemize}
	\item $\{x,y\}$ is the set of edges of $\widetilde{G}$ incident at $v$.
	\item There is a one-to-one correspondence between directed cycles in $G$ containing $x$ and directed cycles in $\widetilde{G}$ containing $v$. 
\end{itemize}
Thus, we get Corollary \ref{cor-KR-detect-edge} by applying Theorem \ref{thm-KR-detect-vertex} to the vertex $v$ of $\widetilde{G}$.
\end{proof}

\section{Undirected Cycles}\label{sec-cycles-u}

\subsection{$\mathscr{H}_\ast$ and undirected cycles} Next we prove Theorems \ref{thm-tree}, \ref{thm-tree-v} and Corollary \ref{cor-u-cycle-detect}.

\begin{proof}[Proof of Theorem \ref{thm-tree}]
Note that the graded $\zed[E(G)]$-module $\zed$ in Part 1 of Theorem \ref{thm-tree} is supported on module grading $0$. So, to prove Parts 1 and 2 of Theorem \ref{thm-tree}, we just need to verify that:
\begin{enumerate}[(a)]
	\item The underlying undirected graph of $G$ is a disjoint union of trees $\Rightarrow$ $\mathscr{H}_\ast(G) \cong \mathscr{H}_0(G) \cong \zed$.
	\item $G$ contains an undirected cycle $\Rightarrow$ $\mathscr{H}_{0,1} (G) \neq 0$.
\end{enumerate}

To prove (a), we induct on the number of edges in $G$. If $G$ has only one edge $x$, then 
\[
\mathscr{C}_\ast (G) = 0 \rightarrow \zed[x]\{1\} \xrightarrow{x} \zed[x] \rightarrow 0.
\] 
(a) is obvious in this case. Assume that (a) is true for disjoint unions of trees with $n-1$ edges and that $G$ is a disjoint union of trees with $n$ edges. $G$ has a vertex of degree $1$. Applying Lemma \ref{lemma-end-removal} to this vertex, we get a disjoint union of trees $\hat{G}$ with $n-1$ edges such that $\mathscr{H}_\ast (G) \cong \mathscr{H}_\ast (\hat{G})$. This completes the induction and proves (a).

For (b), fix an undirected cycle $C$ in $G$. Applying Lemma \ref{lemma-edge-removal} to the removal of all edges of $G$ not in $C$, we get a surjective homomorphism $\mathscr{H}_{0} (G) \rightarrow \mathscr{H}_{0} (C)$ preserving the module grading. 

We call a vertex of $C$ removable if it is the terminal vertex of an edge in $C$ and the initial vertex of an edge in $C$. A vertex that is not removable is called non-removable. Note that there are even number of non-removable vertices in $C$ since non-removable vertices are alternatingly sources and sinks in $C$.

Next, we successively remove each removable vertex of $C$ and merging the two edges incident at it. We stop when we reach a graph $C'$ that is 
\begin{enumerate}
	\item either a directed cycle with a single removable vertex and a single loop edge,
	\item or an undirected cycle with no removable vertices and a positive even number of non-removable vertices.
\end{enumerate}
By Lemma \ref{lemma-vertex-2-removal},
\[
\mathscr{H}_{0} (C) \cong \mathscr{H}_{0} (C') \cong 
\begin{cases}
\zed[x] & \text{in case (1),} \\
\zed[x]/(x^2) & \text{in case (2).}
\end{cases}
\]
In both cases, $\mathscr{H}_{0,1} (C') \neq 0$. Then the aforementioned surjective homomorphism $\mathscr{H}_{0} (G) \rightarrow \mathscr{H}_{0} (C) \cong \mathscr{H}_{0,1} (C')$ preserving the module grading implies that $\mathscr{H}_{0,1} (G) \neq 0$. This proves (b) and completes the proof of Parts 1 and 2 of Theorem \ref{thm-tree}.

Next, we prove Part 3 of Theorem \ref{thm-tree}. Let $\alpha=\alpha_{undirected}(G)$. Fix a collection $\{C_1,\dots,C_\alpha\}$ of pairwise edge-disjoint undirected cycles in $G$ and a collection of edges $\{x_1,\dots,x_\alpha\} \subset E(G)$ such that $x_i$ is contained in $C_i$. Note that $x_i \neq x_j$ if $i\neq j$ since the collection $\{C_1,\dots,C_{\alpha}\}$ is pairwise edge-disjoint. Denote by $G'$ the subgraph of $G$ consisting of the vertices and edges of the undirected cycles in this collection. Then, by Lemma \ref{lemma-edge-removal}, there is a surjective homomorphism $\mathscr{H}_{0} (G) \rightarrow \mathscr{H}_{0} (G')$ preserving the module grading. For each vertex of degree $2k$ of $G'$ split it into $k$ vertices of degree $2$ such that, at each of these new vertices, there are two edges of the same undirected cycle in the above collection. This gives a new directed graph $G''$ that is the disjoint union of the undirected cycles $C_1,\dots,C_\alpha$. By Lemma \ref{lemma-fuse-vertex}, there is a surjective homomorphism $\mathscr{H}_{0} (G') \rightarrow \mathscr{H}_{0} (G'')$ preserving the module grading. Similar to the discussion in the proof of (b) above, we get
\[
\mathscr{H}_{0} (C_i) \cong
\begin{cases}
\zed[x_i] & \text{if all vertices of } C_i \text{ are removable,} \\
\zed[x_i]/(x_i^2) & \text{if not all vertices of } C_i \text{ are removable.} \\
\end{cases}
\]
But $\mathscr{H}_{0} (G'') \cong  \mathscr{H}_{0} (C_1)\otimes_{\zed} \cdots\otimes_{\zed} \mathscr{H}_{0} (C_\alpha)$. So, over $\zed$, $\mathscr{H}_{0,1} (G'')$ has rank $\alpha$ and is spanned by $x_1,\dots,x_\alpha$. The surjective homomorphisms $\mathscr{H}_{0} (G) \rightarrow \mathscr{H}_{0} (G')\rightarrow \mathscr{H}_{0} (G'')$ preserving the module grading then imply that $\rank \mathscr{H}_{0,1} (G) \geq \rank \mathscr{H}_{0,1} (G') \geq \rank \mathscr{H}_{0,1} (G'') = \alpha$. 

It remains to show that $\rank \mathscr{H}_{0,1} (G) \leq \beta_{undirected}(G)$. Let $\beta = \beta_{undirected}(G)$. Fix a set $\{y_1,\dots,y_\beta\}$ of edges of $G$ whose removal from $G$ destroys all undirected cycles in $G$. Denote by $\hat{G}$ the directed graph obtained from $G$ by removing the edges $y_1,\dots,y_\beta$. Then $\hat{G}$ contains no undirected cycles. By Lemma \ref{lemma-edge-removal}, $\mathscr{H}_{0} (\hat{G}) \cong \mathscr{H}_{0} (G)/((y_1,\dots,y_\beta)\cdot\mathscr{H}_{0} (G))$, where $(y_1,\dots,y_\beta)$ is the ideal of $\zed[E(G)]$ generated by $y_1,\dots,y_\beta$. So $\mathscr{H}_{0,1} (\hat{G}) \cong \mathscr{H}_{0,1} (G)/\left\langle y_1,\dots,y_\beta\right\rangle$, where $\left\langle y_1,\dots,y_\beta\right\rangle$ is the $\zed$-submodule of $\mathscr{H}_{0,1} (G)$ spanned by $y_1,\dots,y_\beta$. But, by Part 1, $\mathscr{H}_{0,1} (\hat{G})=0$. So $\mathscr{H}_{0,1} (G) = \left\langle y_1,\dots,y_\beta\right\rangle$ and, therefore, $\rank \mathscr{H}_{0,1} (G) \leq \beta$. This completes the proof of Part 3.
\end{proof}

\begin{proof}[Proof of Theorem \ref{thm-tree-v}]
Similar to the proof of Theorem \ref{thm-tree}, to prove Parts 1 and 2, we only need to show that
\begin{enumerate}[(a)]
	\item There are no undirected cycles in $G$ containing $v$ $\Rightarrow$ $A(v) \cong \zed$.
	\item There is an undirected cycle in $G$ containing $v$ $\Rightarrow$ $A_1(v) \neq 0$.
\end{enumerate}

Assume that there are no undirected cycles in $G$ containing $v$. Denote by $G_0$ the connected component of $G$ containing $v$. Consider the graph $G_0'$ given by $V(G_0')= V(G_0)$ and $E(G_0')= E(G_0)\setminus E(v)$. Since there are no undirected cycles in $G$ containing $v$, there is exactly one edge in $G$ connecting $v$ and each connected component of $G_0'$ that is not $v$ itself. Now define a graph $\hat{G}_0$ by $E(\hat{G}_0) = E(G_0)$ and $V(\hat{G}_0) = V(G_0)/\sim$, where $\sim$ is the equivalence relation $u \sim w$ if and only if $u$ and $w$ are in the same connected component of $G_0'$. Then $\hat{G}_0$ satisfies:
\begin{itemize}
	\item If $u \in V(\hat{G}_0)$ and $u \neq v$, then there is a single edge in $\hat{G}_0$ connecting $u$ and $v$.
	\item If $u,w \in V(\hat{G}_0)$ and $u \neq v$, $w \neq v$, $u \neq w$, then there are no edges in $\hat{G}_0$ connecting $u$ and $w$.
	\item There are no loops in $\hat{G}_0$ at $v$.
\end{itemize}
Let $\hat{G} = (G \setminus G_0) \sqcup \hat{G}_0$. By Lemmas \ref{lemma-deloop} and \ref{lemma-separate}, $\ann_{\zed[E(v)]} \mathscr{H}_0 (\hat{G})=(E(v))$, where $(E(v))$ is the homogeneous ideal of $\zed[E(v)]$ generated by $E(v)$. Thus, $\zed[E(v)]/(\ann_{\zed[E(v)]} \mathscr{H}_0 (\hat{G})) \cong \zed$. But, by Lemma \ref{lemma-fuse-vertex}, there is a surjective $\zed[E(G)]$-module homomorphism $\mathscr{H}_0 (\hat{G}) \rightarrow \mathscr{H}_0 (G)$. So, by Lemma \ref{lemma-Krull-quotient}, 
\[
(E(v)) = \ann_{\zed[E(v)]} \mathscr{H}_0 (\hat{G}) \subset \ann_{\zed[E(v)]} \mathscr{H}_0 (G) = \zed[E(v)]\cap I(G) \subset (E(v)),
\]
where $I(G)$ is the incidence ideal for $G$ of $\zed[E(G)]$. Therefore, $\ann_{\zed[E(v)]} \mathscr{H}_0 (G) = (E(v))$ and 
\[
\zed[E(v)]/(\ann_{\zed[E(v)]} \mathscr{H}_0 (G)) \cong \zed.
\] 
This proves (a).

Now assume that there is an undirected cycle in $G$ containing $v$. Then $\alpha_v:=\alpha_{undirected}(G,v) >0$. Fix a collection of $\{C_1,\dots,C_{\alpha_v}\}$ of pairwise edge-disjoint undirected cycles in $G$ containing $v$. For each $C_i$, pick an $x_i \in E(v)$ contained in $C_i$. Note that $x_i \neq x_j$ if $i\neq j$ since the collection $\{C_1,\dots,C_{\alpha_v}\}$ is pairwise edge-disjoint. Let $G'$ be the graph given by 
\begin{itemize}
	\item $V(G')=\{u\in V(G)~|~ u$ is contained in $C_i$ for some $i=1,\dots,\alpha_v\}$,
	\item $E(G')=\{x\in E(G)~|~ x$ is contained in $C_i$ for some $i=1,\dots,\alpha_v\}$.
\end{itemize}
For each vertex $u$ in $G'$ of degree $2k$ in $G'$, split it into $k$ vertices, each of which is the vertex where two edges of the same $C_i$ are incident. This changes $G'$ to another graph $G''$. Similar to the computation in the proof of Theorem \ref{thm-tree}, we have that 
\[
\mathscr{H}_{0} (C_i) \cong
\begin{cases}
\zed[x_i] & \text{if all vertices of } C_i \text{ are removable,} \\
\zed[x_i]/(x_i^2) & \text{if not all vertices of } C_i \text{ are removable,} \\
\end{cases}
\]
and $\mathscr{H}_{0} (G'') \cong  \mathscr{H}_{0} (C_1)\otimes_{\zed} \cdots\otimes_{\zed} \mathscr{H}_{0} (C_{\alpha_v})$. 

If $x\in E(v)$ is an edge of a $C_i$, then $i$ is uniquely determined by $x$ since $C_1,\dots,C_{\alpha_v}$ are pairwise edge-disjoint. Define $\nu(x)$ to be the parity of the number of non-removable vertices of $C_i$ between $x$ and $x_i$, which is well-defined since the total number of non-removable vertices on $C_i$ is even. Using this notation, $\ann_{\zed[E(v)]} (\mathscr{H}_{0} (G''))$ is the ideal of $\zed[E(v)]$ generated by the set $(E(v)\setminus E(G'')) \cup S_1 \cup S_2$, where 
\begin{eqnarray*}
S_1 & = & \{x_i-(-1)^{\nu(x)}x~|~x\in E(v) \cap E(G''),~1\leq i \leq \alpha_v \text{ and } x \text{ is contained in } C_i\}, \\
S_2 & = & \{x_i^2~|~ 1\leq i \leq \alpha_v \text{ and } C_i \text{ contains a non-removable vertex}\},
\end{eqnarray*}
This shows that $\zed[E(v)]/ \ann_{\zed[E(v)]} (\mathscr{H}_{0} (G''))\cong \zed[x_1,\dots,x_{\alpha_v}]/ I_2$, where $I_2$ is the ideal of $\zed[x_1,\dots,x_{\alpha_v}]$ generated by $S_2$. But, by Lemmas \ref{lemma-edge-removal} and \ref{lemma-fuse-vertex}, there are surjective $\zed[E(G)]$-module maps $\mathscr{H}_{0} (G) \rightarrow \mathscr{H}_{0} (G') \rightarrow \mathscr{H}_{0} (G'')$. So, by Lemma \ref{lemma-Krull-quotient}, $\ann_{\zed[E(v)]}\mathscr{H}_{0} (G) \subset \ann_{\zed[E(v)]}\mathscr{H}_{0} (G') \subset \ann_{\zed[E(v)]}\mathscr{H}_{0} (G'')$. Thus, there is a surjective $\zed[E(G)]$-module map 
\[
A(v) =\zed[E(v)]/\ann_{\zed[E(v)]}(\mathscr{H}_{0} (G)) \rightarrow \zed[E(v)]/\ann_{\zed[E(v)]}(\mathscr{H}_{0} (G'')) \cong \zed[x_1,\dots,x_{\alpha_v}]/ I_2.
\]
In particular, it follows that $\rank A_1(v) \geq \alpha_v >0$. This proves (b) and that  $\rank A_1(v) \geq \alpha_{undirected}(G,v)$. 

So far, we have proved Parts 1, 2 and half of Part 3. 

It remains to show that $\rank A_1(v) \leq \beta_{undirected}(G,v)$. Write $\beta_v = \beta_{undirected}(G,v)$ and fix a set $\{y_1,\dots,y_{\beta_v}\}$ of $\beta_v$ edges in $G$ incident at $v$, whose removal destroys all undirected cycles in $G$ containing $v$. Let $\tilde{G}$ be the graph with $V(\tilde{G}) =V(G)$ and $E(\tilde{G})=E(G)\setminus \{y_1,\dots,y_{\beta_v}\}$. It is clear that:
\begin{itemize}
	\item There are no undirected cycles in $\tilde{G}$ containing $v$. 
	\item $\ann_{\zed[E(v)]} \mathscr{H}_{0} (\tilde{G}) = \ann_{\zed[E(v)]}\mathscr{H}_{0} (G) + (y_1,\dots,y_{\beta_v})$, where $(y_1,\dots,y_{\beta_v})$ is the ideal of $\zed[E(v)]$ generated by $\{y_1,\dots,y_{\beta_v}\}$.
\end{itemize}
Denote by $J$ the ideal of $A(v)$ generated by $\{y_1,\dots,y_{\beta_v}\}$. Then, by Part 1, 
\[
A(v)/J \cong \zed[E(V)]/(\ann_{\zed[E(v)]}\mathscr{H}_{0} (G) + (y_1,\dots,y_{\beta_v})) = \zed[E(V)]/\ann_{\zed[E(v)]} \mathscr{H}_{0} (\tilde{G}) \cong \zed,
\] 
which does not contain homogeneous elements of positive degrees. In particular, $A_1(v)/\left\langle y_1,\dots,y_{\beta_v}\right\rangle \cong 0$, where $\left\langle y_1,\dots,y_{\beta_v}\right\rangle$ is the $\zed$-submodule of $A_1(v)$ spanned by $\{y_1,\dots,y_{\beta_v}\}$. So $A_1(v)=\left\langle y_1,\dots,y_{\beta_v}\right\rangle$ and $\rank A_1(v) \leq \beta_v=\beta_{undirected}(G,v)$.
\end{proof}
 
\begin{proof}[Proof of Corollary \ref{cor-u-cycle-detect}]
Give each edge in $G$ a direction. This makes $G$ a directed graph. Recall that $\mathscr{H}_0(G) \cong \zed[E(G)]/I(G)$, where $I(G)$ is the incidence ideal of $\zed[E(G)]$. Note that $\delta_{v,l}$ is a homogeneous polynomial of degree $l$. So, as $\zed$-modules $\mathscr{H}_{0,1}(G) \cong \zed \cdot E(G)/M$, where $\zed \cdot E(G):= \bigoplus_{x\in E(G)} \zed \cdot x$ and $M$ is the $\zed$-submodule of $\zed \cdot E(G)$ generated by $\{\delta_{v,1}~|~ v \in V(G)\}$. In $\zed_2 \cdot E(G) \cong \zed \cdot E(G)/2(\zed \cdot E(G))$, $\delta_{v,1} = \sum_{x\in E(v)\setminus L(v)} x$. Thus, under the standard quotient map $\zed \cdot E(G) \rightarrow \zed_2 \cdot E(G)$, $M$ is mapped onto $S$. This implies that $\zed_2 \otimes_\zed \mathscr{H}_{0,1}(G) \cong \zed_2 \cdot E(G) /S$. So, by Theorem \ref{thm-tree}, $|E(G)|-\dim_{\zed_2} S=\dim_{\zed_2} (\zed_2 \cdot E(G) /S) = \dim_{\zed_2} \zed_2 \otimes_\zed \mathscr{H}_{0,1}(G) \geq \rank \mathscr{H}_{0,1}(G) \geq \alpha_{undirected}(G)$. Repeat the last paragraph of the proof of Theorem \ref{thm-tree} over $\zed_2$, one gets that $|E(G)|-\dim_{\zed_2} S=\dim_{\zed_2} (\zed_2 \cdot E(G) /S) \leq \beta_{undirected}(G)$. This proves Part 1 of Corollary \ref{cor-u-cycle-detect}.

Recall that $A(v) := \zed[E(v)]/(\ann_{\zed[E(v)]} \mathscr{H}_0 (G))$ in Theorem \ref{thm-tree-v}. So $A_1(v) \cong \zed \cdot E(v) / ((\zed \cdot E(v)) \cap M)$. This implies that $\zed_2 \otimes_\zed A_1(v) \cong \zed_2 \cdot E(v) / ((\zed_2 \cdot E(v)) \cap S)$. So, similar to Part 1, Part 2 of Corollary \ref{cor-u-cycle-detect} follows from Theorem \ref{thm-tree-v} and the last paragraph of its proof.

It remains to prove Part 3. First, assume $x$ is contained in an undirected cycle $C$ in $G$. Modulo $\zed_2\cdot E(G)$ by the subspace spanned by $\{y \in E(G)~|~ y \notin E(C)\}$. This give a quotient map from $\zed_2\cdot E(G)$ to $\zed_2\cdot E(C)$, which maps $S$ to the subspace $S(E)$ of $\zed_2\cdot E(C)$ spanned by $\{y+x ~|~ y \in E(C)\}$. Clearly, $(\zed_2\cdot E(C))/S(E) \cong \zed_2 \cdot x$. So $x \notin S(E)$ as a vector in $\zed_2\cdot E(C)$. Thus, $x \notin S$ as a vector in $\zed_2\cdot E(G)$.

Now assume that $x$ is not contained in any undirected cycles in $G$. Denote by $G'$ the undirected graph obtained from $G$ by removing the edge $x$. Since there are no undirected cycles in $G$ containing $x$, the two vertices of $x$ belong to two different connected components of $G'$. Let $G'_0$ be one of these two connected components. Then $x = \sum_{v \in V(G'_0)} \sum_{y \in E(v)\setminus L(v)} y$, where $E(v)\setminus L(v)$ is the set of non-loop edges in $G$ (not just those in $G'$ or $G'_0$) incident at $v$. This is because each $y$ in this sum is either $x$, which appears once in this sum, or a non-loop edge of $G'_0$, which appears twice. This shows that, as a vector in $\zed_2\cdot E(G)$, $x \in S$.
\end{proof}

\subsection{Properties of $\mathscr{U}_\ast$} Next, we state some basic properties of $\mathscr{U}_\ast$ and sketch a proof of Proposition \ref{prop-KR-undirected}.

\begin{lemma}\label{lemma-KR-module-u}
Let $G$ be an undirected graph.
\begin{enumerate}[1.]
	\item $\mathscr{U}_0 (G) \cong \zed[E(G)]/(\Omega_G)$, where $(\Omega_G)$ is the ideal of $\zed[E(G)]$ generated by the set $\Omega_G = \{e_l(E(v))~|~v\in V(G), ~1 \leq l \leq \deg v\}$.
	\item $\mathscr{U}_\ast (G)$ is a finitely generated $\zed[E(G)]/(\Omega_G)$-module.
	\item $\mathscr{U}_\ast (G)$ is a finitely generated $\zed$-module if and only if $\mathscr{U}_0 (G)$ is a finitely generated $\zed$-module.
\end{enumerate}
\end{lemma}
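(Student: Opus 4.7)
The plan is to imitate the proof of Lemma \ref{lemma-KR-module} essentially verbatim, since the only formal difference between $\mathscr{C}_\ast(G)$ and $U_\ast(G)$ is the particular sequence of homogeneous generators used to build the Koszul complex over $\zed[E(G)]$. In both cases we have a finite sequence of homogeneous elements of positive degree in $\zed[E(G)]$, and all three parts of the lemma are formal consequences of general Koszul complex machinery already established in Section \ref{sec-KR}.

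First I would settle Part 1 by invoking Lemma \ref{lemma-C-0} directly with $R = \zed[E(G)]$ and the sequence of generators being an enumeration of $\Omega_G$: the $0$-th homology of a Koszul complex on a sequence $\{r_1,\dots,r_k\}$ is $R/(r_1,\dots,r_k)$, so $\mathscr{U}_0(G) \cong \zed[E(G)]/(\Omega_G)$. (The choice of linear ordering on $\Omega_G$ is immaterial by the same observation used for $\mathscr{C}_\ast(G)$.) For Part 2, I would apply Lemma \ref{lemma-C-ideal}: each element of $\Omega_G$, acting by multiplication on $U_\ast(G)$, is a chain map homotopic to zero, so the ideal $(\Omega_G)$ annihilates $\mathscr{U}_\ast(G)$. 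Since $U_\ast(G)$ is a bounded complex of finitely generated free $\zed[E(G)]$-modules, its homology $\mathscr{U}_\ast(G)$ is a finitely generated $\zed[E(G)]$-module; combined with the annihilator statement, it is thus a finitely generated $\zed[E(G)]/(\Omega_G)$-module.

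For Part 3, one direction is immediate: a finitely generated $\zed$-module has finitely generated submodules and subquotients (since $\zed$ is Noetherian), so if $\mathscr{U}_\ast(G)$ is finitely generated over $\zed$, then the summand $\mathscr{U}_0(G)$ is too. Conversely, if $\mathscr{U}_0(G)$ is finitely generated over $\zed$, then by Part 1, $\zed[E(G)]/(\Omega_G)$ is finitely generated over $\zed$; combining this with Part 2, $\mathscr{U}_\ast(G)$ is finitely generated over $\zed[E(G)]/(\Omega_G)$, which in turn is finitely generated over $\zed$, so $\mathscr{U}_\ast(G)$ is finitely generated over $\zed$.

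There is no real obstacle here: the argument is purely formal and uses no graph-theoretic input beyond the finiteness of $V(G)$ and $E(G)$. The only mild care needed is verifying that $\Omega_G$ consists of homogeneous elements of positive degree (which is immediate since each $e_l(E(v))$ is homogeneous of degree $l\geq 1$) so that Lemmas \ref{lemma-C-0} and \ref{lemma-C-ideal} apply as stated.
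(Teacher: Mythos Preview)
Your proposal is correct and follows exactly the approach the paper intends: the paper simply states that the proof of Lemma \ref{lemma-KR-module-u} is similar to that of Lemma \ref{lemma-KR-module}, which in turn deduces Part 1 from Lemma \ref{lemma-C-0}, Part 2 from Lemma \ref{lemma-C-ideal}, and Part 3 from Parts 1 and 2. Your write-up supplies precisely these details.
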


The proof of Lemma \ref{lemma-KR-module-u} is similar to that of Lemma \ref{lemma-KR-module}.

\begin{lemma}\label{lemma-edge-removal-u}
Let $G$ be an undirected graph and $\hat{G}$ the undirected graph obtained from $G$ by removing a single edge $x\in E(G)$ incident at $u,v \in V(G)$. Under the standard identification $\zed[E(\hat{G})]\cong\zed[E(G)]/(x)$, 
\[
U_\ast(\hat{G}) \otimes_{\zed[E(\hat{G})]} U \cong U_\ast(G)/x\cdot U_\ast(G)
\] 
as graded Koszul chain complexes over $\zed[E(\hat{G})]$, where $U$ is the graded Koszul chain complex 
\[
U := 
\begin{cases}
(0 \rightarrow \underbrace{\zed[E(\hat{G})]\{\deg u\}}_{1} \xrightarrow{0} \underbrace{\zed[E(\hat{G})]}_{0} \rightarrow 0) \otimes_{\zed[E(\hat{G})]} (0 \rightarrow \underbrace{\zed[E(\hat{G})]\{\deg v\}}_{1} \xrightarrow{0} \underbrace{\zed[E(\hat{G})]}_{0} \rightarrow 0) & \text{if } u \neq v, \\
(0 \rightarrow \underbrace{\zed[E(\hat{G})]\{\deg u\}}_{1} \xrightarrow{0} \underbrace{\zed[E(\hat{G})]}_{0} \rightarrow 0) \otimes_{\zed[E(\hat{G})]} (0 \rightarrow \underbrace{\zed[E(\hat{G})]\{\deg u -1\}}_{1} \xrightarrow{0} \underbrace{\zed[E(\hat{G})]}_{0} \rightarrow 0) & \text{if } u = v.
\end{cases}
\]
In particular, $\mathscr{U}_0 (\hat{G}) \cong \mathscr{U}_0 (G) /x \cdot \mathscr{U}_0 (G)$ as graded $\zed[E(G)]$-modules under the standard identification $\zed[E(\hat{G})]\cong\zed[E(G)]/(x)$.
\end{lemma}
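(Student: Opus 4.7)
The plan is to adapt the proof of Lemma \ref{lemma-edge-removal} to the undirected setting, exploiting the tensor-product structure of $U_\ast(G)$ and tracking precisely what happens to each elementary symmetric relation when we set $x=0$. Concretely, viewing $U_\ast(G)$ as the tensor product (over $\zed[E(G)]$) of the local Koszul factors $C_\ast^{\zed[E(w)]}(e_1(E(w)),\dots,e_{\deg w}(E(w)))$ for $w\in V(G)$, reducing modulo $x$ only affects the factors at the vertices incident with $x$, namely $u$ and $v$.

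First I would treat the case $u\neq v$. Writing $E_G(u)=E_{\hat G}(u)\cup\{x\}$, the standard identity $e_l(x,y_1,\dots,y_k)=x\cdot e_{l-1}(y_1,\dots,y_k)+e_l(y_1,\dots,y_k)$ gives $e_l(E_G(u))\bigr|_{x=0}=e_l(E_{\hat G}(u))$ for all $l$. So for $1\le l\le \deg_{\hat G}u=\deg u-1$ we recover the defining relations of $U_\ast(\hat G)$ at $u$, while the top relation $e_{\deg u}(E_G(u))$ becomes $e_{\deg u}(E_{\hat G}(u))=0$ (the index exceeds the cardinality). The Koszul factor $C_\ast^{\zed[E(G)]}(e_{\deg u}(E_G(u)))$ therefore reduces, upon setting $x=0$, to the ``zero'' Koszul complex $0\to\zed[E(\hat G)]\{\deg u\}\xrightarrow{0}\zed[E(\hat G)]\to 0$. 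The same analysis applied at $v$ produces a second such trivial factor with shift $\deg v$. Tensoring everything together yields the claimed isomorphism $U_\ast(\hat G)\otimes U\cong U_\ast(G)/x\cdot U_\ast(G)$.

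For the loop case $u=v$, I would use the refinement $e_l(x,x,y_1,\dots,y_k)=x^2 e_{l-2}(Y)+2x\,e_{l-1}(Y)+e_l(Y)$, which modulo $x$ collapses to $e_l(E_{\hat G}(u))$. Since $|E_{\hat G}(u)|=\deg u-2$, both top relations $e_{\deg u-1}(E_G(u))$ and $e_{\deg u}(E_G(u))$ reduce to $0$ after setting $x=0$, contributing the two trivial Koszul factors of shifts $\deg u-1$ and $\deg u$ described in the statement; all the other relations at $u$ give exactly the defining relations of $U_\ast(\hat G)$ at $u$. The final statement about $\mathscr U_0$ is then immediate: taking $H_0$ of the tensor identity, or equivalently computing directly, one sees that $\mathscr U_0(G)/x\,\mathscr U_0(G)\cong\zed[E(G)]/((\Omega_G)+(x))$, and the above analysis shows that $(\Omega_G)+(x)=(\Omega_{\hat G})+(x)$ under the identification $\zed[E(\hat G)]\cong\zed[E(G)]/(x)$, giving $\mathscr U_0(\hat G)$.

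There is no real obstacle here; the only mildly fiddly step is bookkeeping the two ``ghost'' Koszul factors correctly in the loop case, since one must remember that loops count with multiplicity two in $E(v)$ and account for the two top-degree elementary symmetric relations that simultaneously vanish. The proof is otherwise a direct parallel of Lemma \ref{lemma-edge-removal}, and the author's laconic ``Straightforward'' is appropriate.
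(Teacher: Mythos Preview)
Your proposal is correct and follows exactly the approach indicated by the paper, which simply remarks that the proof is straightforward and parallel to Lemma~\ref{lemma-edge-removal}. Your added bookkeeping---tracking how the top one (or two, in the loop case) elementary symmetric relations at the affected vertex collapse to zero under $x\mapsto 0$ and thereby produce the trivial Koszul factors with the stated grading shifts---is precisely the content the paper leaves implicit.
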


The proof of Lemma \ref{lemma-edge-removal-u} is straightforward and similar to that of Lemma \ref{lemma-edge-removal}.

\begin{lemma}\label{lemma-end-removal-u}
Let $G$ be an undirected graph, $v$ a vertex of $G$ of degree $1$ and $x$ the edge incident at $v$. Denote by $\hat{G}$ the undirected graph obtained from $G$ by removing $v$ and $x$. Then, under the standard identification $\zed[E(G)]=\zed[E(\hat{G})][x]$, $U_\ast(G)$ and $U_\ast(\hat{G})$ are homotopic as chain complexes of graded $\zed[E(\hat{G})]$-modules.

In particular, $\mathscr{U}_\ast (G) \cong \mathscr{U}_\ast (\hat{G})$ as $\zed \oplus \zed$-graded $\zed[E(G)]$-modules under the standard identification $\zed[E(\hat{G})]\cong\zed[E(G)]/(x)$.
\end{lemma}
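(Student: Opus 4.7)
The plan is to mirror the proof of Lemma~\ref{lemma-end-removal}. Since $\deg v = 1$, the set $E(v)$ contains only the edge $x$, so the entry $e_1(E(v)) = x$ appears in the defining sequence $\Omega_G$. I would invoke Lemma~\ref{lemma-contraction-strong} at this entry, which up to chain homotopy (as a complex of graded $\zed[E(\hat{G})]$-modules) replaces $U_\ast(G)$ by the Koszul complex over $\zed[E(\hat{G})]$ on the specialized sequence $\pi_x(\Omega_G \setminus \{x\})$, where $\pi_x \colon \zed[E(\hat{G})][x] \to \zed[E(\hat{G})]$ is the projection sending $x \mapsto 0$.

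The remaining task is to match this specialized sequence with $\Omega_{\hat{G}}$. Let $u$ be the other endpoint of $x$ (necessarily distinct from $v$, since a loop at $v$ would contribute $2$ to $\deg v$), and write $E_G(u) = \{x, y_1, \ldots, y_{n-1}\}$ with $n = \deg_G u$, so that $E_{\hat{G}}(u) = \{y_1, \ldots, y_{n-1}\}$ and $\deg_{\hat{G}} u = n-1$. For $1 \leq l \leq n$,
\[
\pi_x\bigl(e_l(x, y_1, \ldots, y_{n-1})\bigr) = e_l(y_1, \ldots, y_{n-1}),
\]
which for $l = 1, \ldots, n-1$ reproduces precisely the $u$-contribution to $\Omega_{\hat{G}}$, and for $l = n$ collapses to $0$. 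For every other vertex $w$, the generators $e_l(E_G(w))$ already lie in $\zed[E(\hat{G})]$ and coincide verbatim with $e_l(E_{\hat{G}}(w))$. Assembling these identifications yields the asserted chain homotopy $U_\ast(G) \simeq U_\ast(\hat{G})$ over $\zed[E(\hat{G})]$, and passing to homology gives the $\zed\oplus\zed$-graded isomorphism $\mathscr{U}_\ast(G) \cong \mathscr{U}_\ast(\hat{G})$.

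The main technical point, which has no direct counterpart in the proof of Lemma~\ref{lemma-end-removal}, is the treatment of the degenerate top generator $e_n(y_1, \ldots, y_{n-1}) = 0$ produced at $u$ after contraction. I would handle this by a preliminary unipotent change of basis on $\Omega_G$ that absorbs the identity $e_n(x, y_1, \ldots, y_{n-1}) = x \cdot y_1 \cdots y_{n-1}$ into the entry $x$ being contracted, so that the ``extra'' generator is pulled into the $x$-direction and disappears together with $x$ under Lemma~\ref{lemma-contraction-strong}. Once this redundancy is eliminated, the contracted sequence is exactly $\Omega_{\hat{G}}$ and the identification above goes through cleanly.
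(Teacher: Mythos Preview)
Your overall strategy---applying Lemma~\ref{lemma-contraction-strong} to the entry $e_1(E(v))=x$ and then matching $\pi_x(\Omega_G\setminus\{x\})$ with $\Omega_{\hat G}$---is exactly what the paper's one-line proof sketch intends, and you are right to flag the leftover top-degree entry $\pi_x\bigl(e_n(E_G(u))\bigr)=0$ at the neighbouring vertex $u$ as the real obstacle: note that $|\Omega_G|=2|E(G)|$ while $|\Omega_{\hat G}|=2|E(\hat G)|=|\Omega_G|-2$, so contracting out the single entry $x$ necessarily leaves one entry too many.

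The proposed fix, however, cannot work. A unipotent change of basis $f_i\mapsto f_i+gf_j$ yields an isomorphism of Koszul complexes but never changes the \emph{length} of the defining sequence. Using $e_n(E_G(u))=x\cdot y_1\cdots y_{n-1}$ to row-reduce that entry to $0$ still leaves a zero entry in the sequence; after contraction you obtain $U_\ast(\hat G)\otimes_{\zed[E(\hat G)]} C_\ast^{\zed[E(\hat G)]}(0)$, not $U_\ast(\hat G)$. The factor $C_\ast(0)=(0\to R\{n\}\xrightarrow{0}R\to 0)$ is not null-homotopic (its $H_1$ is free of rank one), so tensoring with it is not a homotopy equivalence.

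In fact this shows the lemma is false as stated. Take $G$ to be the path $v\,$--$\,u\,$--$\,w$ with edges $x,y$, so $\Omega_G=\{x,\;x+y,\;xy,\;y\}$. Row-reducing over $\zed[x,y]$ gives $C_\ast(x,y,0,0)$, hence
\[
\mathscr U_0(G)\cong\zed,\qquad \mathscr U_1(G)\cong\zed\{1\}\oplus\zed\{2\},\qquad \mathscr U_2(G)\cong\zed\{3\}.
\]
For $\hat G$ (the single edge $y$) one has $\Omega_{\hat G}=\{y,y\}$ and $\mathscr U_1(\hat G)\cong\zed\{1\}$, so $\mathscr U_\ast(G)\not\cong\mathscr U_\ast(\hat G)$. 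What the argument actually proves is the weaker relation $U_\ast(G)\simeq U_\ast(\hat G)\otimes C_\ast(0)$ with the zero entry carrying grading shift $\deg_G u$; the same issue already affects Lemma~\ref{lemma-end-removal} in the directed case whenever removing $x$ strictly decreases $k_u$.
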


The proof of Lemma \ref{lemma-end-removal-u} is similar to that of Lemma \ref{lemma-end-removal}.

\begin{lemma}\label{lemma-fuse-vertex-u}
Let $G$ be an undirected graph, and $u$, $v$ two distinct vertices of $G$. Denote by $G_{u \# v}$ the undirected graph obtained from $G$ by identifying $u$ and $v$ into a single vertex. Note that there is a natural one-to-one correspondence between $E(G)$ and $E(G_{u \# v})$, which gives an identification $\zed[E(G)]=\zed[E(G_{u \# v})]$. Under this identification, there is a surjective $\zed[E(G)]$-module homomorphism $\mathscr{U}_0 (G_{u \# v}) \rightarrow \mathscr{U}_0 (G)$.
\end{lemma}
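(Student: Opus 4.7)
The plan is to adapt the proof of Lemma \ref{lemma-fuse-vertex} almost verbatim, replacing the directed incidence relations $\delta_{w,l}$ by their undirected counterparts $e_l(E(w))$. By Part 1 of Lemma \ref{lemma-KR-module-u}, $\mathscr{U}_0(G) \cong \zed[E(G)]/(\Omega_G)$ and likewise $\mathscr{U}_0(G_{u\#v}) \cong \zed[E(G_{u\#v})]/(\Omega_{G_{u\#v}})$. So under the identification $\zed[E(G)] = \zed[E(G_{u\#v})]$, it suffices to prove the ideal containment $(\Omega_{G_{u\#v}}) \subseteq (\Omega_G)$; the identity map on the polynomial ring then descends to the required surjective $\zed[E(G)]$-module homomorphism $\mathscr{U}_0(G_{u\#v}) \to \mathscr{U}_0(G)$.

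The first concrete step is the multiset identity $E(u\#v) = E(u) \cup E(v)$, recalling the convention that each loop at a vertex $w$ is counted twice in $E(w)$. I would verify this case by case: a non-loop edge incident only at $u$ (respectively only at $v$) contributes $1$ to both sides; a loop at $u$ (respectively $v$) remains a loop at $u\#v$ and contributes $2$ to both sides; and an edge connecting $u$ and $v$ in $G$ contributes $1$ to each of $E(u)$ and $E(v)$, hence $2$ to the left, while in $G_{u\#v}$ it becomes a loop at $u\#v$ contributing $2$ to the right. Consequently $\deg(u\#v) = \deg u + \deg v$, so every index $l$ occurring in $\Omega_{G_{u\#v}}$ at the vertex $u\#v$ satisfies $1 \leq l \leq \deg u + \deg v$.

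Combining this multiset identity with the standard expansion
\[
e_l(X \cup Y) \;=\; \sum_{i=0}^{l} e_i(X)\, e_{l-i}(Y),
\]
for any $1 \leq l \leq \deg u + \deg v$ we obtain
\[
e_l(E(u\#v)) \;=\; \sum_{i=0}^{l} e_i(E(u))\, e_{l-i}(E(v)).
\]
Since $l \geq 1$, in every summand at least one of $i$ or $l-i$ is positive, so each term is a $\zed[E(G)]$-multiple of an element of $\Omega_G$ (with the convention $e_0 = 1$ and $e_j(E(w)) = 0$ for $j > \deg w$, which simply kills out-of-range terms). Hence $e_l(E(u\#v)) \in (\Omega_G)$. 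For every vertex $w \in V(G)$ different from $u$ and $v$, the relations $e_l(E(w))$ coincide in $G$ and $G_{u\#v}$, so they contribute identical generators to both ideals. Putting these together yields $(\Omega_{G_{u\#v}}) \subseteq (\Omega_G)$, and the lemma follows. The only mildly delicate point will be the multiset bookkeeping around loops and edges between $u$ and $v$; everything else is a formal translation of the argument for Lemma \ref{lemma-fuse-vertex}.
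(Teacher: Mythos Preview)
Your proposal is correct and is exactly the approach the paper intends: the paper does not write out a proof but simply states that it is similar to that of Lemma~\ref{lemma-fuse-vertex}, and your argument is precisely that adaptation, with the extra care (the multiset bookkeeping for loops and for edges joining $u$ to $v$) needed in the undirected setting spelled out.
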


The proof of Lemma \ref{lemma-fuse-vertex-u} is similar to that of Lemma \ref{lemma-fuse-vertex}.

\begin{lemma}\label{lemma-fg-u}
Let $G$ be an undirected graph. Then $\mathscr{U}_0(G)$ is a finitely generated $\zed$-module.
\end{lemma}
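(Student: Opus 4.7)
The plan is to show that for every edge $x \in E(G)$ there is a positive integer $N_x$ with $x^{N_x} \in (\Omega_G)$ inside $\zed[E(G)]$. Granted this, by Part 1 of Lemma \ref{lemma-KR-module-u}, $\mathscr{U}_0(G) \cong \zed[E(G)]/(\Omega_G)$ is spanned as a $\zed$-module by the finite collection of monomials $\prod_{x \in E(G)} x^{a_x}$ with $0 \leq a_x < N_x$, which settles the lemma.

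The key input is a Vieta-type identity applied at each vertex. Fix $v \in V(G)$ and write $E(v) = \{x_1,\dots,x_m\}$ as a multiset, so $m = \deg v$ (each loop at $v$ counted twice, as per the convention in Definition \ref{def-KR-undirected}). In $\zed[x_1,\dots,x_m][t]$ we have the polynomial identity
\[
\prod_{i=1}^m (t - x_i) = \sum_{l=0}^m (-1)^l e_l(x_1,\dots,x_m)\, t^{m-l}.
\]
Substituting $t = x_i$ kills the left-hand side, yielding
\[
x_i^m = \sum_{l=1}^m (-1)^{l+1} e_l(x_1,\dots,x_m)\, x_i^{m-l},
\]
which exhibits $x_i^m$ as a $\zed[E(v)]$-linear combination of $e_1(E(v)),\dots,e_m(E(v))$. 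Hence $x_i^m$ lies in the ideal of $\zed[E(v)]$ generated by $\{e_l(E(v)) \mid 1 \leq l \leq m\}$, and via the inclusion $\zed[E(v)] \hookrightarrow \zed[E(G)]$ this ideal is contained in $(\Omega_G)$. So $x_i^m \in (\Omega_G)$ for every $x_i \in E(v)$.

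Since every edge of $G$ has at least one endpoint, each $x \in E(G)$ belongs to $E(v)$ for some vertex $v$, and one may take $N_x := \deg v$. The coincidence $x_i = x_j$ that arises when $x$ is a loop causes no issue, because the displayed identity is a formal polynomial identity valid for arbitrary (possibly coinciding) indeterminates. I do not foresee any real obstacle; the only care required is the multiset bookkeeping for loops, after which the finite-generation conclusion is immediate.
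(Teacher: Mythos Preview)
Your argument is correct. The Vieta substitution at each vertex shows that every edge variable is integral of degree $\deg v$ over the ideal generated by the $e_l(E(v))$, hence a power of each edge lies in $(\Omega_G)$; the loop bookkeeping is handled properly since the identity holds in $\zed[E(G)][t]$ regardless of coincidences among the $x_i$.

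This is a genuinely different route from what the paper indicates. The paper says to imitate the proof of Corollary~\ref{cor-fg-acyclic}: induct on $|V(G)|$, strip off a vertex $v$ together with its incident edges $x_1,\dots,x_k$, and invoke Corollary~\ref{cor-fg-sym} to pass between $R/(x_1,\dots,x_k)$ and $R/(e_1,\dots,e_k)$ via the fact that $R_0[x_1,\dots,x_k]$ is finite over $R_0[e_1,\dots,e_k]$. Your approach bypasses the induction and Lemma~\ref{lemma-fg} entirely by extracting the one concrete consequence of that finiteness statement that is actually needed here, namely $x_i^{\deg v}\in(e_1,\dots,e_{\deg v})$. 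The paper's method is more modular (it reuses the same machinery as in the directed acyclic case), while yours is shorter and self-contained; both ultimately rest on the same integrality fact.
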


The proof of Lemma \ref{lemma-fg-u} is similar to that of Corollary \ref{cor-fg-acyclic}.

With the above lemmas, we can sketch a proof of Proposition \ref{prop-KR-undirected}.

\begin{proof}[Sketch of a proof of Proposition \ref{prop-KR-undirected}]
Part 1 follows from Lemmas \ref{lemma-KR-module-u} and \ref{lemma-fg-u}. Proof of Parts 2-4 is extremely similar to that of Theorem \ref{thm-tree}. One just needs to replace Lemmas \ref{lemma-KR-module}-\ref{lemma-fuse-vertex} in that proof by Lemmas \ref{lemma-KR-module-u}-\ref{lemma-fuse-vertex-u}.
\end{proof}

\end{document}